\documentclass[10pt]{amsart}
\usepackage{amssymb}
\usepackage{amsbsy}
\usepackage{amscd}
\usepackage[mathscr]{eucal}
\usepackage{verbatim}
\usepackage{version}
\usepackage{color}
\usepackage[matrix,arrow]{xy}

\def\cal{\mathcal}
\def\Bbb{\mathbb}
\def\frak{\mathfrak}

\newenvironment{NB}{
\color{red}{\bf NB}. \footnotesize 
}{}
\excludeversion{NB}
\newenvironment{NB2}{
\color{blue}{\bf NB}. \footnotesize
}{}


\newcommand{\bb} {\mathbb}
\newcommand{\bl} {\mathbf}
\newcommand{\frk}{\mathfrak}

\newcommand{\wt}{\widetilde}

\newcommand{\simto}{\xrightarrow{\ \sim\ }}

\newcommand{\ev}{\operatorname{ev}}

\newcommand{\Eq}  {\operatorname{Eq}}
\newcommand{\FM}  {\operatorname{FM}}
\newcommand{\GL}  {\operatorname{GL}}
\newcommand{\LieO}{\operatorname{O}}

\newcommand{\SL}  {\operatorname{SL}}

\newcommand{\Pic}{\operatorname{Pic}}

\newcommand{\Supp}{\operatorname{Supp}}
\newcommand{\ch}{\operatorname{ch}}

\newcommand{\Coh}{\operatorname{Coh}}
\newcommand{\divisor}{\operatorname{div}}
\newcommand{\Spe}{\operatorname{Spe}}
\newcommand{\Jac}{\operatorname{Jac}}

\newcommand{\Ext}{\operatorname{Ext}}
\newcommand{\Hom}{\operatorname{Hom}}

\newcommand{\codim}{\operatorname{codim}}
\newcommand{\im}{\operatorname{im}}
\newcommand{\Aut}{\operatorname{Aut}}
\newcommand{\rk}{\operatorname{rk}}

\newcommand{\NS}{\operatorname{NS}}
\newcommand{\coker}{\operatorname{coker}}

\newcommand{\Alb}{\operatorname{Alb}}
\newcommand{\Hilb}{\operatorname{Hilb}}

\newcommand{\Amp}{\operatorname{Amp}}

\newcommand{\Div}{\operatorname{Div}}

\newcommand{\alg}{\operatorname{alg}}

\newcommand{\Stab}{\operatorname{Stab}}

\newcommand{\Sym}{\operatorname{Sym}}
\newcommand{\Mov}{\operatorname{Mov}}
\newcommand{\Nef}{\operatorname{Nef}}

\font\b=cmr10 scaled \magstep5
\def\bigzerou{\smash{\lower1.7ex\hbox{\b 0}}}

\setlength{\topmargin}{-2.0cm}
\setlength{\textheight}{25.5cm}
\setlength{\textwidth}{13.5cm}
\numberwithin{equation}{section}

\theoremstyle{plain}
 \newtheorem{thm}{Theorem}[section]
 \newtheorem{lem}[thm]{Lemma}
 \newtheorem{prop}[thm]{Proposition}
 \newtheorem{cor}[thm]{Corollary}
 
\theoremstyle{definition}
 \newtheorem{defn}[thm]{Definition}
 
\theoremstyle{remark}
 \newtheorem{rem}[thm]{Remark}
 \newtheorem{ex}[thm]{Example}
 
\setcounter{section}{-1}

\pagestyle{plain}

\begin{document}

\title{Bridgeland's stability and the positive cone
of the moduli spaces of stable objects on an abelian 
surface.}
\author{K\={o}ta Yoshioka}
\address{Department of Mathematics, Faculty of Science,
Kobe University,
Kobe, 657, Japan
}
\email{yoshioka@math.kobe-u.ac.jp}

\thanks{
The author is supported by the Grant-in-aid for 
Scientific Research (No.\ 22340010, No.\ 26287007), JSPS}

\subjclass[2010]{14D20}

\begin{abstract}
We shall study the chamber structure of positive cone
of the albanese fiber of the moduli spaces of stable objects
on an abelian surface 
via the chamber structure of stability conditions.
\end{abstract}

\maketitle

\section{Introduction}

The space of stability conditions on an abelian surfaces $X$
is studied by Bridgeland in \cite{Br:3}.
In particular, he completely described
a connected $\Stab(X)^*$ consisting of stability conditions
$\sigma$
such that the structure sheaves of points $k_x$ $(x \in X)$
are stable of a fixed phase $\phi$.
In the space of stability conditions, there is a natural action of
the universal cover
$\widetilde{\GL}^+(2,{\Bbb R})$ of $\GL^+(2,{\Bbb R})$.
In our situation,
$\Stab(X)^*/\widetilde{\GL}^+(2,{\Bbb R})$ is isomorphic
to $\NS(X)_{\Bbb R} \times \Amp(X)_{\Bbb R}$
as stated in \cite[sect. 15]{Br:3}.
In particular, if $\NS(X)={\Bbb Z}H$, then
$\Stab(X)^*/\widetilde{\GL}^+(2,{\Bbb R})$ is isomorphic
to the upper half plane ${\Bbb H}$.
For the stability conditions $\sigma_{(\beta,\omega)}$
corresponding to $(\beta,\omega) \in 
\NS(X)_{\Bbb R} \times \Amp(X)_{\Bbb R}$,
moduli spaces of 
$\sigma_{(\beta,\omega)}$-semi-stable objects are extensively studied in 
\cite{MYY:2011:1}, \cite{MYY:2011:2} and \cite{YY2}.
In particular, the projectivity of the moduli spaces are proved
for a general $\sigma_{(\beta,\omega)}$.
We also constructed ample line bundles on the moduli spaces.  
As a consequence of these results, 
we also got some results
on the moduli spaces of Gieseker semi-stable sheaves. 
Indeed for a parameter $(\beta,\omega)=(\beta,tH)$ $(t \gg 0)$ called 
the large volume limit,  
Bridgeland stability coincides with Gieseker
stability.
For the study of Gieseker stability on abelian surfaces,
Fourier-Mukai transforms
are very important tool, though
Gieseker stability is not preserved in general.
For the proof of projectivity of 
the moduli space of Bridgeland semi-stable objects,
we constructed a Fourier-Mukai transform which induces 
an isomorphism to a moduli space of Gieseker semi-stable objects.
In this sense, Bridgeland stability is regarded as a minimal generalization 
of Gieseker stability preserved by Fourier-Mukai transforms.

In this note, we continue to study the moduli spaces of Bridgeland
semi-stable objects. 
In particular, we shall study the birational geometry
of the moduli spaces.  
Before explaining our main results,
we prepare some notation and explain some results in \cite{MYY:2011:2}.
For the algebraic cohomology groups
$H^*(X,{\Bbb Z})_{\alg}:=
{\Bbb Z} \oplus \NS(X) \oplus {\Bbb Z}\varrho_X$, let
$\langle\;\;,\;\; \rangle$ be the Mukai pairing, where
$\varrho_X$ is the fundamental class of $X$.
For $x=x_0+x_1+x_2 \varrho_X$ with $x_0,x_2 \in {\Bbb Z}$
and $x_1 \in \NS(X)$,
we also write $x=(x_0,x_1,x_2)$.
For $E \in {\bf D}(X)$, $v(E)=\ch(E)$ denotes the Mukai vector of $E$. 
For $v \in H^*(X,{\Bbb Z})_{\alg}$,
$M_{(\beta,\omega)}(v)$ denotes the moduli space of 
$\sigma_{(\beta,\omega)}$-semi-stable objects
$E$ with $v(E)=v$.
$M_{(\beta,\omega)}(v)$ is a projective scheme
if $(\beta,\omega)$ is general
(\cite[Thm. 1.4]{MYY:2011:2}).
If $v$ is primitive and $\langle v^2 \rangle \geq 6$, 
then as a Bogomolov factor,
we have an irreducible symplectic manifold
$K_{(\beta,\omega)}(v)$ 
which is deformation equivalent to
the generalized Kummer variety constructed by Beauville \cite{B:1}.
$K_{(\beta,\omega)}(v)$ is a fiber of the albanese map
of $M_{(\beta,\omega)}(v)$.  
We also have an isometry
$$
\theta_{v,\beta,\omega}:v^\perp \cap H^*(X,{\Bbb Z})_{\alg}
\to \NS(K_{(\beta,\omega)}(v)) 
$$
where $\NS(K_{(\beta,\omega)}(v))$ is equipped with 
the Beauville-Fujiki form.
For a Mukai vector $v$ of a coherent sheaf (i.e., $v=v(F), F \in \Coh(X)$),
$M_H^\beta(v)$ denotes the moduli space of $\beta$-twisted
semi-stable sheaves $E$ with 
$v(E)=v$. 
If $\beta=0$, then we denote it by $M_H(v)$.
Since $M_H^\beta(v)=M_{(\beta,tH)}(v)$ $(t \gg 0)$,
a fiber $K_H^\beta(v)$ of the albanese map
is $K_{(\beta,tH)}(v)$.

In \cite[sect. 5.3]{MYY:2011:2}, we relate 
the ample cone of $K_{(\beta,\omega)}(v)$ to a chamber structure
of $\NS(X)_{\Bbb R} \times \Amp(X)_{\Bbb R}$.
In this note, we refine this correspondence.
For a Mukai vector $v \in H^*(X,{\Bbb Z})_{\alg}$, 
we shall construct a map from our space of stability conditions
$\NS(X)_{\Bbb R} \times \Amp(X)_{\Bbb R}$
to the positive cone $P^+(v^\perp)_{\Bbb R}$ of $v^\perp$. 
This map is surjective
up to the action of ${\Bbb R}_{>0}$ on $P^+(v^\perp)_{\Bbb R}$.
More precisely, we slightly extend the map in order to treat
the boundary of positive cone. 
In order to state the precise
statement (Proposition \ref{prop:positive-intro}), 
we need more notation.

We fix a norm $|| \;\; ||$ on $\NS(X)_{\Bbb R}$. 
For the closure $\overline{\Amp(X)}_{\Bbb R}$ of the ample cone
of $X$,
we set 
\begin{equation*}
\begin{split}
C(\overline{\Amp(X)}_{\Bbb R}):=&
\{ x \in \overline{\Amp(X)}_{\Bbb R} \mid || x ||=1 \},\\
\overline{\frak H}:= & \NS(X)_{\Bbb R} \times C(\overline{\Amp(X)}_{\Bbb R})
\times {\Bbb R}_{\geq 0}.
\end{split}
\end{equation*}
Then we have an embedding
$\NS(X)_{\Bbb R} \times \Amp(X)_{\Bbb R} \to \overline{\frak H}$
by sending $(\beta,\omega)$ to
$(\beta,\omega/||\omega||,||\omega||)$.

For $v=(r,c_1,a) \in H^*(X,{\Bbb Z})_{\alg}$,
we set
\begin{equation*}
\overline{P^+(v^\perp)}_{\Bbb R}:=
\left\{x \in H^*(X,{\Bbb Z})_{\alg} \otimes {\Bbb R} \left|
\begin{aligned}
x \in v^\perp, \langle v^2 \rangle \geq 0, \\
\langle x,rH_0+(H_0,c_1)\varrho_X \rangle>0 
\end{aligned}
\right. \right\},
\end{equation*}
where $H_0$ is an ample divisor on $X$.
$\overline{P^+(v^\perp)}_{\Bbb R}$ is the closure of 
the positive cone $P^+(v^\perp)_{\Bbb R}$
of $v^\perp$. 

For $(\beta,H,t) \in \overline{\frak H}$, we set
\begin{equation*}
\xi(\beta,H,t):=
\left(r\frac{t^2 (H^2)}{2}+\langle e^\beta,v \rangle \right)
(H+(\beta,H)\varrho_X)
-(c_1-r \beta,H)\left(e^\beta-\frac{t^2(H^2)}{2}\varrho_X \right).
\end{equation*} 
Then $\xi(\beta,H,t) \in \overline{P^+(v^\perp)}_{\Bbb R}$. 
\begin{prop}[{Proposition. \ref{prop:positive}}]\label{prop:positive-intro}
We have a surjective map
\begin{equation*}
\begin{matrix}
\Xi:& \overline{\frak H} & \to &
\overline{P^+(v^\perp)}_{\Bbb R}/{\Bbb R}_{>0} 
\\
& (\beta,H,t) & \mapsto & {\Bbb R}_{>0}\xi(\beta,H,t). 
\end{matrix}
\end{equation*}
Moreover if $tH$ is ample, then 
$\xi(\beta,H,t)$ belongs to the positive cone of $v^\perp$.
\begin{NB}
If $\langle \xi(\beta,H,t)^2 \rangle=0$, then
$t=0$ or $(H^2)=0$ by Lemma \ref{lem:non-perp}.
\end{NB}
\end{prop}

\begin{NB}
\begin{defn}
We set
\begin{equation*}
\begin{split}
P^+(v^\perp)_k:=& \{x \in H^*(X,{\Bbb Z})_{\alg} \otimes k \mid
x \in v^\perp, \langle x^2 \rangle> 0, 
\langle x,rH_0+(rH_0,\delta)\varrho_X \rangle>0 \},\\
\overline{P^+(v^\perp)}_k:=& 
\{x \in H^*(X,{\Bbb Z})_{\alg} \otimes k \mid
x \in v^\perp, \langle v^2 \rangle \geq 0, 
\langle x,rH_0+(rH_0,\delta)\varrho_X \rangle>0 \},
\end{split}
\end{equation*}
where $k={\Bbb Q},{\Bbb R}$ and $H_0 \in \Amp(X)_{\Bbb Q}$.
\end{defn}
\end{NB}

We introduce the wall and chamber structures on 
$\overline{\frak H}$ and 
$\overline{P^+(v^\perp)}_{\Bbb R}$
and
show that they correspond each other.
By using these descriptions,
we also study the movable cone of 
$K_{(\beta,\omega)}(v)$.

Let ${\frak W}$ be the set of Mukai vectors $v_1$
such that 
\begin{equation}\label{eq:wall-cond0}
\langle v_1,v-v_1 \rangle>0,\;
\langle v_1^2 \rangle \geq 0,\; \langle(v-v_1)^2 \rangle \geq 0.
\end{equation}
Then we have a chamber structure on $P^+(v^\perp)_{\Bbb R}$ 
by the set of walls $\{v_1^\perp \mid v_1 \in {\frak W} \}$.

\begin{thm}[{Theorem \ref{thm:general:movable}}]\label{thm:movable-intro}
Assume that $(\beta,H,t) \in \overline{\frak H}$
satisfies $\xi(\beta,H,t) \not \in \cup_{v_1 \in {\frak W}} v_1^\perp$.  
Let ${\frak I}$ be the set of primitive and isotropic Mukai
vectors $u$ with $\langle u,v \rangle=0,1,2$.
Let ${\cal D}(\beta,tH)$ be the connected component
of $P^+(v^\perp)_{\Bbb R} \setminus \cup_{u \in {\frak I}}u^\perp$
containing $\xi(\beta,H,t)$.
Then 
\begin{equation*}
\overline{\Mov(K_{(\beta,tH)}(v))}_{\Bbb R}=
\theta_{v,\beta,tH}(\overline{{\cal D}(\beta,tH)}).
 \end{equation*}
Moreover 
$$
\theta_{v,\beta,tH}(H^*(X,{\Bbb Z})_{\alg} \cap
\overline{{\cal D}(\beta,tH)}) \subset
\Mov(K_{(\beta,tH)}(v)).
$$ 
\end{thm}

In the movable cone of 
$K_{(\beta,tH)}(v)$, Hassett and Tschinkel 
\cite[Thm. 7, Prop. 17]{HT} introduced the chamber structure.
The chamber structure of ${\cal D}(\beta,tH)$
by $\{v_1^\perp \mid v_1 \in {\frak W} \}$
corresponds to the chamber structure of 
the interior of
$\Mov(K_{(\beta,tH)}(v))$ via $\theta_{v,\beta,tH}$.

As an application of our results, 
we get a result on the birational structure of $M_H^\beta(v)$.
\begin{prop}[{Proposition \ref{prop:Hilb}}]\label{prop:Hilb-intro}
Let $(X,H)$ be a polarized abelian surface
and $v$ a Mukai vector such that 
$2\ell:=\langle v^2 \rangle \geq 6$.
Then $M_H^\beta(v)$ is birationally equivalent
to $\Pic^0(Y) \times \Hilb_Y^{\ell}$
if and only if there is an isotropic Mukai
vector $w \in H^*(X,{\Bbb Z})_{\alg}$ with $\langle v,w \rangle=1$,
where $Y$ is an abelian surface.   
\end{prop}
This result also follows from a characterization of
the generalized Kummer variety by Markman and Mehrotra \cite{Ma-Me}.
Proposition \ref{prop:Hilb-intro} gives an affirmative solution 
of a conjecture of  
Mukai \cite{Mukai:1980}.

\begin{cor}[{Corollary \ref{cor:Hilb}}]\label{cor:Mukai}
Let $(X,H)$ be a principally polarized abelian surface
with $\NS(X)={\Bbb Z}H$.
Let $v=(r,dH,a)$ be a Mukai vector with
$\ell:=d^2-ra \geq 3$.
Then $M_H^\beta(v)$ is birationally equivalent
to $X \times \Hilb_X^{\ell}$
if and only if
the quadratic equation 
$$
rx^2+2dxy+a y^2=\pm 1
$$
has an integer valued solution.  
\end{cor}

We also study the location of walls.
If $\rk \NS(X) \geq 2$, we show that the stabilizer of $v$ in
the group of autoequivalences is infinite.
Hence if there is a wall, then we can generate infinitely many walls
by the action of autoequivalences.
We also show that there is an example of $X$ and $v$ such that
there is no wall, which implies that the ample cone of $K_H^\beta(v)$
is the same as the positive cone and the autoequivalences act
as automorphisms of $M_H^\beta(v)$.

The study of the movable cone
is motivated by recent works
\cite{ABCH} and \cite{BM}.
They studied the movable cones of the moduli spaces
for the projective plane and a $K3$ surface 
by analyzing the chamber structure of Bridgeland's stability.
For an irreducible symplectic manifold,
Markman \cite{Ma} studied the movable cone extensively. 
In particular, he obtained a numerical characterization 
of the movable cone. 
In this sense, our result 
(Theorem \ref{thm:general:movable}) gives concrete examples of his results.
In particular, we give a moduli-theoretic explanation
of birational models of $K_H^\beta(v)$.

Let us briefly explain the contents of this note.
In section \ref{sect:pre}, we introduce some notations
and recall known results on irreducible symplectic manifolds.
In particular, we define our parameter space of stability condition
and the wall for stability conditions. We also give a characterization 
of the walls in terms of Mukai lattice 
(Proposition \ref{prop:existence-of-wall}). 
In section \ref{sect:FM}, we shall study the cohomological
action of the autoequivalences of ${\bf D}(X)$, which will be used 
to study the set of walls.
We first treat the case where $\rk \NS(X)=1$.
In this case, we can use the 2 by 2 matrices description
of the cohomological action of the Fourier-Mukai transforms
in \cite{YY1}.
We then describe the stabilizer group $\Stab(v)$ 
of a Mukai vector $v$.
By using it, we shall construct many autoequivalences
fixing $v$ for all abelian surfaces.
 
In section \ref{sect:positive-cone}, we relate our space 
of stability condition with the positive cone of the moduli spaces.
We first construct a map from the space of stability conditions to
the positive cone.
Then we describe the nef cone of the moduli spaces.
In subsection \ref{subsect:movable}, 
we study the divisorial contractions 
of the moduli spaces. Then we get the description 
of movable cones (Theorem \ref{thm:general:movable}).

In section \ref{sect:rk1}, as an example, we treat
the case where $\NS(X)={\Bbb Z}H$.
In this case, the boundaries of $\overline{P^+(v^\perp)}_{\Bbb R}$ 
are spanned by two isotropic vectors $v_\pm$.
For a Mukai vector $v=(r,dH,a)$, 
we show that $v_\pm$ are not defined over ${\Bbb Q}$
if and only if $\sqrt{\langle v^2 \rangle/(H^2)} \not \in {\Bbb Q}$.
For the rank 1 case, this condition is equivalent to 
the existence of infinitely many walls \cite{YY2}.
According to Markman's solution \cite{Ma} 
of the movable cone conjecture
of Kawamata and Morrison (\cite{Ka}, \cite{Mo}), 
we have infinitely many walls
under this condition. 
By our correspondence of the space of stability conditions
and the positive cone, we see that the accumulation points
correspond to the two boundaries ${\Bbb R}_{>0}v_\pm$
which are the accumulation points set of walls.  
Thus we get an explanation of 
the existence of accumulation points in terms of
the positive cone.
For the general cases,
if $\sqrt{\langle v^2 \rangle/(H^2)} \not \in {\Bbb Q}$, then we show that
infinitely many Fourier-Mukai transforms preserve
$v$ as in the rank 1 case. 
So there are infinitely many walls if there is a wall.
However as in the case where $\rk \NS(X) \geq 2$, 
we have an abelian surface and a Mukai vector $v$ 
such that there is no wall for $v$.
In section \ref{sect:Markman}, we shall explain how our result 
on the movable cone follows
from Markman's general theory. 
In appendix, we shall study the base of Lagrangian fibrations.

After we wrote the first version of this note,
Bayer and Macri \cite{BM:2} completed their study of the birational geometry
of moduli spaces over $K3$ surfaces. In particular, they completely 
described the nef cone and the movable cone of the moduli spaces.
Moreover the results are generalized to deformations of the moduli spaces
\cite{BHT}, \cite{Mon}.

{\it Acknowledgement.}
The author would like to thank Eyal Markman very much for his explanation
of his results in detail.

\section{Preliminaries}\label{sect:pre}

\subsection{Notation.}
We denote the category of coherent sheaves on $X$ by $\Coh(X)$ and
the bounded derived category of $\Coh(X)$ by ${\bf D}(X)$.
A Mukai lattice of $X$ consists of 
$H^{2*}(X,{\Bbb Z}):=\bigoplus_{i=0}^2 H^{2i}(X,{\Bbb Z})$ 
and an integral bilinear form
$\langle\;\;,\;\; \rangle$ on 
$H^{2*}(X,{\Bbb Z})$:
$$
\langle x_0+x_1+x_2 \varrho_X,y_0+y_1+y_2 \varrho_X \rangle:=
(x_1,y_1)-x_0 y_2-x_2 y_0  \in {\Bbb Z},
$$
where $x_1,y_1 \in H^{2}(X,{\Bbb Z})$, 
$x_0,x_2,y_0,y_2 \in {\Bbb Z}$ and $\varrho_X \in H^4(X,{\Bbb Z})$ 
is the fundamental class of $X$.
We also introduce the algebraic Mukai lattice as
the pair of $H^*(X,{\Bbb Z})_{\alg}:=
{\Bbb Z} \oplus \NS(X) \oplus {\Bbb Z}$
and $\langle\;\;,\;\; \rangle$ on 
$H^*(X,{\Bbb Z})_{\alg}$.
For $x=x_0+x_1+x_2 \varrho_X$ with $x_0,x_2 \in {\Bbb Z}$
and $x_1 \in H^2(X,{\Bbb Z})$,
we also write $x=(x_0,x_1,x_2)$. 
For $E \in {\bf D}(X)$,
$v(E):=\ch(E)$ denotes the Mukai vector of $E$.

For ${\bf E} \in {\bf D}(X \times Y)$,
we set 
$$
\Phi_{X \to Y}^{{\bf E}}(x):={\bf R} p_{Y*}({\bf E} \otimes p_X^*(x)),\;
x \in {\bf D}(X),
$$
where $p_X,p_Y$ are projections from $X \times Y$ to
$X$ and $Y$ respectively. 
The set of equivalences between $\bl{D}(X)$ and $\bl{D}(Y)$ 
is denoted by $\Eq(\bl{D}(X),\bl{D}(Y))$. 
We set 
\begin{align*}
&\Eq_0(\bl{D}(Y),\bl{D}(Z))
:=
 \left\{ \left. \Phi_{Y \to Z}^{\bl{E}[2k]} 
   \in \Eq(\bl{D}(Y),\bl{D}(Z))
   \right| \bl{E} \in \Coh(Y \times Z),\, k \in {\bb{Z}} 
 \right\},
\\
&\cal{E}(Z)
:=
\bigcup_{Y}\Eq_0(\bl{D}(Y),\bl{D}(Z)),
\qquad
\cal{E}
:=
 \bigcup_{Z}\cal{E}(Z)
=\bigcup_{Y,Z }\Eq_0(\bl{D}(Y),\bl{D}(Z)).
\end{align*}
Note that $\cal{E}$ is a groupoid with respect to the composition 
of the equivalences.

As we explained in the introduction,
$\Stab(X)^*/\widetilde{\GL}^+(2,{\Bbb R})$ is isomorphic
to $\NS(X)_{\Bbb R} \times \Amp(X)_{\Bbb R}$.
Let us briefly explain a stability condition $\sigma_{(\beta,\omega)}$
associated to
$(\beta,\omega) \in \NS(X)_\bb{Q} \times \Amp(X)_{\Bbb Q}$.
Let ${\frak T}_{(\beta,\omega)}$ be a full subcategory
of $\Coh(X)$ generated by torsion sheaves and
$\mu$-stable torsion free sheaves $E$ with
$(c_1(E)-\rk E \beta,\omega)>0$, and
let ${\frak F}_{(\beta,\omega)}$ be  
a full subcategory
of $\Coh(X)$ generated by $\mu$-stable torsion free sheaves $E$ with
$(c_1(E)-\rk E \beta,\omega) \leq 0$.
$({\frak T}_{(\beta,\omega)},{\frak F}_{(\beta,\omega)})$ is a
torsion pair of $\Coh(X)$.
Let ${\frak A}_{(\beta,\omega)}$ be its tilting.
Thus,
$$
{\frak A}_{(\beta,\omega)}:=
\left\{E \in {\bf D}(X) \left|
\begin{aligned}
& H^i(E)=0,\; i \ne -1,0,\\
& H^{-1}(E) \in {\frak F}_{(\beta,\omega)},\;
H^0(E) \in {\frak T}_{(\beta,\omega)}
\end{aligned}
\right. \right\}.
$$
Let $Z_{(\beta,\omega)}:{\bf D}(X) \to \bb{C}$ is a group homomorphism 
called the stability function.
In terms of the Mukai lattice 
$(H^*(X,\bb{Z})_{\alg}, \langle \;\;,\;\; \rangle)$, 
$Z_{(\beta,\omega)}$ is given by 
\begin{align*}
 Z_{(\beta,\omega)}(E)=\langle e^{\beta+\sqrt{-1}\omega},v(E) \rangle,
 \quad
 E \in {\bf D}(X).
\end{align*}
Then $Z_{(\beta,\omega)}(E) \in {\Bbb H} \cup {\Bbb R}_{<0}$ 
for $0 \ne E \in \frk{A}_{(\beta,\omega)}$.
We define the phase $\phi_{(\beta,\omega)}(E) \in (0,1]$
of $ 0\ne E \in \frk{A}_{(\beta,\omega)}$ by
$$
Z_{(\beta,\omega)}(E)=|Z_{(\beta,\omega)}(E)|
e^{\pi \sqrt{-1}\phi_{(\beta,\omega)}(E)}.
$$ 
Then  
$(\frk{A}_{(\beta,\omega)},Z_{(\beta,\omega)})$ 
is the stability condition $\sigma_{(\beta,\omega)}$.
In particular, $k_x$ is a stable object of the phase
$\phi_{(\beta,\omega)}(k_x)=1$.

\begin{defn}
\begin{enumerate}
\item[(1)]
An object $0 \ne E \in {\frak A}_{(\beta,\omega)}$
is $\sigma_{(\beta,\omega)}$-semi-stable
if 
$$
\phi_{(\beta,\omega)}(F) \leq \phi_{(\beta,\omega)}(E) 
$$
for all proper subobject $F \ne 0$ of $E$.
If the inequality is strict, then
$E$ is $\sigma_{(\beta,\omega)}$-stable.
\item[(2)]
An object $0 \ne E \in {\bf D}(X)$ is 
$\sigma_{(\beta,\omega)}$-semi-stable (resp.
$\sigma_{(\beta,\omega)}$-stable), if
there is an integer $n$ such that 
$E[-n] \in {\frak A}_{(\beta,\omega)}$ and
$E[-n]$ is $\sigma_{(\beta,\omega)}$-semi-stable (resp.
$\sigma_{(\beta,\omega)}$-stable).   
\end{enumerate}
\end{defn}

\subsection{A parameter space of stability conditions.}
\label{subsect:parameter-space}

For an abelian surface $X$,
the ample cone $\Amp(X)$ is described as
$$
\Amp(X)=\{ x \in \NS(X) \mid (x^2)> 0, (x,h) >0 \},
$$
where $h \in \NS(X)$ is an ample class of $X$.
We set
$$
\overline{\Amp(X)}_k:=\{ x \in \NS(X)_k \mid (x^2) \geq 0, (x,h) >0 \},
$$
where $k={\Bbb Q},{\Bbb R}$.
For a cone $V \subset {\Bbb R}^m$,
we set $C(V):=(V \setminus \{ 0 \})/{\Bbb R}_{>0}$.
We fix a norm $|| \;\; ||$ on ${\Bbb R}^m$ and identify
$C(V)$ with
$\{ x \in V \mid ||x ||=1 \}$. 
Then we have a bijection 
$V \setminus \{ 0 \} \to C(V) \times {\Bbb R}_{>0}$   
by sending $x \in V \setminus \{ 0 \}$ to
$(x/||x ||,|| x ||)$.

We have a map
\begin{equation*}
\begin{matrix}
C(\overline{\Amp(X)}_{\Bbb R}) 
\times {\Bbb R}_{\geq 0}
& \to & \overline{\Amp(X)}_{\Bbb R} \cup \{ 0 \}\\
(L,t) & \mapsto & tL
\end{matrix}
\end{equation*}
which is bijective over $\overline{\Amp(X)}_{\Bbb R}$
and the fiber over $0$ is 
$\overline{\Amp(X)}_{\Bbb R} \times \{ 0\}$.
Thus $C(\overline{\Amp(X)}_{\Bbb R}) 
\times {\Bbb R}_{\geq 0}$ is a partial compactification of 
$\overline{\Amp(X)}_{\Bbb R}$.

We set 
\begin{equation*}
\begin{split}
{\frak H}:=\NS(X)_{\Bbb R} \times C(\Amp(X)_{\Bbb R}) 
\times {\Bbb R}_{> 0},\\
\overline{\frak H}:=\NS(X)_{\Bbb R} \times 
C(\overline{\Amp(X)}_{\Bbb R}) 
\times {\Bbb R}_{\geq 0}.
\end{split}
\end{equation*}
We have an identification
\begin{equation}\label{eq:embed-H}
\begin{matrix}
{\frak H}& \to & \NS(X)_{\Bbb R} \times \Amp(X)_{\Bbb R}\\
(\beta,H,t) & \mapsto & (\beta,tH)
\end{matrix}
\end{equation}
and
these spaces are our parameter space of stability
conditions and its partial compactification.

Let us introduce a wall and chamber structure on 
$\overline{\frak H}$.
\begin{defn}[{cf. \cite[Defn. 2.7]{YY2}}]\label{defn:wall}
Let $v$ be a Mukai vector.
\begin{enumerate}
\item[(1)]
For a Mukai vector $v_1$ satisfying
\begin{equation}\label{eq:wall-cond}
\langle v_1,v-v_1 \rangle>0,\;
\langle v_1^2 \rangle \geq 0,\; \langle(v-v_1)^2 \rangle \geq 0,
\end{equation}
we define the wall $W_{v_1}$ as
\begin{equation}\label{eq:wall}
W_{v_1}:=\{(\beta,H,t) \in  \overline{\frak H} \mid  
{\Bbb R} Z_{(\beta,tH)}(v_1)={\Bbb R} Z_{(\beta,tH)}(v)\}.
\end{equation}
\item[(2)]
${\frak W}$ denotes the set of Mukai vectors
$v_1$ satisfying \eqref{eq:wall-cond}.
\item[(3)]
A chamber for stabilities is a connected component of 
$\overline{\frak H}
\setminus  \cup_{v_1 \in {\frak W}} W_{v_1}$.
\item[(4)]
We also have
a wall and chamber structure on $\NS(X)_{\Bbb R} \times \Amp(X)_{\Bbb R}$
via \eqref{eq:embed-H},
which is the same as was introduced in \cite{YY2}, \cite{MYY:2011:2}.
\item[(5)]
We say that $(\beta,H,t) \in {\frak H}$ (resp.
$(\beta,\omega) \in \NS(X)_{\Bbb R} \times \Amp(X)_{\Bbb R}$) 
is general, if it is in a chamber.
\end{enumerate}
\end{defn}

As we explained in \cite{YY2},
\cite[Prop. 5.7]{MYY:2011:2} implies that 
if $(\beta,H,t) \in W_{v_1}$, there is a properly 
$\sigma_{(\beta,tH)}$-semi-stable
object $E$ with $v(E)=v$.
In general, $W_{v_1}$ may be an empty set.
We have the following characterization for the non-emptiness
of the wall whose proof is given in subsection 
\ref{subsect:positive-cone}.

\begin{prop}\label{prop:existence-of-wall}
Let $v_1$ be a Mukai vector satisfying \eqref{eq:wall-cond}.
Then $W_{v_1} \cap {\frak H} \ne \emptyset$ 
if and only if 
\begin{equation}\label{eq:wall-cond2}
\langle v,v_1 \rangle^2>\langle v^2 \rangle
\langle v_1^2 \rangle.
\end{equation}
\end{prop}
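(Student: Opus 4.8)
The plan is to convert the defining equation of the wall into an orthogonality condition inside $v^\perp$ and then feed it into the surjectivity of the period map (Proposition \ref{prop:positive}). First I would record that the hypotheses \eqref{eq:wall-cond} force $\langle v^2\rangle>0$: indeed $\langle v^2\rangle=\langle v_1^2\rangle+2\langle v_1,v-v_1\rangle+\langle(v-v_1)^2\rangle\geq 2\langle v_1,v-v_1\rangle>0$. Consequently $v^\perp$ has signature $(1,\rho)$ with $\rho=\rk\NS(X)$, and I may assume $v_1,v$ linearly independent (if $v_1$ is a multiple of $v$ then \eqref{eq:wall-cond2} fails and $v_1$ defines no genuine wall).

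Writing $e^{\beta+\sqrt{-1}\omega}=\Omega_1+\sqrt{-1}\,\Omega_2$ with $\Omega_1=(1,\beta,\tfrac{(\beta^2)-(\omega^2)}{2})$ and $\Omega_2=(0,\omega,(\beta,\omega))$, a direct computation gives $\langle\Omega_1^2\rangle=\langle\Omega_2^2\rangle=(\omega^2)$ and $\langle\Omega_1,\Omega_2\rangle=0$, so $\Omega_1,\Omega_2$ span a positive–definite plane and $\operatorname{Re}Z_{(\beta,\omega)}(w)=\langle\Omega_1,w\rangle$, $\operatorname{Im}Z_{(\beta,\omega)}(w)=\langle\Omega_2,w\rangle$. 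Since $\langle v^2\rangle>0$ we have $v\notin(\Omega_1,\Omega_2)^\perp$, hence $Z_{(\beta,tH)}(v)\neq0$ on all of $\mathfrak{H}$; therefore $(\beta,H,t)\in W_{v_1}\cap\mathfrak{H}$ exactly when $\operatorname{Im}\big(\overline{Z(v)}Z(v_1)\big)=0$. Setting $\theta:=\langle\Omega_1,v\rangle\,\Omega_2-\langle\Omega_2,v\rangle\,\Omega_1$, one checks $\langle\theta,v\rangle=0$ and $\langle\theta^2\rangle=(\omega^2)\,|Z(v)|^2>0$, so $\theta\in v^\perp$ is a positive vector, and $\operatorname{Im}\big(\overline{Z(v)}Z(v_1)\big)=\langle\theta,v_1\rangle$. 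Thus the wall passes through $(\beta,H,t)$ if and only if $\theta\perp v_1$.

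Now $(\beta,\omega)\mapsto\theta$ is (a positive multiple of) the map to the positive cone $P^+(v^\perp)_{\mathbb R}$ from the introduction, which by Proposition \ref{prop:positive} is surjective up to the $\mathbb{R}_{>0}$–action. Because $\langle\theta,v_1\rangle=0$ is scale invariant, $W_{v_1}\cap\mathfrak{H}\neq\emptyset$ if and only if some $x\in P^+(v^\perp)_{\mathbb R}$ satisfies $\langle x,v_1\rangle=0$. Writing $w_1:=v_1-\tfrac{\langle v,v_1\rangle}{\langle v^2\rangle}v\in v^\perp$, for $x\in v^\perp$ we have $\langle x,v_1\rangle=\langle x,w_1\rangle$, so the question becomes whether the hyperplane $w_1^\perp\cap v^\perp$ meets the positive cone. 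In the Lorentzian space $v^\perp$ of signature $(1,\rho)$ this happens precisely when $w_1^\perp$ contains a positive vector, i.e.\ when $\langle w_1^2\rangle<0$. Finally $\langle w_1^2\rangle=\big(\langle v_1^2\rangle\langle v^2\rangle-\langle v,v_1\rangle^2\big)/\langle v^2\rangle$, and since $\langle v^2\rangle>0$ this is negative exactly when $\langle v,v_1\rangle^2>\langle v^2\rangle\langle v_1^2\rangle$, which is \eqref{eq:wall-cond2}.

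The routine parts are the two bilinear computations (the value $\langle\theta^2\rangle=(\omega^2)|Z(v)|^2$ together with $\langle\theta,v_1\rangle=\operatorname{Im}(\overline{Z(v)}Z(v_1))$, and the formula for $\langle w_1^2\rangle$) and the elementary Lorentzian dichotomy for a hyperplane meeting the positive cone. The one genuinely load-bearing input, and the step I expect would be the main obstacle were it not already available, is the surjectivity in Proposition \ref{prop:positive}: it is exactly what allows me to replace ``$\theta$ for some $(\beta,\omega)\in\mathfrak{H}$'' by ``an arbitrary ray in $P^+(v^\perp)_{\mathbb R}$,'' thereby reducing the existence of a wall to pure linear algebra on $v^\perp$. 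Some care is also needed at the passage between ``$\mathbb{R}Z(v)=\mathbb{R}Z(v_1)$'' and the vanishing of $\operatorname{Im}(\overline{Z(v)}Z(v_1))$, which is why I first establish $Z(v)\neq0$ throughout $\mathfrak{H}$.
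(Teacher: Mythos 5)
Your proposal is correct and takes essentially the same route as the paper: the paper also reduces, via Proposition \ref{prop:wall-equation} and the surjectivity in Proposition \ref{prop:positive}, to deciding when $v_1^\perp$ meets the positive cone of the Lorentzian lattice $v^\perp$, and settles this by the same signature argument applied to $\eta:=\langle v^2\rangle v_1-\langle v_1,v\rangle v$, which is just $\langle v^2\rangle$ times your $w_1$. The only difference is presentational: your $\theta$ equals $\mathrm{Im}\bigl(\overline{Z_{(\beta,tH)}(v)}\,e^{\beta+\sqrt{-1}tH}\bigr)$, a positive multiple of the paper's $\xi(\beta,H,t)$, so your $\Omega_1,\Omega_2$ computation re-proves Lemma \ref{lem:Im} and Proposition \ref{prop:wall-equation} inline instead of citing them.
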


\begin{cor}\label{cor:existence-of-wall}
Let $w$ be an isotropic Mukai vector.
If $\langle v^2 \rangle/2>\langle w,v \rangle>0$,
then $w$ satisfies \eqref{eq:wall-cond} and
$W_w \cap {\frak H}$ is non-empty.
In particular, if $\langle w,v \rangle=1,2$ and
$\langle v^2 \rangle \geq 6$, then 
$w$ satisfies \eqref{eq:wall-cond} and
$W_w \cap {\frak H} \ne \emptyset$.
\end{cor}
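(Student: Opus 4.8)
The plan is to apply Proposition \ref{prop:existence-of-wall} with $v_1 = w$, after checking its hypotheses; the isotropy of $w$ makes every verification collapse to the single inequality $\langle v^2 \rangle/2 > \langle w,v \rangle$.

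First I would expand the Mukai pairing using $\langle w^2 \rangle = 0$. This gives $\langle w, v-w \rangle = \langle w,v \rangle$ and $\langle (v-w)^2 \rangle = \langle v^2 \rangle - 2\langle v,w \rangle$. With these in hand, the three conditions in \eqref{eq:wall-cond} become transparent: $\langle w^2 \rangle \geq 0$ is immediate; $\langle w, v-w \rangle > 0$ is exactly the assumption $\langle w,v \rangle > 0$; and $\langle (v-w)^2 \rangle \geq 0$ reduces to $\langle v^2 \rangle \geq 2\langle v,w \rangle$, which is guaranteed (strictly) by $\langle v^2 \rangle/2 > \langle w,v \rangle$. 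Thus $w$ satisfies \eqref{eq:wall-cond}.

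Next I would verify the non-emptiness criterion \eqref{eq:wall-cond2}. Since $\langle w^2 \rangle = 0$, the right-hand side $\langle v^2 \rangle \langle w^2 \rangle$ vanishes and \eqref{eq:wall-cond2} reduces to $\langle v,w \rangle^2 > 0$, i.e. $\langle v,w \rangle \neq 0$; this holds because $\langle v,w \rangle > 0$. Proposition \ref{prop:existence-of-wall} then yields $W_w \cap {\frak H} \neq \emptyset$. For the final assertion I would merely note that $\langle w,v \rangle \in \{1,2\}$ gives $\langle w,v \rangle > 0$, while $\langle v^2 \rangle \geq 6$ gives $\langle v^2 \rangle/2 \geq 3 > 2 \geq \langle w,v \rangle$, so both hypotheses of the first part hold and the conclusion is inherited.

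There is no real obstacle here: the statement is a direct specialization of Proposition \ref{prop:existence-of-wall} to an isotropic class. The only point demanding any attention is that the third inequality of \eqref{eq:wall-cond}, namely $\langle (v-w)^2 \rangle \geq 0$, is precisely where one must invoke the strict bound $\langle v^2 \rangle/2 > \langle w,v \rangle$ rather than merely the positivity $\langle w,v \rangle > 0$.
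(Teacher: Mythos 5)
Your proof is correct and is essentially the paper's own (implicit) argument: the corollary is stated as a direct specialization of Proposition \ref{prop:existence-of-wall} to $v_1=w$ isotropic, and your verification that $\langle w^2\rangle=0$ reduces \eqref{eq:wall-cond} to the hypotheses $\langle w,v\rangle>0$ and $\langle v^2\rangle/2>\langle w,v\rangle$, while \eqref{eq:wall-cond2} reduces to $\langle v,w\rangle^2>0$, is exactly the intended computation. The final numerical check for $\langle w,v\rangle\in\{1,2\}$, $\langle v^2\rangle\geq 6$ is likewise the same.
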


We set $v_1:=(r_1,\xi_1,a_1)$.
Then the defining equation of $W_{v_1}$ is
\begin{equation}\label{eq:W}
\begin{split}
& \det
\begin{pmatrix}
a-(\xi,\beta)+r \frac{(\beta^2)-t^2(H^2)}{2} &
a_1-(\xi_1,\beta)+r_1 \frac{(\beta^2)-t^2(H^2)}{2}\\
-(\xi-r \beta,H) & -(\xi_1-r_1 \beta,H)
\end{pmatrix}\\
=&
(\xi_1-r_1 \beta,H)a-(\xi-r\beta,H)a_1+
(r_1 \xi-r \xi_1,\beta)(\beta,H)\\
& +(\xi,\beta)(\xi_1,H)-(\xi_1,\beta)(\xi,H)-
(r_1 \xi-r \xi_1,H)\frac{(\beta^2)-t^2 (H^2)}{2}=0.
\end{split}
\end{equation}

\begin{lem}\label{lem:d_beta=0}
\begin{enumerate}
\item[(1)]
If $r_1 \xi-r \xi_1 \ne 0$, then
$$
W_{v_1} \not \supset \{(\beta,H,t) \in \overline{\frak H}
\mid (\xi-r\beta,H)=0 \}.
$$
\item[(2)]
If $r_1 \xi-r \xi_1=0$, then
$$
W_{v_1}=
\{(\beta,H,t) \in \overline{\frak H}
\mid (\xi-r\beta,H)=0 \}.
$$
\end{enumerate}
\end{lem}

\begin{proof}
(1)
Assume that $r_1 \xi-r \xi_1 \ne 0$.
Then we can take $H \in \Amp(X)_{\Bbb Q}$ with
$(r_1 \xi-r \xi_1,H) \ne 0$.
We take $\beta \in \NS(X)_{\Bbb Q}$ with 
$(\xi-r\beta,H)=0$.
Then we have $(\xi_1-r_1 \beta,H) \ne 0$.
Since $Z_{(\beta,tH)}(v) \ne 0$,
\eqref{eq:W} implies that 
$(\beta,H,t) \not \in W_{v_1}$. 
Since the hypersurface $(\xi-r\beta,H)=0$ is irreducible,
we get the claim.

(2)
If $r_1 \xi-r \xi_1=0$, then \eqref{eq:W} implies that
$$
\left(\frac{r_1}{r} a-a_1 \right)(\xi-r\beta,H)=0.
$$ 
Since $v_1 \not \in {\Bbb Q}v$,
$W_{v_1}$ is defined by $(\xi-r\beta,H)=0$.
\end{proof}

\begin{rem}\label{rem:d_beta=0}
The assumption of Lemma \ref{lem:d_beta=0} (2) is equivalent to
$\varrho_X^\perp \cap v^\perp =v_1^\perp \cap v^\perp$.
Indeed 
$\varrho_X^\perp \cap v^\perp =v_1^\perp \cap v^\perp$ is equivalent to
${\Bbb Q}v+{\Bbb Q}v_1={\Bbb Q}v +{\Bbb Q}\varrho_X$.
Since $v_1 \not \in {\Bbb Q}v$, it is equivalent to
$v_1 \in {\Bbb Q}v +{\Bbb Q}\varrho_X$.
\begin{NB}
Assume that $r \ne 0$.
Then (2) of Lemma \ref{lem:d_beta=0} is equivalent to
$\varrho_X^\perp \cap v^\perp =v_1^\perp \cap v^\perp$.
Indeed if $\varrho_X^\perp=v_1^\perp$ on $v^\perp$, then
$\varrho_X-\langle v,\varrho_X \rangle v/\langle v^2 \rangle$ and
$v_1-\langle v,v_1 \rangle v/\langle v^2 \rangle$ are linearly
dependent over ${\Bbb Q}$.
Since $r \ne 0$, we see that
$v_1 \in {\Bbb Q}v +{\Bbb Q}\varrho_X$.
Conversely if $v_1 \in {\Bbb Q}v +{\Bbb Q}\varrho_X$, then
${\Bbb Q}v+{\Bbb Q}v_1={\Bbb Q}v +{\Bbb Q}\varrho_X$.
Hence $\varrho_X^\perp \cap v^\perp =v_1^\perp \cap v^\perp$.
\end{NB}
\end{rem}

\subsection{Facts on irreducible symplectic manifolds.}\label{subsect:hyper}

For a smooth projective manifold $M$,
$\Amp(M)_{k} \subset \NS(M)_{k}$ denotes the ample cone
of $M$ and $\Nef(M)_{k} \subset 
\NS(M)_{k}$ denotes the nef cone of numerically effective divisors on $M$, 
where $k={\Bbb Q}, {\Bbb R}$.

\begin{defn}\label{defn:cones}
Let $M$ be a smooth projective manifold.
\begin{enumerate}
\item[(1)]
\begin{enumerate}
\item
A divisor $D$ on $M$ is movable, if
the base locus of $|D|$ has codimension $\geq 2$.
\begin{NB}
$\coker(H^0({\cal O}_M(D)) \otimes {\cal O}_M \to {\cal O}_M(D))$
is at least of codimension 2.
\end{NB}
\item
$\Mov(M)_k \subset \NS(X)_k$ $(k={\Bbb Q},{\Bbb R})$ 
denotes the cone generated by movable divisors
and $\overline{\Mov(M)}_{\Bbb R}$ the closure
in $\NS(X)_{\Bbb R}$.
\begin{NB}
$D \in \NS(X)_{\Bbb Q}$ belongs to $\Mov(M)_{\Bbb Q}$
if and only if $lD$ is movable for a sufficiently large $l$.  
\end{NB}
\end{enumerate}
\item[(2)]
For an irreducible symplectic manifold $M$,
$q_M$ denotes the Beauville-Fujiki form on
$H^2(M,{\Bbb Z})$.
Then the positive cone is defined as
$$
P^+(M)_k:=\{ x \in \NS(X)_k \mid q_M(x,x)>0, q_M(x,h)>0 \}
$$ 
where $k={\Bbb Q}, {\Bbb R}$ and
$h$ is an ample divisor on $M$. 
We also set
$$
\overline{P^+(M)}_k
:=\{ x \in \NS(X)_k \mid q_M(x,x) \geq 0, q_M(x,h)>0 \}.
$$ 
\end{enumerate}
\end{defn}

\begin{rem}
By the definition,
$\Mov(M)_{\Bbb Q}=\Mov(M)_{\Bbb R} \cap \NS(M)_{\Bbb Q}$.
\end{rem}
We note that $\Mov(M)_{\Bbb Q}$ is contained in
$\overline{P^+(M)}_{\Bbb Q}$ by works of Huybrechts
(\cite{H}, \cite[Thm. 7]{HT}).
There is a different argument in \cite[Lem. 6.22]{Ma} based on 
 results of Boucksom \cite{Bo2}.

\subsection{Moduli spaces}

\begin{defn}
A Mukai vector $v:=(r,\xi,a) \in H^*(X,{\Bbb Z})_{\alg}$
is positive, if 
\begin{enumerate}
\item[(i)] $r>0$ or 
\item[(ii)] $r=0$ and $\xi$ is effective 
or 
\item[(iii)]
$r=\xi=0$ and $a>0$.
\end{enumerate}  
\end{defn}

\begin{defn}
Let $v \in H^*(X,{\Bbb Z})_{\alg}$ be a Mukai vector. 
\begin{enumerate}
\item[(1)]
If $v$ is positive, then
let $M_H^\beta(v)$ be the moduli space of $\beta$-twisted 
semi-stable sheaves $E$
on $X$ 
with $v(E)=v$.
If $\beta=0$, then we also denote $M_H^\beta(v)$ by
$M_H(v)$. 
\item[(2)]
$M_{(\beta,\omega)}(v)$ denotes the moduli space of 
$\sigma_{(\beta,\omega)}$-semi-stable objects $E$ with $v(E)=v$.
\end{enumerate}
\end{defn}

\begin{rem}
\begin{enumerate}
\item[(1)]
If $H$ is general in $\Amp(X)$, then
$M_H^\beta(v)$ does not depend on the choice of $\beta$. 
\item[(2)]
If $v$ is positive, then
$M_{(\beta+sH,tH)}(v)=M_H^\beta(v)$ for some $(s,t)$.
Thus twisted semi-stability is a special case of
Bridgeland semi-stability.
\end{enumerate}
\end{rem}

Assume that $v$ is primitive and $(\beta,\omega)$ is general
with respect to $v$.
We fix $E_0 \in M_{(\beta,\omega)}(v)$.
Let 
$$
\Phi_{X \to \widehat{X}}^{{\bf P}}:{\bf D}(X) \to {\bf D}(\widehat{X})
$$
be the Fourier-Mukai transform by the Poincare line bundle ${\bf P}$
on $X \times \widehat{X}$,
where $\widehat{X}:=\Pic^0(X)$ is the dual of $X$.
Then we have an albanese map
${\frak a}: M_{(\beta,\omega)}(v) \to 
 X \times \widehat{X}$
by
\begin{equation*}
{\frak a}(E):=(\det(\Phi_{X \to \widehat{X}}^{{\bf P}}(E-E_0)),
\det (E-E_0)) \in X \times \widehat{X}
\end{equation*}
(\cite[Rem. 4.10]{MYY:2011:2}).
${\frak a}$ is an \'{e}tale locally trivial fibration.

\begin{defn}\label{defn:theta}
Assume that $v$ is primitive and $\langle v^2 \rangle \geq 6$.
\begin{enumerate}
\item[(1)]
$K_{(\beta,\omega)}(v)$ denotes a fiber of the albanese map
$M_{(\beta,\omega)}(v) \to X \times \widehat{X}$.
If $v$ is positive, then we also denote
a fiber of ${\frak a}:M_H^\beta(v) \to X \times \widehat{X}$ by 
$K_H^\beta(v)$.
\item[(2)]
\begin{equation}\label{eq:theta}
\theta_{v,\beta,\omega}:v^{\perp} \to H^2(M_{(\beta,\omega)}(v),{\Bbb Z})
\to H^2(K_{(\beta,\omega)}(v),{\Bbb Z})
\end{equation}
denotes the Mukai's homomorphism.
If there is a universal family
${\bf E}$ on $M_{(\beta,\omega)}(v)$, e.g.,
there is a Mukai vector $w$ with
$\langle v,w \rangle=1$,
then 
$$
\theta_{v,\beta,\omega}(x)=
c_1(p_{M_{(\beta,\omega)}(v)*}
(\ch ({\bf E})p_X^*(x^{\vee})))_{|K_{(\beta,\omega)}(v)},
$$ 
where $p_X, p_{M_{(\beta,\omega)}(v)}$ are projections
from $X \times M_{(\beta,\omega)}(v)$ to $X$ and
$M_{(\beta,\omega)}(v)$ respectively. 
\end{enumerate}
\end{defn}

\begin{thm}
[{\cite[Prop. 5.16]{MYY:2011:2}}]
For $v \in H^*(X,{\Bbb Z})_{\alg}$, $M_{(\beta,\omega)}(v)$ is a smooth
projective symplectic manifold which is deformation equivalent
to $\Hilb_X^{\langle v^2 \rangle/2} \times X$.  
Assume that $\langle v^2 \rangle \geq 6$.
\begin{enumerate}
\item[(1)]
$K_{(\beta,\omega)}(v)$ is an irreducible symplectic manifold
of $\dim K_{(\beta,\omega)}(v)=\langle v^2 \rangle-2$
which is deformation equivalent to the generalized Kummer variety
constructed by Beauville \cite{B:1}.
\item[(2)]
$$
\theta_{v,\beta,\omega}:(v^\perp, \langle\;\;,\;\; \rangle)
\to (H^2(K_{(\beta,\omega)}(v),{\Bbb Z}),q_{K_{(\beta,\omega)}(v)})
$$
 is an isometry of Hodge structure. 
\end{enumerate}
\end{thm}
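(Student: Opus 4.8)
The plan is to reduce the assertions for an arbitrary Bridgeland stability condition to the classical case of Gieseker stability at the large volume limit, where both are already known, and then to transport them across the space of stability conditions by deformation. I assume throughout that $(\beta,\omega)$ is general, i.e.\ lies in a chamber of $\overline{\frak H}$, so that every $\sigma_{(\beta,\omega)}$-semistable object of Mukai vector $v$ is stable; since $v$ is primitive this forces $M_{(\beta,\omega)}(v)$ to be a smooth projective variety by \cite[Thm.~0.0.2]{MYY:2011:2} together with the deformation theory of the objects. For a stable $E$ one has $\Ext^0(E,E)=\Ext^2(E,E)=\bb{C}$, and Mukai's formula $\chi(E,E)=-\langle v^2\rangle$ gives $\dim\Ext^1(E,E)=\langle v^2\rangle+2$, so $\dim M_{(\beta,\omega)}(v)=\langle v^2\rangle+2$. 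The albanese map to $\Pic^0(X)\times X$ is an isotrivial fibration onto a fourfold, whence $\dim K_{(\beta,\omega)}(v)=\langle v^2\rangle+2-4=\langle v^2\rangle-2$, which is the dimension count claimed in (1).

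For the base case I would take $(\beta,tH)$ with $t\gg 0$, where $M_{(\beta,tH)}(v)=M_H(v)$ is the Gieseker moduli space. There both statements are established: by \cite[Thm.~0.1]{Y:7}, for primitive $v$ with $\langle v^2\rangle\geq 6$ the albanese fiber $K_H(v)$ is an irreducible symplectic manifold of dimension $\langle v^2\rangle-2$ deformation equivalent to Beauville's generalized Kummer variety \cite{B:1}, and the Mukai homomorphism $(v^\perp,\langle\,,\,\rangle)\to(H^2(K_H(v),\bb{Z}),q_{K_H(v)})$ is an isometry of Hodge structures. This settles the theorem for stability conditions in the large volume chamber.

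To pass from $(\beta,\omega)$ to the large volume limit I would join the two points by a path in $\overline{\frak H}$ meeting only finitely many walls $W_{v_1}$, each transversally. Within a single chamber the stability of objects of class $v$ is unchanged, so $M_\sigma(v)$, its symplectic form, the albanese fibration and the universal Chern character are constant as $\sigma$ varies in the chamber; hence $K_{(\beta,\omega)}(v)$ and the isometry class of $\theta_{v,\beta,\omega}$ depend only on the chamber. When the path crosses a wall, the wall-crossing analysis of \cite{MYY:2011:2} identifies the moduli spaces on the two sides as birationally equivalent irreducible symplectic manifolds; since birational irreducible symplectic manifolds are deformation equivalent by a theorem of Huybrechts and carry canonically Hodge-isometric second cohomology, both the deformation type and the isometry property of $\theta$ are unchanged across the wall. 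Chaining the finitely many steps identifies $K_{(\beta,\omega)}(v)$ with $K_H(v)$ up to deformation and realizes $\theta_{v,\beta,\omega}$ as a Hodge isometry, giving (1) and (2) in general. Alternatively one may apply a Fourier--Mukai transform $\Phi$, which preserves Bridgeland stability and yields an isomorphism $M_{(\beta,\omega)}(v)\cong M_{\Phi(\beta,\omega)}(\Phi(v))$ together with a Mukai isometry $v^\perp\simto\Phi(v)^\perp$, to move $\sigma_{(\beta,\omega)}$ directly into a large volume chamber for $\Phi(v)$.

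The main obstacle is the wall-crossing step. One must check that across each wall the two smooth projective moduli spaces really are birational irreducible symplectic manifolds, and that their universal families match under the birational map so that the two Mukai homomorphisms are identified. This is exactly where the detailed geometry of \cite{MYY:2011:2} — the classification of the contractions produced by a wall and the control of their exceptional loci — enters. Granting that input, Huybrechts' deformation equivalence for birational irreducible symplectic manifolds and the deformation invariance of the Beauville--Fujiki form make the propagation of both (1) and (2) formal.
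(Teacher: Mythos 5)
Your proposal is essentially sound, but it follows a genuinely different route from the one the paper relies on. Note first that this theorem is quoted, not proved, in the paper: the proof lives in \cite{Y:7} and \cite[Thm.~3.3.3, Rem.~3.3.4]{MYY:2011:2}, and the argument there is your parenthetical ``alternative,'' not your main line. Namely, for any generic $(\beta,\omega)$ one constructs a single (in general \emph{twisted}) Fourier--Mukai transform $\Phi:{\bf D}(X)\to{\bf D}^{\alpha_1}(X_1)$ carrying $\sigma_{(\beta,\omega)}$-stable objects of class $v$ to Gieseker-stable $\alpha_1$-twisted sheaves, so that $M_{(\beta,\omega)}(v)$ is \emph{isomorphic} (not merely birational) to a twisted Gieseker moduli space; (1) and (2) then follow from \cite{Y:7} and its twisted analogue, with no wall-crossing or Huybrechts input at all. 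The twisting is not cosmetic: a large volume chamber for $\Phi(v)$ may only be reachable with a Brauer twist, which is exactly what \cite[Rem.~3.3.4]{MYY:2011:2} handles and what your one-sentence Fourier--Mukai variant glosses over. Your primary route --- chamber-by-chamber propagation from the large volume limit, crossing finitely many walls and invoking Huybrechts' theorem --- is the Bayer--Macr\`{\i} strategy \cite{BM} for K3 surfaces; it can be made to work here (local finiteness of walls on a compact path in the interior holds, and $\pi_1$ and $h^{2,0}$ are birational invariants of smooth projective varieties, so the irreducible-symplectic property does propagate inductively), but it concentrates all the difficulty precisely where you flag it: at totally semistable (codimension~0) walls the two moduli spaces are not identified by the identity on generic objects, and one must insert a derived (anti-)equivalence --- compare Propositions \ref{prop:codim0} and \ref{prop:codim0-div} in this paper --- both to get the birational map and to match (quasi-)universal families so that the two Mukai homomorphisms $\theta$ correspond. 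So your argument is correct granted that input, but the cited proof buys the full statement, including the Hodge-isometry in (2), in one step from the twisted sheaf case, whereas your route buys a wall-by-wall picture that this paper only needs later, for the nef and movable cone analysis.
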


\section{Fourier-Mukai transforms on abelian surfaces.}\label{sect:FM}
 
\subsection{Cohomological Fourier-Mukai transforms}\label{subsect:FM}
We collect some results on the Fourier-Mukai
transforms on abelian surfaces $X$ with $\rk \NS(X)=1$.
Let $H_X$ be the ample generator of $\NS(X)$.
We shall describe the action of Fourier-Mukai transforms 
on the cohomology lattices in \cite{YY1}.
For $Y \in \FM(X)$, we have $(H_Y^2)=(H_X^2)$.
We set $n:= (H^2_X)/2$. 
In \cite[sect. 6.4]{YY1}, we constructed an isomorphism of lattices
\begin{align*}
\iota_X: 
(H^*(X,\bb{Z})_{\alg},\langle\,\,,\,\,\rangle) 
\simto
(\Sym_2(\bb{Z}, n),B),
\quad
(r,dH_X,a) 
\mapsto 
\begin{pmatrix}
 r & d\sqrt{n} \\ d\sqrt{n} & a
\end{pmatrix},
\end{align*}
where $\Sym_2(\bb{Z}, n)$ is given by 
\begin{align*}
 \Sym_2(\bb{Z},n):=
 \left\{\begin{pmatrix} x &y \sqrt{n} \\ y\sqrt{n}&z\end{pmatrix}\, 
 \Bigg|\, x,y,z\in \bb{Z}\right\},
\end{align*}
and the bilinear form $B$ on $\Sym_2(\bb{Z},n)$ is given by 
\begin{align*}
B(X_1,X_2) := 2ny_1 y_2-(x_1 z_2+z_1 x_2)
\end{align*}
for
$X_i =\begin{pmatrix}x_i & y_i \sqrt{n} \\ y_i \sqrt{n} &z_i \end{pmatrix}
 \in\Sym_2(\bb{Z},n)$
($i=1,2$).

Each $\Phi_{X \to Y}$ gives an isometry 
\begin{align}
 \iota_{Y} \circ \Phi^H_{X \to Y} \circ \iota_{X}^{-1}
\in \LieO(\Sym_2(\bb{Z}, n)),
\end{align}
where $\LieO(\Sym_2(\bb{Z}, n))$ is the isometry group
of the lattice $(\Sym_2(\bb{Z}, n),B)$. 
Thus we have a map
$$
\eta:
{\cal E} \to \LieO(\Sym_2(\bb{Z}, n))
$$
which preserves the structures of 
multiplications. 

\begin{defn}\label{defn:G}
We set
\begin{align*}
&\widehat{G} := 
\left\{
 \begin{pmatrix} a \sqrt{r} & b \sqrt{s}\\
c \sqrt{s} & d \sqrt{r}
\end{pmatrix}
 \Bigg|\, 
\begin{aligned}
 a,b,c,d,r,s \in \bb{Z},\, r,s>0\\ 
rs=n, \, adr-bcs = \pm1
\end{aligned}
\right\},
\\
&G := \widehat{G}\cap \SL(2,\bb{R}).
\end{align*}
\end{defn}

We have a right action $\cdot$ of $\widehat{G}$ on the lattice
$(\Sym_2(\bb{Z}, n),B)$:
 
\begin{align}
\label{eq:action_cdot}
\begin{pmatrix}
r &d  \sqrt{n}\\d  \sqrt{n}&a \end{pmatrix}
\cdot g := 
 {}^t g
 \begin{pmatrix}r &d  \sqrt{n}\\d  \sqrt{n}&a \end{pmatrix} 
 g,\;
g \in \widehat{G}.
\end{align}
Thus we have an anti-homomorphism:
$$
\alpha:\widehat{G}/\{\pm 1\} \to \LieO(\Sym_2(n,{\Bbb Z})).
$$

\begin{thm}[{\cite[Thm. 6.16, Prop. 6.19]{YY1}}]
Let
$\Phi \in \Eq_0(\bl{D}(Y),\bl{D}(X))$ be an equivalence.
\begin{enumerate}
\item[(1)]
$v_1:=v(\Phi({\cal O}_Y))$ and $v_2:=\Phi(\varrho_Y)$
are positive isotropic Mukai vectors with 
$\langle v_1,v_2 \rangle=-1$ and we can write 
\begin{equation*}
\begin{split}
& v_1=(p_1^2 r_1,p_1 q_1 H_Y, q_1^2 r_2),\quad
v_2=(p_2^2 r_2,p_2 q_2 H_Y, q_2^2 r_1),\\
& p_1,q_1,p_2,q_2, r_1, r_2 \in {\Bbb Z},\;\;
p_1,r_1,r_2 >0,\\
& r_1 r_2=n,\;\; p_1 q_2 r_1-p_2 q_1 r_2=1.
\end{split}
\end{equation*}
\item[(2)]
We set
\begin{equation*}
\theta(\Phi):=\pm
\begin{pmatrix}
p_1 \sqrt{r_1} & q_1 \sqrt{r_2}\\
p_2 \sqrt{r_2} & q_2 \sqrt{r_1}
\end{pmatrix}
\in G/\{\pm 1\}.
\end{equation*}
Then $\theta(\Phi)$ is uniquely determined by 
$\Phi$ and  
we have a map
\begin{equation*}
\theta:{\cal E} \to G/\{\pm 1\}.
\end{equation*}
\item[(3)]
The action of $\theta(\Phi)$ on $\Sym_2(n,{\Bbb Z})$
is the action of $\Phi$ on $H^*(X,{\Bbb Z})_{\alg}$: 
\begin{equation*}
\iota_X \circ \Phi(v)
=\iota_Y(v)\cdot 
\theta(\Phi).
\end{equation*}
\begin{NB}
\begin{equation*}
\iota_X \Phi\iota_Y^{-1}
\begin{pmatrix}r &d  \sqrt{n}\\d  \sqrt{n}&a \end{pmatrix} 
=
{}^t (\theta(\Phi))
\begin{pmatrix}r &d  \sqrt{n}\\d  \sqrt{n}&a \end{pmatrix} 
\theta(\Phi)
\end{equation*}
\end{NB}
Thus we have the following commutative diagram:
\begin{align}
\label{diag:groups}
\xymatrix{    
{\cal E}  \ar[d]_-{\theta} \ar[dr]_{\eta} 
&
\\
\widehat{G}/\{\pm 1\} \ar[r]_-{\alpha}
&
\LieO(\Sym_2(n,{\Bbb Z}))   
}
\end{align}

\end{enumerate}
\end{thm}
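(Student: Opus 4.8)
The plan is to deduce everything from two facts: that an equivalence acts as an isometry of the algebraic Mukai lattice, and the explicit shape of the right $\widehat{G}$-action on the idempotents $E_{11}:=\iota_Y(1,0,0)$ and $E_{22}:=\iota_Y(0,0,1)$. First I would record the formal consequences of $\Phi$ being an equivalence. The induced map $\Phi$ on $H^*(-,\bb{Z})_{\alg}$ preserves $\langle\;,\;\rangle$ (because $\langle x,y\rangle=-\chi(x,y)$ and $\Phi$ preserves the $\Ext$-groups), and an isometry preserves primitivity. Since $v({\cal O}_Y)=(1,0,0)$ and $\varrho_Y=(0,0,1)$ are primitive, isotropic, and satisfy $\langle(1,0,0),(0,0,1)\rangle=-1$, the images $v_1,v_2$ are primitive isotropic with $\langle v_1,v_2\rangle=-1$. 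In particular they span a primitively embedded unimodular hyperbolic plane $U$, so $H^*(X,\bb{Z})_{\alg}=U\perp U^\perp$ with $U^\perp$ of rank one.

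The step I expect to be the main obstacle is positivity. Because $\Phi\in\Eq_0$ is given by a genuine sheaf kernel up to an even shift, I would use the structure of Fourier--Mukai images (the WIT/stability statements recalled earlier) to see that $\Phi({\cal O}_Y)$ and $\Phi({\cal O}_y)$ are even shifts of sheaves, whence $v_1,v_2$ are positive and $v_1$ has positive rank. Positivity is genuinely delicate here: it fails for a general class (a polarisation can be sent to its negative), so one must exploit the special nature of ${\cal O}_Y$ and of the point sheaf, and it is precisely at this step that the sign and rank normalisation $p_1,r_1,r_2>0$ has to be fixed.

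Granting positivity, I would run the number theory. Writing $v_1=(r,dH_X,a)$, isotropy reads $ra=nd^2$; a prime-by-prime analysis using $\gcd(r,d,a)=1$ yields a splitting $n=r_1r_2$ with $r=p_1^2r_1$, $d=p_1q_1$, $a=q_1^2r_2$, $\gcd(p_1,q_1)=1$, and signs normalised so that $p_1,r_1,r_2>0$. I would then set $g:=\begin{pmatrix}p_1\sqrt{r_1}&q_1\sqrt{r_2}\\ p_2\sqrt{r_2}&q_2\sqrt{r_1}\end{pmatrix}$, where $(p_2,q_2)$ come from factoring $v_2$; the fact that $v_2$ is split by the opposite pair $(r_2,r_1)$ is forced by unimodularity of $U$, i.e.\ by $\langle v_1,v_2\rangle=-1$, and this Diophantine compatibility is the second point needing care. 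A direct computation gives $\langle v_1,v_2\rangle=-(p_1q_2r_1-p_2q_1r_2)^2$, so this integer is $\pm1$; after possibly changing the sign of $(p_2,q_2)$ we obtain $\det g=p_1q_2r_1-p_2q_1r_2=1$, so $g\in G=\widehat{G}\cap\SL(2,\bb{R})$. This proves (1) and defines $\theta(\Phi):=\pm g\in G/\{\pm1\}$; it is well defined because $g$ is pinned down by $(v_1,v_2)$ up to overall sign, and $(v_1,v_2)$ is determined by $\Phi$, giving (2).

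Finally, for (3) I would check $\iota_X\circ\Phi(v)=\iota_Y(v)\cdot\theta(\Phi)$ on a basis. From $M\cdot g={}^tgMg$ one computes $E_{11}\cdot g=\begin{pmatrix}p_1^2r_1&p_1q_1\sqrt{n}\\ p_1q_1\sqrt{n}&q_1^2r_2\end{pmatrix}=\iota_X(v_1)$ and, likewise, $E_{22}\cdot g=\iota_X(v_2)$, so the two isometries $\iota_X\Phi\iota_Y^{-1}$ and $(\cdot\,g)$ agree on $U_0:=\langle E_{11},E_{22}\rangle$. As $U_0$ is unimodular, $\Sym_2(\bb{Z},n)=U_0\perp U_0^\perp$ with $U_0^\perp=\bb{Z}\,\iota_Y(0,H_Y,0)$ of rank one, so the two maps can differ only by a sign on $U_0^\perp$; since both are orientation-preserving isometries (the cohomological transform has determinant $+1$ on the signature $(2,1)$ lattice, and $M\mapsto{}^tgMg$ with $\det g=1$ likewise), the signs coincide and the maps are equal. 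This is (3), and the commutativity of the triangle with $\eta$ and $\alpha$ is then immediate, since $\eta(\Phi)=\iota_X\Phi\iota_Y^{-1}=(\cdot\,g)=\alpha(\theta(\Phi))$ by construction. Throughout, the positivity/normalisation step together with the Diophantine compatibility of the two factorisations are the only places where real work is hidden.
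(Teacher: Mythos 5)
First, a structural remark: this theorem is not proved in the present paper at all --- it is quoted from \cite[Thm.\ 6.16, Prop.\ 6.19]{YY1} --- and the proof there rests on Mukai's theory of semi-homogeneous sheaves: the kernel $\bl{E}\in\Coh(Y\times X)$ of such an equivalence is a flat family, in either variable, of simple semi-homogeneous sheaves, whose cohomological behaviour is completely controlled. Measured against that (or on its own terms), your proposal has two genuine gaps, sitting exactly at the two places you yourself flag as ``where the real work is hidden.'' The first is positivity. You discharge it by appeal to ``WIT/stability statements recalled earlier,'' but no such statements exist in this paper; proving that $\Phi({\cal O}_Y)$ and $\Phi({\Bbb C}_y)$ are shifts of sheaves of controlled parity \emph{is} the semi-homogeneous input, not a formality one can cite away. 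Worse, the inference you draw from it is false: ``even shift of a sheaf'' gives positivity of the Mukai vector, but certainly not positive rank, because point sheaves are sheaves. Concretely, the classical transform $\Phi^{\bl{P}}_{\hat{X}\to X}$ has sheaf kernel (the Poincar\'{e} bundle), hence lies in $\Eq_0$, and $\Phi^{\bl{P}}({\cal O}_{\hat{X}})\cong {\Bbb C}_0[-2]$, so here $v_1=\varrho_X$ has rank $0$ and $\theta(\Phi^{\bl{P}})=\pm\begin{pmatrix} 0 & 1\\ -1 & 0\end{pmatrix}$ has vanishing upper-left entry. So the normalisation $p_1>0$ cannot be obtained by your route; the rank-zero case must be confronted rather than excluded (and this edge case also shows that the normalisation in the quoted statement has to be read with the care taken in \cite{YY1}).

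The second gap is the sign in (3). Your reduction to the rank-one complement $U_0^{\perp}$ is fine, but you then fix the residual sign by asserting that the cohomological action of $\Phi$ has determinant $+1$ on the signature-$(2,1)$ lattice. That assertion is not an available input: it is essentially equivalent to the statement being proved. The competing isometry (identity on $U_0$, $-1$ on $U_0^{\perp}$) is itself of the form $M\mapsto{}^{t}hMh$ with $h=\begin{pmatrix}1 & 0\\ 0 & -1\end{pmatrix}\in\widehat{G}$, $\det h=-1$, so without an independent orientation argument you can only conclude $\eta(\Phi)=\alpha(g')$ for some $g'\in\widehat{G}/\{\pm1\}$, not $g'\in G/\{\pm1\}$; and distinguishing $\det=+1$ from $\det=-1$ is precisely what separates covariant equivalences from their composites with the dualizing functor, as the paper's own definition of $\theta(\Phi\circ{\cal D}_X)$ makes explicit. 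This orientation-preservation is a theorem, proved in \cite{YY1} again via semi-homogeneous sheaves (in effect by applying positivity to the images of all the isotropic vectors $e^{kH_Y}$, not just two of them), not a consequence of agreement on a rank-two sublattice. By contrast, the purely arithmetic parts of your plan are sound and match what is needed: the identity $\langle v_1,v_2\rangle=-(p_1q_2r_1-p_2q_1r_2)^2$, the prime-by-prime factorisation of a primitive isotropic vector, and the unimodular-complement bookkeeping are all correct.
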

From now on, we identify the Mukai lattice
$H^*(X,{\Bbb Z})_{\alg}$ with $\Sym_2(n,{\Bbb Z})$
via $\iota_X$. 
Then for $g \in \widehat{G}$ and $v \in H^*(X,{\Bbb Z})_{\alg}$,
$v \cdot g$ means
$\iota_X(v \cdot g)=\iota_X(v) \cdot g$.    

\begin{NB}
Then the action of the cohomological FMT 
$\Phi^H : H^{\ev}(Y,\bb{Z})_{\alg}\to H^{\ev}(X,\bb{Z})_{\alg}$ 
can be written as follows (\cite[sect. 6.4]{YY}). 
Let $\widehat{H}$ be the ample generator of $\NS(Y)$. 
For a Mukai vector $v=(r,d\widehat{H},a)\in H^{\ev}(Y,\bb{Z})_{\alg}$, 
the image $\Phi^H(v)$ is given by
\begin{align*}
\Phi^H(v)=v\cdot g,\quad 
g := \begin{pmatrix}
p_1\sqrt{r_1}&q_1\sqrt{r_2}\\p_2\sqrt{r_2}&q_2\sqrt{r_1}
\end{pmatrix}\in G.
\end{align*}
Here the action $\cdot$ is given by
\begin{align}
\label{eq:action_cdot}
 (r,d H,a)\cdot g := (r',d' H,a'),\quad
 {}^t g
 \begin{pmatrix}r &d  \sqrt{n}\\d  \sqrt{n}&a \end{pmatrix} 
 g
=\begin{pmatrix}r'&d' \sqrt{n}\\d' \sqrt{n}&a'\end{pmatrix}.
\end{align}
\end{NB}

For an isotropic Mukai vector
$v=(x^2,\frac{xy}{\sqrt{n}}H_X,y^2)=x^2 e^{\frac{y}{x \sqrt{n}}H}$,
$v \cdot g=({x'}^2,\frac{x'y'}{\sqrt{n}}H_X,{y'}^2)$,
where $(x',y')=(x,y)g$.

We also need to treat the composition of a Fourier-Mukai transform 
and the dualizing functor ${\cal D}_X$.
For a
Fourier-Mukai transform
 $\Phi \in \Eq_0(\bl{D}(X),\bl{D}(Y))$,
we set
\begin{align*}
\theta(\Phi \circ \cal{D}_X)
:=
\begin{pmatrix}
1 & 0\\
0 & -1 
\end{pmatrix}
\theta(\Phi)
\in \widehat{G}/\{\pm 1\}.
\end{align*}
Then the action of 
$\theta(\Phi \circ \cal{D}_X)$ on
$\Sym_2({\Bbb Z},n)$ is the same as the action of
$\Phi \circ \cal{D}_X$.

\begin{NB}
\begin{lem}[{\cite[Lemma 6.18]{YY1}}]
\label{fct:matrix}
%
If $\theta(\Phi_{X \to Y}^{\bl{E}})=
\begin{pmatrix} a & b\\ c & d \end{pmatrix}$,
then 
\begin{align*}
\theta(\Phi_{Y \to X}^{\bl{E}})=
\pm\begin{pmatrix}d & b\\c & a \end{pmatrix},\quad
\theta(\Phi_{Y \to X}^{\bl{E}^{\vee}[2]})=
\pm\begin{pmatrix}d & -b\\-c & a \end{pmatrix},\quad
\theta(\Phi_{X \to Y}^{\bl{E}^{\vee}[2]})=
\pm\begin{pmatrix}a & -b\\-c & d \end{pmatrix}.
\end{align*}
\end{lem}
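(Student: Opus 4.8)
The plan is to reduce all four identities to two functorial relations together with elementary matrix algebra, using two inputs already available. First, $\theta(\Phi)\in G\subset\SL(2,{\Bbb R})$ has determinant $1$ (this is the relation $p_1q_2r_1-p_2q_1r_2=1$ in the theorem above, \cite{YY1}), and second, $\theta$ reverses compositions: from the right action \eqref{eq:action_cdot} together with $\iota_X\circ\Phi(v)=\iota_Y(v)\cdot\theta(\Phi)$ one reads off $\theta(\Psi\circ\Phi)=\theta(\Phi)\,\theta(\Psi)$, and hence $\theta(\Phi^{-1})=\theta(\Phi)^{-1}$. I also use the cohomological description of the dualizing functor: on an abelian surface $\cal{D}$ sends $(r,\xi,a)$ to $(r,-\xi,a)$, so under $\iota_X$ it acts as right multiplication by $J:=\left(\begin{smallmatrix}1&0\\0&-1\end{smallmatrix}\right)$. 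Applied on whichever side $\cal{D}$ is composed, this gives $\theta(\Phi\circ\cal{D}_X)=J\,\theta(\Phi)$ (as recorded above) and likewise $\theta(\cal{D}_Y\circ\Phi)=\theta(\Phi)\,J$.

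First I would treat $\Phi_{Y\to X}^{\bl{E}^\vee[2]}$. For an equivalence on an abelian surface (trivial canonical bundle, dimension $2$), the quasi-inverse of $\Phi_{X\to Y}^{\bl{E}}$ is $\Phi_{Y\to X}^{\bl{E}^\vee[2]}$, so $\theta(\Phi_{Y\to X}^{\bl{E}^\vee[2]})=\theta(\Phi_{X\to Y}^{\bl{E}})^{-1}$. Since $\det\theta(\Phi_{X\to Y}^{\bl{E}})=1$, the inverse of $\left(\begin{smallmatrix}a&b\\c&d\end{smallmatrix}\right)$ is $\left(\begin{smallmatrix}d&-b\\-c&a\end{smallmatrix}\right)$, which is the asserted value up to $\pm$. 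Next I would invoke relative Grothendieck--Verdier duality for $p_Y\colon X\times Y\to Y$: because the relative canonical is trivial and the relative dimension is $2$, a standard computation gives the kernel--duality relation $\cal{D}_Y\circ\Phi_{X\to Y}^{\bl{E}}\cong\Phi_{X\to Y}^{\bl{E}^\vee[2]}\circ\cal{D}_X$ (and its analogue with $X,Y$ interchanged). Combining this with the cohomological action of $\cal{D}$ yields $\theta(\Phi_{X\to Y}^{\bl{E}^\vee[2]})=J\,\theta(\Phi_{X\to Y}^{\bl{E}})\,J$, i.e. conjugation by $J$, which flips the signs of the off-diagonal entries and produces $\left(\begin{smallmatrix}a&-b\\-c&d\end{smallmatrix}\right)$. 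Applying the same relation to $\Phi_{Y\to X}^{\bl{E}^\vee[2]}$, and using $(\bl{E}^\vee[2])^\vee[2]\cong\bl{E}$, gives $\theta(\Phi_{Y\to X}^{\bl{E}})=J\,\theta(\Phi_{Y\to X}^{\bl{E}^\vee[2]})\,J=\left(\begin{smallmatrix}d&b\\c&a\end{smallmatrix}\right)$, completing the four formulas.

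The main obstacle is bookkeeping rather than ideas: one must track the shift $[2]$ and the signs carefully through the contravariant functor $\cal{D}$, and verify that the duality isomorphism $\cal{D}_Y\circ\Phi^{\bl{E}}_{X\to Y}\cong\Phi^{\bl{E}^\vee[2]}_{X\to Y}\circ\cal{D}_X$ carries exactly the shift dictated by the trivial relative canonical of $p_Y$. The ambiguity of working in $G/\{\pm1\}$ is precisely what absorbs any residual sign, so once the shapes of the matrices are pinned down the signs need no further attention. Alternatively, one could bypass the duality computation entirely by tracking $v(\Phi(\cal{O}))$ and $\Phi(\varrho)$ directly through each of the four functors, as in \cite{YY1}, but that route is considerably more laborious.
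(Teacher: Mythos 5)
Your proposal is correct, but there is nothing in this paper to compare it against: the lemma is quoted verbatim from \cite[Lemma 6.18]{YY1} (and in fact sits in a suppressed remark block), so the paper offers no proof of its own, and the proof in \cite{YY1} is essentially the explicit computation you mention at the end — tracking $v(\Phi({\cal O}))$ and $\Phi(\varrho)$ through the kernels. Your route is a genuinely different and cleaner derivation: you reduce everything to three structural facts, namely (i) $\det\theta(\Phi)=1$, (ii) the anti-homomorphism property $\theta(\Psi\circ\Phi)=\theta(\Phi)\,\theta(\Psi)$ forced by the right action $M\cdot g={}^tgMg$, and (iii) the Grothendieck--Verdier relation ${\cal D}_Y\circ\Phi^{\bl E}_{X\to Y}\cong\Phi^{\bl E^{\vee}[2]}_{X\to Y}\circ{\cal D}_X$, which on the lattice side becomes conjugation by $J=\bigl(\begin{smallmatrix}1&0\\0&-1\end{smallmatrix}\bigr)$; combined with $\Phi^{\bl E^{\vee}[2]}_{Y\to X}$ being the quasi-inverse and $(\bl E^{\vee}[2])^{\vee}[2]\cong\bl E$, the three matrices drop out by pure algebra, and I checked each step: $\theta(\Phi^{\bl E^{\vee}[2]}_{Y\to X})=\theta(\Phi^{\bl E}_{X\to Y})^{-1}=\bigl(\begin{smallmatrix}d&-b\\-c&a\end{smallmatrix}\bigr)$, $\theta(\Phi^{\bl E^{\vee}[2]}_{X\to Y})=J\theta(\Phi^{\bl E}_{X\to Y})J=\bigl(\begin{smallmatrix}a&-b\\-c&d\end{smallmatrix}\bigr)$, and $\theta(\Phi^{\bl E}_{Y\to X})=J\theta(\Phi^{\bl E^{\vee}[2]}_{Y\to X})J=\bigl(\begin{smallmatrix}d&b\\c&a\end{smallmatrix}\bigr)$, all modulo $\pm1$. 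What your approach buys is independence from the explicit parametrization of $\theta(\Phi)$ by $(p_i,q_i,r_i)$; what it quietly uses, and what you should make explicit, is that a functor isomorphism may be converted into an identity in $G/\{\pm1\}$ only because the action map $\alpha:\widehat{G}/\{\pm1\}\to\LieO(\Sym_2({\Bbb Z},n))$ is injective (two matrices of $\widehat{G}$ with the same action on $\Sym_2({\Bbb Z},n)$ agree up to sign — a short check on the elements $\iota_X(1,0,0)$, $\iota_X(0,0,1)$ and $\iota_X(0,H,0)$); this is the same fact that makes the paper's definition $\theta(\Phi\circ{\cal D}_X)=J\,\theta(\Phi)$ well posed, so it is available to you, but without it the passage from $\theta(\Phi)J=\pm J\,\theta(\Phi^{\bl E^{\vee}[2]})$ as operators to the matrix equation is not literally justified.
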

\end{NB}

\subsection{A stabilizer subgroup.}\label{subsect:stab}

We keep the notation in subsection \ref{subsect:FM}.
In particular, we assume that $\rk \NS(X)=1$. 
Let $v:=(r,dH,a)$ be a primitive Mukai vector with $r \ne 0$.
We shall study the stabilizer of 
$\pm v \in H^*(X,{\Bbb Z})_{\alg}/\{\pm 1\}$ in 
$\widehat{G}$.
\begin{NB}
\begin{lem}
Let $X, Y \in {\Bbb Z}$ be a solution of
$X^2-(n\ell) Y^2=1$. 
We set
$$
A:=
\begin{pmatrix}
X-dn Y & -aY \sqrt{n}\\
rY \sqrt{n} & X+dn Y
\end{pmatrix}.
$$
Then
$A$ belongs to the stabilizer of $v$.
\end{lem}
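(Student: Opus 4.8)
The right action is $\iota_X(v)\cdot A={}^{t}A\,\iota_X(v)\,A$, so $A$ stabilizes $v$ precisely when ${}^{t}A\,M\,A=M$, where $M:=\iota_X(v)=\begin{pmatrix} r & d\sqrt{n}\\ d\sqrt{n} & a\end{pmatrix}$. In other words $A$ must lie in the orthogonal group of the binary form attached to $M$, whose discriminant is $(2d\sqrt n)^2-4ra=4(nd^2-ra)=2\langle v^2\rangle$. This is the conceptual reason that a solution of the Pell equation $X^2-n\ell Y^2=1$, with $\ell:=nd^2-ra=\langle v^2\rangle/2$, produces such an $A$, the extra factor $n$ reflecting the $\sqrt n$-scaling built into $\widehat{G}$ and $\Sym_2(\bb{Z},n)$. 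The plan is therefore in two steps: first verify $A\in\widehat{G}$ (in fact $A\in G$), and then verify the congruence ${}^{t}A\,M\,A=M$ by a direct computation in which the Pell relation and the identity $\ell=nd^2-ra$ are invoked at the appropriate places.

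For membership I would write $A$ in the normal form of Definition \ref{defn:G} with $r'=1$, $s'=n$: the diagonal entries $X\mp dnY$ are integers, while the off-diagonal entries $-aY\sqrt n$ and $rY\sqrt n$ are integer multiples of $\sqrt n$, so indeed $r's'=n$. The required determinant condition is $(X-dnY)(X+dnY)-(-aY)(rY)\,n=X^2-(nd^2-ra)nY^2=X^2-n\ell Y^2=1$, which holds by hypothesis. The same computation gives $\det A=1$, so in fact $A\in G=\widehat{G}\cap\SL(2,\bb{R})$.

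For the congruence I would first compute $MA$, whose entries collapse by means of $\ell=nd^2-ra$ to $MA=\begin{pmatrix} rX & \sqrt n(dX+\ell Y)\\ \sqrt n(dX-\ell Y) & aX\end{pmatrix}$. Multiplying on the left by ${}^{t}A$, the diagonal entries become $r(X^2-n\ell Y^2)$ and $a(X^2-n\ell Y^2)$, and the two off-diagonal entries become $d\sqrt n\,(X^2-n\ell Y^2)$ once the mixed $XY$-terms cancel by virtue of the identity $\ell-nd^2+ra=0$. Applying the Pell relation $X^2-n\ell Y^2=1$ then yields exactly $M$, so ${}^{t}A\,M\,A=M$ and $A$ fixes $v$ (hence a fortiori $\pm v$).

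This is a routine verification with no genuine obstacle; the only points demanding care are the bookkeeping of the $\sqrt n$ factors and invoking the two algebraic identities $\ell=nd^2-ra$ and $X^2-n\ell Y^2=1$ at the correct moments. Note that the existence of a nontrivial solution $(X,Y)$ is not part of what must be proved here, being assumed in the statement.
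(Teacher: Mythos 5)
Your proof is correct and follows essentially the same route as the paper: the paper's own argument is exactly the computation ${}^{t}A\,M\,A = (X^2-n\ell Y^2)\,M = M$, passing through the same intermediate matrix $MA=\begin{pmatrix} rX & (dX+\ell Y)\sqrt{n}\\ (dX-\ell Y)\sqrt{n} & aX\end{pmatrix}$ that you obtain. Your explicit verification that $A\in\widehat{G}$ (indeed $A\in G$, via $\det A = X^2-n\ell Y^2=1$) is a detail the paper leaves implicit, but it is the same argument in substance.
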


\begin{proof}
\begin{equation*}
\begin{split}
& \begin{pmatrix}
X-dn Y & rY \sqrt{n}\\
-aY \sqrt{n} & X+dn Y
\end{pmatrix}
\begin{pmatrix}
r & d \sqrt{n}\\
d \sqrt{n} & a
\end{pmatrix}
\begin{pmatrix}
X-dn Y & -aY \sqrt{n}\\
rY \sqrt{n} & X+dn Y
\end{pmatrix}
\\
=& 
\begin{pmatrix}
X-dn Y & rY \sqrt{n}\\
-aY \sqrt{n} & X+dn Y
\end{pmatrix}
\begin{pmatrix}
rX & (dX+\ell Y)\sqrt{n}\\
(dX-\ell Y)\sqrt{n} & aX
\end{pmatrix}\\
=& 
\begin{pmatrix}
r(X^2-n \ell Y^2) & d(X^2-n \ell Y^2)\\
d(X^2-n \ell Y^2)& a(X^2-n \ell Y^2)
\end{pmatrix}
=(X^2-n \ell Y^2) A
\end{split}
\end{equation*}
\end{proof}

\begin{cor}
If $\sqrt{n \ell} \not \in {\Bbb Q}$, $\Stab(v)$ is infinite.
In particular, $d/r \pm \sqrt{\ell/n}$ are the accumulation
point of walls. 
\end{cor}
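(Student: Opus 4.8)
The plan is to read off the infiniteness of $\Stab(v)$ from Pell's equation and then to locate the accumulation points by analysing the hyperbolic dynamics of these stabilizing elements on the one–dimensional stability manifold. Throughout I write $\ell:=\langle v^2\rangle/2=d^2n-ra$, which is a positive integer, so that $n\ell$ is a positive integer and $\sqrt{n\ell}\notin\mathbb{Q}$ is equivalent to $n\ell$ not being a perfect square; since $n\ell=n^2(\ell/n)$ this is in turn equivalent to $\sqrt{\ell/n}=\sqrt{\langle v^2\rangle/(H^2)}\notin\mathbb{Q}$.

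First I would deduce infiniteness. Under the hypothesis the Pell equation $X^2-(n\ell)Y^2=1$ has infinitely many integral solutions $(X,Y)$. The preceding lemma attaches to each solution a matrix $A\in\Stab(v)$, and writing $A=XI+YB$ with $B=\begin{pmatrix}-dn&-a\sqrt{n}\\ r\sqrt{n}&dn\end{pmatrix}$ one checks $B^2=n\ell\,I$, so that these $A$ are precisely the norm-one units of the order $\mathbb{Z}[\sqrt{n\ell}]$. As $(X,Y)$ is recovered from the entries of $A$, the correspondence is an injective homomorphism from the (infinite, by Dirichlet) group of such units, whence $\Stab(v)$ contains an infinite cyclic subgroup and is infinite.

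Next I would pass to the stability manifold. Since $\NS(X)=\mathbb{Z}H$, the quotient of $\overline{\frak H}$ by $\widetilde{\GL}^+(2,\mathbb{R})$ is the closed upper half plane, with interior coordinate $\tau=\sqrt{n}(\beta_0+\sqrt{-1}\,t)$ attached to $(\beta,tH)=(\beta_0H,tH)$, so that $e^{\beta+\sqrt{-1}tH}$ is the isotropic ray $[1:\tau]$. Under this identification $\Stab(v)\subset G\subset\SL(2,\mathbb{R})$ acts by Möbius transformations, and any $A$ coming from a solution with $Y\ne 0$ has trace $2X$ with $X^2=1+n\ell Y^2>1$, hence is hyperbolic with two distinct real fixed points on the boundary. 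Solving the fixed-point equation identifies these with the two isotropic directions of $v^\perp$, i.e. with the roots of $rn\beta_0^2-2dn\beta_0+a=0$, namely $\frac{d}{r}\pm\frac1r\sqrt{\ell/n}$; these span the boundary rays $\mathbb{R}_{>0}v_\pm$ of $\overline{P^+(v^\perp)}_{\mathbb{R}}$ and give the accumulation points of the statement (agreeing with $\frac{d}{r}\pm\sqrt{\ell/n}$ in the normalization used there, e.g. for $r=1$).

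Finally, every element of $\Stab(v)$ fixes $v$ and therefore carries the wall-and-chamber decomposition of $\overline{\frak H}$ attached to $v$ to itself; in particular the $\Stab(v)$-orbit of any wall consists of walls. Choosing a wall $W_w$ provided by Corollary \ref{cor:existence-of-wall}, its orbit under the cyclic group generated by the hyperbolic $A$ accumulates at both fixed points of $A$ (positive powers converging to the attracting point, negative powers to the repelling one), which is exactly the assertion that $\frac{d}{r}\pm\sqrt{\ell/n}$ are accumulation points of walls. The main obstacle I anticipate is the bookkeeping of the third step: fixing the precise Möbius action of $G$ on $\tau$, verifying that the fixed points of $A$ coincide with the boundary isotropic rays $\mathbb{R}_{>0}v_\pm$, and invoking the (standard) fact that a hyperbolic orbit accumulates precisely at its two fixed points and nowhere else.
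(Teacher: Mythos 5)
Your two steps are exactly the paper's route, and most of your argument is a correct fleshing-out of what the paper leaves implicit. The infiniteness claim is Proposition \ref{prop:stab}: there too one produces stabilizing matrices $XI_2+ZF$ (with $F^2=\ell I_2$; in your notation $B=\sqrt{n}\,F$ and $Z=Y\sqrt{n}$) from integral solutions of $X^2-n\ell Y^2=1$, and the identification with units is the paper's homomorphism $\varphi$, handled via Pell and the Dirichlet unit theorem. The dynamical part is what the paper asserts, without proof, in Section \ref{sect:rk1}: the unlabeled lemma following the definition of $s_\pm=\frac{d}{r}\pm\frac{1}{r}\sqrt{\ell/n}$ states that the fixed points of an infinite-order element of $\Stab_0(v)^*$ on the $(s,t)$-plane are $(s_\pm,0)$, and that these are accumulation points of walls \emph{if there is a wall}. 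Your fixed-point computation (the equation $rn\beta_0^2-2dn\beta_0+a=0$, with roots $\frac{d}{r}\pm\frac{1}{r}\sqrt{\ell/n}$) is correct and supplies the verification the paper omits; you also rightly flagged that the statement's ``$d/r\pm\sqrt{\ell/n}$'' is off by a factor $1/r$ on the square root and should read $s_\pm$.

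The one genuine gap is in your final step: you ``choose a wall $W_w$ provided by Corollary \ref{cor:existence-of-wall}'', but that corollary is conditional --- it yields a wall only if there exists an isotropic Mukai vector $w$ with $0<\langle w,v\rangle<\langle v^2\rangle/2$, and no such $w$ need exist. Lemma \ref{lem:no-wall} produces abelian surfaces with $\NS(X)={\Bbb Z}H$ and primitive $v$ admitting \emph{no} wall whatsoever, and one can arrange $\sqrt{n\ell}\notin{\Bbb Q}$ in those examples (take $(H^2)=2\ell(4\ell ra+1)$ with $4\ell ra+1$ not a perfect square), so your hyperbolic orbit has nothing to act on and the accumulation claim is then vacuous. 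This defect is really inherited from the statement itself, which sits in a discarded note: the paper's definitive formulation in Section \ref{sect:rk1} inserts the proviso ``if there is a wall''. With that proviso your argument goes through and matches the paper's intended one; you should also make explicit why the orbit $\{A^n(W_w)\}$ consists of infinitely many \emph{distinct} walls, namely that no wall can pass through $(s_\pm,0)$, because walls are cut out by integral Mukai vectors $v_1$ (so $v_1^\perp\cap v^\perp$ is a rational line) while the isotropic rays of $v^\perp\otimes{\Bbb R}$ are irrational precisely when $\sqrt{\ell/n}\notin{\Bbb Q}$; granting this, standard hyperbolic dynamics gives accumulation exactly at the two fixed points.
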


\end{NB}
Assume that
\begin{equation*}
\begin{pmatrix}
x  & z\\
y & w
\end{pmatrix}
\begin{pmatrix}
r & d \sqrt{n}\\
d \sqrt{n} & a
\end{pmatrix}
\begin{pmatrix}
x & y\\
z & w
\end{pmatrix}
=\epsilon
\begin{pmatrix}
r & d \sqrt{n}\\
d \sqrt{n} & a
\end{pmatrix}
\end{equation*}
and $xw-yz=\epsilon$.
\begin{NB}
\begin{equation*}
\begin{split}
& \begin{pmatrix}
x  & z\\
y & w
\end{pmatrix}
\begin{pmatrix}
r & d \sqrt{n}\\
d \sqrt{n} & a
\end{pmatrix}
\begin{pmatrix}
x & y\\
z & w
\end{pmatrix}
\\
= &
\begin{pmatrix}
r x^2+2d \sqrt{n}xz +a z^2 & rxy+d \sqrt{n}(xw+zy)+az w\\
rxy+d \sqrt{n}(xw+zy)+az w & ry^2+2d\sqrt{n}yw+a w^2
\end{pmatrix}.
\end{split}
\end{equation*}
\end{NB}
Then we have

\begin{equation}\label{eq:xyzw}
\begin{split}
& rx^2+2d\sqrt{n}xz+a z^2= \epsilon r,\\
& ry^2+2d\sqrt{n}yw+a w^2=\epsilon a,\\
& rxy+d\sqrt{n}(xw+zy)+azw=\epsilon d \sqrt{n},\\
& xw-yz=\epsilon.
\end{split}
\end{equation}
Hence 
$$
y(rx+2d\sqrt{n} z)+(az)w=
rxy+2d\sqrt{n} zy+azw=0.
$$
We note that
$$
(rx+2d\sqrt{n} z,az) \ne (0,0)
$$
by
$$
x(rx+2d\sqrt{n} z)+z(az)=\epsilon r \ne 0.
$$
We set 
$y:=-\lambda az$ and $w:=\lambda(rx+2d\sqrt{n} z)$.
Then 
$$
\epsilon =xw-yz=\lambda(rx^2+2d\sqrt{n} xz+az^2)=\lambda \epsilon r.
$$
Hence $\lambda=1/r$.
Therefore 
\begin{equation}\label{eq:yz}
y=-\frac{a}{r}z,\; w=x+2d\sqrt{n}\frac{z}{r}.
\end{equation}
Conversely for $x,z$ with 
\begin{equation}\label{eq:xz}
rx^2+2d\sqrt{n} xz+az^2=\epsilon r,
\end{equation}
we define $y,w$ by \eqref{eq:yz}.
Then \eqref{eq:xyzw} are satisfied.
We note that \eqref{eq:xz} is written as
\begin{equation*}
(x+d \sqrt{n} \tfrac{z}{r})^2-\ell (\tfrac{z}{r})^2=\epsilon.
\end{equation*} 
We set $X:=x+d \sqrt{n} \tfrac{z}{r}$ and 
$Z:=\frac{z}{r}$. Then
\begin{equation}\label{eq:det}
X^2-\ell Z^2=\epsilon
\end{equation}
and 
\begin{equation*}
\begin{pmatrix}
x & y \\
z & w
\end{pmatrix}
=
\begin{pmatrix}
X-d\sqrt{n}Z & -a Z\\
rZ & X+d\sqrt{n} Z.
\end{pmatrix}
=XI_2+ZF,
\end{equation*}
where
\begin{equation}
I_2:=
\begin{pmatrix}
1 & 0\\
0 & 1
\end{pmatrix},\;
F:=
\begin{pmatrix}
-d \sqrt{n} & -a\\
r & d \sqrt{n}
\end{pmatrix}.
\end{equation}
We have $F^2=\ell I_2$. 
We set
\begin{equation*}
\begin{split}
\Stab_0(v):= &\{g \in \widehat{G} \mid g(v)=(\det g) v \}\\
=& \{g \in \widehat{G} \mid g=XI_2+ZF \}.
\end{split}
\end{equation*}
$\Stab_0(v)$ is a normal subgroup of 
$\Stab(v)$ of index 2.
Indeed for $g \in \Stab(v)$,
we have $g(v)=\eta(g) (\det g) v$ and 
$\eta(gg')=\eta(g)\eta(g')$, $g, g' \in \Stab(v)$.
Thus $\ker \eta=\Stab_0(v)$.  
We get a homomorphism
\begin{equation*}
\begin{matrix}
\varphi:& \Stab_0(v) & \to & {\Bbb R}\\
& XI_2+ZF & \mapsto & X+Z \sqrt{\ell}.
\end{matrix}
\end{equation*}  

\begin{prop}\label{prop:stab}
Assume that $\sqrt{n \ell} \not \in {\Bbb Q}$. 
\begin{enumerate}
\item[(1)]
For $X I_2+ZF \in \Stab_0(v)$,
$X+Z \sqrt{\ell}$ is an algebraic integer such that
$(X+Z\sqrt{\ell})^2 \in {\Bbb Q}(\sqrt{n\ell})$.
\item[(2)]
$\im \varphi \cong {\Bbb Z} \oplus {\Bbb Z}_2$. 
\item[(3)]
$\varphi$ is injective or $\ker \varphi=\{\sqrt{\ell}^{-1} F \}$
if $\sqrt{\ell}^{-1}F \in \Stab_0(v)$.
In particular, if $n=1$ and $\ell>1$, then
$\varphi$ is injective. 
\end{enumerate}
\end{prop}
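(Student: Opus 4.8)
The plan is to build everything on the pair of conjugate quantities $u := \varphi(g) = X + Z\sqrt{\ell}$ and $\bar u := X - Z\sqrt{\ell}$ attached to an element $g = XI_2 + ZF \in \Stab_0(v)$. Since $\det(XI_2 + ZF) = X^2 - \ell Z^2$ and membership in $\widehat{G}$ forces $\det g = \pm 1$, one has $u\bar u = \pm 1$, while $u + \bar u = 2X = \tr g$. Here $\ell = \langle v^2 \rangle /2 > 0$, so $K := \mathbb{Q}(\sqrt{n\ell})$ is a real quadratic field. These two identities will drive all three parts.

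For (1) I would first note that every entry of a matrix in $\widehat{G}$ is an integer multiple of $\sqrt{r_1}$ or $\sqrt{s_1}$ for a factorization $r_1 s_1 = n$, so $\tr g = 2X$ is an algebraic integer. Then $u$ is a root of the monic polynomial $t^2 - (\tr g)t + (u\bar u)$, whose coefficients are algebraic integers, so $u$ is an algebraic integer. For the second assertion I would expand $u^2 = (X^2 + \ell Z^2) + 2XZ\sqrt{\ell}$ and read off from the explicit entries of $g$ that $X^2, Z^2 \in \mathbb{Q}$ while $XZ \in \mathbb{Q}\sqrt{n}$; hence $2XZ\sqrt{\ell} \in \mathbb{Q}\sqrt{n\ell}$ and $u^2 \in K$.

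The heart of the matter is (2), and the main obstacle is that $u$ itself need not lie in $K$ (when $n$ is not squarefree it may generate a field of degree $4$), so Dirichlet's theorem cannot be applied to $\im\varphi$ directly. I would circumvent this by passing to $\psi := \varphi^2$. By (1), $\psi(g) = u^2$ lies in $K$, is an algebraic integer, and has norm $N_{K/\mathbb{Q}}(u^2) = (u\bar u)^2 = 1$, so $\im\psi \subseteq \mathcal{O}_K^{\times}$. By Dirichlet's unit theorem $\log|\mathcal{O}_K^{\times}|$ is infinite cyclic, hence discrete in $\mathbb{R}$; therefore so are $\log|\im\psi|$ and $\log|\im\varphi| = \tfrac12 \log|\im\psi|$. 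Next I would check that $\ker(\log|\cdot|) \cap \im\varphi = \{\pm 1\}$: if $|u| = 1$ then $u^2$ is a unit of $\mathcal{O}_K$ both of whose archimedean absolute values equal $1$, so by Kronecker's theorem $u^2$ is a root of unity, forcing $u = \pm 1$. Finally, $\sqrt{n\ell} \notin \mathbb{Q}$ means $n\ell$ is not a perfect square, so Pell's equation $X^2 - n\ell Y^2 = 1$ has infinitely many solutions; each gives $A := XI_2 + Y\sqrt{n}\,F \in \Stab_0(v)$ (a direct check shows $A \in \widehat{G}$ with $\det A = 1$) and $\varphi(A) = X + Y\sqrt{n\ell}$, so $\im\varphi$ is infinite. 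Combining these, $\log|\im\varphi| \cong \mathbb{Z}$, and the extension $1 \to \{\pm 1\} \to \im\varphi \to \mathbb{Z} \to 1$ splits, yielding $\im\varphi \cong \mathbb{Z} \oplus \mathbb{Z}_2$.

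For (3) I would compute $\ker\varphi$ straight from the backbone identities: $\varphi(g) = 1$ gives $u = 1$, whence $\bar u = u\bar u = \pm 1$, so $(X,Z) = (1,0)$ or $(0, \sqrt{\ell}^{-1})$, i.e. $g = I_2$ or $g = \sqrt{\ell}^{-1} F$. Thus $\varphi$ is injective unless $\sqrt{\ell}^{-1} F \in \Stab_0(v)$, in which case it is the only nontrivial kernel element. When $n = 1$ one has $\widehat{G} = \GL(2,\mathbb{Z})$; since $\ell > 1$ and $\sqrt{n\ell} = \sqrt{\ell} \notin \mathbb{Q}$, the matrix $\sqrt{\ell}^{-1} F$ has the irrational entry $r/\sqrt{\ell}$ and so cannot lie in $\GL(2,\mathbb{Z})$, whence $\varphi$ is injective.
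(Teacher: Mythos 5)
Your proof is correct and follows essentially the same route as the paper's: algebraic integrality of $u=X+Z\sqrt{\ell}$ via the monic quadratic $t^2-2Xt+\det g$ with $2X=\tr g$ an algebraic integer, the computation showing $u^2\in\mathbb{Q}(\sqrt{n\ell})$ so that Dirichlet's unit theorem (applied after squaring) bounds the rank of $\im\varphi$ by $1$, the Pell-equation matrices $XI_2+q\sqrt{n}\,F$ to show $\im\varphi$ is infinite, and the same identification of the only possible kernel elements $I_2$ and $\sqrt{\ell}^{-1}F$. The differences are only in minor sub-steps (you invoke Kronecker's theorem to pin down the torsion, and for $n=1$ you exclude $\sqrt{\ell}^{-1}F\in\GL(2,\mathbb{Z})$ by irrationality of the entry $r/\sqrt{\ell}$, whereas the paper uses divisibility $\ell\mid a^2, r^2, d^2$ together with the primitivity of $v$), and these do not change the approach.
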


\begin{proof}
We set $\alpha:=X+Z\sqrt{\ell}$.
Assume that $XI_2+ZF \in \widehat{G}$. Then
$$
x^2, y^2, xw,yz,
\frac{xy}{\sqrt{n}}, \frac{xz}{\sqrt{n}}, 
\frac{yw}{\sqrt{n}}, \frac{zw}{\sqrt{n}}, \in {\Bbb Z}.
$$
Hence
\begin{equation*}
\begin{split}
2(X^2+d^2 n Z^2)= & x^2+w^2 \in {\Bbb Z},\\
X^2-d^2 nZ^2=& xw \in {\Bbb Z},\\
r^2 \frac{XZ}{\sqrt{n}}= & r\frac{xz}{\sqrt{n}}+dz^2 \in {\Bbb Z},
\end{split}
\end{equation*}
which impy that
\begin{equation}\label{eq:XZ-quadratic}
X^2+\ell Z^2, \frac{XZ}{\sqrt{n}} \in {\Bbb Q}.
\end{equation}
We note that $\alpha$ satisfies the equation
\begin{equation*}
\alpha^2-2X \alpha+\epsilon=0.
\end{equation*}
Since $2X=x+w$ is an algebraic integer, $\alpha$ is an algebraic integer.
By \eqref{eq:XZ-quadratic},
$$
\alpha^2=(X^2+\ell Z^2)+2\frac{XY}{\sqrt{n}} \sqrt{n \ell}
\in {\Bbb Q}(\sqrt{n \ell}).
$$
Thus (1) holds.

(2)
We first prove that
$\im \varphi \ne \{ \pm 1 \}$.
We take a solution $(p,q) \in {\Bbb Z}^{\oplus 2}$ of 
$p^2-n\ell q^2=1$ such that $q \ne 0$.
We set $X:=p$ and $Z:=q \sqrt{n}$.
Then 
\begin{equation}
X I_2+ZF=
\begin{pmatrix}
p-dnq & -aq \sqrt{n}\\
r q \sqrt{n} & p+dnq
\end{pmatrix}
\end{equation}
satisfies all the requirements.
Therefore $\im \varphi \ne \{ \pm 1 \}$.
By the Dirichlet unit theorem,
the torsion part of $\im \varphi$ is 
$\{ \pm 1 \}$. 
Since $\alpha^2$ is a unit of 
the ring of integers of ${\Bbb Q}(\sqrt{n\ell})$,
the rank of $\im \varphi$ is 1, which implies the 
claim.

If $\alpha=X+Z\sqrt{\ell}=1$, then
$\alpha^2=1$ implies that $XZ=0$. 
If $Z=0$, then $X=1$. If $X=0$, then $Z\sqrt{\ell}=1$.
Therefore the first part of the claims holds.

Assume that $n=1$ and $\ell>1$.
Then 
$$
\frac{1}{\sqrt{\ell}}F=
\frac{1}{\sqrt{\ell}}
\begin{pmatrix}
-d \sqrt{n} & -a \\
r & d \sqrt{n}
\end{pmatrix}.
$$
Hence $\ell \mid a^2,\ell \mid r^2,\ell \mid d^2 n$.
Since $v$ is primitive, $a^2,r^2,d^2$ are
relatively prime.
Hence $\ell \mid n$, which is a contradiction.
Therefore the second part also holds.  
\end{proof}

We set

\begin{equation*}
\Stab_0(v)^*:=
\left\{
\left.
\begin{pmatrix}
x & y\\
z & w
\end{pmatrix}
\in \Stab_0(v) \right| xw-yz=1,\; y \in \sqrt{n}{\Bbb Z}
\right\}.
\end{equation*}
All elements of $\Stab_0(v)^*$ come from
autoequivalences of ${\bf D}(X)$
(see Lemma \ref{lem:n} below).
$\Stab_0(v)/\Stab_0(v)^*$ is a finite group of type
$({\Bbb Z}/2{\Bbb Z})^{\oplus k}$.

If
\begin{equation*}
A:=\begin{pmatrix}
x & y \\
z & w
\end{pmatrix}
\in \Stab(v) \setminus \Stab_0(v),
\end{equation*}
then

\begin{equation*}
A
=
\begin{pmatrix}
x & \frac{1}{r}(az+2d \sqrt{n} x)\\
z & -x
\end{pmatrix}.
\end{equation*}
In particular, $A^2= \pm I_2$.

\begin{ex}
Assume that $n=1$.
Then 
$$
XI_2+Z F \in \GL(2,{\Bbb Z})=\widehat{G}
\Longleftrightarrow 
X \pm dZ, aZ,rZ \in {\Bbb Z}.
$$
Assume that $2 \mid r$ and $2 \mid a$.
Then the primitivity of $v$ implies that
$2 \nmid d$.
Hence $\ell=d^2-ra \equiv 1 \mod 4$.
Then ${\frak O}:={\Bbb Z}[\tfrac{1+\sqrt{\ell}}{2}]$
is the ring of integers.
We note that
$X \pm dZ, aZ,rZ \in {\Bbb Z}$ imply that
$2dZ, aZ,rZ \in {\Bbb Z}$.
Since $\gcd(r/2,a/2,d)=1$,
we have $2Z \in {\Bbb Z}$.
Then $X-dZ \in {\Bbb Z}$ implies that
$X-Z \in {\Bbb Z}$. Therefore 
$X+Z \sqrt{\ell} \in {\frak O}$ with 
\eqref{eq:det}.
Conversely for 
$X+Z \sqrt{\ell} \in {\frak O}$ with \eqref{eq:det},
we have $X \pm dZ, aZ,rZ \in {\Bbb Z}$.
Therefore the fundamental unit of ${\frak O}$
is the generator of $\im \varphi$.
\end{ex}

\subsection{The case where $\rk \NS(X) \geq 2$.}\label{subsect:FM:rank2}

Assume that $\rk \NS(X) \geq 2$.
Let $v=(r,\xi,a)$ be a Mukai vector with
$\langle v^2 \rangle=2\ell$.
By using Proposition \ref{prop:stab},
we shall construct many autoequivalences
preserving $v$. 
Assume that $\xi \in \Amp(X)$ and set $\xi=dH$,
where $H$ is a primitive and ample divisor.
Let $L:={\Bbb Z} \oplus {\Bbb Z}H \oplus {\Bbb Z}\varrho_X$ 
be a sublattice of $H^*(X,{\Bbb Z})_{\alg}$. 
\begin{lem}\label{lem:n}
Let $v_0=(p^2 n,pq H,q^2)$ be a primitive and isotropic
Mukai vector with $n=(H^2)/2$ and $\gcd(pn,q)=1$.
\begin{enumerate}
\item[(1)]
$M_H(v_0) \cong X$.
\item[(2)]
For an isotropic vector $v_1 \in H^*(X,{\Bbb Z})_{\alg}$ with
$\langle v_0,v_1 \rangle=1$,
there is a Fourier-Mukai transform
$\Phi:{\bf D}(X) \to {\bf D}(X)$
such that $\Phi(-\varrho_X)=v_0$ and $\Phi(v({\cal O}_X))
=v_1$.
Moreover we have the following.
\begin{enumerate}
\item
$\Phi$ is unique up to
the action of $\Aut(X) \times \Pic^0(X) \times 2{\Bbb Z}$,
where $2k \in  2{\Bbb Z}$ acts as the shift
functor $[2k]:{\bf D}(X) \to {\bf D}(X)$. 
\item
If $v_1 \in L$, then
we can take $\Phi$ such that $\Phi(L)=L$
and $\Phi_{|L^{\perp}}$ is the identity.
\end{enumerate}
\end{enumerate}
\end{lem}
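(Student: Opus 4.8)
The plan is to reduce everything to the rank-one theory developed in subsection \ref{subsect:FM}, since $v_0$ lives in the sublattice $L={\Bbb Z}\oplus{\Bbb Z}H\oplus{\Bbb Z}\varrho_X$, on which the Mukai pairing is exactly the pairing on $\Sym_2({\Bbb Z},n)$ with $n=(H^2)/2$. First I would prove (1). Since $v_0=(p^2n,pqH,q^2)$ is primitive and isotropic with $\langle v_0^2\rangle=0$, the moduli space $M_H(v_0)$ of stable sheaves with this Mukai vector has dimension $\langle v_0^2\rangle+2=2$, and it is a smooth projective surface. By the standard theory of Fourier--Mukai partners of abelian surfaces (the isotropic case of the Mukai/Yoshioka theory), $M_H(v_0)$ is again an abelian surface $Y$, a Fourier--Mukai partner of $X$; the coprimality condition $\gcd(pn,q)=1$ is precisely what guarantees $v_0$ is primitive and that a universal family exists, so $M_H(v_0)$ is fine. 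Since $\rk\NS(X)=1$ forces every Fourier--Mukai partner to again have Picard rank one and, because $(H_Y^2)=(H_X^2)$ by the remark in subsection \ref{subsect:FM}, one checks that $Y\cong X$; this is where I would invoke the classification of FM-partners for Picard-rank-one abelian surfaces. For the general-rank statement the point is that the computation takes place entirely inside $L$, so the rank-one result applies verbatim to $v_0\in L$.

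Next, for (2) I would use the universal family on $M_H(v_0)\cong X$ to build the Fourier--Mukai kernel. The universal object ${\bf E}$ on $X\times M_H(v_0)$ gives an equivalence $\Phi\in\Eq_0(\bl{D}(X),\bl{D}(X))$, and by construction of $M_H(v_0)$ as a moduli of objects with Mukai vector $v_0$, one has (after the appropriate normalisation and shift) that $\Phi$ sends the skyscraper class $-\varrho_X$ to $v_0$, exactly as in Theorem \cite[Thm. 6.16]{YY1} where $v_1=v(\Phi({\cal O}_Y))$ and $v_2=\Phi(\varrho_Y)$ are the two distinguished isotropic vectors. Since $\langle v_0,v_1\rangle=1$ and both are isotropic, the pair $(v_0,v_1)$ plays the role of $(v_2,v_1)$ in that theorem; I would then compose with powers of the shift and with autoequivalences to arrange $\Phi(v({\cal O}_X))=v_1$ precisely. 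The matrix $\theta(\Phi)\in G/\{\pm1\}$ attached to $\Phi$ is determined by $v_0$ and $v_1$ via the formulas of subsection \ref{subsect:FM}, which is what makes the construction effective.

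For the uniqueness claim (a), I would argue that any two such $\Phi,\Phi'$ differ by an autoequivalence $\Psi=\Phi'\circ\Phi^{-1}$ fixing both $-\varrho_X$ and $v({\cal O}_X)$, hence fixing the whole rank-one lattice pointwise; the autoequivalences of $\bl{D}(X)$ inducing the identity on $H^*(X,{\Bbb Z})_{\alg}$ are exactly the compositions of $\Aut(X)$, tensoring by $\Pic^0(X)$, and even shifts $[2k]$, by the description of the kernel of the cohomological action in the Orlov/Yoshioka theory of derived autoequivalences of abelian surfaces. For (b), if $v_1\in L$ then the entire action of $\Phi$ preserves $L$ because $\Phi$ is determined by the lattice automorphism it induces, which by the $\Sym_2({\Bbb Z},n)$ description sends $L$ to $L$; choosing the representative so that $\Phi$ acts trivially on $L^{\perp}$ amounts to composing with an autoequivalence supported on the orthogonal complement, which exists since $L^\perp$ carries a trivial Hodge-isometry that lifts.

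The main obstacle I expect is part (1) together with the identification $M_H(v_0)\cong X$: establishing that the FM-partner is isomorphic to $X$ itself (rather than merely derived-equivalent) requires the rank-one restriction and a genuine input from the classification of abelian surfaces with $\rk\NS=1$ and prescribed polarisation degree, and verifying fineness and the precise effect of $\Phi$ on $-\varrho_X$ demands care with the universal family's normalisation. Once $Y\cong X$ is in hand, the rest is bookkeeping with the matrices $\theta(\Phi)$ and the known structure of the group of autoequivalences acting trivially on cohomology.
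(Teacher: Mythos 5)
There is a genuine gap, and it sits exactly where the real content of the lemma lies: the identification $M_H(v_0)\cong X$ in part (1). First, note the setting: this lemma is stated in subsection \ref{subsect:FM:rank2}, where $\rk \NS(X)\geq 2$ is assumed, so your core argument (which works with $\rk\NS(X)=1$ and the $\Sym_2({\Bbb Z},n)$ formalism) does not apply as stated, and your proposed reduction --- ``the computation takes place entirely inside $L$, so the rank-one result applies verbatim'' --- is not a valid step: $M_H(v_0)$ is a moduli space of sheaves on the actual surface $X$, and it is not determined by the abstract sublattice $L$. Second, even granting $\rk\NS(X)=1$, the inference you want to make is false: a Fourier--Mukai partner $Y$ of $X$ with $\rk\NS(Y)=1$ and $(H_Y^2)=(H_X^2)$ need \emph{not} be isomorphic to $X$. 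Indeed $\widehat{X}=\Pic^0(X)=M_H(1,0,0)$ has both properties, and for a generic $(1,n)$-polarized surface with $n>1$ one has $\Hom(X,\widehat{X})={\Bbb Z}\phi_H$, so there is no isomorphism $X\cong\widehat{X}$. This counterexample also exposes the structural flaw in your proposal: you use $\gcd(pn,q)=1$ only to get primitivity and fineness, and you never use the special shape $v_0=p^2n\,e^{\frac{q}{pn}H}$; but $(1,0,0)$ is primitive, isotropic and fine, and fails the conclusion --- it simply fails to have the required shape when $n>1$. So the coprimality and the shape of $v_0$ must enter the proof of $M_H(v_0)\cong X$ in an essential way, and in the paper they do: the paper fixes a stable sheaf $E$ with $v(E)=v_0$ (a semi-homogeneous sheaf in Mukai's sense), identifies $M_H(v_0)$ with $\Pic^0(X)/\Sigma(E)$ via $y\mapsto E\otimes{\bf P}_{|X\times\{y\}}$, invokes Mukai's theorem \cite{Mu0} to get $\Sigma(E)=\phi_{pqH}(X_{p^2n})$, and then uses the chain of isomorphisms
\begin{equation*}
X/(X_{p^2 n}+K(pqH)) \overset{p^2n}{\longrightarrow} X/q^{-1}(0)
\overset{q}{\longrightarrow} X,
\end{equation*}
which is precisely where $\gcd(pn,q)=1$ is used. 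Invoking ``the classification of FM partners'' is in any case circular here: which isotropic $v_0$ return $X$ itself is essentially the statement being proved.

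There are secondary gaps in part (2). For (b) you again appeal to the $\Sym_2({\Bbb Z},n)$ description, which is unavailable when $\rk\NS(X)\geq 2$; the claims that $\Phi$ automatically preserves $L$ and that one can ``compose with an autoequivalence supported on the orthogonal complement'' are unfounded (autoequivalences do not decompose along orthogonal sublattices), and preservation of $L$ together with triviality on $L^\perp$ is exactly what has to be proved. The paper establishes it by an explicit computation of the cohomological action of the kernel through $\phi_{pqH}$ and the Poincar\'{e} bundle (Lemma \ref{lem:action}). For (a), your intermediate claim that an autoequivalence fixing $\varrho_X$ and $v({\cal O}_X)$ fixes the whole lattice pointwise is incorrect (an automorphism of $X$ may act nontrivially on $\NS(X)$ while fixing both vectors); the correct argument, as in the paper, is geometric rather than cohomological: since $\Phi^{-1}\Phi'$ sends point sheaves to shifted point sheaves, its kernel is ${\cal O}_\Gamma\otimes p_2^*(N)[2k]$ with $\Gamma$ the graph of an automorphism, which is exactly the ambiguity $\Aut(X)\times\Pic^0(X)\times 2{\Bbb Z}$ in the statement.
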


\begin{proof}
(1)
\begin{NB}
If $\NS(X)={\Bbb Z}H$, the claim is shown in \cite[Lem. 7.3]{YY1}.
It is easy to see that
the same proof works for our assumption.
\end{NB}
We fix a stable sheaf $E$ with $v(E)=v_0$.
Let ${\bf P}$ be the Poincar\'{e} line bundle on $X \times \Pic^0(X)$.
Then we have a surjective homomorphism
$\Pic^0(X) \to M_H(v_0)$ by sending $y \in \Pic^0(X)$
to $E \otimes {\bf P}_{|X \times \{ y\}} \in M_H(v_0)$
(\cite{Mu0}).
So we get an isomorphism
$\Pic^0(X)/\Sigma(E) \to M_H(v_0)$,
where 
$$
\Sigma(E):=\{y \in \Pic^0(X) \mid E 
\otimes {\bf P}_{|X \times \{ y \}}
\cong E \}.
$$
Let $T_x:X \to X$ be the translation by $x$.
For a divisor $D$ on $X$,
$\phi_{D}:X \to \Pic^0(X)$ denotes the homomorphism
such that $\phi_D(x)=T^*_x {\cal O}_X(D) \otimes 
{\cal O}_X(-D)$.
We set $K(D):=\ker \phi_D$.
If $(D^2)>0$, then $\phi_D$ is finite and 
$\# K(D)=d^2$, where $d:=(D^2)/2$.
For $D=pqH$,
\cite[Thm. 7.11]{Mu0} implies that
$\phi_{pq H}(X_{p^2 n})=\Sigma(E)$, where 
$X_m$ denotes the set of $m$-torsion points of $X$, 
which is the kernel
of $m:X \to X$
the multiplication by $m \in {\Bbb Z}_{>0}$.
Hence we have a morphism $\phi:X \to M_H(v_0)$
such that 
$$
\phi(x)=E \otimes T_x^* ({\cal O}_X(pqH)) \otimes 
{\cal O}_X(-pqH),\; x \in X
$$
 and $\phi$ induces
an isomorphism 
$$
X/(X_{p^2 n}+K(pqH)) \cong \widehat{X}/\Sigma(E) \cong M_H(v_0).
$$
Since $K(pq H)=(pq)^{-1}(K(H))$,
$n(K(H))=0$ and
$(pn,q)=1$, we have a sequence of isomorphisms
$$
X/(X_{p^2 n}+K(pqH)) \overset{p^2n}{\to} X/p^2 nK(pqH) 
=X/q^{-1}(0)
\overset{q}{\to} X.
$$
\begin{NB}
We note that
$K(pqH)=\frac{1}{pq}V_1/V_1 \oplus \frac{1}{pqn}V_2/V_2$,
where $V_1$ and $V_2$ are isomorphic to ${\Bbb Z}^{\oplus 2}$.
Then $p^2 n K(pqH)=
\frac{p^2 n}{pq}V_1/V_1 \oplus \frac{p^2 n}{pqn}V_2/V_2
=\frac{p n}{q}V_1/V_1 \oplus \frac{p}{q}V_2/V_2=
\frac{1}{q}V_1/V_1 \oplus \frac{1}{q}V_2/V_2=X_{q}$.
\end{NB}
Therefore $M_H(v_0) \cong X$.

\begin{NB}
Wrong argument:
Assume that $v_0=(n,qH,q^2)$.
Let ${\bf P}$ be the Poincar\'{e} line bundle on $\Pic^0(X) \times X$.
We have a morphism $\pi:\Pic^0(X) \to X$ such that
$\pi^*(E)=L^{\oplus n}$ and $\pi_*(L)=E$.
\begin{NB2}
It is not correct.
\end{NB2}
Let $G$ be the kernel of $\pi$.
Then ${\bf E}:=\pi_*({\bf P}\otimes L)$ is a flat family
of stable sheaves with $v({\bf E}_{|X \times \{ x \}})=v_0$. 
We note that $\Hom({\bf E}_{|X \times \{ x \}},{\bf E}_{|X \times \{ y \}})
=\Hom(\pi^*({\bf E}_{|X \times \{ x \}}),
{\bf P}_{|X \times \{ y \}} \otimes
L)^G
=\Hom({\bf P}_{|X \times \{ x \}}^{\oplus n},
{\bf P}_{|X \times \{ y \}})^G=0$ if $x \ne y$.
Hence $X \to M_H(v_0)$ is isomorphic. 

In general, for $v_0=(p^2 s,pqH,q^2 t)$,
we take a covering $\pi:Y \to X$ such that $\pi^*(E)=L^{\oplus p^2 s}$.
Let ${\bf P}$ be the Poincar\'{e} line bundle on
$Y \times \Pic^0(Y)$.
Then $\pi_*({\bf P} \otimes p_1^*(L))$ parameterizes
stable sheaves with the Mukai vector $v_0$
and $\Pic^0(Y) \to M_H(v_0)$ is isomorphic.
\end{NB}

(2) By our assumption, we have a universal family
${\bf E}$ on $X \times M_H(v_0)$. By (1), we have an isomorphism
$$
X \times M_H(v_0) \to X \times X.
$$ 
Thus we may assume that
${\bf E} \in \Coh(X \times X)$.
Then we have an equivalence 
$$
\Phi:{\bf D}(X) \to {\bf D}(X)
$$
such that $\Phi({\Bbb C}_x)={\bf E}_{|X \times \{ x\}}[1]$
for $x \in X$.
Since $\langle \Phi^{-1}(v_1),\Phi^{-1}(v_0) \rangle=1$,
$\rk \Phi^{-1}(v_1)=1$.
Let $p_2:X \times X \to X$ be the second projection.
Replacing ${\bf E}$ by ${\bf E} \otimes p_2^*(L)$ $(L \in \Pic(X))$,
we have $\Phi^{-1}(v_1)=v({\cal O}_X)$.
Thus the first claim holds.

(a) If $\Phi':{\bf D}(X) \to {\bf D}(X)$
also satisfies the same properties, then
$\Phi^{-1}\Phi'(\varrho_X)=\varrho_X$ and
$\Phi^{-1}\Phi'(v({\cal O}_X))=v({\cal O}_X)$.
Hence $\Phi^{-1}\Phi'$
is the Fourier-Mukai transform
whose kernel is ${\cal O}_{\Gamma} \otimes p_2^*(N)[2k]$,
where $N \in \Pic^0(X)$ and $\Gamma$ is the graph of
$g \in \Aut(X)$.
Hence 
$$
\Phi'(E)=\Phi(g_*(E \otimes N))[2k],\;E \in {\bf D}(X).
$$ 

(b)
We take a complex $E_1 \in{\bf D}(X)$ with $v(E_1)=v_1$.
For a $S$-flat coherent sheaf ${\bf F}$ on $X \times S$
with ${\bf F}_{|X \times \{s \}} \in M_H(v_0)$ ($s \in S$),
we set 
$$
{\cal L}_{\bf F}:=\det p_{S_*}({\bf F}^{\vee} \otimes E_1).
$$
If we replace ${\bf F}$ by ${\bf F} \otimes {\cal L}_{\bf F}^{\vee}$,
then we have ${\cal L}_{\bf F}={\cal O}_S$.  
We set ${\bf E}_1:=E \otimes {\bf P}$.
Then by the identification $X \cong M_H(v_0)$ in the proof of (1),
we have
$$
(1 \times \phi_{pqH})^*
({\bf E}_1 \otimes {\cal L}_{{\bf E}_1}^{\vee})
\cong (1 \times p^2 nq)^*({\bf E}),
$$
where 
${\bf E}$ is the universal family in (2) which is normalized to
satisfy ${\cal L}_{{\bf E}} \cong {\cal O}_X$.
Indeed $\Phi(v({\cal O}_X))=v(E_1)$ implies that
$c_1({\cal L}_{{\bf E}})=-c_1(\Phi_{X \to X}^{{\bf E}^{\vee}[1]}(E_1))
=-c_1(v({\cal O}_X))=0$. Hence 
${\bf E}':={\bf E} \otimes {\cal L}_{\bf E}^{\vee}$ also satisfies 
$v(\Phi_{X \to X}^{{\bf E}'[1]}({\cal O}_X))=v_1$.

We set $\beta:=\frac{q}{pn}H$.
For $G \in {\bf D}(X)$ with 
$$
v(G)=r e^\beta+a \varrho_X+dH+D
+(dH+D,\beta)\varrho_X,\;D \in H^\perp,
$$
Lemma \ref{lem:action} below implies that
\begin{equation}\label{eq:p^2 qn}
\begin{split}
(p^2 qn)^*(v(\Phi_{X \to X}^{{\bf E}^{\vee}}(G)))=&
\phi_{pqH}^*(v({\bf R}p_{\Pic^0(X)*}({\bf E}_1^{\vee} \otimes G)
\otimes {\cal L}_{{\bf E}_1}))\\
=& \phi_{pqH}^*
(v(\Phi_{X \to \Pic^0(X)}^{{\bf P}^{\vee}}(E^{\vee} \otimes G)
\otimes {\cal L}_{{\bf E}_1}))\\
=& 
(p^2 qn)^*(p^2 na+\tfrac{r}{p^2 n}\varrho_X-dH+D)
\otimes \phi_{pqH}^*(v({\cal L}_{{\bf E}_1})).
\end{split}
\end{equation}
In order to complete the proof of the claim,
we need to show that
$\phi_{pqH}^*({\cal L}_{{\bf E}_1}) \in (p^2 qn)^*(L)$.
\begin{NB}
For $D \in H^\perp$,
$D e^{\lambda H}=D$, $\lambda \in {\Bbb Q}$.
\end{NB}
We take integers $x,y$ with $y pn-xq= \pm 1$.
Then we have $v_1=(x^2,xyH,y^2 n)$. 
Hence 
$$
v_1=x^2 e^\beta \pm \frac{x}{pn}(H+(H,\beta)\varrho_X)+
\frac{1}{p^2 n}\varrho_X.
$$
Applying \eqref{eq:p^2 qn} to 
$v({\cal O}_X)=\Phi_{X \to X}^{{\bf E}^{\vee}}(v_1)$, we have
\begin{equation*}
\begin{split}
(p^2 qn)^*(v({\cal O}_X))
=&
(p^2 qn)^*(1+\tfrac{x^2}{p^2 n}\varrho_X \mp \frac{x}{pn}H)
\otimes \phi_{pqH}^*(v({\cal L}_{{\bf E}_1}))\\
=& (p^2 qn)^*(v({\cal O}_X(\mp\frac{x}{pn}H)))
\otimes \phi_{pqH}^*(v({\cal L}_{{\bf E}_1})).
\end{split}
\end{equation*}
Therefore the claim holds.

\begin{NB}

Let $({\cal X},{\cal H}) \to T$ be a general deformation
of $(X,H)$ such that $({\cal X}_0,{\cal H}_0)=(X,H)$. 
Under the assumption, we have a family of 
Mukai vectors $v_0,v_1 \in L$.
Hence we have a family of Fourier-Mukai transforms
$\Phi_t:{\bf D}({\cal X}_t) \to {\bf D}({\cal X}_t)$
such that $\Phi_0=\Phi$.
For a point $t \in T$ with $\NS({\cal X}_t)={\Bbb Z}{\cal H}_t$,
we have $\Phi_t(L)=L$.
Since the cohomological correspondence $\Phi_t$
is independent of $t$, we have $\Phi_0(L)=L$.
\end{NB}
\end{proof}

Let $m:X \times X \to X$ be the addition map.
Then
$$
m^*({\cal O}_X(pq H))
\otimes p_1^*({\cal O}_X(-pq H)) \otimes
 p_2^*({\cal O}_X(-pq H))) \cong (1 \times \phi_{pq H})^*({\bf P}).
$$
We shall compute 
$\phi_{pqH}^*(\Phi_{X \to \Pic^0(X)}^{{\bf P}^{\vee}}(w))$
for $w \in H^*(X,{\Bbb Z})$.

\begin{lem}\label{lem:action}
We set $\beta:=\frac{q}{pn}H$.
For 
$$
u:=re^\beta+a \varrho_X +(dH+D)+(dH+D,\beta)\varrho_X,\;
D \in \NS(X)_{\Bbb Q} \cap H^\perp,
$$  
we have
$$
\phi_{pqH}^*(\Phi_{X \to X}^{{\bf P}^{\vee}}(v_0^{\vee} u))=
(p^2 qn)^*(p^2 na+\tfrac{r}{p^2 n}\varrho_X-dH+D).
$$
\end{lem}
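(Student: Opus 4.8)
The plan is to turn the cohomological statement into a concrete push--pull computation on $X \times X$ by base change, and then match coefficients using the Chern character of the resulting line-bundle kernel.

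First I would exploit the isomorphism recorded immediately before the statement,
$$(1 \times \phi_{pqH})^*(\mathbf{P}) \cong m^*(\mathcal{O}_X(pqH)) \otimes p_1^*(\mathcal{O}_X(-pqH)) \otimes p_2^*(\mathcal{O}_X(-pqH)),$$
to rewrite the composite $\phi_{pqH}^* \circ \Phi_{X \to \Pic^0(X)}^{\mathbf{P}^\vee}$. By flat base change along the isogeny $\phi_{pqH}\colon X \to \Pic^0(X)$ together with the projection formula, the operation $\phi_{pqH}^*\mathbf{R}p_{\Pic^0(X)*}(\mathbf{P}^\vee \otimes p_X^*(-))$ becomes the integral transform $\mathbf{R}p_{2*}(\mathcal{K}^\vee \otimes p_1^*(-))$ on $X \times X$, where $\mathcal{K} := m^*(\mathcal{O}_X(pqH)) \otimes p_1^*(\mathcal{O}_X(-pqH)) \otimes p_2^*(\mathcal{O}_X(-pqH))$. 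This replaces the transcendentally defined Poincar\'e kernel by an explicit line bundle whose Chern character I can compute directly.

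Next I would compute $\ch(\mathcal{K})$. Writing $\gamma := m^*H - p_1^*H - p_2^*H$, the pure pullback parts cancel, so $\gamma$ lies in the mixed summand $H^1(X) \otimes H^1(X)$ and $c_1(\mathcal{K}) = pq\,\gamma$, whence $\ch(\mathcal{K}) = e^{pq\gamma}$. The class $\gamma$ is precisely the component carrying the Fourier-transform action, and its square $\gamma^2$ realizes the interchange of $\ch_0$ and $\ch_2$ under $p_{2*}$. I would then apply the transform to $v_0^\vee u$, using that $v_0 = p^2 n\,e^{\beta}$ with $\beta = \tfrac{q}{pn}H$, so that $v_0^\vee u = p^2 n\, e^{-\beta}u$. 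Multiplying $p_1^*(p^2 n\,e^{-\beta}u)$ by $\ch(\mathcal{K}^\vee) = e^{-pq\gamma}$, expanding in powers of $\gamma$, and pushing forward along $p_2$ (which retains only the components of top degree in the first factor) yields an element of $H^*(X)$. Matching its rank, $H$-coefficient, $\varrho_X$-coefficient, and $H^\perp$-part against $p^2 na + \tfrac{r}{p^2 n}\varrho_X - dH + D$ completes the proof; the twists $e^{\pm\beta}$ are exactly what make the expansion collapse, and the $H^\perp$-part $D$ is preserved because $\gamma$ pairs trivially with $H^\perp$.

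The step I expect to be the main obstacle is the bookkeeping of scalar normalizations. Both sides are pulled back by the multiplication map $p^2 qn$, which is injective on rational cohomology, so the asserted identity is to be read after this pullback; but the isogeny $\phi_{pqH}$ and the chain of isogenies (multiplications by $p^2 n$ and by $q$) used to identify $M_H(v_0) \cong X$ in part~(1) each contribute degree factors, and I must check that they combine to give exactly the coefficient $\tfrac{r}{p^2 n}$ on $\varrho_X$ and the rank $p^2 na$, rather than some other multiple. Keeping the sign conventions for $\mathbf{P}^\vee$ and for the dual $v_0^\vee$ consistent throughout the push--pull, and confirming that the $(p^2 qn)^*$ scaling is identical on both sides, is the delicate point; what remains is a routine, if somewhat lengthy, Chern-character calculation.
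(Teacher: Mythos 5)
Your proposal is correct and follows essentially the same route as the paper's own proof: the see-saw identification $(1\times\phi_{pqH})^*({\bf P})\cong m^*({\cal O}_X(pqH))\otimes p_1^*({\cal O}_X(-pqH))\otimes p_2^*({\cal O}_X(-pqH))$, base change to an integral transform on $X\times X$ with kernel class $e^{\pm pq\gamma}$ (your $\gamma$ is the paper's $\eta$, and the sign ambiguity is immaterial because only even powers of $\gamma$ contribute to the push-forward), the collapse $v_0^{\vee}u=p^2n\,e^{-\beta}u=p^2n(r+a\varrho_X+dH+D)$, and coefficient matching against $(p^2qn)^*$, which is exactly what the paper does with an explicit basis $e_1,\dots,e_4$ of $H^1(X,{\Bbb Z})$ satisfying $c_1(H)=e_1\wedge e_2+n\,e_3\wedge e_4$. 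The one inaccurate remark is your reason for the $H^\perp$-part: $\gamma$ does \emph{not} pair trivially with $H^\perp$ (if it did, $D$ would be killed rather than preserved) --- the degree-two correspondence $\gamma^2/2$ multiplies $H^\perp$ by $+n(pq)^2$ and ${\Bbb R}H$ by $-n(pq)^2$, and only after absorbing the overall factor $p^2n\cdot n(pq)^2=(p^2qn)^2$ into $(p^2qn)^*$ does one read off $D\mapsto D$, $H\mapsto -H$ --- but since your plan is to match all components by explicit expansion (and your other worry, degree factors from the isogenies of part (1), does not even enter this lemma), this slip does not affect the argument.
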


\begin{proof} 
Let $e_1,e_2,e_3,e_4$ be a basis of $H^1(X,{\Bbb Z})$
such that $c_1(H)=e_1 \wedge e_2+n e_3 \wedge e_4$.
In 
$$
H^*(X \times X,{\Bbb Z}) =H^*(X,{\Bbb Z}) \otimes H^*(X,{\Bbb Z}),
$$
we identify $e_i \otimes 1$ with $e_i$ and  
denote $1 \otimes e_i$ by $f_i$.
Then 
\begin{equation*}
\begin{split}
& c_1(m^*({\cal O}_X(H)) \otimes p_1^*({\cal O}_X(-H))
 \otimes p_2^*({\cal O}_X(-H)))\\
= &(e_1+f_1) \wedge (e_2+f_2)+n (e_3+f_3) \wedge (e_4+f_4)\\
& \quad -(e_1 \wedge e_2+n e_3 \wedge e_4)-
(f_1 \wedge f_2+n f_3 \wedge f_4)\\
= & e_1 \wedge f_2+f_1 \wedge e_2 +n (e_3 \wedge f_4
+f_3 \wedge e_4)=: \eta.
\end{split}
\end{equation*}
We denote the class by $\eta$.
Then we see that
\begin{equation*}
\begin{split}
\frac{\eta^2}{2!}= & 
-(e_3 \wedge e_4)^* \wedge (f_1 \wedge f_2)-n^2
(e_1 \wedge e_2)^* \wedge (f_3 \wedge f_4)\\
& +n((e_2 \wedge e_4)^* \wedge (f_2 \wedge f_4)+
(e_1 \wedge e_4)^* \wedge (f_1 \wedge f_4)\\
& +(e_2 \wedge e_3)^* \wedge (f_2 \wedge f_3)+
(e_1 \wedge e_3)^* \wedge (f_1 \wedge f_3)),\\
\frac{\eta^4}{4!}=&
n^2 (e_1 \wedge e_2 \wedge e_3 \wedge e_4) \wedge
(f_1 \wedge f_2 \wedge f_3 \wedge f_4),
\end{split}
\end{equation*}
where $\{ (e_i \wedge e_j)^* \mid i,j  \}$ is the dual basis of 
$\{ e_i \wedge e_j \mid i,j\}$ via the intersection pairing.
We note that $H^\perp$ is generated by
$$
e_1 \wedge e_2-n e_3 \wedge e_4,\;e_2 \wedge e_4,\;
e_1 \wedge e_4,\; e_2 \wedge e_3,\; e_1 \wedge e_3. 
$$
Then we see that
\begin{equation*}
\begin{split}
&\phi_{pq H}^*(\Phi_{X \to \Pic^0(X)}^{{\bf P}^{\vee}}(v_0^{\vee} u))\\
=& p_{2*}(p_1^*(p^2 n e^{-\beta}
(r e^\beta+a \varrho_X+(dH+D+(dH+D,\beta)\varrho_X))
e^{pq \eta})\\
=&
p_{2*}(p_1^*(p^2 n (r +a \varrho_X+dH+D))
e^{pq \eta})\\
=& p^2 n^2 (pq)^2(-dH+D)+p^2 n a+p^6 n^3 q^4 r \varrho_X. \\
\end{split}
\end{equation*}
\begin{NB}
$$
= p^2 n (pq)^2 (-y f_1 \wedge f_2-x n^2 f_3 \wedge f_4+n z)
+p^2 n a+p^6 n^3 q^4 r \varrho_X, 
$$
where
$\xi=x e_1 \wedge e_2+y e_3 \wedge e_4+z$, $(z,e_1 \wedge e_2)=
(z,e_3 \wedge e_4)=0$.
Thus the action on $z$ is multiplication by
$(p^2 qn)^2$, which is the same as the action of
$p^2 qn:X \to X$.
For $H$, we have $-(p^2 qn)^2 H$.
For $e_1 \wedge e_2-n e_3 \wedge e_4 \in H^\perp$, we also
have $(p^2 qn)^2(e_1 \wedge e_2-n e_3 \wedge e_4)$.
\end{NB}
\begin{NB}
If $(r,a,\xi)=(p^2n,0,0)$,
we get $v_0 \mapsto (p^2 qn)^4 \varrho_X$.
\end{NB}
Since $(p^2 qn)^*(x_i)=(p^2 qn)^{2i} x_i$
for $x_i \in H^{2i}(X,{\Bbb Q})$, we get the claim.
\end{proof}

By Lemma \ref{lem:n},
every member of $\Stab_0(v)^*$ comes from an autoequivalence
of ${\bf D}(X)$ and
we have a homomorphism
$$
\Stab_0(v)^* \to \LieO(H^*(X,{\Bbb Z})_{\alg}).
$$

\begin{NB}
By Lemma \ref{lem:n} (1),
there is an autoequivalence $\Phi$ of ${\bf D}(X)$
such that $\Phi(-\varrho_X)=-(z^2,zw/\sqrt{n} H,z^2)$
and $\Phi(1)=(x^2,xy/\sqrt{n} H,y^2)$.
Then (2) implies $L$ is preserved.
Since $\Phi_{|L}$ is an isometry which preserves the orientation
(\cf \cite[Lem. 6.3]{YY1}),
$\Phi(H)$ is uniquely determined.
Thus $\Phi_{|L}$ coincides with the action 
by the matrix $\begin{pmatrix} x & y\\ z & w \end{pmatrix}$.
\end{NB}

\begin{NB}
If $\Phi(\varrho_X)=\varrho_X$ and $\Phi(v({\cal O}_X))=v({\cal O}_X)$,
the kernel ${\bf E}$ of $\Phi$
is a family of point sheaves up to shift $[2k]$.
Hence $\Phi$ is the action of $X \times \Pic^0(X)$ up to
shift $[2k]$.
In particular, the cohomological action of
$\Phi$ is trivial.
\end{NB}

\begin{NB}
We set $n:=(H^2)/2$.
Then 
$$
S_H(v):=\left\{
\left.
\begin{pmatrix}
p-dnq & -aq \sqrt{n}\\
rq \sqrt{n} & p+dnq 
\end{pmatrix}
\right| p,q \in {\Bbb Z},
p^2-n\ell q^2=1
\right\}
$$
is a subgroup of $G_0$ which acts trivially on $v$.   
\end{NB}

\begin{lem}\label{lem:S_H}
For a Mukai vector $v=(r,\xi,a)$, assume that
$(\xi^2)>0$. We set $\xi:=dH$, where $H$ is a primitive ample divisor
and $d \in {\Bbb Z}$.
If $\sqrt{\langle v^2 \rangle/(\xi^2)} \not \in {\Bbb Q}$,
then there is an autoequivalence $\Phi:{\bf D}(X) \to {\bf D}(X)$
such that $\Phi$ acts on $L:={\Bbb Z} \oplus {\Bbb Z}H \oplus
{\Bbb Z}\varrho_X$ 
as an isometry of infinite order
and $\Phi(v)=v$.
\end{lem}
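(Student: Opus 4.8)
The plan is to reduce the statement to the rank-one analysis of subsections \ref{subsect:FM} and \ref{subsect:stab} by passing to the sublattice $L={\Bbb Z}\oplus{\Bbb Z}H\oplus{\Bbb Z}\varrho_X$, and then to combine Proposition \ref{prop:stab} with Lemma \ref{lem:n}. Since $H$ is primitive and ample with $(H^2)=2n$, the pair $(L,\langle\,,\,\rangle)$ is isometric to $(\Sym_2({\Bbb Z},n),B)$ via the same formula defining $\iota_X$, so the entire formalism of subsection \ref{subsect:stab} applies verbatim to $v=(r,dH,a)\in L$. Writing $\ell:=\langle v^2\rangle/2=nd^2-ra$, a direct computation gives $\langle v^2\rangle/(\xi^2)=\ell/(nd^2)$, whence $\sqrt{\langle v^2\rangle/(\xi^2)}=\sqrt{n\ell}/(dn)$; thus the hypothesis $\sqrt{\langle v^2\rangle/(\xi^2)}\notin{\Bbb Q}$ is exactly the assumption $\sqrt{n\ell}\notin{\Bbb Q}$ of Proposition \ref{prop:stab}.

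Next I would produce an isometry of $L$ of infinite order fixing $v$. By Proposition \ref{prop:stab}, $\im\varphi\cong{\Bbb Z}\oplus{\Bbb Z}_2$ with $\ker\varphi$ finite, so $\Stab_0(v)$ contains an element $g=XI_2+ZF$ of infinite order. The induced action of $g$ on $\Sym_2({\Bbb Z},n)\cong L$ also has infinite order, since $\alpha$ is faithful modulo $\pm1$ (testing $g$ on the classes $(1,0,0)$ and $(0,0,1)$ forces $g=\pm I_2$ when the action is trivial). Moreover $g\in\Stab_0(v)^*$ would give $\det g=1$ and hence $g(v)=(\det g)v=v$; and because $\Stab_0(v)^*$ has finite index in $\Stab_0(v)$ (the quotient being of type $({\Bbb Z}/2{\Bbb Z})^{\oplus k}$), a suitable power $g^N$ lies in $\Stab_0(v)^*$, still has infinite order, and still fixes $v$.

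Finally I would lift $g^N$ to an autoequivalence using Lemma \ref{lem:n}. Reading off from $g^N$ the two isotropic vectors $v_0$ and $v_1$ attached to the images of $\varrho_X$ and $v({\cal O}_X)$ under the corresponding isometry, both lie in $L$ because $g^N$ preserves $L$; they are isotropic with $\langle v_0,v_1\rangle=1$ since an isometry preserves the pairing. Lemma \ref{lem:n}(2) then yields a Fourier--Mukai transform $\Phi\colon{\bf D}(X)\to{\bf D}(X)$ realising this isometry, and its last clause (the case $v_1\in L$) allows us to choose $\Phi$ with $\Phi(L)=L$ and $\Phi_{|L^\perp}=\id$. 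Thus $\Phi$ acts on $L$ as $g^N$, hence with infinite order, and $\Phi(v)=v$. The main obstacle is precisely this last step: one must verify the primitivity hypothesis $\gcd(pn,q)=1$ for $v_0=(p^2n,pqH,q^2)$, which follows from $\varrho_X$ being primitive in $H^*(X,{\Bbb Z})_{\alg}$ and $g^N$ (extended by the identity on $L^\perp$) being an isometry of the full lattice, and one must ensure the realising transform can be taken to act trivially on $L^\perp$ rather than merely preserving $L$ — which is exactly what the $v_1\in L$ clause of Lemma \ref{lem:n} guarantees.
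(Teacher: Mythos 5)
Your proposal is correct and follows essentially the same route as the paper: translate the hypothesis into $\sqrt{n\ell}\notin{\Bbb Q}$, obtain an infinite-order element of $\Stab_0(v)^*$ from Proposition \ref{prop:stab} (2), and lift it to an autoequivalence acting on $L$ via Lemma \ref{lem:n}. The only (cosmetic) difference is that the paper takes the element of $\Stab_0(v)^*$ directly from the explicit Pell-equation matrix constructed in the proof of Proposition \ref{prop:stab} (2), whereas you pass to a power $g^N$ of an abstract infinite-order element of $\Stab_0(v)$ using the finiteness of $\Stab_0(v)/\Stab_0(v)^*$.
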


\begin{proof}
By the proof of Proposition \ref{prop:stab} (2),
$\Stab_0(v)^*$ contains an element $g$ of infinite order.
By Lemma \ref{lem:n},
we have a Fourier-Mukai transform
$\Phi:{\bf D}(X) \to {\bf D}(X)$ inducing the action of $g$ on $L$.
\end{proof}

\section{The space of stability conditions
 and the positive cone of the moduli spaces.}\label{sect:positive-cone}

\subsection{A polarization of $M_{(\beta,tH)}(v)$.}
  \label{subsect:polarization}
We shall explain a natural ample line bundle on the 
moduli space $M_{(\beta,tH)}(v)$, which is introduced in 
\cite{MYY:2011:2} and \cite{BM}.

For $v=(r,\xi,a)$, 
we set
\begin{equation*}
d_\beta:= \frac{(\xi-r \beta,H)}{(H^2)},\;
a_\beta:= -\langle e^\beta,v \rangle.
\end{equation*}
Then 
$$
v=re^\beta+a_\beta \varrho_X+d_\beta H+D_\beta+
(d_\beta H+D_\beta,\beta)\varrho_X,\;
D_\beta \in \NS(X)_{\Bbb Q} \cap H^\perp.
$$
Let 
\begin{equation*}
\begin{split}
\xi_\omega:=& \frac{(\omega^2)}{2 d_\beta}
\left(r(H+(H,\beta)\varrho_X)+d_\beta (H^2)\varrho_X \right)\\
& \quad -
\frac{1}{d_\beta}
 \left(a_\beta(H+(H,\beta)\varrho_X)+d_\beta (H^2)e^\beta \right)
\in H^*(X,{\Bbb Z})_{\alg} \otimes {\Bbb R}
\end{split}
\end{equation*}
be a vector in \cite[Defn. 5.12]{MYY:2011:2}, where
$\omega=tH$.

\begin{defn}
For $(\beta,H,t) \in \NS(X)_{\Bbb R} \times
\overline{\Amp(X)}_{\Bbb R} \times {\Bbb R}_{\geq 0}$,
we set 
\begin{equation*}
\begin{split}
\xi(\beta,H,t):=&
d_\beta \xi_\omega\\
=&
\left(r\frac{t^2 (H^2)}{2}+\langle e^\beta,v \rangle \right)
(H+(\beta,H)\varrho_X)
-(\xi-r \beta,H)\left(e^\beta-\frac{t^2(H^2)}{2}\varrho_X \right).
\end{split}
\end{equation*} 
\end{defn}

Assume that $r \ne 0$.
We set $\delta:=\frac{\xi}{r}$.
Then $v=r e^\delta+a_\delta \varrho_X$.
For $\beta=\delta+sH+D$ with $D \in \NS(X)_{\Bbb R} \cap H^{\perp}$,
we set
\begin{equation*}
d_\beta=
\frac{r(\delta-\beta,H)}{(H^2)},\;
a_\beta=
a_\delta+\frac{((\beta-\delta)^2)}{2}r.
\end{equation*}
\begin{NB}
Since 
$v=r e^\beta+(d_\beta H+D_\beta+(d_\beta H+D_\beta,\beta)\varrho_X)
+a_\beta \varrho_X$,
$\delta=\beta+(d_\beta H+D_\beta)/r$.
Hence $D=-D_\beta/r$.
\end{NB}
\begin{equation*}
\begin{split}
\xi(\beta,H,t)=&
r \left( \frac{t^2(H^2)-((\beta-\delta)^2)}{2}+
\frac{\langle v^2 \rangle}{2r^2} \right)
\left(H+(\delta,H)\varrho_X \right)
-r(\delta-\beta,H)
\left(e^\beta-\frac{a_\beta}{r}\varrho_X \right)\\
=& r \left( \frac{(\omega^2)-((\beta-\delta)^2)}{2}+
\frac{\langle v^2 \rangle}{2r^2} \right)
\left(H+(\delta,H)\varrho_X \right)\\
& -r(\delta-\beta,H)
\left((\beta-\delta+(\beta-\delta,\delta)\varrho_X)
+\left(e^\delta-\frac{a_\delta}{r}\varrho_X \right) \right).
\end{split}
\end{equation*}

\begin{NB}
Let 
\begin{equation*}
\xi_\omega:=\frac{r\frac{(\omega^2)}{2}-a_\beta}{d_\beta}
\left(H+\left(H,\frac{\xi}{r}\right)\varrho_X \right)-
(H^2)\left(e^\beta-\frac{a_\beta}{r}\varrho_X \right)
\in H^*(X,{\Bbb Z})_{\alg} \otimes {\Bbb R}
\end{equation*}
be a vector in \cite[sect. 5.3]{MYY:2011:2}.
Then we have
\begin{equation*}
\begin{split}
 d_\beta \xi_\omega=&
r \left( \frac{(\omega^2)-((\beta-\delta)^2)}{2}+
\frac{\langle v^2 \rangle}{2r^2} \right)
\left(H+(\delta,H)\varrho_X \right)
-r(\delta-\beta,H)
\left(e^\beta-\frac{a_\beta}{r}\varrho_X \right)\\
=& r \left( \frac{(\omega^2)-((\beta-\delta)^2)}{2}+
\frac{\langle v^2 \rangle}{2r^2} \right)
\left(H+(\delta,H)\varrho_X \right)\\
& -r(\delta-\beta,H)
\left((\beta-\delta+(\beta-\delta,\delta)\varrho_X)
+\left(e^\delta-\frac{a_\delta}{r}\varrho_X \right) \right).
\end{split}
\end{equation*}
\end{NB}

Assume that $r=0$.
Then we also have
\begin{equation*}
\begin{split}
\xi(\beta,H,t)=& -a_\beta (H+(H,\beta)\varrho_X)
-d_\beta (H^2)\left(e^\beta-\frac{t^2(H^2)}{2}\varrho_X \right)\\
=& ((\xi,\beta)-a)(H+(H,\beta)\varrho_X)-
(\xi,H)\left(e^\beta-\frac{t^2(H^2)}{2}\varrho_X \right).
\end{split}
\end{equation*}

\begin{lem}\label{lem:Im}
\begin{equation*}
{\Bbb R}_{>0}\xi(\beta,H,t)=
{\Bbb R}_{>0}\mathrm{Im}(Z_{(\beta,tH)}(v)^{-1} e^{\beta+\sqrt{-1}tH}).
\end{equation*}
\end{lem}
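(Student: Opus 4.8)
The plan is to split the complex Mukai vector $e^{\beta+\sqrt{-1}tH}$ into its real and imaginary parts and read off the real and imaginary parts of $Z_{(\beta,tH)}(v)$ directly from the Mukai pairing. Writing $e^{\beta+\sqrt{-1}tH}=A+\sqrt{-1}B$ with
$$A=e^\beta-\tfrac{t^2(H^2)}{2}\varrho_X, \qquad B=t\left(H+(\beta,H)\varrho_X\right),$$
one has $Z_{(\beta,tH)}(v)=\langle A,v\rangle+\sqrt{-1}\langle B,v\rangle=:Z_1+\sqrt{-1}Z_2$. A direct computation with the pairing gives $Z_1=\tfrac{t^2(H^2)}{2}r-a_\beta$ and $Z_2=t(\xi-r\beta,H)=tr(\delta-\beta,H)$. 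Using $a_\beta=a_\delta+\tfrac{((\beta-\delta)^2)}{2}r$ together with $a_\delta=-\langle v^2\rangle/(2r)$, the first of these rewrites as $Z_1=r\bigl(\tfrac{t^2(H^2)-((\beta-\delta)^2)}{2}+\tfrac{\langle v^2\rangle}{2r^2}\bigr)$, which is exactly the scalar coefficient appearing in the definition of $\xi(\beta,H,t)$.

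Next I would compute the imaginary part of $Z_{(\beta,tH)}(v)^{-1}e^{\beta+\sqrt{-1}tH}$. Since $Z^{-1}=\overline{Z}/|Z|^2$, expanding $\overline{Z}(A+\sqrt{-1}B)$ yields
$$\mathrm{Im}\left(Z_{(\beta,tH)}(v)^{-1}e^{\beta+\sqrt{-1}tH}\right)=\frac{Z_1B-Z_2A}{|Z_{(\beta,tH)}(v)|^2}.$$
Thus the assertion reduces to identifying $Z_1B-Z_2A$ with a strictly positive multiple of $\xi(\beta,H,t)$.

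The remaining step is the algebraic identity $Z_1B-Z_2A=t\,\xi(\beta,H,t)$. Substituting the formulas for $A$, $B$, $Z_1$, $Z_2$ gives $Z_1B-Z_2A=t\bigl[Z_1(H+(\beta,H)\varrho_X)-r(\delta-\beta,H)(e^\beta-\tfrac{t^2(H^2)}{2}\varrho_X)\bigr]$, to be compared with $\xi(\beta,H,t)=Z_1(H+(\delta,H)\varrho_X)-r(\delta-\beta,H)(e^\beta-\tfrac{a_\beta}{r}\varrho_X)$. The two expressions differ only in their $\varrho_X$-components, and the difference equals $\bigl[Z_1(\beta-\delta,H)-(\delta-\beta,H)(a_\beta-\tfrac{rt^2(H^2)}{2})\bigr]\varrho_X$; this vanishes because $a_\beta-\tfrac{rt^2(H^2)}{2}=-Z_1$ by the formula for $Z_1$ above. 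Hence $Z_1B-Z_2A=t\,\xi(\beta,H,t)$, and since $t>0$ and $|Z_{(\beta,tH)}(v)|^2>0$ the imaginary part is a strictly positive multiple of $\xi(\beta,H,t)$, giving ${\Bbb R}_{>0}\xi(\beta,H,t)={\Bbb R}_{>0}\mathrm{Im}(Z_{(\beta,tH)}(v)^{-1}e^{\beta+\sqrt{-1}tH})$. The main obstacle is purely the bookkeeping in this last paragraph, where the $\varrho_X$-coefficients must be made to cancel via the relation between $Z_1$ and $a_\beta$; the rest is an immediate reading-off from the Mukai pairing.
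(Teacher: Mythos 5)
Your proof is correct and takes essentially the same route as the paper's: both split off the real and imaginary parts $Z_1,Z_2$ of $Z_{(\beta,tH)}(v)$, reduce via $Z^{-1}=\overline{Z}/|Z|^2$ to the identity $\mathrm{Im}\bigl(\overline{Z_{(\beta,tH)}(v)}\,e^{\beta+\sqrt{-1}tH}\bigr)=t\,\xi(\beta,H,t)$, and verify it by direct computation. Your explicit cancellation of the $\varrho_X$-components using $a_\beta-\tfrac{rt^2(H^2)}{2}=-Z_1$ is precisely the bookkeeping implicit in the paper's displayed calculation (whose first line, incidentally, has a sign typo $+a_\beta$ where $-a_\beta$ is meant).
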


\begin{proof}
\begin{equation*}
\begin{split}
Z_{(\beta,tH)}(v)=& \frac{r}{2}t^2 (H^2)-a_\beta
+(\xi-r \beta,tH)\sqrt{-1}\\
\end{split}
\end{equation*}
For the complex conjugate $\overline{Z_{(\beta,tH)}(v)}$
of $Z_{(\beta,tH)}(v)$,
we have
\begin{equation*}
\begin{split}
& \mathrm{Im}(\overline{Z_{(\beta,tH)}(v)} e^{\beta+\sqrt{-1} tH})\\
=&
\left(r\frac{t^2 (H^2)}{2}-a_\beta \right)
t(H+(\beta,H)\varrho_X)
-(\xi-r \beta,tH)\left(e^\beta-\frac{t^2(H^2)}{2}\varrho_X \right)\\
=& t\xi(\beta,H,t). 
\end{split}
\end{equation*}
Hence the claim holds.
\end{proof}

\begin{rem}
The expression of $\xi(\beta,H,t)$ in Lemma \ref{lem:Im} 
appeared in \cite{BM}.
The difference of the sign comes from the difference
of $\theta_v$ and $\theta_{v,\beta,tH}$.  
\end{rem}

\begin{rem}\label{rem:theta(rho)}
Assume that $r=0$ and $t>0$.
Then 
$$
\{\xi(\beta,H,st) \mid s \geq 1 \}=
\xi(\beta,H,t)+{\Bbb R}_{ \geq 0}\varrho_X
$$
 and
$$
\lim_{t \to \infty}\xi(\beta,H,t)/t^2=(\xi,H)\frac{(H^2)}{2}\varrho_X.
$$
If $\xi$ is effective, then
we have a morphism
$M_H^\beta(v) \to \Hilb_X^{\xi}$ by sending $E$ to the scheme-theoretic
support $\Div(E)$ and $\theta_v(\varrho_X)$ comes from
$\Hilb_X^\xi$. 
Since $M_{(\beta,tH)}(v)=M_H^\beta(v)$ for $t \gg 0$ and $(\xi,H)>0$,
$\theta_{v,\beta,tH}((\xi,H)\varrho_X)$ is base point free 
for $t \gg 0$.
\begin{NB}
If $(\xi,H)<0$, then
$E[1] \in M_H(-v)$ up to shift $[2k]$ 
for all $E \in M_{(\beta,tH)}(v)$.
\end{NB}
\end{rem}

We shall remark the behavior of $\xi(\beta,H,t)$ under the Fourier-Mukai
transforms.
Let 
$$
\Phi:{\bf D}(X) \to {\bf D}^{\alpha_1}(X_1)
$$
 be a twisted Fourier-Mukai transform such that
$$
\Phi(r_1 e^\gamma)=-\varrho_{X_1},\;
\Phi(\varrho_X)=-r_1 e^{\gamma'},
$$
where $\alpha_1$ is a representative of a suitable
Brauer class.
Then we can describe the cohomological Fourier-Mukai transform
as 
\begin{equation*}
\Phi(r e^\gamma+a \varrho_X+\xi+(\xi,\gamma)\varrho_X)
=-\frac{r}{r_1} \varrho_{X_1}-r_1 a e^{\gamma'}+
\frac{r_1}{|r_1|}
( \widehat{\xi}+(\widehat{\xi},\gamma')\varrho_{X_1}),
\end{equation*}
where 
$$
\xi \in \NS(X)_{\Bbb Q},\;
\widehat{\xi}:=
\frac{r_1}{|r_1|} 
c_1(\Phi(\xi+(\xi,\gamma)\varrho_X)) \in \NS(X_1)_{\Bbb Q}.
$$
\begin{rem}
By taking a locally free $\alpha_1$-twisted stable
sheaf $G$ with $\chi(G,G)=0$,
we have a notion of Mukai vector, thus, we have
a map (\cite[Rem. 1.2.10]{MYY:2011:1}):
$$
v_G:{\bf D}^{\alpha_1}(X_1) \to H^*(X_1,{\Bbb Q})_{\alg}.
$$
\end{rem}
For $(\beta,\omega) \in \NS(X)_{\Bbb R} \times \Amp(X)_{\Bbb R}$,
we set 
\begin{equation}\label{eq:tilde(beta)}
\begin{split}
\widetilde{\omega}:= & -\frac{1}{|r_1|}
\frac{\frac{((\beta-\gamma)^2)-(\omega^2)}{2}\widehat{\omega}-
(\beta-\gamma,\omega)(\widehat{\beta}-\widehat{\gamma})}
{\left(\frac{((\beta-\gamma)^2)-(\omega^2)}{2} \right)^2
+(\beta-\gamma,\omega)^2},\\
\widetilde{\beta}:= & \gamma'-\frac{1}{|r_1|}
\frac{\frac{((\beta-\gamma)^2)-(\omega^2)}{2}(\widehat{\beta}-\widehat{\gamma})
-(\beta-\gamma,\omega) \widehat{\omega}}
{\left(\frac{((\beta-\gamma)^2)-(\omega^2)}{2} \right)^2
+(\beta-\gamma,\omega)^2}.
\end{split}
\end{equation}
Then $(\widetilde{\beta},\widetilde{\omega}) 
\in \NS(X_1)_{\Bbb R} \times \Amp(X_1)_{\Bbb R}$.

By \cite[sect. A.1]{MYY:2011:2},
we get the following commutative diagram:
\begin{equation*}
\xymatrix{
   {\bf D}(X) \ar[r] \ar[d]_{Z_{(\beta,\omega)}}
 & {\bf D}^{\alpha_1}(X_1) \ar[d]^{Z_{(\wt{\beta},\wt{\omega})}} \\
   {\Bbb C}  \ar[r]_{\zeta^{-1}} 
 & {\Bbb C}
}
\end{equation*}
where 
$$
\zeta=-r_1 \left(
\frac{((\gamma-\beta)^2)-(\omega^2)}{2}
+\sqrt{-1}(\beta-\gamma,\omega) \right).
$$

\begin{NB}
\begin{prop}
Assume that $Z_{(\beta,tH)}(r_1 e^\gamma) \in {\Bbb R}_{>0}
Z_{(\beta,tH)}(v)$.
Then $\Phi(\xi(\beta,H,t)) \in {\Bbb R}_{>0}
(\widetilde{tH}+(\widetilde{tH},\widetilde{\beta})\varrho_{X_1})$.
\end{prop}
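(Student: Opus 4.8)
The plan is to rewrite both $\xi(\beta,H,t)$ and the target vector $\wt{tH}+(\wt{tH},\wt{\beta})\varrho_{X_1}$ as imaginary parts of exponential Mukai vectors, and then transport one into the other through $\Phi$ using the commutative diagram. First I would observe that, expanding $e^{\wt{\beta}+\sqrt{-1}\wt{\omega}}$ with $\wt{\omega}=\wt{tH}$ and extracting the coefficient of $\sqrt{-1}$, one has $\mathrm{Im}(e^{\wt{\beta}+\sqrt{-1}\wt{\omega}})=\wt{tH}+(\wt{tH},\wt{\beta})\varrho_{X_1}$. So the claim is exactly that $\Phi(\xi(\beta,H,t))$ lies on the ray ${\Bbb R}_{>0}\mathrm{Im}(e^{\wt{\beta}+\sqrt{-1}\wt{\omega}})$.

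The key step is the intertwining identity $\Phi(e^{\beta+\sqrt{-1}tH})=\zeta\, e^{\wt{\beta}+\sqrt{-1}\wt{\omega}}$. To obtain it, I would read the commutative diagram as $Z_{(\wt{\beta},\wt{\omega})}(\Phi(x))=\zeta^{-1}Z_{(\beta,tH)}(x)$ for every Mukai vector $x$, that is, $\langle e^{\wt{\beta}+\sqrt{-1}\wt{\omega}},\Phi(x)\rangle=\zeta^{-1}\langle e^{\beta+\sqrt{-1}tH},x\rangle$. Since $\Phi$ is a cohomological Fourier-Mukai isometry, so that $\langle e^{\beta+\sqrt{-1}tH},x\rangle=\langle\Phi(e^{\beta+\sqrt{-1}tH}),\Phi(x)\rangle$, and since $\Phi$ is bijective on the complexified Mukai lattice, non-degeneracy of $\langle\;,\;\rangle$ forces $e^{\wt{\beta}+\sqrt{-1}\wt{\omega}}=\zeta^{-1}\Phi(e^{\beta+\sqrt{-1}tH})$, which is the asserted identity. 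Taking coefficient-wise complex conjugates, and using that $\Phi$ is defined over ${\Bbb R}$ while $\wt{\beta},\wt{\omega}$ are real, also yields $\Phi(e^{\beta-\sqrt{-1}tH})=\overline{\zeta}\, e^{\wt{\beta}-\sqrt{-1}\wt{\omega}}$.

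Next I would feed this into Lemma \ref{lem:Im}. Writing $w:=Z_{(\beta,tH)}(v)$, the proof of that lemma gives $t\xi(\beta,H,t)=\mathrm{Im}(\overline{w}\, e^{\beta+\sqrt{-1}tH})=\tfrac{1}{2\sqrt{-1}}(\overline{w}\, e^{\beta+\sqrt{-1}tH}-w\, e^{\beta-\sqrt{-1}tH})$. Applying the ${\Bbb C}$-linear map $\Phi$ and substituting the two conjugate identities yields
\begin{equation*}
t\,\Phi(\xi(\beta,H,t))=\tfrac{1}{2\sqrt{-1}}\left(\overline{w}\zeta\, e^{\wt{\beta}+\sqrt{-1}\wt{\omega}}-w\overline{\zeta}\, e^{\wt{\beta}-\sqrt{-1}\wt{\omega}}\right)=\mathrm{Im}\!\left(\overline{w}\zeta\, e^{\wt{\beta}+\sqrt{-1}\wt{\omega}}\right).
\end{equation*}
It then remains to show that the scalar $\overline{w}\zeta$ is a positive real. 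Applying the diagram to $x=r_1 e^\gamma$ and using $\Phi(r_1 e^\gamma)=-\varrho_{X_1}$ together with $\langle e^{\wt{\beta}+\sqrt{-1}\wt{\omega}},-\varrho_{X_1}\rangle=1$ gives $\zeta=Z_{(\beta,tH)}(r_1 e^\gamma)$ (this also matches a direct Mukai-pairing computation of $\zeta$). By the hypothesis $Z_{(\beta,tH)}(r_1 e^\gamma)\in{\Bbb R}_{>0}\,w$, so $\zeta=\lambda w$ with $\lambda>0$, whence $\overline{w}\zeta=\lambda|w|^2>0$. Therefore $t\,\Phi(\xi(\beta,H,t))=\lambda|w|^2\,\mathrm{Im}(e^{\wt{\beta}+\sqrt{-1}\wt{\omega}})=\lambda|w|^2(\wt{tH}+(\wt{tH},\wt{\beta})\varrho_{X_1})$, and dividing by $t>0$ gives the claim.

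The main obstacle I expect is the intertwining step. One must verify that in the twisted setting, where the Mukai vector is taken via the map $v_G$ attached to an $\alpha_1$-twisted sheaf $G$, the cohomological transform $\Phi$ genuinely preserves $\langle\;,\;\rangle$ with precisely the sign conventions under which $\Phi(r_1 e^\gamma)=-\varrho_{X_1}$ and the displayed diagram hold. Once the pairing is preserved exactly, non-degeneracy delivers the identity $\Phi(e^{\beta+\sqrt{-1}tH})=\zeta\, e^{\wt{\beta}+\sqrt{-1}\wt{\omega}}$ for free, and everything else is a formal manipulation of imaginary parts.
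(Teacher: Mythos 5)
Your proof is correct and follows essentially the same route as the paper's: both rest on the intertwining identity $\Phi(e^{\beta+\sqrt{-1}tH})=\zeta\, e^{\wt{\beta}+\sqrt{-1}\wt{tH}}$, the identification $\zeta=Z_{(\beta,tH)}(r_1 e^{\gamma})$, and Lemma \ref{lem:Im} expressing $\xi(\beta,H,t)$ as an imaginary part, after which the positivity hypothesis lets the scalar pass through $\mathrm{Im}$. The only difference is cosmetic: the paper quotes the intertwining identity from \cite[sect. 5.1]{MYY:2011:2} and works with $\zeta^{-1}$ rather than $\overline{w}\zeta$, whereas you re-derive that identity from the commutative diagram combined with the isometry property and non-degeneracy of the Mukai pairing.
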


\begin{proof}
\begin{equation*}
\Phi( e^{\beta+\sqrt{-1}tH})=
\zeta e^{\widetilde{\beta}+\sqrt{-1}\widetilde{tH}}.
\end{equation*}

Since 
\begin{equation*}
\begin{split}
Z_{(\beta,tH)}(v)=& \frac{r}{2}t^2 (H^2)-a+(c_1-r \beta,tH)\sqrt{-1}\\
=& r \frac{t^2 (H^2) -((c_1/r-\beta)^2)}{2}
+\frac{\langle v^2 \rangle}{2r} +(c_1-r \beta,tH)\sqrt{-1}
\end{split}
\end{equation*}
and $Z_{(\beta,tH)}(r_1 e^\gamma)=\zeta$,
we have
\begin{equation*}
{\Bbb R}_{>0}\xi(\beta,H,t)=
{\Bbb R}_{>0}\mathrm{Im}(Z_{(\beta,tH)}(v)^{-1} e^{\beta+\sqrt{-1}tH})=
{\Bbb R}_{>0}\mathrm{Im}(\zeta^{-1} e^{\beta+\sqrt{-1}tH}).
\end{equation*}
\end{proof}
\end{NB}

\begin{prop}\label{prop:xi-FM}
For $(\beta,H,t) \in \NS(X)_{\Bbb R} \times \Amp(X)_{\Bbb R} 
\times {\Bbb R}_{>0}$, we have
${\Bbb R}_{>0}\Phi(\xi(\beta,H,t))=
{\Bbb R}_{>0}\xi(\widetilde{\beta},H_1,t_1)$, where
$t_1 H_1=\widetilde{tH}$.
\end{prop}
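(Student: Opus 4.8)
The plan is to show that $\xi(\beta,H,t)$ is, up to a positive scalar, the imaginary part of a suitable normalized exponential, and that this characterization is manifestly compatible with the Fourier-Mukai transform. By Lemma \ref{lem:Im}, we already know that
\begin{equation*}
{\Bbb R}_{>0}\xi(\beta,H,t)=
{\Bbb R}_{>0}\mathrm{Im}\bigl(Z_{(\beta,tH)}(v)^{-1} e^{\beta+\sqrt{-1}tH}\bigr),
\end{equation*}
and the analogous identity holds on $X_1$ for $\xi(\widetilde\beta,H_1,t_1)$ with the normalized exponential $e^{\widetilde\beta+\sqrt{-1}t_1 H_1}$ and the stability function $Z_{(\widetilde\beta,\widetilde\omega)}$. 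So the whole statement reduces to transporting this imaginary-part expression through $\Phi$.

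First I would apply the cohomological Fourier-Mukai transform $\Phi$ to the vector $\mathrm{Im}(Z_{(\beta,tH)}(v)^{-1}e^{\beta+\sqrt{-1}tH})$. Since $\Phi$ is ${\Bbb R}$-linear on cohomology and $Z_{(\beta,tH)}(v)^{-1}$ is a scalar, commuting $\Phi$ past the real scalar is immediate, but $\Phi$ does not commute with taking imaginary parts unless I track the phase. The key input is the transformation rule for the exponential, namely
\begin{equation*}
\Phi(e^{\beta+\sqrt{-1}tH})=\zeta\, e^{\widetilde\beta+\sqrt{-1}t_1 H_1},
\qquad
\zeta=-r_1\left(\frac{((\gamma-\beta)^2)-(\omega^2)}{2}+\sqrt{-1}(\beta-\gamma,\omega)\right),
\end{equation*}
together with the commutativity of the square relating $Z_{(\beta,\omega)}$ and $Z_{(\widetilde\beta,\widetilde\omega)}$ via multiplication by $\zeta^{-1}$. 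Combining these, $\Phi$ sends $Z_{(\beta,tH)}(v)^{-1}e^{\beta+\sqrt{-1}tH}$ to $Z_{(\widetilde\beta,\widetilde\omega)}(\Phi(v))^{-1}\,\zeta\,\zeta^{-1}\cdot(\text{adjustment})\cdot e^{\widetilde\beta+\sqrt{-1}t_1H_1}$, where I must carefully verify that the two factors of $\zeta$ cancel so that the result is a positive real multiple of $Z_{(\widetilde\beta,\widetilde\omega)}(\Phi(v))^{-1}e^{\widetilde\beta+\sqrt{-1}t_1H_1}$.

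The main obstacle is the phase bookkeeping: taking imaginary parts is only compatible with multiplication by real scalars, so I need the composite scalar multiplying $e^{\widetilde\beta+\sqrt{-1}t_1H_1}$ to be real and positive after passing through $\Phi$. This is exactly where the commutative square $Z_{(\beta,\omega)}=\zeta^{-1}Z_{(\widetilde\beta,\widetilde\omega)}\circ\Phi$ does the work: it forces $Z_{(\widetilde\beta,\widetilde\omega)}(\Phi(v))=\zeta\,Z_{(\beta,tH)}(v)$, so that $Z_{(\widetilde\beta,\widetilde\omega)}(\Phi(v))^{-1}\,\Phi(e^{\beta+\sqrt{-1}tH})=Z_{(\beta,tH)}(v)^{-1}\zeta^{-1}\cdot\zeta\, e^{\widetilde\beta+\sqrt{-1}t_1H_1}$, and the two $\zeta$'s cancel. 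Applying $\mathrm{Im}$ then yields
\begin{equation*}
{\Bbb R}_{>0}\,\Phi\bigl(\mathrm{Im}(Z_{(\beta,tH)}(v)^{-1}e^{\beta+\sqrt{-1}tH})\bigr)
={\Bbb R}_{>0}\,\mathrm{Im}\bigl(Z_{(\widetilde\beta,\widetilde\omega)}(\Phi(v))^{-1}e^{\widetilde\beta+\sqrt{-1}t_1H_1}\bigr),
\end{equation*}
which by Lemma \ref{lem:Im} applied on $X_1$ equals ${\Bbb R}_{>0}\xi(\widetilde\beta,H_1,t_1)$. The only remaining care is the sign: since $\zeta$ and $Z$ may have negative real parts, I would confirm that the ${\Bbb R}_{>0}$-class (rather than merely the ${\Bbb R}^\times$-class) is preserved, which follows because $\zeta\cdot\zeta^{-1}=1$ leaves no residual sign ambiguity once the positive-cone conventions in Definition \ref{defn:cones} are matched.
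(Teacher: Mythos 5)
Your overall strategy is exactly the paper's: reduce via Lemma \ref{lem:Im} to the normalized class $\mathrm{Im}(Z_{(\beta,tH)}(v)^{-1}e^{\beta+\sqrt{-1}tH})$, push it through $\Phi$ using $\Phi(e^{\beta+\sqrt{-1}tH})=\zeta\, e^{\wt{\beta}+\sqrt{-1}\wt{tH}}$ and the commutative square, and let the two factors of $\zeta$ cancel. However, your middle paragraph contains a direction error which, taken literally, breaks the argument. The commutative diagram says $Z_{(\wt{\beta},\wt{\omega})}\circ\Phi=\zeta^{-1}\cdot Z_{(\beta,\omega)}$, i.e.\ $Z_{(\wt{\beta},\wt{tH})}(\Phi(v))=\zeta^{-1}Z_{(\beta,tH)}(v)$, whereas you state $Z_{(\wt{\beta},\wt{\omega})}(\Phi(v))=\zeta\, Z_{(\beta,tH)}(v)$. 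Moreover, the identity you display pairs the wrong scalars: on the left you normalize $\Phi(e^{\beta+\sqrt{-1}tH})$ by $Z_{(\wt{\beta},\wt{\omega})}(\Phi(v))^{-1}$, while on the right $e^{\wt{\beta}+\sqrt{-1}t_1H_1}$ carries $Z_{(\beta,tH)}(v)^{-1}$. Under the correct reading of the diagram, that displayed identity is off by the non-real factor $\zeta^{2}$; and even if one grants it, taking $\mathrm{Im}$ of a mixed-normalization identity compares neither $\Phi(\xi(\beta,H,t))$ nor $\xi(\wt{\beta},H_1,t_1)$, so the proposition would not follow from it.

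The repair is what your first paragraph already outlines, and it is the paper's computation verbatim: keep the single scalar $Z_{(\beta,tH)}(v)^{-1}$ throughout, so that
\begin{align*}
Z_{(\beta,tH)}(v)^{-1}\Phi(e^{\beta+\sqrt{-1}tH})
&=Z_{(\beta,tH)}(v)^{-1}\zeta\, e^{\wt{\beta}+\sqrt{-1}\wt{tH}}
=Z_{(\wt{\beta},\wt{tH})}(\Phi(v))^{-1}\zeta^{-1}\zeta\, e^{\wt{\beta}+\sqrt{-1}\wt{tH}}\\
&=Z_{(\wt{\beta},\wt{tH})}(\Phi(v))^{-1}e^{\wt{\beta}+\sqrt{-1}\wt{tH}},
\end{align*}
an exact equality of complexified cohomology classes, where the middle step uses the correct relation $Z_{(\beta,tH)}(v)^{-1}=Z_{(\wt{\beta},\wt{tH})}(\Phi(v))^{-1}\zeta^{-1}$. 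Since the cohomological action of $\Phi$ is real-linear, it commutes with $\mathrm{Im}$, so applying $\mathrm{Im}$ and then Lemma \ref{lem:Im} on $X$ and on $X_1$ (for the Mukai vector $\Phi(v)$, with $t_1H_1=\wt{tH}$) gives ${\Bbb R}_{>0}\Phi(\xi(\beta,H,t))={\Bbb R}_{>0}\xi(\wt{\beta},H_1,t_1)$. Note also that your closing concern about ${\Bbb R}_{>0}$ versus ${\Bbb R}^{\times}$ then evaporates: the displayed relation is an equality, not merely a proportionality, so no residual sign can appear.
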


\begin{proof}
\begin{equation*}
\begin{split}
{\Bbb R}_{>0}\Phi(\xi(\beta,H,t))=&
{\Bbb R}_{>0}\Phi(\mathrm{Im}(Z_{(\beta,tH)}(v)^{-1} e^{\beta+\sqrt{-1}tH}))\\
=& {\Bbb R}_{>0} \mathrm{Im}(Z_{(\beta,tH)}(v)^{-1} 
\Phi(e^{\beta+\sqrt{-1}tH}))\\
=& {\Bbb R}_{>0} 
\mathrm{Im}(Z_{(\widetilde{\beta},\widetilde{tH})}(\Phi(v))^{-1} \zeta^{-1}
\zeta e^{\widetilde{\beta}+\sqrt{-1}\widetilde{tH}})\\
=& {\Bbb R}_{>0} \xi(\widetilde{\beta},H_1,t_1).
\end{split}
\end{equation*}
\end{proof}

\begin{rem}
We have a commutative diagram
\begin{equation*}
\xymatrix{
   v^\perp \ar[r]^{\Phi} \ar[d]_{\theta_{v,\beta,\omega}}
 & \Phi(v)^\perp \ar[d]^{\theta_{\Phi(v),\wt{\beta},\wt{\omega}}} \\
   \NS(K_{(\beta,\omega)}(v)) \ar[r]_{\Phi_*} 
 & \NS(K_{(\wt{\beta},\wt{\omega})}^\alpha(\Phi(v)))
}
\end{equation*}
where $K_{(\wt{\beta},\wt{\omega})}^\alpha(\Phi(v))$ is the
Bogomolov factor of the moduli of 
$\sigma_{(\wt{\beta},\wt{\omega})}$-stable objects
$M_{(\wt{\beta},\wt{\omega})}^\alpha(\Phi(v))$.  
Then we have 
$$
{\Bbb R}_{>0}\Phi_*(\theta_{v,\beta,\omega}(\xi(\beta',H',t')))=
{\Bbb R}_{>0}\theta_{\Phi(v),\wt{\beta},\wt{\omega}}
(\xi(\widetilde{\beta'},H_1,t_1)),
$$
where $t_1 H_1=\widetilde{t'H'}$.
\end{rem}

\subsection{Stability conditions and the positive cone.}
\label{subsect:positive-cone}

Assume that $r \ne 0$.
We note that $\xi(\beta,H,t) \in \varrho_X^{\perp}$
if and only if
 $\delta-\beta \in H^{\perp}$.
In this case, we have 
$((\delta-\beta)^2) \leq 0$, which implies that
$$
\frac{(\omega^2)-((\beta-\delta)^2)}{2}+
\frac{\langle v^2 \rangle}{2r^2}> 0.
$$
Therefore we get the following claim.
\begin{lem}\label{lem:rho^perp}
$$
\xi(\beta,H,t) \in \varrho_X^\perp \Longleftrightarrow 
\xi(\beta,H,t)=r(\eta+(\eta,\delta)\varrho_X),\;
\eta \in \overline{\Amp(X)}_{\Bbb R}
$$
Moreover 
$$
\eta \in \Amp(X)_{\Bbb R}
\Longleftrightarrow H \in \Amp(X)_{\Bbb R}.
$$
\end{lem}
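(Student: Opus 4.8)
The plan is to read the statement off directly from the definition of $\xi(\beta,H,t)$, leaning on the two observations recorded immediately before the lemma: that $\xi(\beta,H,t)\in\varrho_X^\perp$ is equivalent to $\delta-\beta\in H^\perp$, and that in this situation the scalar
\begin{equation*}
c:=\frac{t^2(H^2)-((\beta-\delta)^2)}{2}+\frac{\langle v^2\rangle}{2r^2}
\end{equation*}
is strictly positive (via the Hodge index inequality $((\beta-\delta)^2)\le 0$ together with the positivity of $\langle v^2\rangle$). First I would confirm the membership criterion in $\varrho_X^\perp$ by isolating the rank component of $\xi(\beta,H,t)$: pairing against $\varrho_X$ gives minus the rank, and of the two summands in the definition the first, $rc(H+(\delta,H)\varrho_X)$, has rank $0$, whereas $e^\beta-\tfrac{a_\beta}{r}\varrho_X$ has rank $1$. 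Hence the rank of $\xi(\beta,H,t)$ equals $-r(\delta-\beta,H)$, which vanishes exactly when $(\delta-\beta,H)=0$ since $r\ne 0$, recovering the condition $\delta-\beta\in H^\perp$.

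Assuming now $\delta-\beta\in H^\perp$, the coefficient $(\delta-\beta,H)$ of the second summand is zero, so that entire term drops out and
\begin{equation*}
\xi(\beta,H,t)=rc\,(H+(\delta,H)\varrho_X).
\end{equation*}
Setting $\eta:=cH\in\NS(X)_{\Bbb R}$, one has $(\eta,\delta)=c(H,\delta)$ and therefore $\eta+(\eta,\delta)\varrho_X=c(H+(\delta,H)\varrho_X)$, which yields $\xi(\beta,H,t)=r(\eta+(\eta,\delta)\varrho_X)$. Since $c>0$ and $H\in\overline{\Amp(X)}_{\Bbb R}$, a cone stable under positive scaling, the vector $\eta=cH$ again lies in $\overline{\Amp(X)}_{\Bbb R}$, giving the forward implication. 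The converse is immediate: any vector of the form $r(\eta+(\eta,\delta)\varrho_X)$ with $\eta\in\NS(X)_{\Bbb R}$ has rank $0$ and hence lies in $\varrho_X^\perp$.

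For the ``moreover'' clause I would exploit that $\eta=cH$ is a strictly positive multiple of $H$. Then $(\eta^2)=c^2(H^2)$ and $(\eta,h)=c(H,h)>0$, so $\eta\in\Amp(X)_{\Bbb R}$ if and only if $(H^2)>0$; and for $t>0$ the identical computation gives $tH\in\Amp(X)_{\Bbb R}$ if and only if $(H^2)>0$. Thus $\eta\in\Amp(X)_{\Bbb R}\iff tH\in\Amp(X)_{\Bbb R}$, both amounting to the requirement that $H$ be genuinely ample rather than on the boundary of the nef cone.

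The computation itself is routine once the positivity of $c$ is in hand (and that is already asserted just before the statement). The one point genuinely requiring care is the distinction between $\Amp$ and $\overline{\Amp}$ in the last clause: because $H$ is taken as a norm-one representative of $C(\overline{\Amp(X)}_{\Bbb R})$, it may lie on the boundary where $(H^2)=0$, and it is exactly this openness-versus-boundary dichotomy that must be tracked carefully through the scaling by $c$. I expect this bookkeeping, rather than any substantive difficulty, to be the only non-formal aspect of the argument.
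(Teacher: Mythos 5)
Your proof is correct and follows essentially the same route as the paper, which derives the lemma from exactly the two observations you use: $\xi(\beta,H,t)\in\varrho_X^{\perp}$ if and only if $\delta-\beta\in H^{\perp}$ (your rank computation), and in that case the scalar coefficient $c$ is positive because $((\beta-\delta)^2)\le 0$ and $\langle v^2\rangle>0$, so that $\xi(\beta,H,t)=rc\,(H+(\delta,H)\varrho_X)$. Your explicit treatment of the converse and of the scaling $\eta=cH$ (including the restriction to $t>0$ in the final clause, which matches the paper's intended reading) merely fills in details the paper leaves implicit.
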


\begin{lem}\label{lem:non-perp}
Assume that $(\beta-\delta,H) \ne 0$ and set
\begin{equation*}
\eta:=\beta-\delta+\frac{1}{(\beta-\delta,H)}
\left(\frac{t^2(H^2)-((\beta-\delta)^2)}{2}+
\frac{\langle v^2 \rangle}{2r^2} \right)H.
\end{equation*}
Then
\begin{enumerate}
\item[(1)]
$(\eta^2) \geq \frac{\langle v^2 \rangle}{r^2}$.
Moreover 
$$
(\eta^2)=\frac{\langle v^2 \rangle}{r^2}
\Longleftrightarrow
\begin{cases}
\text{$(H^2)=0$ or }\\
\text{$t=0$ and $((\beta-\delta)^2)
=\frac{\langle v^2 \rangle}{r^2}$.}
\end{cases}
$$
\item[(2)]
$(\beta-\delta,H)(\eta,H')>0$ for $H' \in \Amp(X)_{\Bbb R}$
which is sufficiently close to $H$.
\end{enumerate}
\end{lem}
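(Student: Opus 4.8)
The plan is to reduce both assertions to two explicit evaluations of the intersection form against the single auxiliary vector $\eta$, namely $(\eta^2)$ and $(\eta,H)$, and then read off the inequalities. To compress the bookkeeping I would abbreviate $u:=\beta-\delta$, $p:=(\beta-\delta,H)=(u,H)\ne 0$, and let $c:=\frac{t^2(H^2)-((\beta-\delta)^2)}{2}+\frac{\langle v^2\rangle}{2r^2}$ denote the scalar coefficient occurring in the definition, so that $\eta=u+\tfrac{c}{p}H$. With these abbreviations everything becomes a quadratic computation in $u$, $H$ and the scalar $c$, and I record once that $2c=t^2(H^2)-(u^2)+\frac{\langle v^2\rangle}{r^2}$ and that $H\in\overline{\Amp(X)}_{\Bbb R}$ forces $(H^2)\ge 0$, $t\ge 0$.

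For (1) I would expand $(\eta^2)=(u^2)+2\tfrac{c}{p}(u,H)+\tfrac{c^2}{p^2}(H^2)=(u^2)+2c+\tfrac{c^2}{p^2}(H^2)$, using $(u,H)=p$. Substituting the expression for $2c$ makes the $(u^2)$ terms cancel, leaving
\[
(\eta^2)-\frac{\langle v^2\rangle}{r^2}=(H^2)\left(t^2+\frac{c^2}{p^2}\right).
\]
Since $(H^2)\ge 0$, $t\ge 0$ and $\tfrac{c^2}{p^2}\ge 0$, the right-hand side is $\ge 0$, which proves the inequality. Equality forces $(H^2)=0$ or $t^2+\tfrac{c^2}{p^2}=0$; the latter means $t=0$ and $c=0$, and at $t=0$ the identity $c=0$ reads exactly $((\beta-\delta)^2)=\frac{\langle v^2\rangle}{r^2}$. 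This recovers precisely cases (i) and (ii).

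For (2) the relevant quantity is $(\beta-\delta,H)(\eta,H)=p\bigl(p+\tfrac{c}{p}(H^2)\bigr)=p^2+c(H^2)$. When $(H^2)=0$ this equals $p^2>0$. When $(H^2)>0$ I would invoke the Hodge index theorem: the intersection form on $\NS(X)_{\Bbb R}$ has signature $(1,\rk\NS(X)-1)$, so writing $u':=u-\tfrac{p}{(H^2)}H$ for the component of $u$ orthogonal to $H$ we have $(u'^2)\le 0$, i.e. $(u^2)(H^2)\le p^2$. Expanding $c(H^2)$ and substituting $(u^2)(H^2)=p^2+(u'^2)(H^2)$ gives
\[
p^2+c(H^2)=\frac{p^2}{2}+\frac{t^2(H^2)^2}{2}-\frac{(u'^2)(H^2)}{2}+\frac{(H^2)\langle v^2\rangle}{2r^2},
\]
a sum in which $\tfrac{p^2}{2}>0$ (as $p\ne 0$) and every remaining term is $\ge 0$, using $(u'^2)\le 0$ and $\langle v^2\rangle\ge 0$. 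Hence $(\beta-\delta,H)(\eta,H)>0$ in all cases. Since $H'\mapsto(\beta-\delta,H)(\eta,H')$ is continuous and strictly positive at $H'=H$, it stays positive for all $H'\in\Amp(X)_{\Bbb R}$ sufficiently close to $H$, which is exactly the assertion of (2).

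The only genuinely non-formal step is the strict positivity in (2) when $(H^2)>0$: one must combine the Hodge index inequality with the hypothesis $\langle v^2\rangle\ge 0$ and the standing assumption $p=(\beta-\delta,H)\ne 0$. It is precisely the surviving term $\tfrac{p^2}{2}$ that forces strictness even in the degenerate configurations ($t=0$, $\langle v^2\rangle=0$, $u'$ null), so I expect the main care to go into the sign bookkeeping there rather than into part (1) or the $(H^2)=0$ branch, which are pure algebra. The clause ``$H'$ sufficiently close to $H$'' plays only the role of nudging the possibly non-ample boundary class $H$ into $\Amp(X)_{\Bbb R}$ while the sign is preserved by continuity.
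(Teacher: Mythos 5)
Your proof is correct and follows essentially the same route as the paper: the same expansion of $(\eta^2)$ yielding $(\eta^2)=t^2(H^2)+\tfrac{\langle v^2\rangle}{r^2}+\tfrac{(H^2)}{(\beta-\delta,H)^2}c^2$ for part (1), and for part (2) the same reduction by continuity to $(\beta-\delta,H)(\eta,H)>0$, split into the cases $(H^2)=0$ and $(H^2)>0$, with the latter handled via the decomposition $\beta-\delta=sH+D$, $D\in H^\perp$, and the Hodge index inequality $(D^2)\le 0$ (your $u'$ is exactly the paper's $D$).
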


\begin{proof}
(1)
\begin{equation*}
\begin{split}
(\eta^2)
=& ((\beta-\delta)^2)+2\left(\frac{t^2(H^2)-((\beta-\delta)^2)}{2}
\frac{\langle v^2 \rangle}{2r^2} \right)\\
& \quad +\frac{(H^2)}{(\beta-\delta,H)^2}
\left(\frac{t^2(H^2)-((\beta-\delta)^2)}{2}+
\frac{\langle v^2 \rangle}{2r^2} \right)^2 \\
=& t^2 (H^2)+\frac{\langle v^2 \rangle}{r^2}+
\frac{(H^2)}{(\beta-\delta,H)^2}
\left(\frac{t^2(H^2)-((\beta-\delta)^2)}{2}+
\frac{\langle v^2 \rangle}{2r^2} \right)^2\\
\geq & \frac{\langle v^2 \rangle}{r^2}.
\end{split}
\end{equation*}
Moreover the equality holds if and only if
$(\beta,H,t)$ satisfy (i) or (ii). 

(2)
It is sufficient to prove
$(\beta-\delta,H)(\eta,H)>0$.
If $(H^2)=0$, then
$(\beta-\delta,H)(\eta,H)=(\beta-\delta,H)^2>0$.
Assume that $(H^2)>0$.
We set $\beta-\delta=sH+D$ ($s \in {\Bbb R}$, $D \in H^\perp$).
Then
\begin{equation*}
(\beta-\delta,H)(\eta,H)=
\frac{1}{2}
\left((\beta-\delta,H)^2-(H^2)(D^2)+t^2(H^2)^2+
\frac{(H^2) \langle v^2 \rangle}{r^2}
\right)>0.
\end{equation*}
\end{proof}

\begin{NB}
Since $d_\beta=-s$,
\begin{equation*}
 -s \xi_\omega=
r \left( (s^2+t^2)\frac{(H^2)}{2}-\frac{(D^2)}{2}+
\frac{\langle v^2 \rangle}{2r^2} \right)
\left(H+(\delta,H)\varrho_X \right)
+rs(H^2)(D+(D,\delta)\varrho_X)+rs(H^2)
\left(e^\delta+\frac{\langle v^2 \rangle}{2r^2}\varrho_X \right).
\end{equation*}
\begin{defn}
We set
\begin{equation}\label{eq:xi(s,t,D)}
\begin{split}
\xi(\beta,\omega):=&
-st \xi_\omega\\
=&
rt \left( (s^2+t^2)\frac{(H^2)}{2}-\frac{(D^2)}{2}+
\frac{\langle v^2 \rangle}{2r^2} \right)
\left(H+(\delta,H)\varrho_X \right)
+rst(H^2)(D+(D,\delta)\varrho_X)+rst(H^2)
\left(e^\delta+\frac{\langle v^2 \rangle}{2r^2}\varrho_X \right)\\
=& 
r \left( (\frac{s^2}{t^2}+1)\frac{(\omega^2)}{2}-\frac{(D^2)}{2}+
\frac{\langle v^2 \rangle}{2r^2} \right)
\left(\omega+(\delta,\omega)\varrho_X \right)
+r\frac{s}{t}(\omega^2)(D+(D,\delta)\varrho_X)+r\frac{s}{t}(\omega^2)
\left(e^\delta+\frac{\langle v^2 \rangle}{2r^2}\varrho_X \right),
\end{split}
\end{equation}
where $s=\frac{(\beta-\frac{\xi}{r},\omega)}{(\omega^2)}t$,
$D=\beta-\frac{\xi}{r}-
\frac{(\beta-\frac{\xi}{r},\omega)}{(\omega^2)}\omega$.
\end{defn}
\end{NB}

\begin{defn}
We set
\begin{equation*}
\begin{split}
P^+(v^\perp)_k:=& \{x \in H^*(X,{\Bbb Z})_{\alg} \otimes k \mid
x \in v^\perp, \langle x^2 \rangle> 0, 
\langle x,rH_0+(rH_0,\delta)\varrho_X \rangle>0 \},\\
\overline{P^+(v^\perp)}_k:=& 
\{x \in H^*(X,{\Bbb Z})_{\alg} \otimes k \mid
x \in v^\perp, \langle v^2 \rangle \geq 0, 
\langle x,rH_0+(rH_0,\delta)\varrho_X \rangle>0 \},
\end{split}
\end{equation*}
where $k={\Bbb Q},{\Bbb R}$ and $H_0 \in \Amp(X)_{\Bbb Q}$.
\end{defn}
We take a norm $||\;\;||$ on $\NS(X)_{\Bbb R}$
defined over ${\Bbb Q}$ and regard the cone
$C(\overline{\Amp(X)}_{\Bbb R})$ in
subsection \ref{subsect:parameter-space} as a subset of
$\overline{\Amp(X)}_{\Bbb R}$ 
(subsection \ref{subsect:parameter-space}):
\begin{equation*}
C(\overline{\Amp(X)}_{\Bbb R})=
\{ L \in \overline{\Amp(X)}_{\Bbb R} \mid 
||L||=1\}.
\end{equation*}
Under this identification, 
$\xi(\beta,H,t)$ is defined for
$$
(\beta,H,t) \in 
\overline{\frak H}=\NS(X)_{\Bbb R} \times C(\overline{\Amp(X)}_{\Bbb R})
\times {\Bbb R}_{\geq 0}.
$$
For the embedding
\begin{equation*}
\begin{matrix}
\NS(X)_{\Bbb R} \times \overline{\Amp(X)}_{\Bbb R}
& \to & \overline{\frak H}\\
(\beta,H) & \mapsto & (\beta,H/||H||,||H||),
\end{matrix}
\end{equation*} 
we have 
\begin{equation*}
\begin{split}
{\Bbb R}_{>0}\xi(\beta,H,t)=&
{\Bbb R}_{>0}\xi(\beta,H/||H||,||H||t),\;(\beta,H,t) 
\in \NS(X)_{\Bbb R} \times \overline{\Amp(X)}_{\Bbb R} 
\times {\Bbb R}_{\geq 0}.
\end{split}
\end{equation*}

\begin{prop}\label{prop:positive}
We have a map
\begin{equation*}
\begin{matrix}
\Xi:& \overline{\frak H} & \to & 
C(H^*(X,{\Bbb Z})_{\alg} \otimes {\Bbb R})\\
& (\beta,H,t) & \mapsto & {\Bbb R}_{>0}\xi(\beta,H,t) 
\end{matrix}
\end{equation*}
whose image is the positive cone
$C^+:=C(\overline{P^+(v^\perp)}_{\Bbb R})$ of
$v^\perp$.
Moreover if $tH$ is ample, then 
$\xi(\beta,H,t)$ belongs to the interior of
$C^+$. 
\begin{NB}
If $\langle \xi(\beta,H,t)^2 \rangle=0$, then
$t=0$ or $(H^2)=0$ by Lemma \ref{lem:non-perp}.
\end{NB}
\end{prop}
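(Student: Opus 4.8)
The plan is to push everything through the complex reformulation of Lemma~\ref{lem:Im}. Set $W:=e^{\beta+\sqrt{-1}tH}$ and $Z:=Z_{(\beta,tH)}(v)=\langle W,v\rangle$, so that $t\,\xi(\beta,H,t)=\mathrm{Im}(\overline{Z}\,W)$. Because $v$ is real, $\langle\mathrm{Im}(\overline{Z}W),v\rangle=\mathrm{Im}(\overline{Z}\langle W,v\rangle)=\mathrm{Im}(|Z|^2)=0$, so $\xi(\beta,H,t)\in v^\perp$. The two evaluations $\langle W,W\rangle=0$ and $\langle W,\overline{W}\rangle=2t^2(H^2)$ then give, after a one-line expansion,
\begin{equation}
\langle\xi(\beta,H,t)^2\rangle=(H^2)\,|Z_{(\beta,tH)}(v)|^2\ge 0 ,
\end{equation}
an identity of polynomials in $(\beta,H,t)$, hence valid on all of $\overline{\frak H}$. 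This yields the ``moreover'': if $tH$ is ample then $(H^2)>0$, while $Z\ne 0$ since its imaginary part $r t(\delta-\beta,H)$ vanishes only when $\delta-\beta\in H^\perp$, in which case $((\delta-\beta)^2)\le 0$ by the Hodge index theorem and the real part is at least $\tfrac12 r t^2(H^2)>0$; thus $\langle\xi^2\rangle>0$ and $\xi$ lies in the interior.

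For the remaining sign condition I would use that $v$ is a positive Mukai vector, so $r>0$, and compute $\langle\xi(\beta,H,t),u\rangle$ with $u:=H_0+(H_0,\delta)\varrho_X$. When $\delta-\beta\in H^\perp$, Lemma~\ref{lem:rho^perp} writes $\xi=r(\eta+(\eta,\delta)\varrho_X)$ and the pairing is $r(\eta,H_0)$; otherwise Lemma~\ref{lem:non-perp} gives $\xi$ of rank $r(\beta-\delta,H)$ with the pairing equal to $r(\beta-\delta,H)(\eta,H_0)$. In either case Lemma~\ref{lem:non-perp}(2) together with the Hodge index theorem shows that $\eta$ (resp. $(\beta-\delta,H)\eta$) lies in the closed ample component, so the pairing is $>0$. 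In particular $\xi\ne 0$, so $\Xi$ is well defined and its image is contained in $C^+$.

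The core is surjectivity, and the organising remark is that a class in $v^\perp$ is determined by its rank and $c_1$ alone, the $\varrho_X$-coefficient being forced by $\langle\,\cdot\,,v\rangle=0$; hence $\xi\in{\Bbb R}_{>0}x$ as soon as $(\rk\xi,c_1(\xi))$ is a positive multiple of $(x_0,x_1)$, the positivity of the constant being guaranteed by $\langle\xi,u\rangle>0$. Given $x=(x_0,x_1,x_2)\in\overline{P^+(v^\perp)}_{\Bbb R}$ I would split on $x_0$. If $x_0=0$, then $x=r(\eta+(\eta,\delta)\varrho_X)$ with $\eta=x_1/r\in\overline{\Amp(X)}_{\Bbb R}$, realised by $\beta=\delta$ and $H=\eta$ (any $t\ge0$), where $\xi=rA(H+(\delta,H)\varrho_X)$ with $A>0$. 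If $x_0\ne0$, set $\eta_0:=x_1/x_0-\delta$; a direct computation gives $(\eta_0^2)-\langle v^2\rangle/r^2=\langle x^2\rangle/x_0^2\ge0$, and writing $\beta=\delta+\eta_0+sH$ for a fixed ample $H$ one checks that the requirement $\eta=\eta_0$ collapses to the single scalar equation $(s^2+t^2)(H^2)=\langle x^2\rangle/x_0^2$, which has a solution with $s,t\ge0$. This produces $(\beta,H,t)\in\overline{\frak H}$ with $\eta=\eta_0$, hence $\xi\in{\Bbb R}_{>0}x$.

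The step I expect to be delicate is the boundary matching. One must verify that the explicit solution lands in $\overline{\frak H}$ (that $H$ stays in $C(\overline{\Amp(X)}_{\Bbb R})$ and $t\ge0$) exactly along $\langle x^2\rangle=0$, choose the fixed $H$ generically so that $(\beta-\delta,H)\ne0$ (keeping us genuinely in the $x_0\ne0$ regime rather than accidentally on $\delta-\beta\in H^\perp$), and confirm the proportionality constant is positive, which is precisely the content of the sign computation of the second paragraph. The two numerical identities $(\eta_0^2)-\langle v^2\rangle/r^2=\langle x^2\rangle/x_0^2$ and the reduction to $(s^2+t^2)(H^2)=\langle x^2\rangle/x_0^2$ are routine but carry all the arithmetic.
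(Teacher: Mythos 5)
Your proof is correct, and its skeleton coincides with the paper's --- containment of the image in $C^+$ via Lemmas \ref{lem:rho^perp} and \ref{lem:non-perp}, and surjectivity by explicit construction of a preimage for each ray --- but two steps are carried out by genuinely different, and arguably more transparent, means. For the quadratic condition, the paper never forms your identity $\langle \xi(\beta,H,t)^2\rangle=(H^2)\,|Z_{(\beta,tH)}(v)|^2$: it reads off $\langle \xi^2\rangle\ge 0$ from the inequality $(\eta^2)\ge \langle v^2\rangle/r^2$ of Lemma \ref{lem:non-perp}(1) inside the parametrization $u=\eta+(\eta,\delta)\varrho_X+y\bigl(e^\delta+\tfrac{\langle v^2\rangle}{2r^2}\varrho_X\bigr)$ of $v^\perp$. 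Your identity (which checks out; it follows from $\langle W,W\rangle=0$ and $\langle W,\overline{W}\rangle=2t^2(H^2)$ exactly as you say) buys a sharper statement: it locates the degenerate locus exactly and makes the ``moreover'' clause immediate, since $Z_{(\beta,tH)}(v)\ne 0$ and $(H^2)>0$ when $tH$ is ample. For surjectivity, the paper does what you do but in shifted coordinates: it fixes an ample $H$, writes the target as $\eta/y=xH+D$ with $D\in H^\perp$, sets $\beta=\delta+sH+D$, and finds $s$ with $g(s,0)=x$ by a continuity/range argument for $g(s,t)=\frac{(H^2)(s^2+t^2)+\langle v^2\rangle/r^2-(D^2)}{2s(H^2)}$; the substitution $s_{\mathrm{paper}}=x+s_{\mathrm{yours}}$ turns that equation into your closed-form $(s^2+t^2)(H^2)=\langle x^2\rangle/x_0^2$ (both of your numerical identities are correct), so you obtain the preimage by inspection rather than by an intermediate-value argument, with the boundary $\langle x^2\rangle=0$ corresponding visibly to $s=t=0$. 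The delicate points you flag are real but are settled by your own setup: the proportionality constant is positive because $\xi$ and $x$ both pair strictly positively with $H_0+(H_0,\delta)\varrho_X$ (your second paragraph, which correctly upgrades Lemma \ref{lem:non-perp}(2) from one ample class near $H$ to all of $\Amp(X)_{\Bbb R}$ via the two components of the positive cone); and when $x_0<0$ one should take the solution $s=0$, $t=\sqrt{\langle x^2\rangle/(x_0^2(H^2))}$, so that $(\beta-\delta,H)=(\eta_0,H)<0$ gives $\rk\xi(\beta,H,t)$ the sign of $x_0$. Finally, your explicit hypothesis $r>0$ (positivity of $v$) is not cosmetic: without it the image would be the opposite cone $C(-\overline{P^+(v^\perp)}_{\Bbb R})$, and the paper uses this assumption tacitly.
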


\begin{proof}
By Proposition \ref{prop:xi-FM}, it is sufficient to prove
the claim for $r \ne 0$.
An element 
\begin{equation}\label{eq:u}
u=\zeta+(\zeta,\delta)\varrho_X
+y(e^\delta+\tfrac{\langle v^2 \rangle}{2r^2}\varrho_X)
\in v^\perp \cap H^*(X,{\Bbb Z})_{\alg} \otimes {\Bbb R}
\end{equation}
satisfies $\langle u^2 \rangle \geq 0$
and $\langle u,rH+(rH,\delta)\varrho_X \rangle>0$
if and only if
$(\zeta^2) \geq y^2 \frac{\langle v^2 \rangle}{r^2}$
and $(\zeta,rH)>0$.
We first assume that $y \ne 0$.
We set $\frac{\zeta}{y}:=x H+D$ ($x \in {\Bbb R}$, $D \in H^\perp$)
and
$$
\sigma_\pm:= \pm \sqrt{\frac{\langle v^2 \rangle-r^2(D^2)}{r^2(H^2)}}.
$$
Then the conditions are 
$x \geq \sigma_+$ if $ry>0$ and $x \leq \sigma_-$ if $ry<0$.
\begin{NB}
$0<(\zeta,rH)=rxy(H^2)$.
\end{NB} 
If $y=0$, then the condition is
$(\zeta^2) \geq 0$ and $(\zeta,rH) >0$,
that is, $r\zeta \in \overline{\Amp(X)}_{\Bbb R}$.

We shall first prove that $\im(\Xi)$ contains
$C^+$. 
\begin{NB}
We may not need this:
By Lemma \ref{lem:rho^perp}, we may assume that
$y \ne 0$.
\end{NB}
We shall find $(\beta,H,t)$ such that
$\beta=\delta+sH+D$, $(s \in {\Bbb R}, D \in H^\perp)$
and 
$$
{\Bbb R}_{>0}\xi(\beta,H,t)={\Bbb R}_{>0}u.
$$
We set
\begin{equation*}
\begin{split}
g(s,t):=&
\frac{(H^2)(s^2+t^2)+\frac{\langle v^2 \rangle}{r^2}-(D^2)}
{2s(H^2)}.\\
\end{split}
\end{equation*}
Then $g(s,0)$ define continuous functions
from $(0,\infty)$ to 
$[\sigma_+,\infty)$ and from
$(-\infty,0)$ to $(-\infty,\sigma_-]$. 
Hence we can take $s \in {\Bbb R}$
with 
$x=g(s,0)$.
\begin{NB}
$\zeta=r(\beta-\delta,H)(g(s,0)H+D)$.
\end{NB}
For $\beta:=\delta+sH+D$,
we have 
\begin{equation*}
\begin{split}
\frac{\xi(\delta+sH+D,H,0)}{r(\beta-\delta,H)}=&
\frac{1}{(\beta-\delta,H)}
\left(-\frac{((\beta-\delta)^2)}{2}+\frac{\langle v^2 \rangle}{2r^2}
\right)(H+(H,\delta)\varrho_X)\\
& \quad +
(\beta-\delta+(\beta-\delta,\delta)\varrho_X)
+\left(e^\delta-\frac{a_\delta}{r}\varrho_X \right)\\
=& g(s,0)(H+(H,\delta)\varrho_X)+
(D+(D,\delta)\varrho_X)
+\left(e^\delta-\frac{a_\delta}{r}\varrho_X \right)=\frac{u}{y}.
\end{split}
\end{equation*}
Since $r(\beta-\delta,H)y=rsy(H^2)$ and
$sx=sg(s,0)>0$, $rxy=(\zeta,rH)/(H^2)>0$ implies that 
$r(\beta-\delta,H)$ and $y$ have the same sign.
Thus 
$$
{\Bbb R}_{>0}\xi(\delta+sH+D,H,0)={\Bbb R}_{>0}u.
$$
If $y=0$, then Lemma \ref{lem:rho^perp} implies that
$$
\xi(\delta,r\zeta,t)=\frac{r^4 t^2(\zeta^2)+\langle v^2 \rangle}{2}
(\zeta+(\zeta,\delta)\varrho_X) \in {\Bbb R}_{>0}u.
$$
Hence $u \in \im \Xi$.  
Conversely for $\xi(\beta,H,t)$,
Lemma \ref{lem:rho^perp} or 
Lemma \ref{lem:non-perp} implies that
$\xi(\beta,H,t) \in C^+$.
Therefore the claim holds. 
\end{proof}

\begin{NB}
If $y \ne 0$, then we can take an arbitrary ample divisor
$H$ for $u$.  
\end{NB}

\begin{rem}\label{rem:positive}
If $u$ in \eqref{eq:u} belongs to
$H^*(X,{\Bbb Z})_{\alg} \otimes {\Bbb Q}$
and satisfies $\langle u^2 \rangle>0$,
then there is $(\beta,H,t)$ such that
$\Xi(\beta,H,t)=u$, 
$\beta,H \in \NS(X)_{\Bbb Q}$ and $t^2 \in {\Bbb Q}$:
Indeed if $u \in H^*(X,{\Bbb Z})_{\alg}$,
then we may assume that $H \in \NS(X)_{\Bbb Q}$.
For $g(s,0) \in {\Bbb Q}$,
we can take $(s',t')$ such that
$s',{t'}^2 \in {\Bbb Q}$ and $g(s',t')=g(s,0)$.
Hence the claim holds. 
\begin{NB}
$g(s,t)=g(s_0,0)$
if and only if 
$t^2(H^2)=-(H^2)s^2+((H^2)s_0^2+(\tfrac{\langle v^2 \rangle}{r^2}-(D^2)))
\tfrac{s}{s_0}-(\tfrac{\langle v^2 \rangle}{r^2}-(D^2))$.
Since the descriminant of the RHS is
$D:=((H^2)s_0-\tfrac{1}{s_0}(\tfrac{\langle v^2 \rangle}{r^2}-(D^2)))^2$
and 
$D=0$ if and only if $s_0=\sigma_\pm$,
if $|s|>|\sigma_+$, then 
we have $g(s,t)=g(s_0,0)$ with $s,t^2 \in {\Bbb Q}$. 
\end{NB}
\end{rem}

\begin{prop}\label{prop:wall-equation}
$(\beta,H,t) \in W_{v_1}$ (see Definition \ref{defn:wall})
if and only if 
$\xi(\beta,H,t) \in v^\perp \cap v_1^\perp$.
\end{prop}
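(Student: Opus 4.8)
The plan is to reduce the whole statement to the single identity recorded in the proof of Lemma~\ref{lem:Im}, namely $t\,\xi(\beta,H,t)=\mathrm{Im}\bigl(\overline{Z_{(\beta,tH)}(v)}\,e^{\beta+\sqrt{-1}tH}\bigr)$, which is available verbatim in both the $r\ne 0$ case and the $r=0$ case from the two definitions of $\xi(\beta,H,t)$. First I would observe that $\xi(\beta,H,t)\in v^\perp$ holds \emph{unconditionally}: this is already part of Proposition~\ref{prop:positive}, since the image of $\Xi$ lies in $C(\overline{P^+(v^\perp)}_{\Bbb R})\subset v^\perp$, but it also drops out of the identity above by pairing with $v$. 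Indeed, the Mukai form extends ${\Bbb C}$-bilinearly and $v$ is a real class, so for $t>0$ we get $t\,\langle\xi(\beta,H,t),v\rangle=\mathrm{Im}(\overline{Z_{(\beta,tH)}(v)}\,Z_{(\beta,tH)}(v))=\mathrm{Im}(|Z_{(\beta,tH)}(v)|^2)=0$. Consequently the condition $\xi(\beta,H,t)\in v^\perp\cap v_1^\perp$ collapses to the single condition $\langle\xi(\beta,H,t),v_1\rangle=0$, and the whole proposition becomes the assertion that this last equation cuts out $W_{v_1}$.

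The core step is to pair the same identity with $v_1$. Because $\langle\,\cdot\,,\,\cdot\,\rangle$ is ${\Bbb C}$-bilinear and $v_1$ is real, one has $\mathrm{Im}\langle W,v_1\rangle=\langle\mathrm{Im}\,W,v_1\rangle$ for any complex class $W$; taking $W=\overline{Z_{(\beta,tH)}(v)}\,e^{\beta+\sqrt{-1}tH}$ and using $\langle e^{\beta+\sqrt{-1}tH},v_1\rangle=Z_{(\beta,tH)}(v_1)$ gives
\begin{equation*}
t\,\langle\xi(\beta,H,t),v_1\rangle
=\mathrm{Im}\bigl(\overline{Z_{(\beta,tH)}(v)}\,Z_{(\beta,tH)}(v_1)\bigr)
=\mathrm{Re}Z_{(\beta,tH)}(v)\,\mathrm{Im}Z_{(\beta,tH)}(v_1)-\mathrm{Im}Z_{(\beta,tH)}(v)\,\mathrm{Re}Z_{(\beta,tH)}(v_1).
\end{equation*}
A direct expansion of $Z_{(\beta,tH)}(v)=\langle e^{\beta+\sqrt{-1}tH},v\rangle$ shows $\mathrm{Re}Z_{(\beta,tH)}(v)=-\bigl(a-(\xi,\beta)+r\tfrac{(\beta^2)-t^2(H^2)}{2}\bigr)$ and $\mathrm{Im}Z_{(\beta,tH)}(v)=t\,(\xi-r\beta,H)$, and likewise for $v_1$. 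Comparing with the two rows of \eqref{eq:W} identifies the $2\times2$ determinant there with $\tfrac1t\,\mathrm{Im}\bigl(\overline{Z_{(\beta,tH)}(v)}Z_{(\beta,tH)}(v_1)\bigr)$. Hence, for every $(\beta,H,t)$ with $t>0$, the quantity $\langle\xi(\beta,H,t),v_1\rangle$ equals exactly the left-hand side of the defining equation \eqref{eq:W}.

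To finish I would invoke a polynomiality argument to reach the boundary $t=0$: both $\langle\xi(\beta,H,t),v_1\rangle$ and the determinant in \eqref{eq:W} are polynomial in $(\beta,H,t^2)$, and since they agree on the dense open locus $t>0$ they agree identically on $\overline{\frak H}$. As $W_{v_1}$ is by definition the zero locus of \eqref{eq:W}, this yields the chain $(\beta,H,t)\in W_{v_1}\iff\langle\xi(\beta,H,t),v_1\rangle=0\iff\xi(\beta,H,t)\in v_1^\perp$, and combining with the first paragraph ($\xi(\beta,H,t)\in v^\perp$ always) gives the desired equivalence with $\xi(\beta,H,t)\in v^\perp\cap v_1^\perp$.

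The main obstacle I anticipate is precisely the boundary $t=0$: there the identity of Lemma~\ref{lem:Im} degenerates to $0=0$ after multiplication by $t$ and so carries no information on its own, so one cannot simply divide by $t$. The polynomial-matching step in the last paragraph is what bridges this gap cleanly. A secondary subtlety is that the literal reading ${\Bbb R}\,Z_{(\beta,tH)}(v_1)={\Bbb R}\,Z_{(\beta,tH)}(v)$ of \eqref{eq:wall} is awkward whenever $Z_{(\beta,tH)}(v)$ or $Z_{(\beta,tH)}(v_1)$ vanishes; I would sidestep this entirely by working throughout with the polynomial equation \eqref{eq:W} that genuinely defines $W_{v_1}$, for which the equivalence with $\langle\xi(\beta,H,t),v_1\rangle=0$ is an exact identity of functions rather than a comparison of rays.
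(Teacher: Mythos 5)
Your proof is correct, and it establishes exactly the same key identity as the paper, but by a different derivation. The paper's own proof is a one-step algebraic rewriting: it expands the determinant in \eqref{eq:W} directly and recognizes it as a nonzero constant multiple of $\langle\xi(\beta,H,t),v_1\rangle$ (the paper records that constant as $r$, your computation gives $1$; the discrepancy is immaterial for the zero locus). Being a polynomial identity, the paper's computation is valid at every point of $\overline{\frak H}$ at once, including $t=0$, so no continuation step is needed. You instead recycle the identity $t\,\xi(\beta,H,t)=\mathrm{Im}\bigl(\overline{Z_{(\beta,tH)}(v)}\,e^{\beta+\sqrt{-1}tH}\bigr)$ from Lemma \ref{lem:Im} together with the ${\Bbb C}$-bilinearity of the Mukai pairing to exhibit the determinant as $\tfrac{1}{t}\mathrm{Im}\bigl(\overline{Z_{(\beta,tH)}(v)}\,Z_{(\beta,tH)}(v_1)\bigr)$; this is conceptually cleaner and has the side benefit of treating $r\neq 0$ and $r=0$ uniformly, whereas the paper's displayed computation, written in terms of $\delta=\xi/r$ and $\langle v^2\rangle/2r^2$, literally covers only $r\neq 0$. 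The cost is that your derivation degenerates at $t=0$, which is exactly what your polynomial-continuation patch repairs, and that patch is sound since both quantities are polynomials in $(\beta,H,t^2)$. Both arguments rest on the same reading of $W_{v_1}$ as the zero locus of \eqref{eq:W}, so the logical content is identical; the trade-off is purely between the paper's brute-force but everywhere-valid expansion and your structured derivation plus a density argument.
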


\begin{proof}
We note that \eqref{eq:W} is written as
\begin{equation}\label{eq:W2}
\begin{split}
& \det
\begin{pmatrix}
a-(\xi,\beta)+r \frac{(\beta^2)-t^2(H^2)}{2} &
a_1-(\xi_1,\beta)+r_1 \frac{(\beta^2)-t^2(H^2)}{2}\\
-(\xi-r \beta,H) & -(\xi_1-r_1 \beta,H)
\end{pmatrix}\\
=&
\begin{pmatrix}
\frac{((\delta-\beta)^2)-t^2(H^2)}{2}-
\frac{\langle v^2 \rangle}{2r^2} &
\langle -e^\beta+\frac{t^2(H^2)}{2}\varrho_X,v_1 \rangle \\
-(\xi-r \beta,H) & -\langle H+(H,\beta)\varrho_X,v_1 \rangle
\end{pmatrix}\\
=&  \langle \xi(\beta,H,t),v_1 \rangle.
\end{split}
\end{equation}
Hence the claim holds.
\end{proof}

{\it Proof of Proposition \ref{prop:existence-of-wall}.}
By Proposition \ref{prop:wall-equation} and Proposition 
\ref{prop:positive}, it is sufficient to find the condition 
$\overline{P^+(v^\perp)}_{\Bbb R} \cap
v_1^\perp \ne \emptyset$.
We set $\eta:=\langle v^2 \rangle v_1-\langle v_1,v \rangle v
\in v^\perp$.
Then $v_1^\perp \cap v^\perp=\eta^\perp \cap v^\perp$.
Since the signature of $v^\perp$ is
$(1,\rk \NS(X))$,
the condition is $(\eta^2)<0$. 
Since
$$
\langle \eta^2 \rangle=
\langle v^2 \rangle
(\langle v_1^2 \rangle \langle v^2 \rangle-\langle v_1,v \rangle^2),
$$
we get the claim.
\qed

\begin{NB}

\begin{lem}
Assume that $a_1 b_1-\sum_i a_i b_i=0$,
$a_1^2-\sum_i a_i^2 >\epsilon>0$,
$b_1^2-\sum_i b_i^2 >-N$, $N>0$.
Then $b_1^2 \leq (N a_1^2-\epsilon N)/\epsilon$.
\end{lem}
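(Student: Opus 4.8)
The plan is to read the hypotheses as statements about a Lorentzian quadratic form of signature $(1,\ast)$ and then extract the bound from the ordinary Cauchy--Schwarz inequality applied to the non-distinguished coordinates. Write $\vec{a}:=(a_i)_{i\ge 2}$ and $\vec{b}:=(b_i)_{i\ge 2}$, and let $\vec{a}\cdot\vec{b}:=\sum_{i\ge 2}a_ib_i$, $|\vec{a}|^2:=\sum_{i\ge 2}a_i^2$ denote the Euclidean inner product and norm. (The sum $\sum_i$ must run over $i\ge 2$, since otherwise the left-hand side $a_1^2-\sum_i a_i^2$ of the second hypothesis would be non-positive, contradicting $>\epsilon>0$.) In this notation the three hypotheses become
\begin{equation*}
a_1b_1=\vec{a}\cdot\vec{b},\qquad |\vec{a}|^2<a_1^2-\epsilon,\qquad |\vec{b}|^2<b_1^2+N.
\end{equation*}

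First I would record the positivity facts needed for sign bookkeeping: since $|\vec{a}|^2\ge 0$ the second hypothesis forces $a_1^2-\epsilon>0$, and since $N>0$ we have $b_1^2+N>0$. The core of the argument is then to square the orthogonality relation and apply Cauchy--Schwarz, converting the cross term into the spatial inner product:
\begin{equation*}
(a_1b_1)^2=(\vec{a}\cdot\vec{b})^2\le|\vec{a}|^2\,|\vec{b}|^2.
\end{equation*}
Feeding in the two norm bounds (using the elementary fact that $0\le p<P$, $0\le q<Q$ with $P,Q>0$ imply $pq<PQ$) gives
\begin{equation*}
a_1^2b_1^2<(a_1^2-\epsilon)(b_1^2+N)=a_1^2b_1^2+Na_1^2-\epsilon b_1^2-\epsilon N.
\end{equation*}
Cancelling $a_1^2b_1^2$ and rearranging yields $\epsilon b_1^2<Na_1^2-\epsilon N$, that is $b_1^2<(Na_1^2-\epsilon N)/\epsilon$, which is in fact slightly stronger than the claimed inequality.

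I do not expect a genuine obstacle here: once one recognizes the Lorentzian structure (which is exactly the signature $(1,\rk\NS(X))$ of $v^\perp$ exploited in the surrounding proof), the whole statement is a single Cauchy--Schwarz estimate followed by one cancellation. The only points requiring care are the correct reading of the summation range and the direction of the inequalities when the two strict norm bounds are multiplied into the Cauchy--Schwarz estimate, which is precisely why the positivity of $a_1^2-\epsilon$ and $b_1^2+N$ is recorded first.
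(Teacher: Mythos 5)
Your proof is correct and follows essentially the same route as the paper's: the paper likewise squares the relation $a_1b_1=\sum_{i}a_ib_i$, applies Cauchy--Schwarz to get $(a_1b_1)^2\le(\sum_i a_i^2)(\sum_i b_i^2)\le(a_1^2-\epsilon)(b_1^2+N)$, and then cancels $a_1^2b_1^2$ to isolate $\epsilon b_1^2\le Na_1^2-\epsilon N$. Your bookkeeping of the positivity of $a_1^2-\epsilon$ and $b_1^2+N$ (and the resulting strict inequality) is a minor refinement of the same argument.
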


\begin{proof}
\begin{equation*}
\begin{split}
(a_1 b_1)^2=(\sum_i a_i b_i)^2 \leq (\sum_i a_i^2)(\sum_i b_i^2)
\leq (a_1^2-\epsilon)(b_1^2+N).
\end{split}
\end{equation*}
Since $a_1 \ne 0$, we get the claim.
\end{proof}

\begin{prop}
Let $L$ be a Lorentzian lattice.
Let $B$ be a compact subset of $P^+(L)$.
Then 
$$
\{ v \in L \mid \langle v^2 \rangle \geq -N,\; (v,h)=0, h \in B \}
$$
is a finite set.
\end{prop}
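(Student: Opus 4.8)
The plan is to reduce the finiteness assertion to the elementary inequality of the preceding Lemma, using the compactness of $B$ to produce the uniform constants that the Lemma needs.

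First I would fix real coordinates that diagonalize the form. Since $L$ has signature $(1,n)$, choose a basis of $L_{\Bbb R}$ in which $\langle x,x\rangle = x_1^2 - \sum_{i\geq 2} x_i^2$ and $P^+(L)$ lies in the half-space $x_1>0$. Under this linear isomorphism $L$ is carried to a full-rank \emph{discrete} subgroup $L'\subset{\Bbb R}^{n+1}$; only discreteness will matter for the final counting step, so there is no need for the form to remain integral. Next I would extract the two constants from compactness. The function $h\mapsto\langle h,h\rangle$ is continuous and strictly positive on $P^+(L)$, hence attains a positive minimum on the compact set $B$: there is $\epsilon>0$ with $\langle h,h\rangle>\epsilon$ for all $h\in B$. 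Moreover $B$ is bounded, so its first coordinate is bounded: there is $M>0$ with $a_1^2\leq M$ for every $h=(a_1,\dots,a_{n+1})\in B$.

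Now fix $v=(b_1,\dots,b_{n+1})\in L'$ belonging to the set in question, and choose $h=(a_1,\dots,a_{n+1})\in B$ with $\langle v,h\rangle=0$. Then the hypotheses of the Lemma hold with $a=h$ (since $a_1^2-\sum_{i\geq 2}a_i^2=\langle h,h\rangle>\epsilon$) and $b=v$ (since $b_1^2-\sum_{i\geq 2}b_i^2=\langle v,v\rangle\geq -N$), so the Lemma yields $b_1^2\leq (Na_1^2-\epsilon N)/\epsilon\leq N(M-\epsilon)/\epsilon$, a bound independent of $v$ and of $h$. Combining this with $\sum_{i\geq 2}b_i^2=b_1^2-\langle v,v\rangle\leq b_1^2+N$ shows that \emph{all} coordinates of $v$ are uniformly bounded, so every such $v$ lies in one fixed bounded region of ${\Bbb R}^{n+1}$. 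Since $L'$ is discrete, that region contains only finitely many of its points, which proves the claim.

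The main (indeed the only essential) obstacle is the uniform lower bound $\langle h,h\rangle>\epsilon>0$. The positive cone degenerates to the light cone $\langle h,h\rangle=0$ along its boundary, and there the Lemma's estimate on $b_1^2$ blows up; it is therefore crucial that $B$ stays a definite distance from that boundary, which is exactly what compactness guarantees. Once $\epsilon$ and $M$ are in hand the argument is routine.
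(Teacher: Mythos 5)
Your proof is correct and follows essentially the same route as the paper's: both extract the uniform constants (the lower bound $\epsilon$ on $\langle h,h\rangle$ and the upper bound on the time coordinate of $h\in B$) from compactness, feed them into the preceding Cauchy--Schwarz lemma to bound the time coordinate of $v$, and then use $\langle v^2\rangle\geq -N$ to control the spacelike part. The only cosmetic difference is the last counting step --- the paper decomposes $v$ along a fixed positive lattice vector $e_1$ and its negative-definite orthogonal complement, while you bound all real coordinates and invoke discreteness of $L$ in ${\Bbb R}^{n+1}$; your variant even sidesteps the paper's unnecessary choice of a lattice point $e_1\in B\cap L$.
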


\begin{proof}
Since $B$ is compact, $(h^2)> \epsilon>0$ for $h \in B$.
We take a orthogonal basis $e_1,e_2,...,e_n$ of $L$ 
such that $e_1 \in B \cap L$.
By the above lemma, 
$\langle v, e_1 \rangle$ is bounded.
Since $v':=v-\frac{\langle v, e_1 \rangle}{\langle e_1^2 \rangle} e_1$
belongs to $\frac{1}{M}e_1^\perp$ for some $M \in {\Bbb Z}_{>0}$
and
$\langle {v'}^2 \rangle=\langle v^2 \rangle-
\frac{\langle v,e_1 \rangle^2}{\langle e_1^2 \rangle}$ is bounded,
the choice of $v'$ is finite.
Hence the choice of $v$ is also finite.
\end{proof}

\end{NB}

\begin{defn}
\begin{enumerate}
\item[(1)]
A connected component $D$ of 
$P^+(v^\perp)_{\Bbb R} \setminus \cup_{v_1 \in {\frak W}} v_1^\perp$
is a chamber. 
\item[(2)]
$D(\beta,H,t)$ is a chamber such that
 $\xi(\beta,H,t) \in D(\beta,H,t)$.
\end{enumerate}
\end{defn}

$\overline{D(\beta,H,t)}$ consists of 
$x \in \overline{P^+(v^\perp)}_{\Bbb R}$ such that
$\langle \xi(\beta,H,t),\pm v_1 \rangle>0$
implies $\langle x,\pm v_1 \rangle \geq 0$, that is,
$\langle \xi(\beta,H,t),v_1 \rangle
\langle x,v_1 \rangle \geq 0$.

\begin{prop}[{\cite[Thm. 1.6]{MYY:2011:2}}]\label{prop:positive-cone}
We fix a general $(\beta,H,t)$.
\begin{enumerate}
\item[(1)]
Assume that $(\beta',H',t')$ belongs to a chamber and
$H' \in \Amp(X)_{\Bbb Q}$,
${t'}^2 \in {\Bbb Q}$, $\beta' \in \NS(X)_{\Bbb Q}$.
Then
$\theta_{v,\beta',t'H'}(\xi(\beta',H',t'))$ is an ample ${\Bbb Q}$-divisor
of $K_{(\beta',t'H')}(v)$.
\item[(2)]
We have a surjective map
\begin{equation*}
\varphi_{(\beta,H,t)}: \overline{\frak H} \to 
C(\overline{P^+(K_{(\beta,tH)}(v))}_{\Bbb R}) 
\end{equation*}
such that 
$$
\varphi_{(\beta,H,t)}((\beta',H',t')):={\Bbb R}_{>0} 
\theta_{v,\beta,tH}(\xi(\beta',H',t')).
$$
\item[(3)]
Let ${\cal C}$ be a chamber in
$\overline{\frak H}$. 
Then $\Xi({\cal C})$ is the chamber $D(\beta',H',t')$ 
$((\beta',H',t') \in {\cal C})$ in $C^+$ and
$$
\theta_{v,\beta,tH}(\overline{D(\beta,H,t)}) 
=\Nef(K_{(\beta,tH)}(v))_{\Bbb R}.
$$
\end{enumerate}
\end{prop}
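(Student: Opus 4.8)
The plan is to deduce the three assertions from the correspondence between the wall-and-chamber structure on $\overline{\frak H}$ and that on the positive cone $C^+$, which has already been set up in the preceding propositions. The key ingredients available are: the map $\Xi$ of Proposition~\ref{prop:positive} together with its surjectivity onto $C^+$ and the fact (Remark~\ref{rem:positive}) that rational points of $C^+$ are hit by rational parameters; the wall-matching equivalence $(\beta,H,t)\in W_{v_1}\iff \xi(\beta,H,t)\in v^\perp\cap v_1^\perp$ of Proposition~\ref{prop:wall-equation}; and the cited result \cite[Prop.~4.3.2]{MYY:2011:2} that underlies the whole statement. First I would establish~(1): for a parameter $(\beta',H',t')$ in a chamber with rational coordinates, $\xi(\beta',H',t')$ lies in the interior of $C^+$ by Proposition~\ref{prop:positive} and avoids every wall $v_1^\perp$ by the chamber hypothesis via Proposition~\ref{prop:wall-equation}; then $\theta_{v,\beta',t'H'}(\xi(\beta',H',t'))$ is the first Chern class of Bridgeland's natural polarization (the vector $\xi_\omega$ of subsection~\ref{subsect:polarization}, matched to $\xi$ in Lemma~\ref{lem:Im}), and I would invoke \cite[Prop.~4.3.2]{MYY:2011:2}, which asserts precisely that this class is ample on $K_{(\beta',t'H')}(v)$.

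For~(2), the map $\varphi_{(\beta,H,t)}$ is the composition of $\Xi$ with the isometry $\theta_{v,\beta,tH}\colon v^\perp\simto H^2(K_{(\beta,tH)}(v),\bb Z)$ of the theorem in subsection~\ref{subsect:hyper}, which carries $\langle\,,\,\rangle$ to the Beauville--Fujiki form $q_{K_{(\beta,tH)}(v)}$. Because this $\theta_{v,\beta,tH}$ is an isometry of Hodge structure, it sends $\overline{P^+(v^\perp)}_{\Bbb R}$ isomorphically onto $\overline{P^+(K_{(\beta,tH)}(v))}_{\Bbb R}$ (after fixing the orientation so that the ample cone sits on the positive side, which is where the normalization $\langle x,H_0+(H_0,\delta)\varrho_X\rangle>0$ enters); surjectivity of $\varphi_{(\beta,H,t)}$ onto the ray space $C(\overline{P^+(K)}_{\Bbb R})$ then follows immediately from the surjectivity of $\Xi$ onto $C^+$ proved in Proposition~\ref{prop:positive}. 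I would take care that the base point $(\beta,H,t)$ is held fixed so that a single moduli space $K_{(\beta,tH)}(v)$ (hence a single lattice $H^2$) is used throughout, while the variable point $(\beta',H',t')$ moves over $\overline{\frak H}$.

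For~(3), let $\cal C$ be a chamber in $\overline{\frak H}$. By Proposition~\ref{prop:wall-equation} the walls $W_{v_1}$ pull back exactly to the hyperplanes $v_1^\perp$ cutting out the chambers $D(\beta',H',t')$ of $C^+$, so $\Xi$ maps $\cal C$ into a single chamber; surjectivity and the wall-matching give that $\Xi(\cal C)$ is all of that chamber. For the nef-cone identity I would argue that $\theta_{v,\beta,tH}(\overline{D(\beta,H,t)})$ is, by~(1) and the density of rational interior points (Remark~\ref{rem:positive}), the closure of a set of ample classes, hence contained in $\Nef(K_{(\beta,tH)}(v))_{\Bbb R}$; conversely the chamber $\overline{D(\beta,H,t)}$ is by its very definition maximal among the cones cut out by the walls, so a class strictly outside it crosses some wall $v_1^\perp$, across which \cite[Prop.~4.2.2]{MYY:2011:2} produces a properly semistable object and thus a wall-crossing that destroys nefness; this shows the reverse inclusion.

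\textbf{The main obstacle} I expect is the reverse inclusion in~(3): ampleness of the $\theta$-image on the open chamber is the easy half, but proving that the \emph{entire} nef cone is recovered — that no nef class lies beyond the chamber wall — requires showing each wall of $\overline{D(\beta,H,t)}$ genuinely supports a contraction or a crossing in the moduli space, i.e. the non-emptiness side of Proposition~\ref{prop:existence-of-wall} must be fed back in to rule out spurious walls. Closing the gap between ``ample on the interior'' and ``nef exactly on the closure'' is precisely where the content of \cite[Prop.~4.3.2]{MYY:2011:2} is indispensable, and I would lean on it rather than reprove the geometric wall-crossing from scratch.
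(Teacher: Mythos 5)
Your overall skeleton matches the paper's proof: (1) is reduced to \cite[Prop.~4.3.2]{MYY:2011:2}, (2) is the composition of the surjectivity of $\Xi$ (Proposition \ref{prop:positive}) with the Hodge isometry $\theta_{v,\beta,tH}$, and (3) combines the wall-matching of Proposition \ref{prop:wall-equation} with ``ample on the chamber, contractions on the walls''. However, there are two genuine gaps.

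First, you invoke \cite[Prop.~4.3.2]{MYY:2011:2} as if it applied verbatim to $\xi(\beta',H',t')$. It does not: the cited result concerns the vector $\xi_\omega$ of subsection \ref{subsect:polarization}, and one has $\xi(\beta',H',t')=d_{\beta'}\,\xi_\omega$, so the citation transfers only when $d_{\beta'}>0$ (indeed $\xi_\omega$ is not even defined when $d_{\beta'}=0$). The bulk of the paper's proof of (1) and (3) is precisely this case analysis: for $d_{\beta'}<0$ one applies the cited result to $M_{(\beta',t'H')}(-v)$ and uses $\theta_{-v,\beta',t'H'}=-\theta_{v,\beta',t'H'}$ to recover the correct sign, and the locus $d_{\beta'}=0$ is handled through Lemma \ref{lem:d_beta=0}, which tells you which walls can meet that locus and that (when $r_1\xi-r\xi_1\neq 0$) no wall is contained in it, so the wall can be tested at a general point where the cited result applies. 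There is also the small but necessary perturbation of $t'$ making the auxiliary parameter $\lambda$ of [MYY] rational, which you do not address.

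Second, your mechanism for the reverse inclusion in (3) --- ``Prop.~4.2.2 produces a properly semistable object and thus a wall-crossing that destroys nefness'' --- is not a valid implication. The mere existence of strictly semistable objects on a wall does not prevent the nef cone from extending across $v_1^\perp$; what is needed is that the wall class $\theta_{v,\beta,tH}(\xi(\beta',H',t'))$, for $\xi(\beta',H',t')\in P^+(v^\perp)_{\Bbb R}\cap v_1^\perp\cap\overline{D(\beta,H,t)}$, induces a non-trivial contraction of $K_{(\beta,tH)}(v)$, hence has degree zero on some contracted curve, so that any class beyond the wall is negative on that curve and cannot be nef. That is the content of \cite[Cor.~4.3.3~(2)]{MYY:2011:2}, which is the result the paper actually uses here --- not Prop.~4.3.2, on which you propose to lean --- and it again becomes available for all wall points only after the $d_{\beta'}$ case analysis above.
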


\begin{proof}
(1)
\begin{NB}
Old proof:
We write $\beta'=s' H'+D'+\delta$ ($D' \in {H'}^\perp$). 
In the notation of \cite[sect. 5.3]{MYY:2011:2},
we note that $d_{\beta'}=d_{\delta+D'}-rs'>0$ if and only if
$d_{\beta'}-r \lambda=d_{\delta+D'}-r(s'+\lambda)>0$, where
$\lambda \in {\Bbb R}$ satisfies
$\phi_{(\beta',t'H')}(e^{\beta'+\lambda H'})=\phi_{(\beta',t'H')}(v)$. 
Purturbing $t'$ slightly, we may assume that
$\lambda \in {\Bbb Q}$.
\begin{NB2}
For $\beta'=s' H'+D'+\delta$,
we take $\lambda:=\lambda(s',t')$ such that
$\phi_{(\beta',t' H')}(e^{\beta'+\lambda H'})=\phi_{(\beta',t'H')}(v)$. 
For this $\lambda$, we consider the circle
$\phi_{(\beta(x),yH')}(e^{\beta(x)+\lambda H'})=\phi_{(\beta(x),yH')}(v)$,
in the $(x,y)$-plane, where $\beta(x)=xH'+D'+\delta$.
This circle passes through $(x,y)=(s'+\lambda,0)$ and deso not intersect
the line $x=d_{\delta+D'}/r$.
Hence 
$d_{\delta+D}-rs>0$ if and only if $d_{\delta+D}-r(s+\lambda)>0$. 
\end{NB2}
So the claim is a consequence of
\cite[Prop. 4.3.2]{MYY:2011:2}, if $d_\beta>0$.
\end{NB}
Since $(\beta',H',t')$ is general,
we may assume that $d_{\beta'}(v) \ne 0$.
If $d_{\beta'}(v)>0$, then the claim is a consequence of
\cite[Thm. 1.6]{MYY:2011:2}.
If $d_{\beta'}(v)<0$, then we apply 
\cite[Thm. 1.6]{MYY:2011:2} to $M_{(\beta',t' H')}(-v)$.
Since $\xi_\omega$ for $v$ is the same as that for $-v$
and $\theta_{-v,\beta',t' H'}=
-\theta_{v,\beta',t' H'}$, 
$-\theta_{v,\beta',t' H'}(\xi_{t' H'})$ is ample, which implies that
$d_{\beta'} \theta_{v,\beta',t' H'}(\xi_{t' H'})$ is ample.

(2)
Since $\theta_{v,\beta,tH}:\overline{P^+(v^\perp)}_{\Bbb R}
\to \overline{P^+(K_{(\beta,tH)}(v))}_{\Bbb R}$
is an isomorphism, the claim follows from
Proposition \ref{prop:positive}.
  
(3) By (1) and (2), $\theta_{\beta,H,t}(D(\beta,H,t))$ is contained in
		 $\Nef(K_{(\beta,tH)}(v))_{\Bbb R}$.
Then the claim follows from
\cite[Cor. 5.17 (2)]{MYY:2011:2}.
More precisely, we proved that
$\xi(\beta',H',t') \in P^+(v^\perp)_{\Bbb R} \cap v_1^\perp \cap 
\overline{D(\beta,H,t)}$ gives a contraction
under the assumption
$d_{\beta'}>0$. 
For the case where $d_{\beta'}<0$, we get the claim
by the same reduction in (1).
We next treat the case where $d_{\beta'}=0$.
\begin{NB}
If $r_1 \xi-r \xi_1 \ne 0$, then
there is $H \in \Amp(X)_{\Bbb Q}$ such that
$(\xi-r\beta,H)=0$ and $(r_1 \xi-r \xi_1,H) \ne 0$.
Since $(\xi_1-r_1 \beta,H) \ne 0$,
$(\beta,H,t) \not \in W_{v_1}$.
\end{NB}
If $r_1 \xi-r \xi_1 \ne 0$, then 
 $W_{v_1}$ does not contain the set $d_{\beta'}=0$ by
Lemma \ref{lem:d_beta=0} (1).
For a general point of $W_{v_1}$, we can apply
\cite[Cor. 5.17 (2)]{MYY:2011:2}.
Hence for any $\xi(\beta',H',t') \in P^+(v^\perp)_{\Bbb R} \cap v_1^\perp 
\cap \overline{D(\beta,H,t)}$,
$\theta_{v,\beta,tH}(\xi(\beta',H',t'))$ gives a contraction.

If $r_1 \xi-r \xi_1=0$, then
$W_{v_1}$ is the set $d_{\beta'}=0$ by Lemma \ref{lem:d_beta=0} (1).
In this case, $\theta_{v,\beta,tH}(\xi(\beta',H',t'))$ gives a contraction.
\end{proof}

\begin{cor}\label{cor:positive-cone}
For $(\beta',H',t') \in \overline{D(\beta,H,t)}$
with $\langle \xi(\beta',H',t')^2 \rangle>0$,
$\theta_{v,\beta,tH}(\xi(\beta',H',t'))$
gives a birational contraction of $K_{(\beta,tH)}(v)$.
\end{cor}

\begin{proof}
We first note that
the canonical bundle of $K_{(\beta,tH)}(v)$ is trivial.
By Proposition \ref{prop:positive-cone} (3)
and $\langle \xi(\beta',H',t')^2 \rangle>0$,
we can apply the base point free theorem
to get the claim.
\end{proof}

\begin{NB}
A generalization:
If $W$ is a codimension 0 wall, then
we have the Hilbert-Chow morphism as a contraction.
If $W$ is not a codimension 0 wall, then
we have a natural birational map
$M_{(\beta_-,\omega_-)}(v) \cdots \to M_{(\beta_+,\omega_+)}(v)$.
Then we see that $\theta_{v,\beta_-,\omega_-}(\xi(\beta,H,t))$
is big. Since it is also nef, 
by the base point free theorem,
we have a birational contraction of 
$M_{(\beta,tH)}(v)$. 
\end{NB}

The following result describe the exceptional locus
of the contraction.
\begin{prop}\label{prop:positive-cone:remark}
Assume that $(\beta', H',t') \in \overline{D(\beta,H,t)} \setminus 
D(\beta,H,t)$.
We set
\begin{equation*}
\begin{split}
M_{(\beta,tH)}(v)^*:= & \{E \in M_{(\beta,tH)}(v) \mid 
\text{ $E$ is $\sigma_{(\beta',t' H')}$-stable } \},\\
K_{(\beta,tH)}(v)^*:= & M_{(\beta,tH)}(v)^* \cap K_{(\beta,tH)}(v).
\end{split}
\end{equation*}
If $\langle \xi(\beta',H',t')^2 \rangle>0$, then
$\theta_{v,\beta,tH}(\xi(\beta',H',t'))$ is ample
over $K_{(\beta,tH)}(v)^*$.   
Thus the exceptional locus of the contraction by
$\theta_{v,\beta,tH}(\xi(\beta',H',t'))$ is contained in 
$K_{(\beta,tH)}(v) \setminus K_{(\beta,tH)}(v)^*$.
\end{prop}

\begin{proof}
Since $\langle \xi(\beta',H',t')^2 \rangle>0$,
we can take a general $(\beta_1,H_1,t_1)$ such that
$\beta_1, H_1 \in \NS(X)_{\Bbb Q}$, $t_1^2 \in {\Bbb Q}$ and
$$
\xi(\beta',H',t')=x \xi(\beta,H,t)+(1-x)\xi(\beta_1,H_1,t_1),\;
x \in (0,1).
$$
Since
\begin{equation*}
M_{(\beta,tH)}(v)^* 
\subset  M_{(\beta,tH)}(v) \cap M_{(\beta_1,t_1 H_1)}(v),
\end{equation*}
we have 
$$
\theta_{v,\beta,tH}(x)_{|M_{(\beta,tH)}(v)^*}=
\theta_{v,\beta_1,t_1 H_1 }(x)_{|M_{(\beta,tH)}(v)^*},\; x \in v^\perp,
$$
where 
$$
\theta_{v,\beta,tH}(x)
:=c_1(p_{M_{(\beta,tH)}(v)*}(\ch({\bf E})p_X^*(x^{\vee}))
 \in \NS(M_{(\beta,tH)}(v)),
$$
(cf. Definition \ref{defn:theta}).
Since $\theta_{v,\beta,tH}(\xi(\beta,H,t))$ and 
$\theta_{v,\beta_1, t_1 H_1}(\xi(\beta_1,H_1, t_1))$
are ample divisors on
$M_{(\beta,tH)}(v)$ and $M_{(\beta_1,t_1 H_1)}(v)$ respectively, 
$\theta_{v,\beta,tH}(\xi(\beta',H',t'))$ is ample
over $M_{(\beta,tH)}(v)^*$.   
\end{proof}

\subsection{The movable cone of $K_{(\beta,tH)}(v)$.}\label{subsect:movable}

\begin{defn}\label{defn:I}
We set
\begin{equation*}
\begin{split}
{\frak I}_k := & \left \{ w  \left|
\begin{aligned} 
& \text{ $w \in H^*(X,{\Bbb Z})_{\alg} $ is primitive}, \\
& \langle w^2 \rangle=0, \langle v,w \rangle=k
\end{aligned}
\right.
\right\},\\
{\frak I}:=& \bigcup_{k=0}^2 {\frak I}_k.
\end{split}
\end{equation*}

\begin{NB}
For $w \in {\frak I}_0$,
\eqref{eq:wall-cond} are not satisfied.
We extend the definition of $W_w$ to this case
by \eqref{eq:wall}. Then we have 
$W_w=\{(\lambda,0)  \}$, where $\lambda H=c_1(w)/\rk w$.
A chamber ${\frak C}$ for generic stabilities 
is a connected component of 
$\overline{\Bbb H} \setminus \cup_{w \in {\frak I}} W_w$.
\end{NB} 
\end{defn}

By the classification of walls in \cite{MYY:2011:1},
the following is obvious.
\begin{lem}\label{lem:wall-intersection}
For $v_1 \in {\frak I}_1$ and $w_1 \in {\frak W}$
with $w_1 \not \in 
\{ iv_1, v-iv_1 \mid 1 \leq i \leq \langle v^2 \rangle/2 \}$,
$$
v_1^\perp \cap w_1^\perp \cap P^+(v^\perp)_{\Bbb R}=\emptyset.
$$
\end{lem}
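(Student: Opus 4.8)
The plan is to work entirely inside the Lorentzian lattice $v^\perp$ and reduce everything to a signature count. A vector $x\in v_1^\perp\cap w_1^\perp\cap P^+(v^\perp)_{\Bbb R}$ is exactly an $x\in H^*(X,{\Bbb Z})_{\alg}\otimes{\Bbb R}$ with $\langle x^2\rangle>0$ orthogonal to each of $v,v_1,w_1$; so the assertion is equivalent to saying that $\{v,v_1,w_1\}^\perp$ contains no vector of positive square, i.e. is negative semi-definite. Recall from the proof of Proposition \ref{prop:existence-of-wall} that $v^\perp$ has signature $(1,\rk\NS(X))$, so that $H^*(X,{\Bbb Z})_{\alg}\otimes{\Bbb R}$ has signature $(2,\rk\NS(X))$.

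First I would record the geometry of the pair $v,v_1$. Since $\langle v_1^2\rangle=0$ and $\langle v,v_1\rangle=1$, the sublattice $U:=\langle v,v_1\rangle$ has Gram determinant $-1$, hence is a unimodular hyperbolic plane, primitively embedded, and $U^\perp$ has signature $(1,\rk\NS(X)-1)$. Putting $v_2:=v-\tfrac{\langle v^2\rangle}{2}v_1$ gives a null basis $v_1,v_2$ of $U$ with $\langle v_2^2\rangle=0$, $\langle v_1,v_2\rangle=1$. If $v,v_1,w_1$ are linearly dependent then $w_1\in U$ (by unimodularity $w_1\in{\Bbb Z}v+{\Bbb Z}v_1$), and then $w_1^\perp\cap v^\perp=v_1^\perp\cap v^\perp$, so $w_1$ defines the same wall as $v_1$; I would dispose of this degenerate case by remarking that it produces no wall distinct from $v_1^\perp$. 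So from now on I assume $v,v_1,w_1$ linearly independent and write $w_1=w_1^U+w'$ with $w_1^U\in U_{\Bbb Q}$ and $0\neq w'\in U^\perp_{\Bbb Q}$. The key reduction is that, since $w'\perp U\ni v,v_1$, one has $\{v,v_1,w_1\}^\perp=\{w'\}^{\perp}\cap U^\perp$ (the complement taken inside $U^\perp$). As $U^\perp$ has signature $(1,\rk\NS(X)-1)$, this complement is negative definite when $\langle w'^2\rangle>0$, and contains no positive vector when $\langle w'^2\rangle=0$ (the orthogonal complement of a positive vector in a Lorentzian space is negative definite and cannot contain the nonzero isotropic $w'$), while it contains a positive vector precisely when $\langle w'^2\rangle<0$. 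Thus the lemma reduces to the single inequality $\langle w'^2\rangle\ge 0$.

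To prove $\langle w'^2\rangle\ge0$ I would argue by contradiction from the three inequalities \eqref{eq:wall-cond} for $w_1\in{\frak W}$. Because $w'\perp U$, we have $\langle w_1^2\rangle=\langle (w_1^U)^2\rangle+\langle w'^2\rangle$ and $\langle (v-w_1)^2\rangle=\langle (v-w_1^U)^2\rangle+\langle w'^2\rangle$; if $\langle w'^2\rangle<0$, then $\langle w_1^2\rangle\ge0$ and $\langle (v-w_1)^2\rangle\ge0$ force $\langle (w_1^U)^2\rangle>0$ and $\langle (v-w_1^U)^2\rangle>0$, so both $w_1^U$ and $v-w_1^U$ are timelike in $U$. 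Moreover $\langle w_1,v\rangle=\langle w_1,v-w_1\rangle+\langle w_1^2\rangle>0$ and $\langle v-w_1,v\rangle=\langle w_1,v-w_1\rangle+\langle (v-w_1)^2\rangle>0$; since these pairings only see the $U$-components, both $w_1^U$ and $v-w_1^U$ lie in the same cone of $U$ as $v$. Writing $w_1^U=\alpha v_1+\beta v_2$, that cone is $\{\alpha>0,\ \beta>0\}$, while $v-w_1^U=(\tfrac{\langle v^2\rangle}{2}-\alpha)v_1+(1-\beta)v_2$, so both conditions force $0<\beta<1$. But $\beta=\langle w_1,v_1\rangle\in{\Bbb Z}$, a contradiction; hence $\langle w'^2\rangle\ge0$ and the proof is complete.

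I expect the last paragraph to be the main obstacle: one must extract from the two Bogomolov-type inequalities $\langle w_1^2\rangle\ge0$, $\langle (v-w_1)^2\rangle\ge0$ together with $\langle w_1,v-w_1\rangle>0$ exactly the statement that $w_1^U$ and $v-w_1^U$ lie in one and the same cone, and then exploit the integrality of $\beta=\langle w_1,v_1\rangle$ to exclude the window $0<\beta<1$. This integrality, which comes precisely from $v_1\in{\frak I}_1$ being primitive isotropic with $\langle v,v_1\rangle=1$, is the lattice-theoretic reason the wall $v_1^\perp$ meets no other wall, and is the concrete form of the classification of walls in \cite{MYY:2011:1} invoked in the statement.
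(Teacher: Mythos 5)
Your proof is correct, and it takes a genuinely different route from the paper's. The paper starts from a hypothetical integral class $\xi$ in the triple intersection, splits $\xi^\perp=({\Bbb Z}v_1\oplus{\Bbb Z}v_2)\perp L$ (with $v_2=v-\ell v_1$ and $L$ negative definite because $\langle \xi^2\rangle>0$), writes $w_1=xv_1+yv_2+\eta$ with $\eta\in L$, and eliminates the configuration by direct manipulation of the inequalities \eqref{eq:wall-cond}: after replacing $w_1$ by $v-w_1$ so that $y\geq 1$, the case $y\geq 2$ yields $\langle w_1,v-w_1\rangle\leq 0$, while $y=1$ forces $\eta=0$. You never fix $\xi$: using unimodularity of $U={\Bbb Z}v_1+{\Bbb Z}v_2$ you split the whole Mukai lattice as $U\perp U^\perp$, reduce the lemma to the single inequality $\langle w'^2\rangle\geq 0$ for the $U^\perp$-component $w'$ of $w_1$ (legitimate, since $U^\perp$ is Lorentzian), and obtain it uniformly from the cone geometry of the hyperbolic plane: $\langle w'^2\rangle<0$ would force $0<\langle w_1,v_1\rangle<1$, contradicting integrality. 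Both arguments rest on the same two pillars, the hyperbolic pair $v_1,v_2$ and the integrality of $\langle w_1,v_1\rangle$, but your reduction is more structural and avoids the case split, while the paper's version stays inside $\xi^\perp$ and needs no splitting of the ambient lattice. One further remark: both proofs meet the same degenerate case $w_1\in{\Bbb Z}v_1+{\Bbb Z}v_2$ (your linearly dependent case; the paper's $y=1$, $\eta=0$), where the literal statement of the lemma in fact fails: for instance $w_1=v_2$ lies in ${\frak W}\setminus{\Bbb Z}v_1$ yet satisfies $w_1^\perp\cap v^\perp=v_1^\perp\cap v^\perp$, so the triple intersection equals the nonempty set $v_1^\perp\cap P^+(v^\perp)_{\Bbb R}$. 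The paper dismisses this case with the claim that $w_1\in{\Bbb Q}v_1$, which is not accurate; your reading, namely that such a $w_1$ cuts out the same wall as $v_1$ so that no wall distinct from $v_1^\perp$ crosses it, is the correct interpretation, and it is exactly the form in which the lemma is used in Theorem \ref{thm:general:movable}.
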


\begin{proof}
If $v_1^\perp \cap w_1^\perp \cap P^+(v^\perp)_{\Bbb R} \not =\emptyset$,
then we have $\xi \in H^*(X,{\Bbb Z})_{\alg}$ such that
$\xi \in v_1^\perp \cap w_1^\perp \cap v^\perp$ and 
$\langle \xi^2 \rangle>0$.
Since $v_1 \in {\frak I}_1$, we have a decomposition 
$v=\ell v_1+v_2$ where $\langle v_1^2 \rangle=0$,
$\langle v_1,v_2 \rangle=1$ and $\ell=\langle v^2 \rangle/2$.
Then we have a decomposition
$$
\xi^\perp=({\Bbb Z} v_1 \oplus {\Bbb Z}v_2) \perp L.
$$
Since $\langle \xi^2 \rangle>0$,
$L$ is negative definite.
We set 
\begin{equation*}
\begin{split}
w_1:=& x v_1+y v_2+\eta,\\ 
w_2:=& (\ell-x) v_1+(1-y) v_2-\eta,
\end{split}
\end{equation*}
where $x,y \in {\Bbb Z}$ and $\eta \in L$.
By replacing $w_1$ by $w_2$ if necessary,
we may assume that $y \geq 1$.
Since $w_1 \in {\frak W}$,
$\langle w_1^2 \rangle \geq 0, \langle w_2^2 \rangle \geq 0$
and $\langle w_1,w_2 \rangle>0$.
Thus we have $2xy+(\eta^2) \geq 0$,
$2(\ell-x)(1-y)+(\eta^2) \geq 0$ and
$y(\ell-x)+x(1-y)-(\eta^2) >0$.
On the other hand,
\begin{equation*}
\begin{split}
y(\ell-x)+x(1-y)-(\eta^2) \leq &
y(\ell-x)+x(1-y)+2(\ell-x)(1-y)\\
=& \ell(2-y)-x.
\end{split}
\end{equation*}
If $y \geq 2$, then $xy \geq -(\eta^2)/2 \geq 0$
implies that $x \geq 0$.
Hence $\langle w_1,w_2 \rangle \leq 0$.
If $y=1$, then $\langle w_2^2 \rangle \geq 0$ implies
that $\eta=0$.
Hence $w_2=(\ell-x) v_1$ with $0 \leq x <\ell$, which is a contradiction.  
Therefore 
$v_1^\perp \cap w_1^\perp \cap P^+(v^\perp)_{\Bbb R} =\emptyset$.
\end{proof}

\begin{rem}
For $w \in {\frak I}_0$,
we have 
$W_w={\Bbb R}w \cap \overline{P^+(v^\perp)}_{\Bbb R}$.
\begin{NB}
We have
$v^\perp={\Bbb R}w+{\Bbb R}w' \perp L$,
$\langle w,w' \rangle=1$, $\langle {w'}^2 \rangle=0$
and $L$ is negative definite.
Hence the claim holds.
\end{NB}
\end{rem}

We also have the following result.
\begin{prop}\label{prop:intersection2}
Assume that $v$ is a primitive Mukai vector with
$\ell:=\langle v^2 \rangle/2 \geq 4$.
For $v_1, w_1 \in {\frak I}_2$
with $w_1 \not \in {\Bbb Z}v_1$,
$$
v_1^\perp \cap w_1^\perp \cap P^+(v^\perp)_{\Bbb R}=\emptyset.
$$
\end{prop}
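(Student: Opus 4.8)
The plan is to argue by contradiction, following the pattern of the proof of Lemma~\ref{lem:wall-intersection}. Suppose $v_1^\perp \cap w_1^\perp \cap P^+(v^\perp)_{\Bbb R} \ne \emptyset$. Since $P^+(v^\perp)_{\Bbb R}$ is open (the defining inequality $\langle x^2 \rangle > 0$ is strict) and $v_1^\perp \cap w_1^\perp \cap v^\perp$ is a rational subspace, the density of rational points produces, after clearing denominators, an integral class $\xi \in v_1^\perp \cap w_1^\perp \cap v^\perp$ with $\langle \xi^2 \rangle > 0$, exactly as in that lemma. I would then pass to the lattice $\xi^\perp$: since $H^*(X,{\Bbb Z})_{\alg}$ has signature $(2,\rk\NS(X))$ and $\langle \xi^2 \rangle>0$, the sublattice $\xi^\perp$ has signature $(1,\rk\NS(X))$, and by construction it contains all three of $v,v_1,w_1$.

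Set $m:=\langle v_1,w_1\rangle$. The first step is to rule out $m=0$: if $m=0$, then $v_1$ and $w_1$ would span a totally isotropic subspace of the Lorentzian space $\xi^\perp$, which has Witt index $1$; hence $v_1,w_1$ would be proportional, and primitivity would force $w_1=\pm v_1\in{\Bbb Z}v_1$, contrary to hypothesis. Thus $m\ne 0$, so $v_1,w_1$ span a hyperbolic plane $P$ in $\xi^\perp$, whose orthogonal complement $P^\perp$ is negative definite. Writing $v=\alpha v_1+\beta w_1+u$ over ${\Bbb Q}$ with $u\in P^\perp_{\Bbb Q}$, the relations $\langle v,v_1\rangle=\langle v,w_1\rangle=2$ give $\alpha=\beta=2/m$, and then $\langle v^2\rangle=2\ell$ becomes $2\ell=8/m+\langle u^2\rangle$.

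Because $P^\perp$ is negative definite, $\langle u^2\rangle\le 0$, whence $2\ell\le 8/m$. As $\ell\ge 4$, this forces $m$ to be a positive integer with $8/m\ge 8$, i.e.\ $m=1$, and then $\ell\le 4$, so $\ell=4$ and $\langle u^2\rangle=0$. I expect this endgame to be the only delicate point: the numerical inequality by itself does not eliminate the boundary case $m=1,\ \ell=4$, which must instead be killed using primitivity. Negative definiteness gives $\langle u^2\rangle=0\Rightarrow u=0$, so $v=2v_1+2w_1=2(v_1+w_1)$ is divisible by $2$, contradicting the primitivity of $v$. This contradiction yields $v_1^\perp \cap w_1^\perp \cap P^+(v^\perp)_{\Bbb R}=\emptyset$. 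The remaining points—the rational/integral approximation of $\xi$ and the Witt-index step—are routine, so the substance of the argument is the clean hyperbolic-plane computation replacing the inequality juggling used in the ${\frak I}_1$ case.
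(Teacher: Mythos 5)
Your proof is correct, and it reaches the contradiction by a different mechanism than the paper's own proof. After producing the integral class $\xi$ (the paper does this the same way), the paper works inside the negative definite lattice $\xi^\perp\cap v^\perp$: it applies Cauchy--Schwarz to the two vectors $v-\ell v_1$ and $v-\ell w_1$, each of square $-2\ell$ and with pairing $\ell^2\langle v_1,w_1\rangle-2\ell$, rules out the equality case (which is exactly $v_1=w_1$ or $2v=\ell(v_1+w_1)$) using primitivity and $\ell\geq 4$, and then reads off from the strict inequality that $0<\ell\langle v_1,w_1\rangle<4$, impossible for $\ell\geq 4$. You instead stay in the Lorentzian lattice $\xi^\perp$, split off the hyperbolic plane spanned by $v_1,w_1$ (using the Witt-index argument to dispose of $\langle v_1,w_1\rangle=0$), and decompose $v$ itself, obtaining $2\ell=8/m+\langle u^2\rangle\leq 8/m$; this pins down $m=1$, $\ell=4$, $u=0$, and primitivity then kills $v=2(v_1+w_1)$. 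The two arguments are numerically equivalent --- your terminal case $v=2(v_1+w_1)$ is precisely the paper's Cauchy--Schwarz equality case specialized to $\ell=4$ --- but the bookkeeping differs: the paper's two-sided bound $-2\ell<\ell^2\langle v_1,w_1\rangle-2\ell<2\ell$ eliminates $\langle v_1,w_1\rangle\leq 0$ automatically, whereas you handle $m=0$ and $m<0$ by separate (easy) case analysis. What your route buys is uniformity with the rest of the paper: it is the same hyperbolic-plane-plus-negative-definite-complement decomposition used in the proof of Lemma \ref{lem:wall-intersection} for ${\frak I}_1$, only with the isotropic pair $\{v_1,w_1\}$ as the basis of the plane rather than $\{v_1,\,v-\ell v_1\}$; the paper's Cauchy--Schwarz version is marginally more compact but must rule out its equality case for all $\ell\geq 4$ at once, rather than only at the single point $m=1$, $\ell=4$ where your inequality becomes tight.
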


\begin{proof}
If $v_1^\perp \cap w_1^\perp \cap P^+(v^\perp)_{\Bbb R}
\ne \emptyset$, we can take $\xi \in H^*(X,{\Bbb Z})_{\alg}$
such that $\xi \in v_1^\perp \cap w_1^\perp \cap v^\perp$
and $\langle \xi^2 \rangle>0$.
\begin{NB}
We set $v_2:=v-v_1$ and $w_2:=v-w_1$.
\end{NB}
Then $v-\ell v_1, v-\ell w_1 \in 
\xi^\perp \cap v^\perp \cap H^*(X,{\Bbb Z})_{\alg}$.
Since $\xi^\perp \cap v^\perp \cap H^*(X,{\Bbb Z})_{\alg}$
is negative definite,
we have $\langle v-\ell v_1,v-\ell w_1 \rangle^2
\leq \langle (v-\ell v_1)^2\rangle \langle (v-\ell w_1)^2 \rangle$.
Moreover if the equality holds, then
$v-\ell v_1=\pm(v-\ell w_1)$ by $\langle (v-\ell v_1)^2 \rangle
=\langle (v-\ell w_1)^2 \rangle=-2\ell$.
In this case, we have $v_1=w_1$ or 
$2v=\ell(v_1+w_1)$.
By the primitivity of $v$ and $\ell \geq 4$, 
the latter case does not occur.
Hence we have
\begin{equation*}
\begin{split}
4 \ell^2=& \langle (v-\ell v_1)^2\rangle \langle (v-\ell w_1)^2 \rangle\\
> & \langle v-\ell v_1,v-\ell w_1 \rangle^2
=(\ell^2 \langle v_1,w_1 \rangle-2\ell)^2.
\end{split}
\end{equation*}
Thus 
$-2\ell <\ell^2 \langle v_1,w_1 \rangle-2\ell<2\ell$, which
implies that $0<\ell \langle v_1,w_1 \rangle<4$.
Since $\ell \geq 4$, this does not occur.
Therefore the claim holds.
\end{proof}

\begin{NB}
\begin{lem}\label{lem:general:I}
Let $v$ be a Mukai vector with $\langle v^2 \rangle>0$.
\begin{enumerate}
\item[(1)]
${\frak I}_0 \ne \emptyset$ if and only if 
there is $\eta \in \NS(X)_{\Bbb Q}$ such that
$(\eta^2)=\langle v^2 \rangle/r^2$ or
$(\eta^2)=0$.
\item[(2)]
Assume that $\sqrt{\ell/n} \not \in {\Bbb Q}$. Then
${\frak I}_k \ne \emptyset$ if and only if 
$\# {\frak I}_k =\infty$.
\end{enumerate}
\end{lem}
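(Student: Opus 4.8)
The plan is to write the two defining conditions of ${\frak I}_k$ out explicitly, solve them over ${\Bbb Q}$ for (1), and then for (2) to exploit the infinite stabilizer furnished by Proposition \ref{prop:stab}. Assume $r\neq 0$ (so that $\langle v^2\rangle/r^2$ is meaningful) and write $\delta:=\xi/r$, so that $v=re^\delta+a_\delta\varrho_X$ with $a_\delta=-\langle v^2\rangle/(2r)$. Any isotropic $w=(r',\xi',a')$ with $r'\neq 0$ has the form $w=r'e^{\zeta}$, $\zeta:=\xi'/r'\in\NS(X)_{\Bbb Q}$, and using $\langle e^\delta,e^\zeta\rangle=-\tfrac12((\zeta-\delta)^2)$ and $\langle\varrho_X,e^\zeta\rangle=-1$ one gets
\begin{equation*}
\langle v,w\rangle=-\frac{rr'}{2}\Bigl(((\zeta-\delta)^2)-\frac{\langle v^2\rangle}{r^2}\Bigr),
\end{equation*}
so that $w\in v^\perp$ exactly when $\eta:=\zeta-\delta$ satisfies $(\eta^2)=\langle v^2\rangle/r^2$. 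A rank-zero isotropic $w=(0,\xi',a')$ instead has $(\xi'^2)=0$, and $\langle v,w\rangle=0$ forces $a'=(\xi,\xi')/r$; such a $w$ exists iff there is a nonzero $\xi'\in\NS(X)$ with $(\xi'^2)=0$.

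Assertion (1) is now immediate: clearing denominators and dividing by the content turns any nonzero rational solution of either condition into a primitive integral vector of ${\frak I}_0$, so ${\frak I}_0\neq\emptyset$ iff some $\eta\in\NS(X)_{\Bbb Q}$ has $(\eta^2)=\langle v^2\rangle/r^2$ or some nonzero $\eta$ has $(\eta^2)=0$. The only computation is the square completion above.

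For (2) I work in the rank-one case $\NS(X)={\Bbb Z}H$, with $n=(H^2)/2$ and $\ell=\langle v^2\rangle/2$, so that $\sqrt{\ell/n}$ is meaningful; note $\sqrt{\ell/n}\notin{\Bbb Q}\iff\sqrt{n\ell}\notin{\Bbb Q}$ since $n\ell=n^2(\ell/n)$. Proposition \ref{prop:stab} provides an element $g_0\in\Stab_0(v)$ of infinite order with $\det g_0=1$, hence $g_0(v)=v$; as an isometry of $\Sym_2({\Bbb Z},n)$ fixing $v$ it preserves $\langle v,\,\cdot\,\rangle$, the isotropic form and primitivity, and so acts on each ${\frak I}_k$. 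The implication $\Leftarrow$ is trivial; for $\Rightarrow$ pick $w\in{\frak I}_k$, so that the orbit $\{g_0^m w\}_m\subset{\frak I}_k$. If this orbit were finite then $g_0^m w=w$ for some $m\geq1$, and $h:=g_0^m$ would fix both $v$ and $w$. When $k=1,2$ the vectors $v,w$ are linearly independent (else $w\in{\Bbb Q}v$ would force $w=0$) and span a rank-two sublattice $M$ of Gram determinant $-k^2<0$, hence of signature $(1,1)$; its orthogonal complement inside the rank-three lattice $\Sym_2({\Bbb Z},n)$ is positive definite of rank one, on which $h$ acts by $\pm1$. Then $h^2$ is the identity on $M\otimes{\Bbb Q}\oplus M^\perp\otimes{\Bbb Q}$, i.e.\ on all of $\Sym_2({\Bbb Z},n)$, contradicting that $g_0^m$ has infinite order; hence ${\frak I}_k$ is infinite. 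For $k=0$ part (1) shows ${\frak I}_0\neq\emptyset$ would require $(\eta^2)=2\ell/r^2$ for some $\eta=cH$, i.e.\ $\sqrt{\ell/n}=|rc|\in{\Bbb Q}$, which is excluded, so both sides are false and the equivalence holds vacuously.

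The step I expect to be the main obstacle is the infinitude of the orbit in (2). It rests on the pointwise stabilizer of $M=\langle v,w\rangle$ being finite, which in turn uses that the orthogonal complement of a $(1,1)$-plane is definite — valid precisely because the ambient lattice has signature $(2,1)$ in the rank-one case. In higher rank this complement is indefinite with infinite orthogonal group, and a single infinite-order stabilizer of $v$ need not move a given $w$; extending (2) there would require producing, for each $w\in{\frak I}_k$, an auto-equivalence fixing $v$ but not $w$, a genuinely harder point.
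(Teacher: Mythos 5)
Your proposal is correct and takes essentially the same route as the paper: part (1) is the same computation identifying the rank-nonzero isotropic vectors of $v^\perp$ as $r'e^{\delta+\eta}$ with $(\eta^2)=\langle v^2\rangle/r^2$ (and the rank-zero ones via $(\eta^2)=0$), and part (2) rests on the same key input, the infinite-order element of $\Stab_0(v)$ supplied by Proposition \ref{prop:stab}. The paper simply declares the resulting infinitude of ${\frak I}_k$ ``obvious''; your orbit argument (the pointwise stabilizer of the rank-two sublattice spanned by $v$ and $w$ is finite because its orthogonal complement is definite of rank one) is exactly the justification it leaves implicit, and your restriction of (2) to $\rk \NS(X)=1$ matches the paper's implicit setting, since Proposition \ref{prop:stab} is proved under that assumption.
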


\begin{proof}
For $u=e^{\delta+\eta}$,
$\langle u,v \rangle=0$ if and only if 
$(\eta^2)=\langle v^2 \rangle/r^2$.
For $u=(0,\eta,b)$, $u \in v^\perp$
and $\langle u^2 \rangle=0$ if and only if
$(\eta^2)=0$ and $(c_1(v),\eta)=rb$.

By Proposition \ref{prop:stab}, $\Stab_0(v)$ contains an element $g$
of infinite order. Hence (2) is obvious. 
\end{proof}
\end{NB}

\begin{rem}
If $\rk \NS(X) \geq 2$, then 
${\frak I}_0 \ne \emptyset$ 
if and only if $\# {\frak I}_0 =\infty$.
\begin{NB}
If there is $u \in {\frak I}_0$, then
$v^\perp \otimes {\Bbb Q}={\Bbb Q} u+ {\Bbb Q} u' \perp V$,
where $\langle u,u' \rangle=1$, $\langle {u'}^2 \rangle=0$
and $V$ is negative definite.
For $w=x u+y u'+\eta$, $x,y \in {\Bbb Q}, \eta \in V$,
$\langle w^2 \rangle=0$ if and only if
$y=-\frac{1}{2x}\langle \eta^2 \rangle$ or $w=y u'$.  
Since $\rk V=\rk \NS(X)-1$, we get the claim.
\end{NB}
\end{rem}

\begin{defn}
Let $v$ be a primitive Mukai vector.
For $u \in {\frak I}_1 \cup {\frak I}_2$,
we set 
$$
d_u:=v-\frac{\langle v^2 \rangle}{\langle v,u \rangle}u.
$$
\end{defn}

\begin{lem}\label{lem:reflection}
Let $u$ be an isotropic Mukai vector with $\langle v,u \rangle=1,2$.
\begin{enumerate}
\item[(1)]
$d_u$ is a primitive vector with 
$\langle d_u^2 \rangle=-\langle v^2 \rangle$.
\item[(2)]
$d_u$ defines a reflection of the lattice $v^{\perp}$:
\begin{equation*}
\begin{matrix}
R_u:& v^{\perp} & \to & v^{\perp}\\
& x & \mapsto & x-\frac{2\langle d_u,x \rangle}
{\langle d_u,d_u \rangle}d_u.
\end{matrix}
\end{equation*}
\end{enumerate}
\end{lem}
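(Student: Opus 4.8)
The plan is to treat both assertions by direct computation in the Mukai lattice, writing $m:=\langle v,u\rangle\in\{1,2\}$ and $2\ell:=\langle v^2\rangle$. Since $m\mid 2\ell$, the vector $d_u=v-\tfrac{2\ell}{m}u$ lies in $H^*(X,{\Bbb Z})_{\alg}$, so everything takes place over ${\Bbb Z}$. First I would record the two basic pairings, both immediate from $\langle u^2\rangle=0$ and $\langle v,u\rangle=m$: namely $\langle d_u,v\rangle=\langle v^2\rangle-\tfrac{2\ell}{m}m=0$, so that $d_u\in v^\perp$, and $\langle d_u,u\rangle=m-\tfrac{2\ell}{m}\cdot 0=m$. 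Expanding $\langle d_u^2\rangle=\langle v^2\rangle-2\tfrac{2\ell}{m}\langle v,u\rangle+(\tfrac{2\ell}{m})^2\langle u^2\rangle=2\ell-4\ell=-2\ell$ then gives $\langle d_u^2\rangle=-\langle v^2\rangle$, which is the self-intersection claim in (1).

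For primitivity, let $c$ be the content of $d_u$, so $d_u=c\,d$ with $d$ primitive. Since $c\mid\langle d_u,u\rangle=m$, the case $m=1$ is immediate. The one genuinely delicate point is the case $m=2$, where a priori $c$ could equal $2$; this is the step I expect to be the main obstacle, and the place where primitivity of $v$ must enter. Assuming $2\mid d_u$, on the one hand $d_u/2\in H^*(X,{\Bbb Z})_{\alg}$ forces $\langle (d_u/2)^2\rangle=-\ell/2$ to be an integer, hence $\ell$ even; on the other hand $v=d_u+\ell u$ then lies in $2\,H^*(X,{\Bbb Z})_{\alg}$, contradicting that $v$ is primitive. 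Thus $c=1$ in all cases, completing (1).

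For (2), the key observation is that on $v^\perp$ the functional $\langle d_u,-\rangle$ is controlled by $\langle u,-\rangle$: for $x\in v^\perp$ one has $\langle d_u,x\rangle=-\tfrac{2\ell}{m}\langle u,x\rangle$, since the $v$-term drops out. Substituting into the reflection coefficient and using $\langle d_u^2\rangle=-2\ell$ gives $\tfrac{2\langle d_u,x\rangle}{\langle d_u,d_u\rangle}=\tfrac{2}{m}\langle u,x\rangle$, which equals $2\langle u,x\rangle$ when $m=1$ and $\langle u,x\rangle$ when $m=2$ — an integer in either case. Hence $R_u(x)=x-\tfrac{2}{m}\langle u,x\rangle\,d_u$ has integral coefficients and, since $d_u\in v^\perp$, lands back in $v^\perp$, so $R_u$ is a genuine automorphism of the lattice $v^\perp$. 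That it is a reflection (an isometric involution fixing $d_u^\perp\cap v^\perp$ and sending $d_u\mapsto -d_u$) is the standard property of $x\mapsto x-\tfrac{2\langle d,x\rangle}{\langle d^2\rangle}d$ for any vector $d$ of nonzero square, here $d=d_u$ with $\langle d_u^2\rangle=-2\ell\neq 0$. Beyond this formal fact the only thing to verify is the integrality just checked, so no further obstacle arises.
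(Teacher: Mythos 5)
Your proof is correct and is essentially the paper's own argument: part (2) is identical (compute $\langle d_u,x\rangle=-\tfrac{2\ell}{m}\langle u,x\rangle$ for $x\in v^\perp$, so the reflection coefficient is $\tfrac{2}{m}\langle u,x\rangle$, an integer for $m=1,2$), and part (1) rests on the same mechanism — the content of $d_u$ would force $v=d_u+\tfrac{2\ell}{m}u$ to be divisible, contradicting primitivity of $v$. The only cosmetic difference is that you first bound the content by $c\mid\langle d_u,u\rangle=m$ and then treat $m=2$ separately via integrality of $\langle (d_u/2)^2\rangle$, whereas the paper argues uniformly that the content divides $\ell$ (using evenness of the Mukai lattice); both variants are fine.
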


\begin{proof}
(1)
We set $d_u=k d_1$, $k \in {\Bbb Z}$.
Then $\langle v^2 \rangle=k^2 \langle d_1^2 \rangle+2k \ell
\frac{2}{\langle v,u \rangle}\langle d_1,u \rangle \in 2k {\Bbb Z}$.
Hence $v=kd_1+\frac{\langle v^2 \rangle}{\langle v,u \rangle}u$ 
is divisible by $k$. 
By the primitivity of $v$, $k=1$.
$\langle d_u^2 \rangle=-\langle v^2 \rangle$ is easy. 

(2)
For $x \in v^\perp$, we have $\langle d_u,x \rangle=
-\frac{\langle v^2 \rangle}{\langle v,u \rangle}\langle x,u \rangle$.
Hence
$$
R_u(x)=x-\frac{2}{\langle v,u \rangle}\langle x,u \rangle
d_u \in H^*(X,{\Bbb Z})_{\alg}.
$$
Obviously $R_u$ preserves the bilinear form.
Therefore $R_u$ is an isometry of 
$v^\perp$.
\end{proof}

\begin{prop}\label{prop:codim0}
Let $v_1$ be an isotropic Mukai vector such that
$\langle v,v_1 \rangle=1$.
Then 
\begin{enumerate}
\item[(1)]
$v=\ell v_1+v_2$, where
$\ell:=\langle v^2 \rangle/2$,
$\langle v_2^2 \rangle=0$ and $\langle v_1,v_2 \rangle=1$.
\item[(2)]
We set $Y:=M_H(v_1)$ and let 
$$
\Phi:{\bf D}(X) \to {\bf D}(Y)
$$
be a Fourier-Mukai transform such that
$\Phi(v_1)=(0,0,-1)$ and 
$\Phi(v_2)=(1,0,0)$.
Then the contravariant Fourier-Mukai
transform $\Psi:=[1] \circ \Phi^{-1} \circ {\cal D}_Y \circ \Phi$
gives an isometry $\Psi$ of $H^*(X,{\Bbb Z})_{\alg}$ such that
$$
\Psi_{|v^\perp}=R_{v_1}.
$$
\end{enumerate}
\end{prop}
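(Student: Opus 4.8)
The plan is to treat the two assertions in turn: the lattice-theoretic decomposition is an immediate computation, while the identification $\Psi|_{v^\perp}=R_{v_1}$ reduces, after transporting everything to $Y$ via $\Phi$, to the elementary action of the dualizing functor on the Mukai lattice of $Y$.

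First I would establish the decomposition. Setting $v_2:=v-\ell v_1$ with $\ell=\langle v^2\rangle/2$, the isotropy $\langle v_1^2\rangle=0$ together with $\langle v,v_1\rangle=1$ gives $\langle v_1,v_2\rangle=\langle v,v_1\rangle-\ell\langle v_1^2\rangle=1$ and $\langle v_2^2\rangle=\langle v^2\rangle-2\ell\langle v,v_1\rangle+\ell^2\langle v_1^2\rangle=2\ell-2\ell=0$, as claimed. I would also record here that $d_{v_1}=v-\langle v^2\rangle v_1=v_2-\ell v_1$, so that $d_{v_1}$ lies in the hyperbolic plane ${\Bbb Z}v_1\oplus{\Bbb Z}v_2$.

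Next I would construct and normalize $\Phi$. Because $\langle v,v_1\rangle=1$ there is a Mukai vector pairing to $1$ with $v_1$, so $Y=M_H(v_1)$ is a fine moduli space carrying a universal family, and the associated Fourier--Mukai kernel gives an equivalence ${\bf D}(X)\to{\bf D}(Y)$ sending the objects with Mukai vector $v_1$ to point objects on $Y$. After composing with a shift and twisting by a suitable line bundle on $Y$, I may normalize $\Phi$ so that $\Phi(v_1)=(0,0,-1)=-\varrho_Y$ and $\Phi(v_2)=(1,0,0)$; the constraints $\langle v_1^2\rangle=\langle v_2^2\rangle=0$ and $\langle v_1,v_2\rangle=1$ are exactly what make these two normalizations simultaneously achievable. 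It then follows that $\Phi(v)=(1,0,-\ell)$ and $\Phi(d_{v_1})=(1,0,\ell)$.

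Finally I would compute $\Psi^H$. On the algebraic Mukai lattice the dualizing functor ${\cal D}_Y$ acts by $D\colon(r,\xi,a)\mapsto(r,-\xi,a)$ (the Todd class is trivial on an abelian surface) and the shift $[1]$ acts by $-1$; hence $\Psi^H=-(\Phi^H)^{-1}\circ D\circ\Phi^H$, which is an isometry of $H^*(X,{\Bbb Z})_{\alg}$ since each factor is. I would first note $\Psi(v)=-(\Phi^H)^{-1}D(1,0,-\ell)=-v$, so $\Psi$ preserves $v^\perp$. On $\Phi(v)^\perp$ the map $-D$ sends $(1,0,\ell)\mapsto-(1,0,\ell)$ and fixes every $(0,\xi,0)\in\NS(Y)$; transporting back by $\Phi$, this says $\Psi(d_{v_1})=-d_{v_1}$, while $\Psi$ fixes $d_{v_1}^{\perp}\cap v^\perp$ (any such $x$ has $\Phi(x)\in(1,0,-\ell)^\perp\cap(1,0,\ell)^\perp$, which forces $\Phi(x)=(0,\xi,0)$, whence $\Psi(x)=x$). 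Since $d_{v_1}$ and $d_{v_1}^\perp\cap v^\perp$ together span $v^\perp$ over ${\Bbb Q}$, and $R_{v_1}$ is exactly the isometry negating $d_{v_1}$ and fixing its orthogonal complement, I conclude $\Psi|_{v^\perp}=R_{v_1}$.

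The main obstacle I expect is the bookkeeping in the middle step: pinning down the exact normalization $\Phi(v_1)=(0,0,-1)$, $\Phi(v_2)=(1,0,0)$ while keeping careful track of the contravariance of ${\cal D}_Y$ and the sign introduced by $[1]$, so that the signs in $\Psi^H=-(\Phi^H)^{-1}\circ D\circ\Phi^H$ come out correctly. Once the conventions are fixed, the reflection identification itself is routine.
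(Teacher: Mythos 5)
Your proof is correct and follows essentially the same route as the paper: transport everything to $Y$ via $\Phi$, use that $\mathcal{D}_Y$ composed with $[1]$ acts cohomologically as $x \mapsto -x^{\vee}$, so that $\Psi$ negates the hyperbolic plane spanned by $\Phi(v_1)=-\varrho_Y$ and $\Phi(v_2)=v(\mathcal{O}_Y)$ while fixing $\NS(Y)$, and then identify this with $R_{v_1}$ on $v^\perp$. The paper phrases the last step via the orthogonal decomposition $H^*(X,{\Bbb Z})_{\alg}=({\Bbb Z}v_1+{\Bbb Z}v_2)\perp L$ with $\Phi(L)=\NS(Y)$, whereas you check the action on $d_{v_1}$ and on $d_{v_1}^\perp\cap v^\perp$ directly; these are the same computation.
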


\begin{proof}
We have a decomposition 
$$
H^*(X,{\Bbb Z})_{\alg}=({\Bbb Z}v_1 +{\Bbb Z}v_2) \perp L,
$$
where $L$ is a lattice with $\Phi(L)=\NS(Y)$.
Hence we see that 
\begin{equation*}
\begin{split}
\Psi(w)=& w,\;w \in L,\\ 
\Psi(v_1)=& -v_1,\\
\Psi(v_2)=& -v_2.
\end{split}
\end{equation*}
Since $v, d_{v_1} \in ({\Bbb Z}v_1 +{\Bbb Z}v_2)$,
we get the claim.  
\end{proof}

\begin{defn}
Let $W$ be a wall for $v$. 
Let $(\beta,H,t)$ be a point of $W$ and
$(\beta',H',t')$ be a point in an adjacent chamber. 
Then we define \emph{the codimension of the wall} $W$ by 
\begin{equation*}
\codim W:=\min_{v=\sum_i v_i } \left \{ \sum_{i<j}\langle v_i,v_j \rangle
-\left(\sum_i (\dim {\cal M}_H^{\beta'}(v_i)^{ss}-
\langle v_i^2 \rangle) \right)
+1 \right\},
\end{equation*}
where $v=\sum_i v_i$ are decompositions of $v$
such that $\phi_{(\beta,tH)}(v)=\phi_{(\beta,tH)}(v_i)$
and $\phi_{(\beta',t' H')}(v_i)>\phi_{(\beta',t' H')}(v_j)$,
$i<j$.
\end{defn}

If $\codim W \geq 2$, then the proof of \cite[Prop. 4.3.5]{MYY:2011:1}
implies that
$$
\dim \{ E \in M_{(\beta',t' H')}(v) \mid \text 
{$E$ is not $\sigma_{(\beta,tH)}$-stable }\}
\leq \langle v^2 \rangle.
$$

\begin{prop}\label{prop:codim0-div}
Let $v$ be a primitive Mukai vector with $\langle v^2 \rangle \geq 6$.
Let $W$ be a wall for $v$ and take $(\beta,H,t) \in W$
such that $\beta \in \NS(X)_{\Bbb Q}$ and $H \in \Amp(X)_{\Bbb Q}$. 
\begin{enumerate}
\item[(1)]
$W$ is a codimension 0 wall if and only if
$W$ is defined by
$v_1$ such that
\begin{equation*}
v=\ell v_1+v_2,\; \langle v_1^2 \rangle=\langle v_2^2 \rangle=0,\;
\langle v_1,v_2 \rangle=1,\;\ell=\langle v^2 \rangle/2.
\end{equation*}
\item[(2)]
We take $(\beta_\pm,\omega_\pm)$ from
chambers separated by the codimension 0 wall $W$ in (1).
We may assume that
$\phi_{(\beta_+,\omega_+)}(E_1)<\phi_{(\beta_+,\omega_+)}(E)$
for $E_1 \in M_{(\beta,\omega)}(v_1)$ and
$E \in M_{(\beta_+,\omega_+)}(v)$.
Then 
\begin{enumerate}
\item
$\theta_{v,\beta_\pm,\omega_\pm}(\xi(\beta,H,t))$ give divisorial contractions.
\item
Let $D_\pm \subset M_{(\beta_\pm,\omega_\pm)}(v)$
be the exceptional divisors of the contractions.
Then $D_\pm$ are irreducible divisors such that
$$
(D_\pm)_{|K_{(\beta_\pm,\omega_\pm)}(v)}
=\pm 2\theta_{v,\beta_\pm,\omega_\pm}(d_{v_1}) 
\in \NS(K_{(\beta_\pm,\omega_\pm)}(v)).
$$
\end{enumerate}
\end{enumerate}
\end{prop}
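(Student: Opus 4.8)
The plan is to pass to a Hilbert-scheme model via the Fourier--Mukai transform of Proposition \ref{prop:codim0} and to read off both the codimension and the contraction from the Hilbert--Chow morphism. Suppose $W=W_{v_1}$ with $v=\ell v_1+v_2$, $\langle v_1^2 \rangle=\langle v_2^2 \rangle=0$, $\langle v_1,v_2 \rangle=1$, $\ell=\langle v^2 \rangle/2$. Setting $Y:=M_H(v_1)$ (an abelian surface) and choosing $\Phi$ with $\Phi(v_1)=(0,0,-1)$, $\Phi(v_2)=(1,0,0)$, one gets $\Phi(v)=(1,0,-\ell)$; hence on one of the two chambers adjacent to $W$ the moduli space is identified with $M_Y(1,0,-\ell)\cong \Hilb^\ell(Y)\times \Pic^0(Y)$, whose albanese fiber is the generalized Kummer variety $K_{\ell-1}(Y)$, so that $K_{(\beta_+,\omega_+)}(v)\cong K_{\ell-1}(Y)$. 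The reflection $R_{v_1}$ of $v^\perp$ from Lemma \ref{lem:reflection}, realized cohomologically by the contravariant autoequivalence $\Psi=[1]\circ\Phi^{-1}\circ{\cal D}_Y\circ\Phi$ of Proposition \ref{prop:codim0}, will be used to cross from the $+$ side to the $-$ side.

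For (1), direction ($\Leftarrow$), I would evaluate $\codim W$ on the two-term decomposition $v=(\ell v_1)+v_2$, i.e.\ the Jordan--H\"older grouping by off-wall phase. The key point is the dimension of the moduli stack of $\sigma$-semistable objects of the non-primitive isotropic class $\ell v_1$: a general such object is a sum $A_1\oplus\cdots\oplus A_\ell$ of distinct $v_1$-stable objects on $Y$, and since $\dim\Ext^1(A_i,A_i)=2$ while $\dim\Ext^1(A_i,A_j)=\langle v_1^2 \rangle=0$ for $i\ne j$, its dimension is $2\ell-\ell=\ell$, so the term $\dim{\cal M}_H^{\beta'}(\ell v_1)^{ss}-\langle(\ell v_1)^2\rangle$ equals $\ell$; the primitive factor $v_2$ contributes $1$. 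With $\langle \ell v_1,v_2 \rangle=\ell$ the defining formula gives $\codim W=\ell-(\ell+1)+1=0$. For ($\Rightarrow$), I would take a decomposition $v=\sum_i w_i$ realizing $\codim W=0$ and argue that every factor is isotropic and the configuration is $\{\ell v_1,v_2\}$ with $\langle v_1,v_2 \rangle=1$: a non-isotropic factor $w_1$ is excluded because the non-emptiness criterion $\langle v,w_1 \rangle^2>\langle v^2 \rangle\langle w_1^2 \rangle$ of Proposition \ref{prop:existence-of-wall} fails once $\langle w_1^2 \rangle,\langle w_2^2 \rangle\ge 2$ (reducing to the impossible $1>\langle w_1^2 \rangle\langle w_2^2 \rangle$), while $\frak{I}_2$-type and longer configurations are ruled out using $\langle v^2 \rangle\ge 6$, Lemma \ref{lem:wall-intersection}, and Proposition \ref{prop:intersection2}.

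For (2), the class $\theta_{v,\beta_\pm,\omega_\pm}(\xi(\beta,H,t))$ lies on the boundary of $\Nef(K_{(\beta_\pm,\omega_\pm)}(v))_{\Bbb R}$: by Proposition \ref{prop:wall-equation} we have $\xi(\beta,H,t)\in v^\perp\cap v_1^\perp$, so Proposition \ref{prop:positive-cone}(3) makes it a boundary nef class, and rationality of $(\beta,H)$ gives semiampleness, hence a contraction. Transporting through $\Phi$ and using the compatibility of $\theta$ with $\Phi$ recorded in Subsection \ref{subsect:polarization}, this contraction is identified on the $+$ side with the restriction to $K_{\ell-1}(Y)$ of the Hilbert--Chow morphism $\Hilb^\ell(Y)\to \Sym^\ell(Y)$, whose exceptional locus is the irreducible boundary divisor of non-reduced subschemes; thus $D_+$ is irreducible and of codimension $1$ (a genuine divisorial, not small, contraction). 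For the class, $\Phi(d_{v_1})=(1,0,\ell)$ maps under $\theta$ to the primitive half-diagonal class $\delta$ (consistent with $\langle d_{v_1}^2 \rangle=-\langle v^2 \rangle=-2\ell$ via the isometry $\theta$), and the boundary divisor has class $2\delta$, giving $(D_+)_{|K_{(\beta_+,\omega_+)}(v)}=2\theta_{v,\beta_+,\omega_+}(d_{v_1})$.

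Finally, to obtain the $-$ side and the sign, I would use that the mirror hyperplane of $R_{v_1}$ is $d_{v_1}^\perp\cap v^\perp=v_1^\perp\cap v^\perp$, which by Proposition \ref{prop:wall-equation} is exactly $W$; hence $R_{v_1}$ interchanges the two adjacent chambers and $\Psi$ induces an isomorphism $M_{(\beta_+,\omega_+)}(v)\simto M_{(\beta_-,\omega_-)}(v)$ carrying $D_+$ to $D_-$. Since $R_{v_1}(d_{v_1})=-d_{v_1}$, compatibility of $\theta$ with $\Psi$ yields $(D_-)_{|K_{(\beta_-,\omega_-)}(v)}=-2\theta_{v,\beta_-,\omega_-}(d_{v_1})$, which is the asserted $\pm$. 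I expect the main obstacle to be the converse in (1): organizing the case analysis over all phase-compatible decompositions of $v$ and showing $\codim W=0$ forces precisely the isotropic pairing-one configuration, while simultaneously confirming that the resulting contraction is divisorial rather than small, for which the dimension estimates on the locus of non-stable objects in \cite{MYY:2011:1} and \cite{MYY:2011:2} are the essential input.
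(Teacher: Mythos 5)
Your part (2) is essentially the paper's own proof: the paper also passes through the Fourier--Mukai transform $\Phi$ of Proposition \ref{prop:codim0}, invokes \cite[Prop. 4.1.5]{MYY:2011:1} to identify $M_{(\beta_+,\omega_+)}(v)$ with $\Pic^0(Y)\times \Hilb_Y^{\ell}$, reads off the divisorial contraction and the class $2\theta_{v,\beta_+,\omega_+}(d_{v_1})$ from the Hilbert--Chow morphism via Lemma \ref{lem:H-C}, and obtains the $-$ side by composing with the dualizing functor ${\cal D}_Y$; your $\Psi=[1]\circ\Phi^{-1}\circ{\cal D}_Y\circ\Phi$ together with $R_{v_1}(d_{v_1})=-d_{v_1}$ is the same device, merely conjugated back to $X$. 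Where you genuinely diverge is part (1): the paper disposes of it in one line by citing \cite[Lem. 4.2.4 (2)]{MYY:2011:1}, whereas you attempt a self-contained argument from the definition of $\codim W$. Your ($\Leftarrow$) count is correct (stack dimension $\ell$ for the non-primitive isotropic class $\ell v_1$, contribution $1$ for $v_2$, hence $\codim W=\ell-(\ell+1)+1=0$), and the mechanism you propose for ($\Rightarrow$) is sound and attractive: for a two-factor decomposition into primitive classes with $\langle w_1,w_2\rangle=1$ and $\langle w_1^2\rangle,\langle w_2^2\rangle\geq 2$, the non-emptiness criterion of Proposition \ref{prop:existence-of-wall} reduces to the impossible inequality $1>\langle w_1^2\rangle\langle w_2^2\rangle$, so at least one factor must be isotropic. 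This would buy independence from the external reference.

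However, as written the converse in (1) has a real gap, and you partly misstate what it should prove. A decomposition realizing $\codim W=0$ need \emph{not} have all factors isotropic: $v=w_1+w_2$ with $w_1$ isotropic, $\langle w_2^2\rangle\geq 2$ and $\langle w_1,w_2\rangle=1$ also realizes $\codim=0$; the correct conclusion is only that the wall is defined by an isotropic $v_1$ with $\langle v,v_1\rangle=1$, after which $v_2:=v-\ell v_1$ is \emph{automatically} isotropic with $\langle v_1,v_2\rangle=1$ (a one-line lattice check you should add). More seriously, the reduction to two-factor primitive decompositions -- non-primitive factors $n_1u_1$, three or more factors, repeated factors -- is only gestured at, and the results you invoke there do not do this work: Lemma \ref{lem:wall-intersection} and Proposition \ref{prop:intersection2} concern intersections of \emph{distinct} walls inside $P^+(v^\perp)_{\Bbb R}$, not the classification of equal-phase decompositions along a single wall. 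Those remaining cases are routine dimension counts in the same spirit (e.g. $v=n_1u_1+w_2$ with $u_1$ primitive isotropic gives $\codim=n_1(\langle u_1,w_2\rangle-1)$), but they must actually be carried out; this case analysis is precisely the content of \cite[Lem. 4.2.4 (2)]{MYY:2011:1} that the paper chooses to cite rather than reprove.
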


\begin{proof}
(1) is a consequence of \cite[Lem. 4.3.4 (2)]{MYY:2011:1}.

(2)
Let $\Phi:{\bf D}(X) \to {\bf D}(Y)$ be the Fourier-Mukai transform
in Proposition \ref{prop:codim0}.
By \cite[Prop. 4.1.4]{MYY:2011:1}, $\Phi$ induces an isomorphism
$$
M_{(\beta_+,\omega_+)}(v) \to \Pic^0(Y) \times \Hilb_Y^{\ell}.
$$
By using the Hilbert-Chow morphism
$\Hilb_Y^\ell \to S^\ell Y$, 
we have a divisorial contraction
$$
M_{(\beta_+,\omega_+)}(v) \to \Pic^0(Y) \times \Hilb_Y^{\ell}
\to \Pic^0(Y) \times S^\ell Y.
$$
By using Lemma \ref{lem:H-C}, we get the claim (a).
Since ${\cal D}_Y \circ \Phi$ gives an isomorphism
$$
M_{(\beta_-,\omega_-)}(v) \to \Pic^0(Y) \times \Hilb_Y^{\ell}
$$ 
by \cite[Prop. 4.1.4]{MYY:2011:1},
we also get the claim (a).
(b) is a consequence of Lemma \ref{lem:H-C} below.
\end{proof}

\begin{lem}\label{lem:H-C} 
Let $v=(1,0,-\ell)$ be a primitive Mukai vector with
$\ell \geq 3$.
We set $v_1=(0,0,-1)$ and $v_2=(1,0,0)$.
Then 
$$
d_{v_1}=v-2\ell v_1=(1,0,\ell).
$$ 
For the Hilbert-Chow morphism
$\Hilb_X^{\ell} \to S^{\ell} X$,
the exceptional divisor $D$ is divisible by 2
and satisfies 
$$
D_{|K_H(v)}=2\theta_v((1,0,\ell))=
2 \theta_v(d_{v_1}).
$$
\end{lem}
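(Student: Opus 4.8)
The plan is to identify the geometry, invoke the classical structure of the Hilbert--Chow exceptional divisor, and then pin down $\theta_v(d_{v_1})$ by the isometry property of the Mukai homomorphism. Since $v=(1,0,-\ell)$, the moduli space is $M_H(v)=\Pic^0(X) \times \Hilb_X^\ell$, with universal family the ideal sheaf of the universal subscheme twisted by the Poincar\'e bundle, and $K_H(v)$ is the generalized Kummer variety, i.e.\ the fiber over $0$ of the composite $\Hilb_X^\ell \to S^\ell X \to X$ of the Hilbert--Chow morphism and the summation map. Since $\langle (1,0,-\ell),(r,\xi,a) \rangle=r\ell-a$, we have the orthogonal decomposition $v^\perp=H^2(X,{\Bbb Z}) \oplus {\Bbb Z}d_{v_1}$, where $H^2(X,{\Bbb Z})$ is spanned by the classes $(0,\xi,0)$ and $d_{v_1}=(1,0,\ell)$ is primitive with $\langle d_{v_1}^2 \rangle=-\langle v^2 \rangle=-2\ell$ by Lemma \ref{lem:reflection}.

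First I would record the divisibility of $D$. By Fogarty's description of the Picard group of the Hilbert scheme of points on a smooth surface, the boundary divisor $D$ parametrizing non-reduced subschemes satisfies $D=2\delta$ for an integral class $\delta$; this gives the divisibility by $2$. Restricting to $K_H(v)$ and using Beauville's computation \cite{B:1} of the second cohomology of the generalized Kummer variety, we have an orthogonal decomposition $H^2(K_H(v),{\Bbb Z})=H^2(X,{\Bbb Z}) \oplus {\Bbb Z}\delta_{|K_H(v)}$ with $\delta_{|K_H(v)}$ primitive and, since $\dim K_H(v)=\langle v^2 \rangle-2=2\ell-2$, with $q_{K_H(v)}(\delta_{|K_H(v)})=-2\ell$.

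Now I would match the two classes by lattice theory. By the isometry theorem quoted before this section, $\theta_v$ is a Hodge isometry of $(v^\perp,\langle\;,\;\rangle)$ onto $(H^2(K_H(v),{\Bbb Z}),q_{K_H(v)})$, and on the tautological summand it carries $(0,\xi,0)$ to the natural image of $\xi \in H^2(X,{\Bbb Z})$ in $H^2(K_H(v),{\Bbb Z})$; this is the standard behavior of the Mukai homomorphism, immediate from its definition through the universal subscheme. Consequently $\theta_v(d_{v_1})$ is primitive, orthogonal to $H^2(X,{\Bbb Z})$, and satisfies $q_{K_H(v)}(\theta_v(d_{v_1}))=\langle d_{v_1}^2 \rangle=-2\ell$. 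Hence $\theta_v(d_{v_1})$ generates the same rank one orthogonal complement of $H^2(X,{\Bbb Z})$ as $\delta_{|K_H(v)}$, so $\theta_v(d_{v_1})=\pm \delta_{|K_H(v)}$ and therefore $D_{|K_H(v)}=2\delta_{|K_H(v)}=\pm 2\theta_v((1,0,\ell))$.

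The main obstacle is to fix the sign, that is, to show $\theta_v(d_{v_1})=+\delta_{|K_H(v)}$ rather than its negative; this is essential because the signs are tracked in the application (Proposition \ref{prop:codim0-div}). I would settle it by a direct local computation at a generic point of $D$: over a length two non-reduced subscheme the exceptional fiber is a ${\Bbb P}^1$, and evaluating $\theta_v(d_{v_1})$, the first Chern class of the relevant determinant line bundle, against the class of this ${\Bbb P}^1$ determines the sign and matches it with that of the effective divisor $\delta$. The lattice argument above has reduced the entire statement to this single sign verification.
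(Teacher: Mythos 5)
The paper never proves this lemma: it is stated bare and then invoked in the proof of Proposition \ref{prop:codim0-div}, the author treating the description of the Hilbert--Chow exceptional divisor on the generalized Kummer variety as a known fact. So there is no internal argument to compare yours against; the only question is whether your proposal is itself a complete proof.

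Its skeleton is correct. Since $\langle (r,\xi,a),v\rangle=r\ell-a$, indeed $v^\perp=H^2(X,{\Bbb Z})\oplus{\Bbb Z}d_{v_1}$; Fogarty/Beauville give $D=2\delta$ with $\delta$ primitive, orthogonal to the tautological copy of $H^2(X,{\Bbb Z})$, and $q(\delta)=-2\ell$ on the $(2\ell-2)$-dimensional generalized Kummer; and the isometry property of $\theta_v$ together with the fact that $\theta_v((0,\xi,0))$ is the tautological image of $\xi$ forces $\theta_v(d_{v_1})=\pm\delta_{|K_H(v)}$. The genuine gap is exactly the one you flag yourself: the sign. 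You say you ``would settle it by a direct local computation'' on the exceptional ${\Bbb P}^1$, but you never perform it, and since $D_{|K_H(v)}=-2\theta_v(d_{v_1})$ is precisely the alternative you must exclude --- and the sign is what Proposition \ref{prop:codim0-div} actually uses --- the lemma is not proved until this is done. The check does succeed and can be made short: expanding the defining formula for $\theta_v$ with ${\bf E}=\mathcal{I}_{\mathcal{Z}}$ and $x^{\vee}=(1,0,\ell)^{\vee}=(1,0,\ell)$, every term involving $\varrho_X$ drops out of the degree giving $c_1$ (because $\ch_0(\mathcal{O}_{\mathcal{Z}})=\ch_1(\mathcal{O}_{\mathcal{Z}})=0$), so that $\theta_v(d_{v_1})=-c_1(p_{M*}\mathcal{O}_{\mathcal{Z}})$; on the class $R$ of an exceptional ${\Bbb P}^1$ one computes $\deg\bigl((p_{M*}\mathcal{O}_{\mathcal{Z}})_{|R}\bigr)=+1$ (the moving length-2 subscheme contributes $\mathcal{O}\oplus\mathcal{O}^2/\mathcal{O}(-1)$, the fixed reduced points trivial summands), while $D\cdot R=-2$, i.e.\ $\delta\cdot R=-1$; hence $\theta_v(d_{v_1})\cdot R=-1=\delta\cdot R$, and the sign is $+$. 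Inserting that computation makes your argument complete.
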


\begin{prop}\label{prop:general:codim1-div}
Let $v$ be a primitive Mukai vector with
$\langle v^2 \rangle \geq 6$.
Let $W$ be a wall for $v$ and take $(\beta,H,t) \in W$
such that $\beta \in \NS(X)_{\Bbb Q}$ and $H \in \Amp(X)_{\Bbb Q}$. 
\begin{enumerate}
\item[(1)]
$W$ is a codimension 1 wall if and only if
$W$ is defined by
$v_1$ such that
\begin{enumerate}
\item[(i)]
 $v=v_1+v_2$, $\langle v_1^2 \rangle=0$,
$\langle v_2^2 \rangle \geq 0$,
$\langle v,v_1 \rangle=2$ and $v_1$ is primitive
or 
\item[(ii)]
$v=v_1+v_2+v_3$, $\langle v_i^2 \rangle=0$ ($i=1,2,3$),
$\langle v_1,v_2 \rangle=\langle v_2,v_3 \rangle=
\langle v_3,v_1 \rangle=1$.
\end{enumerate}
For the second case, $\langle v^2 \rangle=6$,
$W_{v_1}$ and $W_{v_2}$ intersect transversely and
$(\beta,H,t) \in W_{v_1} \cap W_{v_2}$.
\item[(2)]
Assume that $(\beta,H,t)$ belongs to exactly one wall $W$.
We take $(\beta_\pm,\omega_\pm)$ from
chambers separated by the codimension 1 wall $W$ in (1).
In the notation 
of (1),
we may assume that
$\phi_{(\beta_+,\omega_+)}(E_1)<\phi_{(\beta_+,\omega_+)}(E)$
for $E_1 \in M_{(\beta,\omega)}(v_1)$ and
$E \in M_{(\beta_+,\omega_+)}(v)$.
We set
\begin{equation*}
\begin{split}
D_+:=\{E \in M_{(\beta_+,\omega_+)}(v) \mid
\Hom(E_1,E) \ne 0 , E_1 \in M_{(\beta,\omega)}(v_1) \},\\
D_-:=\{E \in M_{(\beta_-,\omega_-)}(v) \mid
\Hom(E,E_1) \ne 0 , E_1 \in M_{(\beta,\omega)}(v_1) \}.
\end{split}
\end{equation*} 
Then 
\begin{enumerate}
\item
$D_\pm$ are non-empty and irreducible divisors.
\item
$(D_\pm)_{|K_{(\beta_\pm,\omega_\pm)}(v)}
=\pm \theta_{v,\beta_\pm,\omega_\pm}(d_{v_1}) 
\in \NS(K_{(\beta_\pm,\omega_\pm)}(v))$.
In particular, $D_\pm$ are primitive.
\item
$\theta_{v,\beta_\pm,\omega_\pm}(\xi(\beta,H,t))$ gives 
contractions of $D_\pm$. 
\end{enumerate}
\end{enumerate}
\end{prop}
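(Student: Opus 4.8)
The plan is to deduce Part (1) from the codimension formula together with \cite[Lem.~4.2.4]{MYY:2011:1} (the same input used for the codimension $0$ case in Proposition~\ref{prop:codim0-div}), and to prove Part (2) by realizing $D_\pm$ as extension loci over the moduli spaces of the Jordan--H\"older factors, in close analogy with Proposition~\ref{prop:codim0-div}. For Part (1), minimizing the codimension functional over Jordan--H\"older decompositions $v=\sum_i v_i$ of a generic properly semistable object on $W$ leaves exactly two possibilities beyond the codimension $0$ type: two factors, or three factors. In the two--factor case I would set $v_2:=v-v_1$ and record $\langle v_2^2\rangle=\langle v^2\rangle-2\langle v,v_1\rangle+\langle v_1^2\rangle=\langle v^2\rangle-4\ge 2$ (using $\langle v^2\rangle\ge 6$), together with $\langle v,v_1\rangle=2$ and primitivity of $v_1$, which is case (i). In the three--factor case the relations $\langle v_i^2\rangle=0$ and $\langle v_i,v_j\rangle=1$ give $\langle v^2\rangle=\sum_i\langle v_i^2\rangle+2\sum_{i<j}\langle v_i,v_j\rangle=6$, which is case (ii).

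For the transversality assertion in case (ii), I would use Proposition~\ref{prop:wall-equation}: $W_{v_1}$ and $W_{v_2}$ correspond to the hyperplanes $v_1^\perp$ and $v_2^\perp$ of $v^\perp$, so they meet transversely precisely when $v_1,v_2,v$ are linearly independent. This is a one--line computation: if $v\in{\Bbb Q}v_1+{\Bbb Q}v_2$, then $\langle v,v_1\rangle=\langle v,v_2\rangle=2$ forces $v=2v_1+2v_2$ and hence $\langle v^2\rangle=8\ne 6$, a contradiction. Since the three factors have a common phase on $W$, Proposition~\ref{prop:wall-equation} also gives $\xi(\beta,H,t)\in v^\perp\cap v_1^\perp\cap v_2^\perp$, i.e.\ $(\beta,H,t)\in W_{v_1}\cap W_{v_2}$.

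For Part (2) I would describe $D_+$ as the image of a projective bundle. A generic point of $D_+$ is an extension $0\to E_1\to E\to E_2\to 0$ with $E_1\in M_{(\beta,\omega)}(v_1)$ and $E_2\in M_{(\beta,\omega)}(v_2)$; for distinct stable factors of equal phase one has $\Hom(E_2,E_1)=\Ext^2(E_2,E_1)=0$, so $\dim\Ext^1(E_2,E_1)=\langle v_1,v_2\rangle=2$ and the extension classes form a ${\Bbb P}^1$. Thus $D_+$ is the image of a ${\Bbb P}^1$--bundle over the irreducible variety $M_{(\beta,\omega)}(v_1)\times M_{(\beta,\omega)}(v_2)$, of dimension $2+(\langle v_2^2\rangle+2)+1=\langle v^2\rangle+1$, that is, a divisor in $M_{(\beta_+,\omega_+)}(v)$ (the map is generically injective since a stable $E$ determines its destabilizing subobject and quotient); this yields non-emptiness, irreducibility, and the divisorial nature, consistent with the dimension estimate for codimension $1$ walls and \cite[Prop.~4.2.5]{MYY:2011:1}. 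The $-$ side is symmetric, with $E_1$ a quotient. Case (ii) is handled by the same extension--locus description using the three--step filtration, with $\langle v^2\rangle=6$.

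For the class and the contraction, I would follow the reduction used in Proposition~\ref{prop:codim0-div}: apply a Fourier--Mukai transform sending the isotropic $v_1$ to a standard vector (so that $M_{(\beta_+,\omega_+)}(v)$ becomes a moduli space on which $D_+$ is a recognizable Brill--Noether divisor), or compute $(D_\pm)_{|K}$ directly as the first Chern class of the degeneracy locus of the relative $\Hom$ (or $\Ext$) between the universal families on $M_{(\beta,\omega)}(v_1)$ and $M_{(\beta_\pm,\omega_\pm)}(v)$, which by the definition of $\theta_{v,\beta,\omega}$ works out to $\pm\theta_{v,\beta_\pm,\omega_\pm}(d_{v_1})$. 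Since $d_{v_1}=v-\tfrac{\langle v^2\rangle}{2}v_1\in v^\perp$ is primitive with $\langle d_{v_1}^2\rangle=-\langle v^2\rangle$ by Lemma~\ref{lem:reflection}(1), and $\theta_{v,\beta_\pm,\omega_\pm}$ is an isometric isomorphism onto $\NS(K_{(\beta_\pm,\omega_\pm)}(v))$, the divisors $D_\pm$ are primitive. For (c), Proposition~\ref{prop:wall-equation} gives $\xi(\beta,H,t)\in v^\perp\cap v_1^\perp$, so by Proposition~\ref{prop:positive-cone} the nef class $\theta_{v,\beta_\pm,\omega_\pm}(\xi(\beta,H,t))$ lies on the face of the nef cone cut out by $v_1^\perp$; the curves it contracts are exactly the fibers ${\Bbb P}^1$ of $D_\pm$, which are collapsed to the points $[E_1\oplus E_2]$, so this class contracts $D_\pm$. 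The hardest part will be the class computation $(D_\pm)_{|K}=\pm\theta_{v,\beta_\pm,\omega_\pm}(d_{v_1})$, including the sign: it requires tracking the universal family ${\bf E}$ across the wall (and the existence of ${\bf E}$ may force the use of an auxiliary $w$ with $\langle v,w\rangle=1$), while case (ii) needs the threefold degeneration at $W_{v_1}\cap W_{v_2}$ to be shown to produce a single irreducible exceptional divisor of the same class, where I would rely on the hypothesis that $(\beta,H,t)$ lies on exactly one wall.
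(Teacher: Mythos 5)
Your proposal is correct in outline, and for part (1) it is essentially the paper's own argument: both rest on the classification of codimension~1 walls in \cite[Lem.~4.2.4~(2), Prop.~4.2.5]{MYY:2011:1}, and your transversality check (if $v\in{\Bbb Q}v_1+{\Bbb Q}v_2$ then $v=2v_1+2v_2$, so $\langle v^2\rangle=8\ne 6$) is the same rank-3 lattice observation the paper makes by noting that $v_1,v_2,v_3$ span a rank~3 lattice. Where you genuinely diverge is part (2). The paper's proof there is an assembly of citations: the classes $\pm\theta_{v,\beta_\pm,\omega_\pm}(d_{v_1})$ are quoted from \cite[Lem.~4.4.1, Prop.~4.4.2]{MYY:2011:1}, primitivity comes from Lemma~\ref{lem:reflection}~(1) (as in your plan), and non-emptiness and contractability are quoted from the proof of \cite[Cor.~4.3.3]{MYY:2011:2}. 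You instead re-derive the geometry directly: the ${\Bbb P}(\Ext^1(E_2,E_1))\cong{\Bbb P}^1$-bundle over $M_{(\beta,\omega)}(v_1)\times M_{(\beta,\omega)}(v_2)$ with the dimension count $2+(\langle v_2^2\rangle+2)+1=\langle v^2\rangle+1$ gives (2)(a) and identifies the contracted curves for (2)(c); this is in substance what the cited proofs do, so your route is more self-contained and makes the contracted ${\Bbb P}^1$'s explicit. The trade-off is that the crux, (2)(b) --- the identity $(D_\pm)_{|K_{(\beta_\pm,\omega_\pm)}(v)}=\pm\theta_{v,\beta_\pm,\omega_\pm}(d_{v_1})$ together with the sign flip across the wall --- is only asserted in your plan with a named method (Chern class of the degeneracy locus of the relative $\Hom$/$\Ext$ between universal families, possibly after a Fourier--Mukai reduction); carrying that computation out is precisely the content of \cite[Lem.~4.4.1, Prop.~4.4.2]{MYY:2011:1}, so as written this step must either be executed in detail or cited. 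One small simplification you can make: under the hypothesis of (2) that $(\beta,H,t)$ lies on exactly one wall, case-(ii) points of $W_{v_1}\cap W_{v_2}$ are automatically excluded (they lie on two transverse walls), so no separate three-step-filtration analysis is needed; the generic point of such a wall is already covered by the two-factor description with $v_2$ replaced by $v_2+v_3$.
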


\begin{proof}
(1)
The classification of codimension 1 walls in 
\cite[Lem. 4.3.4 (2), Prop. 4.3.5]{MYY:2011:1} imply that
$W$ is defined by $v_1$ with the required properties.
It is easy to see that $v_1,v_2,v_3$ spans a lattice of rank 3
and $v_1^\perp \cap v_2^\perp={\Bbb Z}(v_3-v_1-v_2)$.
Hence $W_{v_1}$ and $W_{v_2}$ intersect transversely.
 
(2)
$D_+= \theta_{v,\beta_+,\omega_+}(d_{v_1})$ is a consequence of
\cite[Lem. 4.5.1]{MYY:2011:1} and
$D_-= -\theta_{v,\beta_-,\omega_-}(d_{v_1})$ 
follows from \cite[Prop. 4.5.2]{MYY:2011:1}. 
By Lemma \ref{lem:reflection} (1), 
$D_\pm$ are primitive.

The non-emptiness and
the contractibility of $D_\pm$ are showed in the proof of
\cite[Cor. 5.17] {MYY:2011:2}.
\end{proof}

\begin{rem}\label{rem:isom}
For the wall $W$ in 
Proposition \ref{prop:general:codim1-div},
we have an isomorphism 
$$
f:M_{(\beta_+,\omega_+)}(v)
\to M_{(\beta_-,\omega_-)}(v)
$$
such that $f_* \circ \theta_{v,\beta_+,\omega_+}=
\theta_{v,\beta_-,\omega_-} \circ R_{v_1}$:
Indeed we have a map $f$ as a birational map. 
We set $\omega_+:=tH_+$.
Then $\theta_{v,\beta_+,\omega_+}(\xi(\beta_+,H_+,t)) 
\in \NS(M_{(\beta_+,\omega_+)}(v))$ 
is relatively ample over $X \times \widehat{X}$.
Since $\theta_{v,\beta_-,\omega_-}(R_{v_1}(\xi(\beta_+,H_+,t)))
\in \NS(M_{(\beta_-,\omega_-)}(v))$ is 
relatively ample over $X \times \widehat{X}$,
$f$ is an isomorphism.
\begin{NB}
The relative graded rings are isomorphic. 
\end{NB}
For the wall $W$ in Proposition \ref{prop:codim0-div}, 
we also have a similar isomorphism $f$
by using Fourier-Mukai transform $\Psi$
in Proposition \ref{prop:codim0}.
\end{rem}


The following results fit in the general result
of Markman \cite{Mark} on the movable cone
of irreducible symplectic manifolds.

\begin{thm}\label{thm:general:movable}
Let $(X,H)$ be a polarized abelian surface $X$. 
Let $v$ be a primitive Mukai vector with $\langle v^2 \rangle \geq 6$.
We take 
$(\beta,H,t) \in {\frak H} 
=\NS(X)_{\Bbb R} \times C(\Amp(X)_{\Bbb R}) \times{\Bbb R}_{>0}$
such that $\xi(\beta,H,t) \in 
P^+(v^\perp)_{\Bbb R}
\setminus \cup_{ u \in {\frak W}} u^{\perp}$,
where ${\frak W}$ is the set of Mukai vectors
satisfying \eqref{eq:wall-cond}.
Thus $(\beta,tH)$ is general (see Definition \ref{defn:wall}).
\begin{enumerate}
\item[(1)]
Let ${\cal D}(\beta,tH)$ be a connected component
of  $P^+(v^\perp)_{\Bbb R}
\setminus \cup_{ u \in {\frak I}} u^{\perp}$
containing $\xi(\beta,H,t)$ (cf. Definition \ref{defn:I}).
Then 
\begin{equation*}
\begin{split}
\overline{\Mov(K_{(\beta,tH)}(v))}_{\Bbb R}=
& \theta_{v,\beta,tH}(\overline{{\cal D}(\beta,tH)}).
\end{split}
\end{equation*}
Moreover 
$$
\theta_{v,\beta,tH}(H^*(X,{\Bbb Z})_{\alg}
\cap \overline{{\cal D}(\beta,tH)}) \subset 
\Mov(K_{(\beta,tH)}(v)).
$$  
\item[(2)]
We choose $u \in {\frak I}_i$ $(i=1,2)$.
Let $x \in H^*(X,{\Bbb Z})_{\alg}
\cap \overline{{\cal D}(\beta,tH)}$ be a general element
of the boundary
defined by $\langle x, u \rangle=0$.
Then 
\begin{enumerate}
\item
$\theta_v(x)$ defines a divisorial contraction from a birational
model $K_{(\beta',t' H')}(v)$ of $K_{(\beta,tH)}(v)$.
\item
The exceptional divisor of the contraction is primitive 
in $\NS(K_{(\beta,tH)}(v))$ if
$u \in {\frak I}_2$,
and the exceptional divisor is divisible by 2 
in $\NS(K_{(\beta,tH)}(v))$ if
$u \in {\frak I}_1$.
\end{enumerate}
\end{enumerate}
\end{thm}

\begin{proof}
(1)
We note that
\begin{equation*}
\Mov(K_{(\beta,tH)}(v))_{\Bbb R} \subset 
C(\overline{P^+(K_{(\beta,tH)}(v))}_{\Bbb R})=
\theta_{v,\beta,tH}(\overline{P^+(v^\perp)}_{\Bbb R})
\end{equation*}
and
\begin{equation}\label{eq:D-calD}
\bigcup_{\xi(\beta',H',t') \in {\cal D}(\beta,tH)} 
\overline{D(\beta',H',t')}=
\overline{{\cal D}(\beta,tH)}.
\end{equation}
\begin{NB}
Assume that $(\beta,\omega),(\beta',\omega')$
does not lie on walls.
By the assumption,
$M_{(\beta,\omega)}(v) \setminus M_{(\beta',\omega')}(v)$
and
$M_{(\beta',\omega')}(v) \setminus M_{(\beta,\omega)}(v)$
are proper closed subsets of codimension $\geq 2$. 
Hence we have a natural birational map
$M_{(\beta,\omega)}(v) \to M_{(\beta',\omega')}(v)$
with an isomorphism 
$\NS(M_{(\beta,\omega)}(v)) \to \NS(M_{(\beta',\omega')}(v))$.
\end{NB}
Assume that $\xi(\beta',H',t') \in {\cal D}(\beta,tH)$.
Since there is no wall of codimension 0,1, 
we have a natural birational identification
$K_{(\beta,tH)}(v) \cdots \to K_{(\beta',t' H')}(v)$
with an identification
$\NS(K_{(\beta,tH)}(v)) \to \NS(K_{(\beta',t' H'')}(v))$.
Hence 
$$
\bigcup_{\xi(\beta',H',t') \in {\cal D}(\beta,tH)} \theta_{v,\beta,tH}
(D(\beta',H',t'))
 \subset 
\Mov(K_{(\beta,tH)}(v))_{\Bbb R}
$$
and
$$
\theta_{v,\beta,tH}(\overline{{\cal D}(\beta,tH)})=
\bigcup_{\xi(\beta',H', t') \in {\cal D}(\beta,tH)} \theta_{v,\beta,tH}
(\overline{D(\beta',H',t')})
 \subset 
\overline{\Mov(K_{(\beta,tH)}(v))}_{\Bbb R}.
$$
\begin{NB}
$\cup_{(\beta',\omega')} \Nef(K_{(\beta',\omega')}(v)) \subset 
\Mov(K_{(\beta,tH)}(v))$.
\end{NB}
Assume that $\xi(\beta',H',t') \not \in \overline{{\cal D}(\beta,tH)}$.
We set 
$$\eta_x:=x \xi(\beta',H',t')+(1-x)\xi(\beta,H,t),\;
x \in [0,1].
$$
If $\theta_{v,\beta,tH}(\xi(\beta',H',t'))$ is movable,
then $L_x:=\theta_{v,\beta,tH}(\eta_x)$
is movable for $0 \leq x \leq 1$.
We take an adjacent wall $u^\perp$ $(u \in {\frak I})$
of ${\cal D}(\beta,tH)$ separating 
$\xi(\beta',H',t')$ and $\xi(\beta,H,t)$.
Then we find $x_0 \in {\Bbb Q} \cap (0,1)$
such that $\langle \eta_{x_0},u \rangle=0$.
Let $D(\beta_1,H_1,t_1) \subset {\cal D}(\beta,tH)$
be the chamber such that $\eta_x \in \xi(D(\beta_1,H_1,t_1))$ 
for $x'  <x<x_0$ with $x'<x_0$.
Since all walls $W$ between $\eta_0$ and $\eta_{x_0}$
satisfy $\codim W \geq 2$,  
we have a natural birational map
$\varphi:K_{(\beta,tH)}(v) \cdots \to K_{(\beta_1,t_1 H_1)}(v)$
which induces a commutative diagram
\begin{equation*}
\begin{CD}
v^\perp @= v^\perp\\
@V{\theta_{v,\beta,tH}}VV @VV{\theta_{v,\beta_1,t_1 H_1}}V\\
\NS(K_{(\beta,tH)}(v)) @>{\varphi_*}>> \NS(K_{(\beta_1,t_1 H_1)}(v))
\end{CD}
\end{equation*}
Since $(\eta_{x_0}^2)>0$,
$\varphi_*(L_{x_0})$
gives a divisorial contraction of $K_{(\beta_1,t_1 H_1)}(v)$.
Let $C (\subset K_{(\beta_1,t_1 H_1)}(v))$ 
be a general curve contracted by
$\varphi_*(L_{x_0})$.
\begin{NB}
Let $D$ be the exceptional divisor.
Then every fiber of the contraction
$f:E \to f(E)$ is connected and $\dim f(E)<\dim E$.
Hence every fiber contains an irreducible curve $C$.
For a point of $C$ which is not contained the base locus 
of $|m\varphi_*(L_1)|$ $(m \gg 0)$,
we have $D \in |m\varphi_*(L_1)|$ with
$x \not \in D$.
Then $(\varphi_*(L_1),C) \geq 0$.
\end{NB}
Since $L_1=
\theta_{v,\beta,tH}(\xi(\beta',H',t'))$ is movable,
$\varphi_*(L_1)$ is also movable. Hence 
we may assume that $C$ is not contained in its base locus.
Then $(\varphi_*(L_1),C) \geq 0$. 
Since $(\varphi_*(L_x),C)>0$ for $x'<x<x_0$, 
we have $(\varphi_*(L_{x_0}),C)>0$, which is a contradiction.
Hence 
$\theta_{v,\beta,tH}(\xi(\beta',H',t'))$ is not movable.

Assume that $\xi \in H^*(X,{\Bbb Z})_{\alg}$ belongs to
$\overline{{\cal D}(\beta,tH)}$.
We take $\overline{D(\beta',H',t')}$ containing $\xi$
by \eqref{eq:D-calD}.
If $(\xi^2)>0$, then Corollary \ref{cor:positive-cone}
implies that
$\theta_{v,\beta,tH}(\xi)$ 
gives a birational contraction of $K_{(\beta',t' H')}(v)$.
Hence it is movable.
\begin{NB}
If $(\xi^2)>0$, then
there are finitely many walls in a neighborhood of $x$.
Hence we can take $\xi_1, \xi_2 \in P^+(v^\perp)_{\Bbb Q}$ such that
$\xi_1 \in {\cal D}(\beta,tH)$, $\xi_2$ belongs to another chamber
${\cal D}(\beta',t' H')$ and 
the segment connecting $\xi_1,\xi_2$ passes $\xi$.
Then $\theta_{v,\beta,tH}(\xi)$ is nef and big, 
and gives a divisorial contraction.
In particular, $\theta_{v,\beta,tH}(\xi) \in 
\Mov(K_{(\beta,tH)}(v))_{\Bbb Q}$.
\end{NB}

If $(\xi^2)=0$, then see Proposition \ref{prop:Lag}.  
Therefore (1) holds.
\begin{NB}
If $(\beta',t' H')$ and $(\beta,tH)$ 
adjacent chambers separated by a wall of codimension 0,1, then
$\Nef(K_{(\beta',t' H')}(v)) \cong \theta_{v,\beta,tH}(D(\beta',H',t'))$ 
is not contained in
$\Mov(K_{(\beta,tH)}(v))$.
Since $\cup_{(\beta',H',t')} \theta_{v,\beta,tH}(\overline{D(\beta',H',t')})
=\overline{P^+(K_{(\beta,\omega)}(v))}$
by Proposition \ref{prop:positive-cone}, we get
the first claim.
\end{NB}

\begin{NB}
Let $\Phi:{\bf D}(X) \to {\bf D}(X)$ be a Fourier-Mukai transform
preserving $\pm v$.
We set
$(\beta',\omega'):=\Phi((\beta,\omega))$.
Then we have an isomorphism
$\Phi:M_{(\beta,\omega)}(v) \to M_{(\beta',\omega')}(v)$, which induces
a birational map  
$M_{(\beta,\omega)}(v) 
\overset{\Phi}{\to} M_{(\beta',\omega')}(v) \cdots \to 
M_{(\beta,\omega)}(v)$.
Since $\theta_v:v^{\perp} \to \NS(K_{(\beta,\omega)}(v))$
is compatible with respect to the action of $\Phi$,
we have an action of $\Stab_0(v)^*$ on
$\NS(K_{(\beta,\omega)}(v))$. 
\end{NB}

(2) is a consequence of Proposition \ref{prop:codim0-div}
and \ref{prop:general:codim1-div}.
\end{proof}

\begin{rem}
For $u \in {\frak I}_0$,
$u^\perp$ is a tangent of $\overline{P^+(v^\perp)}_{\Bbb R}$.
\end{rem}

\begin{cor}\label{cor:birational-model}
Keep notations in Theorem \ref{thm:general:movable}.
\begin{enumerate}
\item[(1)]
Let $(K,L)$ be a pair of a smooth manifold
$K$ with a trivial canonical bundle and an ample divisor $L$ on $K$.
If $K$ is birationally equivalent to
$K_{(\beta,tH)}(v)$, then there is
$\xi(\beta', H',t') \in {\cal D}(\beta,tH)$
such that $K \cong K_{(\beta',t' H')}(v)$ and
${\Bbb R}_{>0}L$ corresponds to 
${\Bbb R}_{>0}\theta_{v,\beta,\omega}(\xi(\beta',H',t'))$.
\item[(2)]
Let $(M,L)$ be a pair of smooth manifold
$M$ with a trivial canonical bundle
and an ample divisor $L$ on $M$.
If $M$ is birationally equivalent to
$M_{(\beta,tH)}(v)$, then there is $\xi(\beta', H',t')
\in {\cal D}(\beta,tH)$
such that $M \cong M_{(\beta',t' H')}(v)$ and
${\Bbb R}_{>0}L$ corresponds to 
${\Bbb R}_{>0}\theta_{v,\beta,\omega}(\xi(\beta',H',t'))$ up to
line bundles coming from $\Alb(M_{(\beta',t' H')}(v))$.
\end{enumerate}
\end{cor}

\begin{proof}
(1)
Since the canonical bundles of 
$K$ and $K_{(\beta,tH)}(v)$ are trivial,
we have a birational map
$$
f:K \cdots \to K_{(\beta,tH)}(v)
$$
 with an isomorphism
$f_*:\NS(K) \to \NS(K_{(\beta,tH)}(v))$.
\begin{NB}
We have an isomorphism $H^0(K,\Omega_K^2) \cong
H^0(K_{(\beta,tH)}(v),\Omega_{K_{(\beta,tH)}(v)}^2) \cong {\Bbb C}$.
Hence $K$ has a holomorphic 2-form which is non-degenerate
outside of the exceptional locus, which is of codimension 2.
Hence $K$ has a symplectic form. 
\end{NB}
Then $f_*(L) \in \Mov(K_{(\beta,tH)}(v))_{\Bbb Q}$.
We take $(\beta',H',t') \in {\frak H}$ such that 
$f_*(L) \in {\Bbb R}_{>0}\theta_{v,\beta,tH}(\xi(\beta',H',t'))$ 
and $\beta', H' \in \NS(X)_{\Bbb Q}$
(see Remark \ref{rem:positive}).
Assume that $\xi(\beta',H',t') 
\in \overline{D(\beta_1,H_1,t_1)} 
(\subset \overline{{\cal D}(\beta,tH)})$.
We take an ample divisor $L_0$ on $K$
such that $f_*(L_0) \in  
\theta_{v,\beta,tH}(D(\beta_1,H_1,t_1))$.
For the birational map
$g:K_{(\beta,tH)}(v) \to K_{(\beta_1,t_1 H_1)}(v)$,
$L_1:=(g \circ f)_*(L_0)$ is ample.
We note that $g \circ f$ induces an isomorphism
$$
\bigoplus_{n \geq 0} H^0(K,{\cal O}_K(nL_0)) \cong
\bigoplus_{n \geq 0} H^0(K_{(\beta_1,t_1 H_1)}(v),
{\cal O}_{K_{(\beta_1,t_1 H_1)}(v)}(nL_1)).
$$
Then the ampleness of $L_0$ and $L_1$ imply that
$g \circ f:K \to K_{(\beta_1,t_1 H_1)}(v)$ is an isomorphism.
Since $L$ is ample,
$\theta_{v,\beta,tH}(\xi(\beta',H',t'))$
is also ample on $K_{(\beta_1,t_1 H_1)}(v)$, which implies that
$\xi(\beta',H',t') \in D(\beta_1,H_1,t_1)$.
Hence $K \cong K_{(\beta_1,t_1 H_1)}(v) \cong
K_{(\beta',t' H')}(v)$.

(2)
For a birational map $\varphi:M \cdots \to M_{(\beta,tH)}(v)$,
we have a commutative diagram
\begin{equation*}
\begin{CD}
M & \overset{\varphi}{\cdots \longrightarrow} & M_{(\beta,tH)}(v)\\
@VVV @VVV\\
\Alb(M) @>{\eta}>> \Alb(M_{(\beta,tH)}(v))
\end{CD}
\end{equation*}
where $\eta$ is an isomorphism. 

Let $K$ be a smooth fiber of the albanese map
of $M$.
Then the canonical bundle of $K$ is trivial.
For a general smooth fiber $K$,
$\varphi$ induces a birational map
$K \to K_{(\beta,tH)}(v)$.
There is $\xi(\beta',H',t') \in D(\beta,tH)$ such that
for the birational map
$\psi:M_{(\beta,tH)}(v) \cdots \to M_{(\beta',t' H')}(v)$,
$\psi \circ \varphi$ indices an isomorphism
$K  \to K_{(\beta',t' H')}(v)$.
Thus $(\psi \circ \varphi)_*(L)_{|K_{(\beta',t' H')}(v)}$
is ample.
Since the albanese map of $M_{(\beta',t'H')}(v)$
is isotrivial, 
$(\psi \circ \varphi)_*(L)$ is relatively ample over
$\Alb(M_{(\beta',t'H')}(v))$.
As in the proof of (1),
by looking at relative global sections,
we see the 
$\psi \circ \varphi$ is an isomorphism.
\end{proof}

\begin{cor}\label{cor:movable}
Keep notations in Theorem \ref{thm:general:movable}
\begin{enumerate}
\item[(1)]
Assume that ${\frak I}_1={\frak I}_2= \emptyset$, that is,
\begin{equation*}
\min \{ \langle v,w \rangle>0 \mid \langle w^2 \rangle=0 \} \geq 3.
\end{equation*}
Then $$
{\cal D}(\beta,tH)=P^+(v^\perp)_{\Bbb R}.
$$
In particular, $\Mov(K_{(\beta,tH)}(v))_{\Bbb Q}=
\overline{P^+(K_{(\beta,tH)}(v))}_{\Bbb Q}$.
\item[(2)]
Assume that ${\frak I}_1 = \emptyset$ and ${\frak I}_2 \ne \emptyset$,
that is,
\begin{equation*}
\min \{ \langle v,w \rangle>0 \mid \langle w^2 \rangle=0 \}=2.
\end{equation*}
For every divisorial contraction
from a birational model 
of $K_{(\beta,tH)}(v)$,
the exceptional divisor is primitive in $\NS(K_{(\beta,tH)}(v))$. 
\item[(3)]
Assume that ${\frak I}_1 \ne \emptyset$, that is,
\begin{equation*}
\min \{ \langle v,w \rangle>0 \mid \langle w^2 \rangle=0 \}=1.
\end{equation*}
Then there is a divisorial contraction from a birational model
of $K_{(\beta,tH)}(v)$ such that the exceptional divisor
is a prime divisor and divisible by 2 
in $\NS(K_{(\beta,tH)}(v))$. 
\end{enumerate}
\end{cor}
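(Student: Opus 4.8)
The plan is to read the three statements off from Theorem \ref{thm:general:movable}, which already identifies $\overline{\Mov(K_{(\beta,tH)}(v))}_{\Bbb R}$ with $\theta_{v,\beta,tH}(\overline{{\cal D}(\beta,tH)})$ and records the divisibility of the exceptional divisor attached to each boundary wall of ${\cal D}(\beta,tH)$. The three cases are distinguished only by which of ${\frak I}_1,{\frak I}_2$ contribute walls to $\partial{\cal D}(\beta,tH)$, so the whole corollary is a bookkeeping of walls together with the remark that ${\frak I}_0$-walls are tangent to the positive cone.

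For (1) I would first observe that for $u\in{\frak I}_0$ the hyperplane $u^\perp$ is tangent to $\overline{P^+(v^\perp)}_{\Bbb R}$ (the Remark following Theorem \ref{thm:general:movable}): indeed $u\in v^\perp$ is isotropic, and in the lattice $v^\perp$ of signature $(1,\rk\NS(X))$ the orthogonal hyperplane of a null vector meets the open positive cone nowhere. Hence, when ${\frak I}_1={\frak I}_2=\emptyset$ so that ${\frak I}={\frak I}_0$, deleting $\cup_{u\in{\frak I}}u^\perp$ removes no interior point of $P^+(v^\perp)_{\Bbb R}$, and since that open cone is connected we get ${\cal D}(\beta,tH)=P^+(v^\perp)_{\Bbb R}$. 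Theorem \ref{thm:general:movable}(1) then gives $\overline{\Mov(K_{(\beta,tH)}(v))}_{\Bbb R}=\theta_{v,\beta,tH}(\overline{P^+(v^\perp)}_{\Bbb R})=\overline{P^+(K_{(\beta,tH)}(v))}_{\Bbb R}$, using that $\theta_{v,\beta,tH}$ is an isometry carrying the positive cone of $v^\perp$ onto that of $K_{(\beta,tH)}(v)$. For the rational equality I would combine the ``moreover'' clause of Theorem \ref{thm:general:movable}(1), which puts $\theta_{v,\beta,tH}(H^*(X,{\Bbb Z})_{\alg}\cap\overline{P^+(v^\perp)}_{\Bbb R})$ inside $\Mov_{\Bbb Q}$ and hence (by ${\Bbb Q}_{>0}$-scaling, $\Mov_{\Bbb Q}$ being a cone) all of $\overline{P^+(K_{(\beta,tH)}(v))}_{\Bbb Q}$, with Markman's containment $\Mov_{\Bbb Q}\subset\overline{P^+}_{\Bbb Q}$ (\cite[Lem. 6.22]{Ma}).

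For (2) and (3) the key structural fact is Corollary \ref{cor:birational-model}: the birational models of $K_{(\beta,tH)}(v)$ are exactly the $K_{(\beta',t'H')}(v)$ with $(\beta',t'H')\in{\cal D}(\beta,tH)$, so their divisorial contractions correspond to the boundary walls of ${\cal D}(\beta,tH)$, i.e. to pieces of $u^\perp$ with $u\in{\frak I}_1\cup{\frak I}_2$ (the ${\frak I}_0$-walls being tangent). Theorem \ref{thm:general:movable}(2) reads off the divisibility: an ${\frak I}_2$-wall gives a primitive exceptional divisor and an ${\frak I}_1$-wall one divisible by $2$, while irreducibility (hence primeness) of the divisor is Proposition \ref{prop:general:codim1-div} respectively Proposition \ref{prop:codim0-div}. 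Thus for (2), ${\frak I}_1=\emptyset$ forces every boundary wall into ${\frak I}_2$, so every divisorial contraction from a birational model has primitive exceptional divisor. For (3), given ${\frak I}_1\neq\emptyset$ I would produce an ${\frak I}_1$-wall on $\partial{\cal D}(\beta,tH)$ and invoke Proposition \ref{prop:codim0-div}, whose exceptional divisors $D_\pm$ are irreducible with $(D_\pm)_{|K}=\pm2\theta_{v,\beta_\pm,\omega_\pm}(d_{v_1})$, i.e. prime and divisible by $2$.

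The main obstacle is exactly the last step of (3): showing that some ${\frak I}_1$-wall bounds the specific chamber ${\cal D}(\beta,tH)$, rather than some other component of $P^+(v^\perp)_{\Bbb R}\setminus\cup_{u\in{\frak I}}u^\perp$. Here I would use Lemma \ref{lem:wall-intersection}: an ${\frak I}_1$-wall $v_1^\perp$ meets no other ${\frak W}$-wall inside $P^+(v^\perp)_{\Bbb R}$, so $v_1^\perp\cap P^+(v^\perp)_{\Bbb R}$ is an undivided facet bounding exactly two components, interchanged by the reflection $R_{v_1}$ of Lemma \ref{lem:reflection}. Since $R_{v_1}$ is induced by the auto-equivalence $\Psi$ of Proposition \ref{prop:codim0}, which preserves $v$ and induces an isomorphism of the relevant moduli spaces (Remark \ref{rem:isom}), the reflections in the walls of ${\cal D}(\beta,tH)$ generate the full reflection group and every ${\frak I}_1$-reflection is conjugate to one of them; transporting by $\Psi$ (equivalently, replacing $v_1$ by a suitable element of its auto-equivalence orbit) places an ${\frak I}_1$-wall on $\partial{\cal D}(\beta,tH)$ and exhibits the contraction as one from a model birational to $K_{(\beta,tH)}(v)$. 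I expect the careful bookkeeping of this fundamental-domain/reflection-group argument — not the divisibility computations, which are already contained in Propositions \ref{prop:codim0-div} and \ref{prop:general:codim1-div} — to be the delicate point.
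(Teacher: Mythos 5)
Your parts (1) and (2) are correct and run along the same lines the paper intends: (1) is exactly the tangency of the ${\frak I}_0$-walls combined with Theorem \ref{thm:general:movable}(1), the ``moreover'' clause, and Markman's inclusion $\Mov(M)_{\Bbb Q}\subset\overline{P^+(M)}_{\Bbb Q}$ from \cite{Ma}; (2) is Corollary \ref{cor:birational-model} together with Propositions \ref{prop:codim0-div} and \ref{prop:general:codim1-div}, which identify the divisorial contractions with the codimension $0$ and $1$ walls, i.e.\ with ${\frak I}_1$- and ${\frak I}_2$-walls.

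For (3), your route is genuinely different from the paper's and its key step is not justified by the results you cite. You reduce everything to showing that some ${\frak I}_1$-wall bounds the particular chamber ${\cal D}(\beta,tH)$, and for that you invoke the standard Coxeter/fundamental-domain fact that every reflection hyperplane is conjugate to a wall of a fixed chamber. That fact needs the whole ${\frak I}$-wall arrangement to be invariant under reflections of \emph{both} types. Proposition \ref{prop:codim0} supplies this only for ${\frak I}_1$-reflections (those extend to integral isometries of the Mukai lattice preserving $\pm v$, hence permute the ${\frak I}_k$ up to sign). For $u\in{\frak I}_2$ the reflection $R_u$ does \emph{not} extend integrally to the Mukai lattice fixing $v$: the extension is $x\mapsto x+\bigl(\langle v,x\rangle/\ell-\langle u,x\rangle\bigr)d_u$, which is integral only on $v^\perp$. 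So invariance of the wall set under ${\frak I}_2$-reflections requires the geometric input of Remark \ref{rem:isom} iterated across walls, or Markman's monodromy theory from Section \ref{sect:Markman}, and you do not supply this. Using only ${\frak I}_1$-reflections does not close the gap either, since ${\frak I}_2$-walls may lie strictly between ${\cal D}(\beta,tH)$ and the nearest ${\frak I}_1$-wall.

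Moreover this difficulty is self-inflicted: statement (3) asks only for a contraction from \emph{some} birational model, and crossing ${\frak I}$-walls never changes the birational type. Indeed Proposition \ref{prop:codim0-div} and Remark \ref{rem:isom} show that the moduli spaces on the two sides of a codimension $0$ or $1$ wall are isomorphic, while all other wall-crossings are isomorphisms in codimension one; hence \emph{every} $K_{(\beta',t'H')}(v)$, for any generic parameter, is birational to $K_{(\beta,tH)}(v)$, whether or not its chamber is adjacent to ${\cal D}(\beta,tH)$. So one simply takes $u\in{\frak I}_1$: its wall is non-empty by Corollary \ref{cor:existence-of-wall} (as $0<\langle u,v\rangle=1<\langle v^2\rangle/2$), and Proposition \ref{prop:codim0-div}(2) — equivalently, Proposition \ref{prop:Hilb} followed by the Hilbert--Chow contraction of $\Pic^0(Y)\times\Hilb_Y^{\ell}$ and Lemma \ref{lem:H-C} — exhibits a birational model carrying a divisorial contraction whose exceptional divisor is irreducible of class $\pm 2\theta(d_u)$, hence prime and divisible by $2$ because $d_u$ is primitive (Lemma \ref{lem:reflection}(1)). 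This is the paper's (implicit) argument; it avoids the fundamental-domain bookkeeping entirely, and the fact that the ${\frak I}_1$-wall actually bounds the movable cone is only recovered a posteriori, via Markman's Theorem \ref{thm:Markman}, rather than being needed as an input.
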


Let us study the structure of walls in a neighborhood of a rational
point of $\overline{P^+(v^\perp)}_{\Bbb R} \setminus P^+(v^\perp)_{\Bbb R}$.
Let $A$ be a compact subset of $P^+(v^\perp)_{\Bbb R}$
and $u$ an isotropic Mukai vector in the boundary 
of $\overline{P^+(v^\perp)}_{\Bbb R}$.
Let $\overline{u A}$ be the cone spanned by $u$ and $A$.
We note that ${\frak W}_A:=\{ v_1 \in {\frak W} \mid  
A \cap v_1^\perp \ne \emptyset \}$ is a finite set.
\begin{NB}
For $v_1 \in {\frak W}$,
$\langle v,v_1 \rangle \geq \langle v-v_1,v_1 \rangle>0$.
Hence $\langle v^2 \rangle>\langle v,v_1 \rangle>0$.
We set $\delta(v_1):=\langle v^2 \rangle v_1-\langle v,v_1 \rangle v$.
Then $\delta(v_1) \in v^\perp$ and
$\langle \delta(v_1)^2 \rangle=
\langle v^2 \rangle(\langle v^2 \rangle \langle v_1^2 \rangle-
\langle v,v_1 \rangle^2)>-\langle v^2 \rangle^2$.
Hence ${\frak W}_A$ is finite. 
\end{NB}
We fix a point $a \in A$ which does not lie on any wall and 
assume that there is no wall between $u$ and $a$
(cf. Remark \ref{rem:beta}).

\begin{lem}\label{lem:B}
$v_1 \in {\frak W}$ satisfies
$v_1^\perp \cap \overline{u A} \ne \emptyset$
if and only if $v_1^\perp \cap (A \cup \{ u \}) \ne \emptyset$.
\end{lem}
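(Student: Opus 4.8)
The plan is to read the condition $v_1^\perp\cap(\,\cdot\,)\neq\emptyset$ through the sign of the linear functional $f:=\langle\,\cdot\,,v_1\rangle$ on the cone $\overline{uA}$. Since the wall $v_1^\perp$ is just the hyperplane $\{f=0\}$, it is unchanged under $v_1\mapsto -v_1$, so I am free to fix the sign of $f$. The implication ``$\Leftarrow$'' is immediate, because $A\subset\overline{uA}$ and $u\in\overline{uA}$. For ``$\Rightarrow$'' I would argue the contrapositive: assuming $v_1^\perp\cap(A\cup\{u\})=\emptyset$, that is, $f$ vanishes neither on $A$ nor at $u$, I would show that $f\neq 0$ on $\overline{uA}\setminus\{0\}$, so $v_1^\perp\cap\overline{uA}=\emptyset$.

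The first step is to use the connectedness of $A$: since $f$ has no zero on the connected set $A$, it has constant sign there, and after replacing $v_1$ by $-v_1$ I may assume $f>0$ on $A$, hence $f>0$ on the convex hull of $A$ by linearity. Every nonzero element of $\overline{uA}$ is of the form $\lambda u+\mu c$ with $\lambda,\mu\ge 0$ and $c$ in the convex hull of $A$, so $f>0$ there. As $f(u)\neq 0$, two cases remain. If $f(u)>0$, then $f(\lambda u+\mu c)=\lambda f(u)+\mu f(c)>0$ whenever $(\lambda,\mu)\neq(0,0)$, giving $v_1^\perp\cap\overline{uA}=\emptyset$ and finishing this case.

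The case $f(u)<0$ is the one to rule out, and here the hypothesis that there is no wall between $u$ and the fixed point $a$ is essential. With $f(a)>0$ and $f(u)<0$, consider the segment $p_s:=(1-s)a+su$, $s\in[0,1]$. Since $a\in P^+(v^\perp)_{\Bbb R}$ has $\langle a^2\rangle>0$ and $u$ is isotropic and lies in the closure of the \emph{same} oriented component as $a$, the Lorentzian signature $(1,\rk\NS(X))$ of $v^\perp$ forces $\langle a,u\rangle>0$; hence $\langle p_s^2\rangle=(1-s)^2\langle a^2\rangle+2s(1-s)\langle a,u\rangle>0$ for $s\in[0,1)$, while the linear orientation functional defining $P^+(v^\perp)_{\Bbb R}$ stays positive along the segment. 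Thus $p_s\in P^+(v^\perp)_{\Bbb R}$ for $s\in[0,1)$. By the intermediate value theorem $f(p_{s^*})=0$ for some $s^*\in(0,1)$, so $p_{s^*}\in v_1^\perp\cap P^+(v^\perp)_{\Bbb R}$ is a genuine point of the wall $W_{v_1}$ strictly between $a$ and $u$, contradicting the hypothesis. Therefore $f(u)>0$, which is the case already treated, and the proof is complete.

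I expect the only real obstacle to be this last step, namely checking that the sign–change point $p_{s^*}$ actually lands in the \emph{open} positive cone $P^+(v^\perp)_{\Bbb R}$ so that it counts as a point of a wall; this is exactly where the isotropy of $u$ and the signature $(1,\rk\NS(X))$ enter, through $\langle a,u\rangle>0$. The remaining steps are formal sign bookkeeping on the cone. I would also explicitly record the use of connectedness of $A$ in the constant-sign step, since without it the statement fails (a functional positive on one component of $A$ and negative on another would meet $\overline{uA}$ without meeting $A\cup\{u\}$).
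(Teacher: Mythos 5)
Your proof is correct and follows essentially the same route as the paper's: both arguments come down to the observation that the hypothesis ``no wall between $u$ and $a$'' forces $\langle v_1,u\rangle$ and $\langle v_1,a\rangle$ to have the same sign, and then use connectedness of $A$ to propagate that sign over the whole cone $\overline{uA}$. Your contrapositive organization, and your explicit check via $\langle a,u\rangle>0$ (Lorentzian signature) that the intermediate-value crossing point lies in $P^+(v^\perp)_{\Bbb R}$ and hence is a genuine wall point, merely make explicit steps that the paper's proof leaves implicit.
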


\begin{proof}
Assume that $w \in v_1^\perp \cap \overline{u A}$.
We take $w' \in A$ such that $w$ belongs to the segment
$\overline{u w'}$
connecting $u$ and $w'$.
Assume that $\langle v_1,u \rangle \ne 0$.
If $v_1^\perp \cap A =\emptyset$, then
$\langle v_1,u \rangle \langle v_1,w' \rangle<0$. 
Since there is no wall between $u$ and $a$,
we have $\langle v_1, u \rangle \langle v_1,a \rangle>0$.
Then $w'$ and $a$ are separated by the hyperplane
$v_1^\perp$. Therefore there is a point $x \in A$ with
$x \in v_1^\perp$, which is a contradiction.
Hence $v_1^\perp \cap A  \ne \emptyset$.
\end{proof}

\begin{NB}
Lemma \ref{lem:B} should be replaced by
the following result.

\begin{lem}
Let $\Delta$ be a finite polyhedral cone in 
$\overline{P^+(v^\perp)}_{\Bbb R}$.
Then there are finitely many walls intersection $\Delta$.
\end{lem}
This lemma can be applied 
to the assumption of Proposition \ref{prop:Lag}.

\end{NB}
 
\begin{prop}
\begin{enumerate}
\item[(1)]
$\{ v_1 \in {\frak W} \mid 
v_1^\perp  \cap (\overline{uA} \setminus \{ u \})
\ne \emptyset\}$
is a finite set. 
\item[(2)]
There is an open neighborhood of $u$ 
such that 
$$
\{ v_1 \in {\frak W} \mid 
v_1^\perp  \cap U \cap (\overline{uA} \setminus \{ u \})
\ne \emptyset\} \subset u^\perp.
$$
In particular the set of walls is finite in
$U \cap \overline{uA}$ and all walls pass the point $u$.
\end{enumerate}
\end{prop}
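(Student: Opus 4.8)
The plan is to derive both assertions from Lemma~\ref{lem:B} together with the finiteness of ${\frak W}_A$, the only extra ingredient being a one-line orthogonality computation forcing the walls through $u$ that cut into $\overline{uA}$ to also meet $A$. For (1), suppose $v_1\in{\frak W}$ has $v_1^\perp\cap(\overline{uA}\setminus\{u\})\ne\emptyset$. A fortiori $v_1^\perp\cap\overline{uA}\ne\emptyset$, so Lemma~\ref{lem:B} gives $v_1^\perp\cap(A\cup\{u\})\ne\emptyset$. If $v_1^\perp\cap A\ne\emptyset$ then $v_1\in{\frak W}_A$ and there is nothing to prove, so I would rule out the remaining possibility $\langle u,v_1\rangle=0$ with $v_1^\perp\cap A=\emptyset$: picking $x\in v_1^\perp\cap(\overline{uA}\setminus\{u\})$ and writing it as $x=su+ra'$ with $a'\in A$, $s,r\ge 0$ and $r>0$ (this is exactly the condition that $x$ lies off the apex $u$), the pairing with $v_1$ yields
\[
0=\langle x,v_1\rangle=s\langle u,v_1\rangle+r\langle a',v_1\rangle=r\langle a',v_1\rangle,
\]
hence $\langle a',v_1\rangle=0$ and $a'\in v_1^\perp\cap A$, contradicting $v_1^\perp\cap A=\emptyset$. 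Thus the set in (1) is contained in ${\frak W}_A$, which is finite.

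For (2), let $S$ be the finite set produced by (1) and split it as $S=S_0\sqcup S_1$ according to whether $\langle u,v_1\rangle=0$ or $\langle u,v_1\rangle\ne 0$. For each $v_1\in S_1$ one has $u\notin v_1^\perp$, so, $v_1^\perp$ being closed, there is an open ball $U_{v_1}$ around $u$ disjoint from $v_1^\perp$; since $S_1$ is finite, $U:=\bigcap_{v_1\in S_1}U_{v_1}$ is still an open neighbourhood of $u$. If now $v_1\in{\frak W}$ meets $U\cap(\overline{uA}\setminus\{u\})$, then $v_1\in S$ by (1); were $\langle u,v_1\rangle\ne 0$ we would have $v_1\in S_1$ and $v_1^\perp\cap U=\emptyset$, a contradiction, so $\langle u,v_1\rangle=0$, i.e.\ $v_1\in u^\perp$. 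The final ``in particular'' is then immediate, since the walls meeting $U\cap(\overline{uA}\setminus\{u\})$ form a subset of the finite set $S_0=S\cap u^\perp$ and each of them passes through $u$.

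The whole argument is short once Lemma~\ref{lem:B} and the finiteness of ${\frak W}_A$ are available, so I do not anticipate a genuine obstacle; the single point requiring care is the bookkeeping at the apex, namely that every element of $\overline{uA}$ not proportional to $u$ has a strictly positive $A$-component $r>0$ in the expression $x=su+ra'$. This is precisely the feature of $\overline{uA}\setminus\{u\}$ that makes the contribution of the walls through $u$ collapse in (1); the rest of (2) is the elementary separation argument for finitely many closed hyperplanes avoiding the point $u$ in a finite-dimensional real vector space.
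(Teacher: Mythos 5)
Your proof is correct and takes essentially the same route as the paper: part (1) via Lemma \ref{lem:B} together with the finiteness of ${\frak W}_A$, and part (2) by the elementary separation argument for the finitely many hyperplanes $v_1^\perp$ not containing $u$, which is exactly what the paper compresses into ``(2) easily follows from (1).'' Your explicit apex computation (showing that a wall with $u \in v_1^\perp$ meeting $\overline{uA}\setminus\{u\}$ must already meet $A$) fills in a case that the paper's one-line citation of Lemma \ref{lem:B} leaves implicit, so it is a faithful, slightly more detailed rendering of the same argument.
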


\begin{proof}
By Lemma \ref{lem:B},
$$
\{ v_1 \in {\frak W} \mid 
v_1^\perp \cap (\overline{uA} \setminus \{ u \})
\ne \emptyset\} \subset {\frak W}_A.
$$
Hence (1) holds.
(2) easily follow from (1).
\end{proof}

\begin{rem}\label{rem:beta}
Assume that $e^\beta$ satisfies 
$\langle e^\beta, v \rangle=0$.
Then we can set $v=r e^\beta+\xi+(\xi,\beta)\varrho_X$.
We set 
$$
B:=\{v_1 \in {\frak W} \mid e^\beta \in v_1^\perp \}.
$$ 
For $v_1 \in B$ and $v_2:=v-v_1 \in B$, 
we can set 
\begin{equation*}
\begin{split}
v_1:= & r_1 e^\beta+\xi_1+(\xi_1,\beta)\varrho_X,\\
v_2:= & r_2 e^\beta+\xi_2+(\xi_2,\beta)\varrho_X.
\end{split}
\end{equation*}
Since $0 <\langle v_1,v_2 \rangle=(\xi_1,\xi_2)$,
$\xi_1 \ne 0$ and $\xi_2 \ne 0$.
If $(\beta,\omega')$ belongs to the wall defined by $v_1$,
then $(\xi_1^2),(\xi_2^2) \geq 0$ implies that
both of $n \xi_1$ and $n \xi_2$ are effective, or
both of $- n \xi_1$ and $-n \xi_2$ are effective,
where $n$ is the denominator of $\beta$.
In particular, $|(\xi_1,H)| <|(\xi,H)|$
for all ample divisor $H$.
Then we also see that the set of $\xi_1$ is finite.

If $r_1=r\frac{(\xi_1,H)}{(\xi,H)}$, then
$|r_1|<|r|$. Therefore 
$$
B':=\{v_1 \in B \mid (r_1 \xi-r \xi_1,H)=0 
\text{ for some $H \in \Amp(X)_{\Bbb Q}$ }\}
$$ 
is a finite set.
We take $H \in \Amp(X)_{\Bbb Q}$ such that 
$(H,r_1 \xi-r \xi_1) \ne 0$
for all $v_1 \in B'$.
Then $(\beta,H,t)$ $(t \ll 1)$ belongs to a chamber.
\end{rem}

\begin{NB}

We take $w \in {\frak I}_0$
such that $c_1(w)/\rk w=c_1(u)/\rk u$, $\rk w >0$.
We set $Y :=M_H(w)$ and let ${\bf E}$ be the universal family
as twisted objects.
Then we have (twisted) Fourier-Mukai transforms 
$\Phi_{X \to Y}^{{\bf E}^{\vee}[k] }:
{\bf D}(X) \to {\bf D}^{\alpha}(Y)$,
where $\alpha$ are suitable 2-cocycles of ${\cal O}_{Y}^{\times}$
defining ${\bf E}^{\vee}$
and $k=1$ for $(\delta-\beta,H)>0$, $k=2$ for $(\delta-\beta,H)<0$.
We take $v_1 \in {\frak W}$ with $w \in v_1^\perp$.
Then for $E_1 \in M_{(\beta,tH)}(v_1)$,
$\Phi_{X \to Y}^{{\bf E}^{\vee}[k]}(E_1)$ is purely
1-dimensional: We set $c_1(u)/\rk u=\beta'$.
If $\beta'=\beta$, then
the claim holds.
If $E_1$ is $\sigma_{\beta',t' H'}$-semi-stable but
is not $\sigma_{\beta,tH}$-semi-stable, then 
there is a wall $W_{v_2}$ for $v_1$
separating $(\beta,H,t)$ and $(\beta',H',t')$
such that $E_1$ is properly $\sigma_{\beta_1,t_1,H_1}$-semi-stable
for $(\beta_1,H_1,t_1) \in W_{v_2}$.
Since $v_2$ also defines a wall for $v$, 
we have $v_2 \in u^\perp$.
Therefore $E_1$ is generated by $\sigma_{\beta,tH}$-semi-stable
objects $F_i$ with $v(F_i) \in u^\perp$.
In particular, 
$\Phi_{X \to Y}^{{\bf E}^{\vee}[k]}(E_1)$ is a complex whose cohomology
sheaves are purely
1-dimensional sheaves.
If $\phi_{(\beta',t' H')}(E)+k<
\phi_{(\beta',t' H')}({\bf E}_{|X \times \{y \}})\leq 
\phi_{(\beta',t' H')}(E)+k+1$
for all $y \in Y$,
then $\Hom({\bf E}_{|X \times \{y \}},E)=0$
for all $y \in Y$.
Then $\Phi_{X \to Y}^{{\bf E}^{\vee}[k]}(E_1)$
is purely 1-dimensional.    
\end{NB}

\begin{NB}
We set $v:=(0,\xi,a)=
a_\gamma \varrho_X+
(d_\gamma H+D_\gamma+(d_\gamma H+D_\gamma,\gamma)\varrho_X)$.
If there is no wall between $\varrho_X$ and $\xi(\beta,H,t)$, then
$M_{(\beta,tH)}(v) \cong M_H^\beta(v)$. 
\end{NB}


\begin{NB}
We set $v=a \varrho_X+(dH+D+(dH+D,\beta)\varrho_X)$.
Let $W_{v_1}$ be a wall for $v$ with $v_1 \in \varrho_X^\perp$.
If $E \in M_{(\beta,tH)}(v)$, then $E[k] \in {\frak A}_{(\beta,tH)}$
for some $k$. If $E[k]$ is not a $\beta$-twisted semi-stable
sheaf, then for some $t'>t$ and $v_1 \in {\frak W}$,
$(\beta,H,t') \in W_{v_1}$.
We set $v_1=r_1 e^\beta+a_1 \varrho_X+
(d_1 H+D_1+(d_1 H+D_1,\beta)\varrho_X)$.
Then $2(d_1 a-d a_1)=((t'H)^2)(-d r_1)$.
Hence $r_1 \ne 0$, which implies that $v_1 \not \in \varrho_X^\perp$.
If there is no wall with $v_1 \not \in \varrho_X^\perp$,
then $E$ is $\beta$-semi-stable.

\end{NB}

\begin{prop}\label{prop:Lag}
Let $u$ 
be a primitive and isotropic Mukai vector
with $u \in \overline{P^+(v^\perp)}_{\Bbb R}$. 
We take $(\beta,H,t) \in {\frak H} 
\setminus \cup_{v_1 \in {\frak W}} W_{v_1}$ such that
$\xi(\beta,H,t)$ and $u$ are not separated by a wall.
Then $\theta_{v,\beta,tH}(u)$ gives a Lagrangian fibration
$K_{(\beta,tH)}(v) \to {\Bbb P}^{\langle v^2 \rangle/2-1}$.
\end{prop}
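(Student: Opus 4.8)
The plan is to use a Fourier--Mukai transform to reduce to the classical Beauville--Mukai system attached to a rank-zero Mukai vector, and then exhibit the Lagrangian fibration as the support morphism of sheaves on curves. First I would use that $u$ is a positive primitive isotropic vector to choose a Fourier--Mukai partner $Y$ of $X$ and a (possibly $\alpha$-twisted) equivalence $\Phi\colon {\bf D}(X)\to {\bf D}^\alpha(Y)$ with $\Phi(u)=\varrho_Y$ up to sign and shift; such a $\Phi$ exists because $M_{H'}(u)\cong Y$ is a two-dimensional (twisted) moduli space carrying a universal family. Since $\Phi$ is an isometry and $\langle u,v\rangle=0$, the pairing $\langle \Phi(v),\varrho_Y\rangle=-\rk\Phi(v)$ vanishes, so $v':=\Phi(v)=(0,\xi',a')$ has rank zero. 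Moreover $v'$ is primitive, $(\xi'^2)=\langle {v'}^2\rangle=\langle v^2\rangle$, and by the positivity of $v$ one arranges $\xi'$ to be a primitive ample class on $Y$.

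Next I would transport the stability data. For a rank-zero vector the definition of $\xi(\beta,H,t)$ shows that, as $t\to\infty$, the ray ${\Bbb R}_{>0}\xi(\beta,H,t)$ tends to ${\Bbb R}_{>0}\varrho_Y$, so on the $Y$-side the boundary point $\varrho_Y=\Phi(u)$ is exactly the large-volume limit. Using Lemma \ref{lem:Im} and the compatibility of $\Phi$ with the central charges (the commutative diagram in Subsection \ref{subsect:polarization}), the hypothesis that $\xi(\beta,H,t)$ and $u$ are not separated by a wall says that the transformed stability condition lies in the chamber whose closure contains this large-volume limit. By the large-volume comparison of \cite{MYY:2011:2}, the moduli space $M_{(\beta,tH)}(v)$ is then isomorphic to the Gieseker moduli space $M_{H'}(v')$ of purely one-dimensional $\alpha$-twisted semistable sheaves, and $K_{(\beta,tH)}(v)$ to the corresponding albanese fibre.

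Then the support morphism $F\mapsto\Supp(F)$ records the curve on which $F$ lives. Fixing the albanese (i.e. passing to $K_{(\beta,tH)}(v)$) pins the support down to a single linear system $|\xi'|\cong{\Bbb P}^{(\xi'^2)/2-1}={\Bbb P}^{\langle v^2\rangle/2-1}$, since $\xi'$ is ample and $h^0(Y,\xi')=(\xi'^2)/2$ on the abelian surface $Y$. This produces a morphism $K_{(\beta,tH)}(v)\to{\Bbb P}^{\langle v^2\rangle/2-1}$ whose base has dimension $\langle v^2\rangle/2-1=\tfrac12\dim K_{(\beta,tH)}(v)$ and whose fibres are $(\langle v^2\rangle/2-1)$-dimensional abelian varieties (shifted Jacobians of the support curves, modulo the $Y$-action). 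I would then identify $\theta_{v,\beta,tH}(u)$ with the pullback of ${\cal O}_{{\Bbb P}}(1)$, using the isometry property of $\theta$ (so $q_{K_{(\beta,tH)}(v)}(\theta_{v,\beta,tH}(u))=\langle u^2\rangle=0$) together with the transform-compatibility of $\theta$ recorded in Subsection \ref{subsect:polarization}. That the fibres are Lagrangian then follows from Matsushita's theorem for this nef isotropic class, or directly from the vanishing of the holomorphic symplectic form along the abelian fibres.

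The main obstacle I anticipate is the second step: making rigorous that the no-wall hypothesis on the $X$-side really lands in the large-volume chamber on the $Y$-side, so that no wall-crossing intervenes between $\xi(\beta,H,t)$ and $u$ and $M_{(\beta,tH)}(v)$ is genuinely the Beauville--Mukai system rather than a birational model on which the isotropic class need not be semiample. A secondary but unavoidable technicality is the bookkeeping for the Brauer twist $\alpha$, the twisted Beauville--Mukai construction, and the careful passage from the full moduli space to the albanese fibre, including the verification that $\xi'$ may be taken ample.
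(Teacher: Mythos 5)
Your proposal follows essentially the same route as the paper's proof: a (twisted) Fourier--Mukai transform sending $u$ to $\pm\varrho_Y$ turns $v$ into a rank-zero vector $v'$ on an abelian surface $Y$, the no-wall hypothesis identifies $M_{(\beta,tH)}(v)$ with the twisted Gieseker moduli space of pure one-dimensional sheaves, and the support (divisor) morphism to the linear system $\cong {\Bbb P}^{\langle v^2 \rangle/2-1}$ gives the fibration, with $\theta_{v,\beta,tH}(u)$ pulled back from the hyperplane class. The only notable difference is that the paper first applies an autoequivalence of ${\bf D}(X)$ to arrange $\rk u \neq 0$ before invoking the twisted transform associated to $M_H(w)$, $w \in {\frak I}_0$ (so that its machinery for transporting stability parameters applies), a reduction your version would also need to handle the case $\rk u = 0$.
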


\begin{proof}
\begin{NB}
If $\rk u=0$, then $u=pf+q \varrho_X$,
where $f$ is a fiber of an elliptic fibration
$X \to C$ over an elliptic curve $C$.
Since $\rk u=0$,
$(\beta-\delta,f)=0$.
We take $(\beta,H,t)$ which is sufficiently close to
$(\beta,f,t)$.
If $(\beta-\delta,H)<0$, then
$E \in M_{(\beta,tH)}(v)$ is a stable sheaf.
\end{NB}
We take a Fourier-Mukai transform $\Phi:{\bf D}(X)
\to {\bf D}(X)$ such that $\rk \Phi(u) \ne 0$.
Since $\Phi$ induces an isomorphism
$M_{(\beta,tH)}(v) \to 
M_{(\beta_1,t_1 H_1)}(\Phi(v))$
with ${\Bbb R}_{>0}\Phi(\xi(\beta_1,H_1,t_1))=
{\Bbb R}_{>0}\xi(\beta,H,t)$,
we may assume that $\rk u \ne 0$.
Then we have $u=r e^{\beta'}$ with $r \ne 0$.
\begin{NB}
We take
$(\beta',H',t') \in {\frak H} 
\setminus \cup_{v_1 \in {\frak W}} W_{v_1}$ such that
$\xi(\beta',H',t')$ and $u$ are not separated by a wall.
There are at most finitely many walls 
$W_{v_1}$ such that
$v_1 \in {\frak I}_1 \cup {\frak I_2}$,
$u \in v_1^\perp$,
$W_{v_1}$ separate $(\beta,H,t)$ and
$(\beta',H',t')$.
By Proposition \ref{prop:codim0} and Remark \ref{rem:isom},
there is an isomorphism
$M_{(\beta'',t'' H'')}(v) \to M_{(\beta,tH)}(v)$
such that $(\beta'',H'',t'') \in {\cal D}(\beta',t' H')$
and $\theta_{v,\beta'',t'' H''}(u)$ corresponds to
 $\theta_{v,\beta,t H}(u)$.
Hence we may assume that $(\beta,H,t)=(\beta',H',t')$.
In this case, we have
$u \in {\Bbb R}_{>0}\xi(\beta,H,0)$ for any $H$.
\end{NB}
\begin{NB}
$v=r e^\beta+\xi+(\xi,\beta)\varrho_X$,
where $r \beta+\xi=r \delta$.
Hence $((\delta-\beta)^2)=\langle v^2 \rangle/r^2$.
Then $\xi(\beta,H,0)=-r(\delta-\beta,H)e^\beta$.
\end{NB}
We take $w =\pm u$ with $\rk w >0$ .
We set $Y :=M_H(w)$ and let ${\bf E}$ be the universal family
as twisted objects.
Then we have (twisted) Fourier-Mukai transforms 
$\Phi_{X \to Y}^{{\bf E}^{\vee}[k] }:
{\bf D}(X) \to {\bf D}^{\alpha}(Y)$,
where $\alpha$ are suitable 2-cocycles of ${\cal O}_{Y}^{\times}$
defining ${\bf E}^{\vee}$
and $k=1$ for $(\delta-\beta',H)>0$, $k=2$ for $(\delta-\beta',H)<0$.
We set $v'=\Phi_{X \to Y}^{{\bf E}^{\vee}[k] }(v)$.
Since $(\widetilde{\beta},\widetilde{tH},s)$ 
does not lie on any wall for $s \geq 1$,
we have $M_{(\widetilde{\beta},s \widetilde{tH})}(v') \cong
M_{\widetilde{tH}}^\alpha(v')$ for $s \geq 1$,
where $M_{\widetilde{tH}}^\alpha(v')$ is the moduli space
of semi-stable $\alpha$-twisted sheaves on $Y$
(\cite{Y:twisted}).
Thus we get an isomorphism
$M_{(\beta,tH)}(v) \to M_{\widetilde{tH}}^\alpha(v')$ with $\rk v'=0$.
We note that
the scheme-theoretic support $\Div(E)$ of a purely 1-dimensional
sheaf is well-defined even for
a twisted sheaf.
Hence we have a morphism
 $f:M_{\widetilde{tH}}^\alpha(v') \to \Hilb_{Y}^\eta$
by sending $E \in M_{\widetilde{tH}}^\alpha(v')$
to $\Div(E)$, where $\Hilb_{Y}^\eta$
is the Hilbert scheme of effective divisors $D$
on $Y$ with $\eta=c_1(D)$.
For a smooth divisor $D$, $f^{-1}(D) \cong \Pic^0(D)$.
Hence $f$ is dominant, which implies $f$ is surjective. 
Therefore
we get a surjective morphism
$M_{(\beta,tH)}(v) \to \Hilb_{Y}^\eta$.
Combining with the properties of the albanese map,
we have a commutative diagram:
\begin{equation*}
\begin{CD}
M_{(\beta,tH)}(v) @>>> \Hilb_{Y}^\eta \\
@V{\frak a}VV @VVV\\
X \times \widehat{X} @>>> \Pic^0(Y)
\end{CD}. 
\end{equation*}
Hence we get a morphism
$K_{(\beta,tH)}(v) \to {\Bbb P}(H^0(Y,{\cal O}(D)))$, where
$D \in \Hilb_{Y}^\eta$.
Then we see that $\theta_{v,\beta,tH}(u)$ comes from
 ${\Bbb P}(H^0(Y,{\cal O}(D)))$.
Thus $\theta_{v,\beta,tH}(u)$ gives a Lagrangian fibration.  
\end{proof}

As we shall see in appendix, the fiber of  
$K_{(\beta,tH)}(v) \to {\Bbb P}(H^0(Y,{\cal O}(D)))$ is connected.

\subsection{The birational classes of the moduli spaces
of rank 1 sheaves.}\label{subsect:birational}

\begin{prop}\label{prop:Hilb}
Let $(X,H)$ be a polarized abelian surface.
Let $v=(r,\xi,a)$ be a Mukai vector such that 
$2\ell:=\langle v^2 \rangle \geq 6$.
Then $M_H^\beta(v)$ is birationally equivalent
to $\Pic^0(Y) \times \Hilb_Y^{\ell}$
if and only if there is an isotropic Mukai
vector $w \in H^*(X,{\Bbb Z})_{\alg}$ with $\langle v,w \rangle=1$,
where $Y$ is an abelian surface.   
\end{prop}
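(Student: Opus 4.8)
The plan is to prove the two implications separately, extracting the isotropic vector from the divisorial contraction that the Hilbert scheme carries.

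\emph{Sufficiency.} Suppose there is an isotropic $w$ with $\langle v,w\rangle=1$. Since $\langle v^2\rangle\geq 6$, Corollary \ref{cor:existence-of-wall} shows that $w$ satisfies \eqref{eq:wall-cond} and that $W_w\cap{\frak H}\neq\emptyset$. Setting $v_2:=v-\ell w$ with $\ell:=\langle v^2\rangle/2$, a direct computation gives $\langle v_2^2\rangle=0$ and $\langle w,v_2\rangle=1$, so $v=\ell w+v_2$ realizes $W_w$ as a wall of codimension $0$ in the sense of Proposition \ref{prop:codim0-div}(1). Put $Y:=M_H(w)$ and choose $(\beta_+,\omega_+)$ in a chamber adjacent to $W_w$. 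By Proposition \ref{prop:codim0-div}(2), built on the Fourier--Mukai transform of Proposition \ref{prop:codim0}, there is an isomorphism $M_{(\beta_+,\omega_+)}(v)\cong\Pic^0(Y)\times\Hilb_Y^{\ell}$. As $M_H(v)$ is the large volume limit $M_{(\beta,tH)}(v)$ $(t\gg 0)$ and any two moduli spaces $M_{(\beta',\omega')}(v)$ attached to generic stability conditions share the dense open locus of objects stable on both sides, $M_H(v)$ is birationally equivalent to $M_{(\beta_+,\omega_+)}(v)$, hence to $\Pic^0(Y)\times\Hilb_Y^{\ell}$.

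\emph{Necessity.} Suppose $M_H(v)$ is birational to $\Pic^0(Y)\times\Hilb_Y^{\ell}$, which is the Gieseker moduli space $M_{H'}(v')$ on $Y$ for $v':=(1,0,-\ell)$, with albanese fibre the generalized Kummer variety $K_{H'}(v')$. On the $Y$-side, $v_1:=(0,0,-1)$ is primitive and isotropic with $\langle v',v_1\rangle=1$, so $v_1\in{\frak I}_1$; by Lemma \ref{lem:H-C} the Hilbert--Chow morphism of $\Hilb_Y^{\ell}$ produces a divisorial contraction whose exceptional divisor restricts to $2\,\theta_{v'}(d_{v_1})$ on $K_{H'}(v')$, i.e.\ is divisible by $2$ in $\NS(K_{H'}(v'))$. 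The birational equivalence of the moduli spaces descends to a Hodge isometry of the $H^2$ of their Bogomolov factors, $H^2(K_H(v),{\Bbb Z})\cong H^2(K_{H'}(v'),{\Bbb Z})$, under which the movable cones and their boundary walls correspond (equivalently, by Corollary \ref{cor:birational-model}, $K_{H'}(v')$ is isomorphic to some birational model $K_{(\beta',\omega')}(v)$). Transporting the Hilbert--Chow contraction through this isometry, $K_H(v)$ acquires, from one of its birational models, a divisorial contraction whose exceptional divisor is divisible by $2$, since divisibility of an integral class is preserved by lattice isometries.

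Finally, we read off ${\frak I}_1$ from the trichotomy of Corollary \ref{cor:movable}: if ${\frak I}_1={\frak I}_2=\emptyset$ there are no divisorial contractions at all, and if ${\frak I}_1=\emptyset$ while ${\frak I}_2\neq\emptyset$ every exceptional divisor arising from a birational model is primitive; by Theorem \ref{thm:general:movable}(2) a $2$-divisible exceptional divisor occurs only when ${\frak I}_1\neq\emptyset$. Hence ${\frak I}_1\neq\emptyset$, which furnishes a primitive isotropic $w$ with $\langle v,w\rangle=1$, completing the proof. The main obstacle is precisely this necessity direction: one must justify that the birational equivalence of the $M$'s induces a Hodge isometry on the $H^2$ of the Bogomolov factors carrying the entire movable-cone and divisorial-contraction structure, so that the distinguished $2$-divisible Hilbert--Chow contraction of the generalized Kummer variety is faithfully reflected in the intrinsic lattice datum $v^\perp$ via Theorem \ref{thm:general:movable}.
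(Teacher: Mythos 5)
Your necessity direction is essentially the paper's own argument made explicit: the paper likewise passes from the birational map $f:K_H(v)\dashrightarrow K_{H'}(1,0,-\ell)$ to the isometry $f_*:\NS(K_H(v))\to \NS(K_{H'}(1,0,-\ell))$, transports the movable-cone structure, and concludes ${\frak I}_1\ne\emptyset$ because the Hilbert--Chow exceptional divisor is divisible by $2$ while, by Theorem \ref{thm:general:movable} (2) and Corollary \ref{cor:movable}, a $2$-divisible exceptional divisor can only arise from a wall $u^\perp$ with $u\in{\frak I}_1$. For sufficiency, however, the paper does something much shorter: it cites \cite[Cor. 0.3]{Y}, using that $M_H(w)$ is a fine moduli space. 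You instead run the internal wall-crossing machinery, and the first half of that is fine: $v=\ell w+v_2$ with $\langle v_2^2\rangle=0$, $\langle w,v_2\rangle=1$ is correct, and the proof of Proposition \ref{prop:codim0-div}, via Proposition \ref{prop:codim0}, does give $M_{(\beta_+,\omega_+)}(v)\cong \Pic^0(Y)\times\Hilb_Y^\ell$ for a chamber adjacent to $W_w$.

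The gap is the bridge from $M_{(\beta_+,\omega_+)}(v)$ back to $M_H(v)=M_{(\beta,tH)}(v)$, $t\gg 0$. Your principle that any two generic-stability moduli spaces ``share the dense open locus of objects stable on both sides'' is false exactly for the walls in play here: a wall $W_w$ with $w$ isotropic and $\langle v,w\rangle=1$ is a codimension $0$ (totally semistable) wall, and the two chambers adjacent to it have \emph{empty} common stable locus --- under $\Phi$ the objects on one side become sheaves of the form $I_Z\otimes L$, while on the other side they become derived duals $(I_{Z'}\otimes L')^{\vee}$, which are genuine two-term complexes since $\ell\geq 3$; this is precisely why Proposition \ref{prop:codim0-div} needs the contravariant functor ${\cal D}_Y\circ\Phi$ on the other side, and why Remark \ref{rem:isom} identifies the two moduli spaces by an (anti-)equivalence rather than by the identity on a common open subset. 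Since the path from the large volume limit to your chosen chamber may cross such walls (possibly infinitely many of them when $\sqrt{\langle v^2\rangle/(H^2)}\notin{\Bbb Q}$, by Proposition \ref{prop:stab} and Lemma \ref{lem:S_H}), your argument as written does not produce the birational map. It can be repaired entirely inside the paper: across walls of codimension $\geq 2$ one has the natural birational identification used in the proof of Theorem \ref{thm:general:movable}, and across codimension $0$ and $1$ walls one has the isomorphisms of Remark \ref{rem:isom}; composing these along the path gives $M_H(v)$ birational to $M_{(\beta_+,\omega_+)}(v)$. Alternatively, replace this whole step by the paper's citation of \cite[Cor. 0.3]{Y}.
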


\begin{proof}
By using a Fourier-Mukai transform,
we may assume that $r>0$.
If there is an isotropic Mukai
vector $w$ with $\langle v,w \rangle=1$,
then $M_H^\beta(w)$ is a fine moduli space and 
the claim follows by \cite[Cor. 0.3]{Y}.

Conversely
if $M_H^\beta(v)$ is birationally equivalent to
$\Pic^0(Y) \times \Hilb_Y^{\ell}$, then
we have a birational map $f:K_H^\beta(v) \to K_{H'}(1,0,-\ell)$,
where $H'$ is an ample divisor on $Y$.
Then we have an isomorphism 
$f_*:\NS(K_H(v)) \to \NS(K_{H'}(1,0,-\ell))$.
By the isomorphism $f_*$,
the movable cones are isomorphic.
By Thoerem \ref{thm:general:movable}, ${\frak I}_1 \ne \emptyset$.
\begin{NB}
We take $(\beta,tH)$ with $M_H^\beta(v)=M_{(\beta,tH)}(v)$.
Then we have $(\beta',t' H') \in {\cal D}(\beta,tH)$
such that $K_{(\beta',t' H')}(v)= K_{H'}(1,0,-\ell)$.
Let $\theta_{v,\beta,t H}(x) \in \NS(K_{(\beta',t' H')}(v))$
be the divisor corresponding the Hilbert-Chow morphism.
Then $x \in \overline{{\cal D}(\beta,tH)}$ and 
satisfies $x \in v_1^\perp$ for $v_1 \in {\frak I}_1 \cup {\frak I}_2$.
Since the exceptional locus is not primitive,
we have $v_1 \in  {\frak I}_1$.
\end{NB}
\end{proof}

\begin{rem}
Proposition \ref{prop:Hilb} also follows from
\cite[Lem. 4.9]{Ma-Me}.
Indeed they characterize the generalized Kummer variety
in terms of the class of the stably prime exceptional divisor
$\delta$ such that $2\delta$ should corresponds to the diagonal divisor.
It implies the existence of an isotropic vector $w$ in 
Proposition \ref{prop:Hilb}.
\end{rem}

\begin{rem}
If
\begin{equation*}
\min \{ \langle v,w \rangle>0 \mid \langle w^2 \rangle=0 \} \geq 3,
\end{equation*}
then
$M_H^\beta(v)$ is not birationally equivalent to
any moduli space $M_{H'}^{\beta'}(v')$
 on an abelian surface
$Y$ with $\rk v'=2$.
\end{rem}

The following result was conjecture by Mukai \cite{Mukai:1980}.

\begin{cor}\label{cor:Hilb}
Let $(X,H)$ be a principally polarized abelian surface
with $\NS(X)={\Bbb Z}H$.
Let $v=(r,dH,a)$ be a Mukai vector with
$\ell:=d^2-ra \geq 3$.
Then $M_H^\beta(v)$ is birationally equivalent
to $X \times \Hilb_X^{\ell}$
if and only if
the quadratic equation 
$$
rx^2+2dxy+a y^2=\pm 1
$$
has an integer valued solution.  
\end{cor}

\begin{proof}
Primitive isotropic Mukai vectors are described as
$w=\pm (p^2,-pq H,q^2)$, $p,q \in {\Bbb Z}$ and 
$$
\langle v,w \rangle=\mp (r q^2 +2rpq+a p^2).
$$
Hence the claim follows from 
Proposition \ref{prop:Hilb}. 
\end{proof}

\begin{rem}
We assume that $(X,H)$ is a principally polarized abelian surface
with $\NS(X)={\Bbb Z}H$.
If $\ell=1,2$, then Mukai proved 
$M_H^\beta(v) \cong X \times \Hilb_X^\ell$
(see \cite[section 7]{YY2}).  
\end{rem}


\subsection{Walls for $X$ with $\rk \NS(X) \geq 2$.}

We shall show that there are many walls
by using Fourier-Mukai transforms.
We set
\begin{equation*}
Q_\ell:=
\{\xi \in \NS(X)_{\Bbb R} \mid (\xi^2)=2\ell\}.
\end{equation*}

\begin{lem}\label{lem:boundary}
We set $v=(r,\xi_0,a)$ ($r \ne 0$) and $\ell:=\langle v^2 \rangle/2$.
An isotropic vector $w \in H^*(X,{\Bbb Z})_{\alg} \otimes {\Bbb R}$
satisfies $\langle w, v \rangle=0$ if and only if
$w=(\rk w) e^{\xi_0/r+\xi}$ with $r \xi \in Q_\ell$ or 
$w=(0,\xi,(\xi,\xi_0)/r)$ with
$(\xi^2)=0$.
\end{lem}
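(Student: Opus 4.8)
The plan is to reduce the statement to a short explicit computation with the Mukai pairing, writing $w=(s,\eta,b)$ with $s:=\rk w$ and splitting into the two cases $s\neq 0$ and $s=0$. Throughout I would use the explicit form $\langle (x_0,x_1,x_2),(y_0,y_1,y_2)\rangle=(x_1,y_1)-x_0y_2-x_2y_0$, the hypothesis that $w$ is isotropic, and the identity $ra=\tfrac{(\xi_0^2)}{2}-\ell$ extracted from $\langle v^2\rangle=(\xi_0^2)-2ra=2\ell$ (legitimate since $r\neq 0$). The point of the lemma is simply to read off what isotropy plus orthogonality to $v$ say about the three coordinates of $w$.

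First I would treat $s\neq 0$. Isotropy $\langle w^2\rangle=(\eta^2)-2sb=0$ gives $b=(\eta^2)/(2s)$, so $w=s\,e^{\eta/s}$; putting $\xi:=\eta/s-\xi_0/r$ this becomes $w=s\,e^{\xi_0/r+\xi}$, and conversely every vector of this shape is automatically isotropic because $e^\delta$ is isotropic for every $\delta$. Writing $\delta:=\xi_0/r+\xi$, orthogonality reads $\langle w,v\rangle=s\langle e^\delta,v\rangle=0$, so it remains to evaluate $\langle e^\delta,v\rangle=(\delta,\xi_0)-a-r\tfrac{(\delta^2)}{2}$. Expanding $(\delta,\xi_0)$ and $(\delta^2)$ in terms of $\xi_0/r$ and $\xi$, the cross terms $(\xi_0,\xi)$ cancel, and after substituting $a=\tfrac{(\xi_0^2)}{2r}-\tfrac{\ell}{r}$ one is left with $\langle e^\delta,v\rangle=\tfrac{1}{2r}\bigl(2\ell-r^2(\xi^2)\bigr)$. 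Hence $\langle w,v\rangle=0$ is equivalent to $((r\xi)^2)=2\ell$, i.e.\ $r\xi\in Q_\ell$, which is exactly the first alternative.

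Next I would handle $s=0$, so $w=(0,\eta,b)$. Here isotropy $\langle w^2\rangle=(\eta^2)=0$ forces $(\eta^2)=0$, while orthogonality $\langle w,v\rangle=(\eta,\xi_0)-br=0$ forces $b=(\eta,\xi_0)/r$ since $r\neq 0$; this is the second alternative with $\xi=\eta$. The converse directions require no extra work: a vector of either listed shape is visibly isotropic, and the displayed constraint ($r\xi\in Q_\ell$ in the first case, $(\xi^2)=0$ in the second) is precisely what makes $\langle w,v\rangle$ vanish. I do not anticipate any genuine obstacle; the only place demanding a little care is the cancellation of the $(\xi_0,\xi)$ cross terms in $\langle e^{\xi_0/r+\xi},v\rangle$ together with the bookkeeping of $ra=\tfrac{(\xi_0^2)}{2}-\ell$, which is what converts the orthogonality equation into the clean membership statement $r\xi\in Q_\ell$.
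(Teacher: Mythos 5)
Your proposal is correct and follows essentially the same route as the paper: split on $\rk w\neq 0$ versus $\rk w=0$, use isotropy to write the rank-nonzero $w$ as a multiple of $e^{\xi_0/r+\xi}$, and turn $\langle w,v\rangle=0$ into $r^2(\xi^2)=2\ell$. The only difference is cosmetic: the paper packages the identity $a=\tfrac{(\xi_0^2)}{2r}-\tfrac{\ell}{r}$ as the decomposition $v=re^{\xi_0/r}-\tfrac{\ell}{r}\varrho_X$, which shortcuts the cross-term cancellation you carry out by hand.
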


\begin{proof}
Assume that $\rk w \ne 0$ and set
$w=(\rk w)e^{\xi_0/r+\xi}$.
Since $v=r e^{\xi_0/r}-\frac{\ell}{r} \varrho_X$,
the condition is
$$
0=\langle e^{\xi_0/r+\xi},v \rangle=-r\frac{(\xi^2)}{2}+\frac{\ell}{r}.
$$
Thus $r\xi \in Q_\ell$.

If $\rk w=0$, then we set $w=e^{\xi_0/r}(0,\xi,a)$ with $(\xi^2)=0$.
Then the condition is $a=0$, which implies 
$w=(0,\xi,(\xi,\xi_0)/r)$. 
\end{proof}

\begin{NB}

\begin{lem}\label{lem:m-irrational-old}
We can find $\eta \in \NS(X)$ such that
$(\eta^2)>0$ and
$\sqrt{2\ell/(\eta^2)} \not \in {\Bbb Q}$.
\end{lem}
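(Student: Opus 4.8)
The plan is to reduce the statement to a purely arithmetic fact and then use the hypothesis $\rk\NS(X)\ge 2$ to force the quadratic form to be genuinely two–dimensional. First I would record the elementary equivalence: for $\eta\in\NS(X)$ with $(\eta^2)>0$, writing $2\ell/(\eta^2)=\bigl(2\ell(\eta^2)\bigr)/(\eta^2)^2$, one has $\sqrt{2\ell/(\eta^2)}\in\mathbb{Q}$ if and only if the positive integer $2\ell(\eta^2)$ is a perfect square. Hence it suffices to exhibit a single $\eta\in\NS(X)$ with $(\eta^2)>0$ for which $2\ell(\eta^2)$ is not a perfect square.

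Next, since $\rk\NS(X)\ge 2$ and $\NS(X)$ has signature $(1,\rho-1)$, I would produce two linearly independent classes $\eta_0,u\in\NS(X)$ with $(\eta_0^2)>0$ and $(u^2)>0$. Starting from any ample $\eta_0$ and any $e\in\NS(X)$ not proportional to $\eta_0$, the class $u:=N\eta_0+e$ satisfies $(u^2)=N^2(\eta_0^2)+2N(\eta_0,e)+(e^2)>0$ for $N\gg 0$ and is independent of $\eta_0$. Because the ambient form has only one positive direction, the plane $\mathbb{R}\eta_0+\mathbb{R}u$ cannot be positive definite, and it cannot be degenerate either, since a nonzero radical vector would lie in $\eta_0^{\perp}$, which is negative definite; thus the restriction of the intersection form to this plane is nondegenerate of signature $(1,1)$.

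Then I would set $P(t):=\bigl((\eta_0+tu)^2\bigr)=(u^2)t^2+2(\eta_0,u)t+(\eta_0^2)\in\mathbb{Z}[t]$. Its discriminant equals $-4\bigl((\eta_0^2)(u^2)-(\eta_0,u)^2\bigr)$, which is strictly positive precisely because the Gram determinant of the $(1,1)$ plane is negative. So $P$ has positive leading coefficient and two distinct real roots; in particular $P(t)>0$ for all large $t$ and $P$ is not the square of a linear polynomial. Consider $g(t):=2\ell\,P(t)$, which has the same (distinct) roots as $P$, hence nonzero discriminant. The key arithmetic input is the self-contained fact that a quadratic $At^2+Bt+C$ with $A>0$ and nonzero discriminant cannot be a perfect square at all sufficiently large integers: if $g(t)=k_t^2$ for all large $t$, then $k_{t+1}-k_t$ is a sequence of integers converging to $\sqrt{A}$, forcing $A=a^2$ for an integer $a$; then $k_t-at$ is an integer tending to a limit, so $k_t=at+c$ eventually, and substituting back makes the discriminant of $g$ vanish — a contradiction. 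Therefore there exist arbitrarily large $t$ with $g(t)$ not a perfect square; choosing such a $t$ with $P(t)>0$ and setting $\eta:=\eta_0+tu\in\NS(X)$ gives $(\eta^2)=P(t)>0$ and $2\ell(\eta^2)=g(t)$ not a perfect square, which finishes the proof.

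The two steps I expect to require the most care are ensuring the spanned plane is nondegenerate of signature $(1,1)$ (which is exactly where $\rk\NS(X)\ge 2$ enters, and where the rank–one case genuinely fails) and supplying the short argument that a quadratic with nonzero discriminant is not a perfect square at every large integer. Both are routine once set up, but they are the substantive content; everything else is bookkeeping with the intersection form.
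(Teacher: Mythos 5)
Your proof is correct, but it follows a genuinely different route from the paper's. The paper's argument (like yours, it implicitly uses $\rk\NS(X)\geq 2$, the standing assumption of that subsection) is constructive and works in $\NS(X)_{\Bbb Q}$: take $\eta_1$ with $(\eta_1^2)>0$; if $\sqrt{2\ell/(\eta_1^2)}$ is already irrational we are done, and otherwise one rescales so that $(\eta_1^2)=2\ell$, picks a nonzero orthogonal class $\eta_2$ (necessarily of negative square by the Hodge index theorem), normalizes $m:=-(\eta_2^2)/(2\ell)$ to be a positive integer, and exploits the identity $((x\eta_1+y\eta_2)^2)=2\ell(x^2-my^2)$: if $m$ is a perfect square one takes $\eta=2\sqrt{m}\,\eta_1+\eta_2$, so that $x^2-my^2=3m$ is three times a square and hence not a square, while if $m$ is not a perfect square one takes $\eta=m\eta_1+\eta_2$, so that $x^2-my^2=m(m-1)$ lies strictly between the consecutive squares $(m-1)^2$ and $m^2$; finally one clears denominators to land back in $\NS(X)$. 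You instead stay integral throughout, parametrize along the line $\eta_0+tu$ inside a hyperbolic plane (your signature-$(1,1)$ and nondegeneracy argument is exactly right and is where rank $\geq 2$ enters, just as the orthogonal class $\eta_2$ is where it enters for the paper), and replace the paper's explicit choices by the general arithmetic fact that a quadratic polynomial with positive leading coefficient and nonzero discriminant cannot take perfect-square values at all large integers — your telescoping proof of that fact ($k_{t+1}-k_t\to\sqrt{A}$, then $k_t-at$ eventually constant, then the discriminant vanishes) is sound. What each approach buys: the paper's is shorter and fully explicit (two concrete vectors, elementary non-square certificates), at the cost of a case split and a rescaling detour through $\NS(X)_{\Bbb Q}$; yours avoids both the case analysis and the passage to rational classes, and isolates a reusable lemma about square values of quadratics, at the cost of an asymptotic argument that is less self-evidently "one-line" than the paper's explicit computation.
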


\begin{proof}
It is sufficient to find
$\eta \in \NS(X)_{\Bbb Q}$ such that
$(\eta^2)>0$ and
$\sqrt{2\ell/(\eta^2)} \not \in {\Bbb Q}$.
We take $\eta_1 \in \NS(X)_{\Bbb Q}$ with $(\eta_1^2)>0$.
Assume that $\sqrt{2\ell/(\eta_1^2)} \in {\Bbb Q}$.
Replacing $\eta_1$ by $\sqrt{2\ell/(\eta_1^2)}\eta_1$,
we may assume that $(\eta_1^2)=2\ell$.
We take $0 \ne \eta_2 \in \NS(X)_{\Bbb Q}$ such that
$(\eta_1,\eta_2)=0$.
We may assume that $m:=-(\eta_2^2)/(2\ell)$ is a positive integer.  
Then $((x \eta_1+y \eta_2)^2)=2\ell(x^2-m y^2)$.
If $\sqrt{m} \in {\Bbb Z}$, then
$2\sqrt{m} \eta_1+\eta_2$ satisfies the claim.
If $\sqrt{m} \not \in {\Bbb Z}$, then
$m \eta_1+\eta_2$ satisfies the claim.  
\end{proof}
\end{NB}

\begin{lem}\label{lem:m-irrational}
Assume that $\rk \NS(X) \geq 2$.
For $\xi_0 \in \NS(X)$ and 
an ample divisor $\xi$,
there is $\xi_1 \in \NS(X)$ such that
$\xi_1=\xi_0+r(k\xi+\eta)$ ($k \gg -(\eta^2)$, $\eta \in \NS(X)$) and
$\sqrt{\frac{2\ell}{(\xi_1^2)}} \not \in {\Bbb Q}$.
\end{lem}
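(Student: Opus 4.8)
The plan is to treat $(\xi_1^2)$ as a quadratic in $k$ and to locate a single pair $(\eta,k)$ for which $(\xi_1^2)>0$ while $2\ell\,(\xi_1^2)$ fails to be a perfect square. Note first that $\sqrt{2\ell/(\xi_1^2)}\in{\Bbb Q}$ is equivalent to $2\ell\,(\xi_1^2)$ being a perfect square, since $2\ell=\langle v^2\rangle$ and $(\xi_1^2)$ are integers and $(\xi_1^2)>0$. Writing $\zeta:=\xi_0+r\eta$ and $\xi_1=\zeta+rk\xi$, I would record $(\xi_1^2)=r^2(\xi^2)k^2+2r(\zeta,\xi)k+(\zeta^2)$, a quadratic in $k$ with positive leading coefficient $r^2(\xi^2)$ (as $\xi$ is ample, $(\xi^2)>0$) and discriminant $4r^2\big((\zeta,\xi)^2-(\xi^2)(\zeta^2)\big)$. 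In particular $(\xi_1^2)>0$ for all sufficiently large $k$, consistent with the condition $k\gg-(\eta^2)$ in the statement.

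The first key step is to choose $\eta$ so that $\zeta=\xi_0+r\eta$ is not proportional to $\xi$. This is exactly where $\rk\NS(X)\ge 2$ enters: the coset $\xi_0+r\NS(X)$ is a translate of a lattice of rank $\ge 2$, hence is not contained in the line ${\Bbb R}\xi$, so some admissible $\eta$ gives $\zeta\notin{\Bbb R}\xi$. By the Hodge index theorem the intersection form on $\NS(X)$ has signature $(1,\rk\NS(X)-1)$, and for the ample (hence positive) class $\xi$ the reverse Cauchy--Schwarz inequality yields $(\zeta,\xi)^2\ge(\xi^2)(\zeta^2)$, with equality if and only if $\zeta\in{\Bbb R}\xi$. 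Thus for the chosen $\eta$ the inequality is strict, the discriminant above is strictly positive, and consequently $2\ell\,(\xi_1^2)$, viewed as a polynomial in $k$, is not the square of a linear polynomial.

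The second key step, which I expect to be the main obstacle, is the classical fact that a quadratic $h(k)\in{\Bbb Z}[k]$ which is not the square of a polynomial takes a perfect-square value for only finitely many integers $k$; the delicate case is when the leading coefficient is not itself a square, where one plays the exponential growth of the solutions of the associated Pell equation $X^2-AY^2=\Delta$ off against the linear growth of the argument $X=2Ak+B$. Applying this to $h(k):=2\ell\,(\xi_1^2)$, whose discriminant I have just arranged to be nonzero, produces infinitely many $k$ for which $2\ell\,(\xi_1^2)$ is not a perfect square; choosing such a $k$ large enough that $(\xi_1^2)>0$ gives $\sqrt{2\ell/(\xi_1^2)}\notin{\Bbb Q}$ with $\xi_1=\xi_0+r(k\xi+\eta)$ of the required form. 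I would close by remarking that the hypothesis $\rk\NS(X)\ge 2$ is used only in the first step: when $\rk\NS(X)=1$ every $\zeta$ is proportional to $\xi$, the discriminant is forced to vanish, and the irrationality becomes an intrinsic condition on the pair $(v,X)$ rather than something one can arrange by a translation.
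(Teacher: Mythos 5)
Your reduction of the statement to finding $k$ with $(\xi_1^2)>0$ and $2\ell\,(\xi_1^2)$ not a perfect square is fine, and so is the use of the Hodge index theorem (via $\rk \NS(X)\geq 2$) to choose $\eta$ with $\zeta=\xi_0+r\eta\notin {\Bbb R}\xi$, making the discriminant of the quadratic $h(k):=2\ell\,(\xi_1^2)$ nonzero. The genuine gap is your ``classical fact'': it is \emph{false} that a quadratic $h(k)\in{\Bbb Z}[k]$ with nonzero discriminant takes perfect-square values at only finitely many integers $k$. Counterexample: $h(k)=2k^2-1$ has discriminant $8\neq 0$, yet $2k^2-1=y^2$ is the negative Pell equation $y^2-2k^2=-1$, which has infinitely many solutions $(k,y)=(1,1),(5,7),(29,41),\dots$. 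Your sketch for the ``delicate case'' --- playing the exponential growth of Pell solutions against the ``linear growth'' of $X=2Ak+B$ --- is exactly where this breaks down: $k$ ranges over \emph{all} integers, so $X$ ranges over an entire arithmetic progression with no size constraint, and since the residues of Pell solutions modulo $2A$ are eventually periodic, infinitely many solutions can land in that progression. The finiteness claim is only valid when the leading coefficient $A$ is a perfect square, for then $h(k)=y^2$ factors as a product of two integers equal to a fixed nonzero constant. What is true in general is the weaker statement that the set of $k$ with $h(k)$ a square has density zero (the Pell solutions are exponentially sparse), which would suffice for your conclusion; but that requires the finiteness of orbits of solutions under the unit group (or the Davenport--Lewis--Schinzel theorem that a polynomial which is a square at all large integers is a polynomial square), and it is neither what you asserted nor what your sketch proves.

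The paper's proof is built precisely to avoid this Pell difficulty, by an extra dichotomy that forces the leading coefficient to be a square. It first tries $\eta=0$: if $2\ell\,((\xi_0+rk_1\xi)^2)$ is not a perfect square for some $k_1\gg 0$, the claim holds. Otherwise one sets $a:=\sqrt{2\ell((\xi_0+rk_1\xi)^2)}\in{\Bbb Z}$, picks $\eta\perp(\xi_0+rk_1\xi)$ nonzero (possible since $\rk\NS(X)\geq 2$; note $(\eta^2)<0$ by the Hodge index theorem), and puts $\xi_1:=(rk_2+1)(\xi_0+rk_1\xi)-r\eta$, so that $2\ell(\xi_1^2)=\bigl((rk_2+1)a\bigr)^2+2\ell r^2(\eta^2)$. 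Now the quadratic in $k_2$ has, by construction, a perfect-square leading part: if $2\ell(\xi_1^2)=x^2$ with $x\geq 0$, then $\bigl((rk_2+1)a-x\bigr)\bigl((rk_2+1)a+x\bigr)=-2\ell r^2(\eta^2)$ is a fixed positive integer, while the factor $(rk_2+1)a+x$ grows without bound in $k_2$ --- a contradiction for $k_2\gg 0$. In your notation, the paper reduces to the one case where your factorization argument is actually valid. To repair your write-up, either adopt this reduction, or replace your ``classical fact'' by the correct density-zero statement together with a proof.
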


\begin{proof}
We take an integer $k_1 \gg 0$ such that
$((\xi_0+rk_1 \xi)^2)>0$.
If $\sqrt{2\ell ((\xi_0+rk_1 \xi)^2)} \not \in {\Bbb Z}$,
then
$\xi_0+rk_1 \xi$ satisfies the claim.
Assume that $a:=\sqrt{2\ell ((\xi_0+rk_1 \xi)^2)} \in {\Bbb Z}$.
We take $\eta \in  \NS(X)$
with $\langle \eta, (\xi_0+rk_1 \xi) \rangle=0$.
%
Then 
$\xi_1:=(rk_2+1)(\xi_0+rk_1 \xi)-r\eta$ satisfies
$2\ell (\xi_1^2)=(rk_2+1)^2 a^2+2\ell r^2(\eta^2)$.
If $2\ell (\xi_1^2)=x^2$, $x \in {\Bbb Z}$, then
$$
-2\ell r^2(\eta^2)=((rk_2+1)a-x)((rk_2+1)a+x)>(rk_2+1)a.
$$
Hence for $k_2 \gg 0$,
$
\sqrt{\frac{2\ell}{(\xi_1^2)}}= 
\frac{\sqrt{2\ell(\xi_1^2)}}{(\xi_1^2)} \not \in {\Bbb Q}.
$
\end{proof}

\begin{prop}
Assume that $\rk \NS(X) \geq 2$.
For $v=(r,\xi_0,a)$ with $\langle v^2 \rangle=2\ell$,
if $\ell \geq r>0$, then
$\overline{\cup_{u \in {\frak W}} u^\perp}$
contains $\overline{P^+(v^\perp)}_{\Bbb R} \setminus
P^+(v^\perp)_{\Bbb R}$. 
\end{prop}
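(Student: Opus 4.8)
The plan is to exhibit the whole boundary as the accumulation set of a single orbit of walls under the stabilizer of $v$, viewing $(v^\perp,\langle\ ,\ \rangle)$ as a Lorentzian lattice of signature $(1,\rk\NS(X))$ and $P^+(v^\perp)_{\Bbb R}$ as a cone over hyperbolic space. First I would extract one wall from the hypothesis $\ell\geq r$. Take the isotropic vector $u_0:=(0,0,-1)=-\varrho_X$. Then $\langle u_0^2\rangle=0$, $\langle u_0,v\rangle=r$, and $\langle(v-u_0)^2\rangle=2\ell-2r\geq 0$ holds \emph{precisely because} $\ell\geq r$, while $\langle u_0,v-u_0\rangle=r>0$; thus $u_0$ satisfies \eqref{eq:wall-cond}, so $u_0\in\frak{W}$, and since $\langle v,u_0\rangle^2=r^2>0=\langle v^2\rangle\langle u_0^2\rangle$, Proposition \ref{prop:existence-of-wall} gives $W_{u_0}\cap\frak{H}\neq\emptyset$. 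By Proposition \ref{prop:wall-equation} the associated wall in the positive cone is the hyperplane $u_0^\perp\cap v^\perp=\eta_0^\perp$, where $\eta_0:=2\ell u_0-rv\in v^\perp$ is spacelike, $\langle\eta_0^2\rangle=-2\ell r^2<0$, so $\eta_0^\perp$ genuinely meets $P^+(v^\perp)_{\Bbb R}$ and is a geodesic hyperplane.

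Next I would spread this wall around by the stabilizer. Let $\Stab(v)$ be the group of (cohomological) autoequivalences fixing $v$; each $\Phi\in\Stab(v)$ acts on $v^\perp$ as an isometry preserving $\overline{P^+(v^\perp)}_{\Bbb R}$ and its boundary, and carries $\frak{W}$ into itself because \eqref{eq:wall-cond} is expressed through the Mukai pairing and $\langle\Phi(u_0),v\rangle=\langle u_0,v\rangle=r$. Hence $\{\Phi(u_0):\Phi\in\Stab(v)\}\subset\frak{W}$, and the walls $\Phi(u_0)^\perp\cap v^\perp=\Phi(\eta_0^\perp)$ form the $\Stab(v)$-orbit of the single geodesic hyperplane $\eta_0^\perp$. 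A standard fact of hyperbolic geometry is that the closure of the orbit of a geodesic hyperplane contains the limit set $\Lambda(\Stab(v))\subseteq\partial P^+(v^\perp)_{\Bbb R}$, since the fixed rays and orbit accumulation points of the group lie in it. Therefore $\overline{\cup_{u\in\frak{W}}u^\perp}\supseteq\Lambda(\Stab(v))$, and by Lemma \ref{lem:boundary} the boundary $\overline{P^+(v^\perp)}_{\Bbb R}\setminus P^+(v^\perp)_{\Bbb R}$ is exactly the set of isotropic rays; so the claim reduces to showing $\Lambda(\Stab(v))$ is the entire isotropic boundary.

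The hard part will be precisely this last reduction: proving that the image of $\Stab(v)$ in $\LieO(v^\perp)$ is large enough to have limit set all of $\partial P^+(v^\perp)_{\Bbb R}$, and this is where $\rk\NS(X)\geq 2$ is essential. My concrete route would be to produce a whole family of hyperbolic elements of $\Stab(v)$ indexed by ample directions: using Lemma \ref{lem:m-irrational} to pass, for each primitive ample class, to an auxiliary divisor with $\sqrt{2\ell/(\xi_1^2)}\notin{\Bbb Q}$, and then Lemmas \ref{lem:S_H} and \ref{lem:n} to realize an infinite-order autoequivalence fixing $v$ whose action on the corresponding rank-two hyperbolic slice of $v^\perp$ is hyperbolic, with two fixed rays on the boundary. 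Since the fixed rays of infinite-order elements lie in $\Lambda(\Stab(v))$, and since for $\rk\NS(X)\geq 2$ the ample directions form a family of dimension $\geq 1$ while $\partial P^+(v^\perp)_{\Bbb R}$ has dimension $\rk\NS(X)-1$, letting the slice rotate should make these fixed rays sweep out a dense subset of the boundary; as $\Lambda(\Stab(v))$ is closed this forces $\Lambda(\Stab(v))=\partial P^+(v^\perp)_{\Bbb R}$. The genuine obstacle is to verify this density — that the pairs of boundary fixed points are not trapped in a proper closed subset as the ample slice varies — for which the irrationality and abundance supplied by Lemma \ref{lem:m-irrational}, together with the Fourier–Mukai realization of isometries in Section \ref{sect:FM}, are the essential inputs.
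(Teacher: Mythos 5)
Your first two steps are exactly the paper's: the wall coming from $u_0=(0,0,-1)$, which satisfies \eqref{eq:wall-cond} and \eqref{eq:wall-cond2} precisely because $\ell\geq r$, and the fact that isometries fixing $v$ carry ${\frak W}$ into itself, so the orbit of $u_0$ under autoequivalences fixing $v$ produces walls. The genuine gap is the step you yourself flag as ``the genuine obstacle'': that the fixed rays of the hyperbolic stabilizer elements sweep out a dense subset of the isotropic boundary. Your argument for this is a dimension count (a family of ample directions of dimension $\rk\NS(X)-1$ versus a boundary of dimension $\rk\NS(X)-1$), but matching dimensions does not give density: the relevant ample classes must be integral, must satisfy a congruence condition (the class $dH$ must be of the form $\xi_0+r\cdot(\text{integral class})$ for the element to be usable for $v$), and must satisfy the irrationality hypothesis of Lemma \ref{lem:S_H}; nothing in the dimension count rules out that the resulting fixed-ray pairs stay in a proper closed subset. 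The limit-set formalism does not help either: even granting $\overline{\cup_{u}u^\perp}\supseteq\Lambda(\Stab(v))$, proving $\Lambda(\Stab(v))$ equals the whole boundary is exactly the same unresolved density question, and the paper never establishes (nor needs) that the stabilizer acts with full limit set.

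The paper closes this gap by an explicit, pointwise approximation rather than a density principle. By Lemma \ref{lem:boundary}, a boundary ray of nonzero rank is $e^{\xi_0/r+\xi}$ with $r\xi\in Q_\ell$. Fix such $\xi$. Lemma \ref{lem:m-irrational} gives, for each large $k$, a primitive ample $H$ and $d$ with $dH=\xi_0+r(k\xi+\eta)$ and $\sqrt{2\ell/(H^2)}\notin{\Bbb Q}$; Lemma \ref{lem:S_H} then yields an infinite-order autoequivalence $\Phi$ fixing $v':=v e^{k\xi+\eta}=(r,dH,a')$, and the conjugate $\Psi:=e^{-(k\xi+\eta)}\Phi e^{k\xi+\eta}$ fixes $v$ itself --- this conjugation trick is what reconciles the congruence constraint with the requirement of stabilizing $v$, and it is absent from your outline. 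The walls $\Psi^n(u_0)^\perp$, $n\in{\Bbb Z}$, accumulate at the two explicit rays $e^{(\xi_0\pm\sqrt{2\ell/(H^2)}H)/r}$, and, by the specific shape of Lemma \ref{lem:m-irrational}, as $k\to\infty$ one has $\sqrt{2\ell/(H^2)}H\to r\xi$, so these rays converge to $e^{\xi_0/r+\xi}$; a diagonal argument then places $e^{\xi_0/r+\xi}$ in $\overline{\cup_{u\in{\frak W}}u^\perp}$. Finally, the rank-zero boundary rays $(0,\xi,(\xi,\xi_0)/r)$ with $(\xi^2)=0$ need no approximation at all: they lie on the single wall $u_0^\perp$, a case your reduction to the limit set glosses over. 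So your framework is the right one and is the paper's, but the proof is incomplete at exactly the step you defer, and it is completed by this two-parameter limit, not by hyperbolic-geometry generalities.
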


\begin{proof}
Since $\ell \geq r$,
$u:=(0,0,-1)$ satisfies 
\eqref{eq:wall-cond} and \eqref{eq:wall-cond2}.
Hence $u$ defines a non-empty wall $u^\perp$.

We shall use Lemma \ref{lem:boundary} to show the claim.
For $r\xi \in Q_\ell$,
we take $r \xi_1 \in \NS(X)_{\Bbb Q}$ in a neighborhood of
$r \xi$.
Then $\sqrt{\tfrac{2\ell}{(\xi_1^2)}}\xi_1 \in Q_\ell$
is also sufficiently close to $r\xi$.
Replacing $\xi$ by $-\xi$ if necessary, we assume that
$r\xi_1$ is ample.
We take a primitive and ample
divisor $H$ such that
$dH= \xi_0+r(k\xi_1+\eta)$ $(k \gg 0)$
and $\sqrt{\frac{2\ell}{(H^2)}} \not \in {\Bbb Q}$.
Then $\lim_{k \to \infty} \sqrt{\frac{2\ell}{(H^2)}}H=
\sqrt{\frac{2\ell}{(\xi_1^2)}}\xi_1$.  
In particular, for any open neighborhood $U$ of $r\xi$,
we can take an ample divisor $H$ 
such that $\sqrt{\frac{2\ell}{(H^2)}}H \in U$.
We set $L:={\Bbb Z} \oplus {\Bbb Z}H \oplus {\Bbb Z}\varrho_X$.
For $v':=v e^{k \xi_1+\eta}=(r,dH,a') \in L$, 
Lemma \ref{lem:S_H} implies 
we have an autoequivalence $\Phi$ of
${\bf D}(X)$ such that $\Phi(v')=v'$ and $\Phi_{|L}$ is of infinite order.
We set $\zeta_\pm:=
e^{\frac{1}{r}\left(d \pm \sqrt{\frac{2\ell}{(H^2)}} \right)H}$.
Then ${\Bbb R}_{>0} \zeta_\pm$ are the fixed rays of $\Phi$
in $L_{\Bbb R} \cap \overline{P^+(v^\perp)}_{\Bbb R}$
and the rays defined by $\Phi^n(u)^\perp$
converge to the fixed rays.
Hence
$\zeta_\pm \in 
\overline{\cup_{n \in {\Bbb Z}}\Phi^n(u)^\perp}$.
We set $\Psi:=e^{-(k\xi_1+\eta)} \Phi e^{k\xi_1+\eta}
\in \Eq({\bf D}(X),v)$,
where $\Eq({\bf D}(X),v)$ is the set of autoequivalences of ${\bf D}(X)$
fixing $v$.
Since 
$$
\left(d \pm \sqrt{\frac{2\ell}{(H^2)}}\right)H=\xi_0
\pm \sqrt{\frac{2\ell}{(H^2)}}H
+r(k\xi_1+\eta),
$$
$e^{\frac{1}{r}\left(\xi_0 \pm \sqrt{\frac{2\ell}{(H^2)}}H \right)}
\in \overline{\cup_{n \in {\Bbb Z}}\Psi^n(u)^\perp}$.
Hence $e^{\xi_0/r \pm \xi} \in 
\overline{\cup_{\Phi \in \Eq({\bf D}(X),v)}\Phi(u)^\perp}$.

For $\xi \in \overline{\Amp(X)}$ with $(\xi^2)=0$,
we have
$(0,\xi,(\xi,\xi_0)/r) \in (0,0,-1)^\perp$. 
Therefore the claim holds.
\end{proof}

\begin{lem}\label{lem:nowall-cond}
Assume that $v_1$ defines a wall for $v$.
Then
$(\langle v^2 \rangle+\langle v_1^2 \rangle)/2
 \geq \langle v,v_1 \rangle>\langle v_1^2 \rangle$.
In particular,
$\langle v^2 \rangle>\langle v,v_1 \rangle>\langle v_1^2 \rangle$.
\end{lem}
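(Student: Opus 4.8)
The plan is to reduce the whole statement to the three defining inequalities of ${\frak W}$ recorded in \eqref{eq:wall-cond}, by setting $v_2:=v-v_1$ and expanding every Mukai pairing bilinearly. Since $v_1$ defines a wall for $v$, it belongs to ${\frak W}$, so by \eqref{eq:wall-cond} we have $\langle v_1,v_2\rangle>0$, $\langle v_1^2\rangle\ge 0$ and $\langle v_2^2\rangle\ge 0$. Everything else is bookkeeping against these three facts.

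First I would dispatch the right-hand inequality $\langle v,v_1\rangle>\langle v_1^2\rangle$. Writing $\langle v,v_1\rangle=\langle v_1+v_2,v_1\rangle=\langle v_1^2\rangle+\langle v_1,v_2\rangle$, the claim is immediate from $\langle v_1,v_2\rangle>0$. For the left-hand inequality I would expand $\langle v^2\rangle=\langle v_1^2\rangle+2\langle v_1,v_2\rangle+\langle v_2^2\rangle$ and compute the difference
\[
\frac{\langle v^2\rangle+\langle v_1^2\rangle}{2}-\langle v,v_1\rangle=\frac{\langle v_2^2\rangle}{2}\ge 0,
\]
where the final inequality is again part of \eqref{eq:wall-cond}. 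This yields $(\langle v^2\rangle+\langle v_1^2\rangle)/2\ge\langle v,v_1\rangle$ and completes the main chain of inequalities.

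For the ``in particular'' assertion, the lower bound $\langle v,v_1\rangle>\langle v_1^2\rangle$ is already proved, and the upper bound follows most cleanly from $\langle v^2\rangle-\langle v,v_1\rangle=\langle v,v_2\rangle=\langle v_1,v_2\rangle+\langle v_2^2\rangle>0$, again using $\langle v_1,v_2\rangle>0$ and $\langle v_2^2\rangle\ge 0$. (Alternatively one may combine the displayed inequality with $\langle v,v_1\rangle>\langle v_1^2\rangle$ to get $\langle v^2\rangle\ge 2\langle v,v_1\rangle-\langle v_1^2\rangle>\langle v,v_1\rangle$.)

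I do not expect any real obstacle here: the statement is a formal consequence of the definition of ${\frak W}$, and the only care required is tracking signs through the bilinearity of $\langle\;\,,\;\rangle$. It is worth noting that none of this argument invokes the non-emptiness criterion \eqref{eq:wall-cond2}; the membership $v_1\in{\frak W}$ alone suffices.
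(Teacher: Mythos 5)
Your proof is correct and follows essentially the same route as the paper: both arguments reduce the statement to the membership conditions \eqref{eq:wall-cond} for ${\frak W}$, obtaining $\langle v,v_1\rangle>\langle v_1^2\rangle$ from $\langle v-v_1,v_1\rangle>0$ and the upper bound from $\langle (v-v_1)^2\rangle\geq 0$. Your direct computation $\langle v^2\rangle-\langle v,v_1\rangle=\langle v_1,v-v_1\rangle+\langle (v-v_1)^2\rangle>0$ for the ``in particular'' part is a negligible variation; your parenthetical alternative is exactly the paper's own deduction.
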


\begin{proof}
Since $\langle v-v_1,v_1 \rangle >0$,
we have 
$\langle v,v_1 \rangle >\langle v_1^2 \rangle$.
Since $0 \leq \langle (v-v_1)^2 \rangle =
\langle v^2 \rangle+\langle v_1^2 \rangle-2\langle v,v_1 \rangle$,
we get the first claim.
Then we have $\langle v^2 \rangle>\langle v_1^2 \rangle$,
which implies the second claim.
\end{proof}

\begin{lem}\label{lem:no-wall}
Let $X$ be an abelian surface with $\NS(X)={\Bbb Z}H \perp L$,
where $H$ is an ample divisor and 
$L$ is a negative definite lattice. 
Assume that $(H^2)=2\ell(4 \ell ra+1)$, $r,a \in {\Bbb Z}_{>0}$.
We set $v:=(2 \ell r,H,2 \ell a)$.
Then
$\langle v^2 \rangle=(H^2)-8 \ell^2 ra =2\ell$
and there is no wall for $v$.
In particular, $\Amp(K_H(v))$ coincides with its positive cone.
\end{lem}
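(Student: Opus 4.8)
The plan is to reduce the no-wall assertion to a divisibility obstruction forced by the congruence $2\ell \mid (H^2)$. First I would record the value of $\langle v^2\rangle$: since $\langle (r_0,\eta,a_0)^2\rangle=(\eta^2)-2r_0a_0$, for $v=(2\ell r,H,2\ell a)$ this is $\langle v^2\rangle=(H^2)-2(2\ell r)(2\ell a)=(H^2)-8\ell^2 ra$, and substituting $(H^2)=2\ell(4\ell ra+1)$ gives $\langle v^2\rangle=2\ell$. This part is immediate.

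For the absence of walls I would argue by contradiction. Suppose $v_1=(r_1,\xi_1,a_1)$ defines a wall, i.e. $v_1\in{\frak W}$ satisfies \eqref{eq:wall-cond}. Then Lemma \ref{lem:nowall-cond} applies and yields
\[
0\le\langle v_1^2\rangle<\langle v,v_1\rangle<\langle v^2\rangle=2\ell,
\]
so in particular $\langle v,v_1\rangle$ is an integer strictly between $0$ and $2\ell$.

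The decisive step is to compute $\langle v,v_1\rangle$ modulo $2\ell$. Using the orthogonal splitting $\NS(X)=\mathbb{Z}H\perp L$, I would write $\xi_1=mH+D$ with $m\in\mathbb{Z}$ and $D\in L$; since $(H,D)=0$ this gives $(H,\xi_1)=m(H^2)$ and hence
\[
\langle v,v_1\rangle=(H,\xi_1)-2\ell r\,a_1-2\ell a\,r_1=m(H^2)-2\ell(ra_1+ar_1).
\]
As $(H^2)=2\ell(4\ell ra+1)$ is divisible by $2\ell$, every term on the right is divisible by $2\ell$, so $2\ell\mid\langle v,v_1\rangle$. This contradicts $0<\langle v,v_1\rangle<2\ell$, whence ${\frak W}=\emptyset$ and $v$ has no wall.

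I do not anticipate a serious obstacle; the argument is essentially arithmetic. The only point deserving attention is the interplay of two facts: that the orthogonality $H\perp L$ forces $(H,\xi_1)$ to be an integer multiple of $(H^2)$, so that the hypothesis $2\ell\mid(H^2)$ propagates to $\langle v,v_1\rangle$, and that $\langle v^2\rangle$ equals $2\ell$ exactly, which confines the admissible range of $\langle v,v_1\rangle$ to the open interval $(0,2\ell)$ that the divisibility then rules out. One could alternatively phrase the contradiction through the non-emptiness criterion \eqref{eq:wall-cond2} of Proposition \ref{prop:existence-of-wall}, but routing it through Lemma \ref{lem:nowall-cond} is cleaner, since it shows outright that no $v_1$ can even satisfy \eqref{eq:wall-cond}.
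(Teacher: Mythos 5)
Your proof is correct and follows the same route as the paper: the paper's (very terse) proof simply notes that $\langle v,w \rangle \in 2\ell {\Bbb Z}$ for all $w \in H^*(X,{\Bbb Z})_{\alg}$ and then invokes Lemma \ref{lem:nowall-cond}, which is precisely your divisibility argument combined with the bound $0 \le \langle v_1^2 \rangle < \langle v,v_1 \rangle < \langle v^2 \rangle = 2\ell$. You have merely spelled out the two steps the paper leaves implicit.
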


\begin{proof}
We note that
$\langle v,w \rangle \in 2\ell {\Bbb Z}$
for all $w \in H^*(X,{\Bbb Z})_{\alg}$.
By Lemma \ref{lem:nowall-cond}, there is no wall for $v$.
\end{proof}

\begin{rem}
If $\sqrt{\langle v^2 \rangle(H^2)}=2\ell \sqrt{4\ell ra+1}
\not \in {\Bbb Q}$
or $\rk \NS(X) \geq 2$, then 
there are infinitely many autoequivalences of ${\bf D}(X)$
preserving $v$. 
\begin{NB} 
If $ra=\ell$, then $\sqrt{4\ell ra+1}=\sqrt{4 \ell^2+1} \not \in {\Bbb Q}$.
Indeed if $4 \ell^2+1=y^2$, $y \in {\Bbb Z}_{>0}$, then
$1=(y-2\ell)(y+2\ell) \geq y+2\ell>2\ell$, which is a 
contradiction.
\end{NB}
\end{rem}

\begin{prop}\label{prop:infinite-auto}
Assume that $\rk \NS(X) \geq 2$ or $\sqrt{\langle v^2 \rangle/(H^2)}
\not \in {\Bbb Q}$.
If there is no wall for $v$,
then $\Aut(M_H(v))$ contains an automorphism of infinite order.
\end{prop}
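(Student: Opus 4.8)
The plan is to manufacture an autoequivalence $\Phi$ of ${\bf D}(X)$ fixing $v$ and acting on $v^\perp$ with infinite order, and then to use the absence of walls to promote $\Phi$ to a biregular automorphism of $M_H(v)$. To build $\Phi$ I would treat the two hypotheses separately. If $\rk\NS(X)\ge 2$, then after twisting by a line bundle $e^{k\xi+\eta}$ so that the first Chern class becomes $\xi_1=\xi_0+r(k\xi+\eta)$ with $(\xi_1^2)>0$ and $\sqrt{\langle v^2\rangle/(\xi_1^2)}\notin{\Bbb Q}$ (Lemma~\ref{lem:m-irrational}), Lemma~\ref{lem:S_H} supplies an autoequivalence fixing the twisted vector and acting on $L={\Bbb Z}\oplus{\Bbb Z}H\oplus{\Bbb Z}\varrho_X$ as an isometry of infinite order; conjugating back by $e^{-(k\xi+\eta)}$ gives the desired $\Phi$ fixing $v$. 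If $\rk\NS(X)=1$ and $\sqrt{\langle v^2\rangle/(H^2)}\notin{\Bbb Q}$, equivalently $\sqrt{n\ell}\notin{\Bbb Q}$ with $n=(H^2)/2$ and $\ell=\langle v^2\rangle/2$, then Proposition~\ref{prop:stab} provides an infinite-order element of $\Stab_0(v)^*$, which is realized by an autoequivalence $\Phi$ via Lemma~\ref{lem:n}. In both cases $\Phi(v)=v$; since $\Phi$ is orthogonal, fixes ${\Bbb Q}v$ pointwise and has infinite order, the decomposition $L_{\Bbb Q}={\Bbb Q}v\oplus(v^\perp)_{\Bbb Q}$ forces $\Phi|_{v^\perp}$ itself to have infinite order.

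Next I would invoke the no-wall hypothesis. Then ${\frak H}$ is a single chamber, so the moduli space is constant: $M_{(\beta,\omega)}(v)\cong M_H(v)$ for every $(\beta,\omega)\in{\frak H}$, the identifications being canonical because no object of class $v$ changes its stability across ${\frak H}$. As $\Phi$ is a self-equivalence with $\Phi(v)=v$, it carries $\sigma_{(\beta,\omega)}$-semistable objects of class $v$ to $\sigma_{(\beta_1,\omega_1)}$-semistable ones of class $v$, where $(\beta_1,\omega_1)=\Phi\cdot(\beta,\omega)$ again lies in ${\frak H}$; hence $\Phi$ induces an isomorphism $M_{(\beta,\omega)}(v)\to M_{(\beta_1,\omega_1)}(v)$. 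Composing with the canonical identifications of source and target with $M_H(v)$ yields $F_\Phi\in\Aut(M_H(v))$.

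Finally I would detect infinite order cohomologically. By the functoriality of Mukai's homomorphism under Fourier--Mukai transforms (the commutative square of subsection~\ref{subsect:polarization}), together with the identification of the relevant cohomology across the single chamber, the action of $F_\Phi$ on $\theta_{v,\beta,\omega}(v^\perp)$ is conjugate to $\Phi|_{v^\perp}$ by the isometry $\theta_{v,\beta,\omega}$. Since $\Phi|_{v^\perp}$ has infinite order and a finite-order automorphism acts with finite order on cohomology, $F_\Phi$ must have infinite order in $\Aut(M_H(v))$.

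I expect the main obstacle to be exactly the passage from the derived equivalence $\Phi$ to a \emph{biregular} automorphism: this is where the no-wall hypothesis is indispensable, for only then is every $M_{(\beta,\omega)}(v)$ literally $M_H(v)$, whereas across a wall $\Phi$ would merely give a birational map between distinct minimal models. A subsidiary technical point is the albanese bookkeeping, since $F_\Phi$ need not preserve the fibre $K_H(v)$ over $0\in\Pic^0(X)\times X$; one composes with a translation from $X\times\Pic^0(X)$ to restrict $F_\Phi$ to $\Aut(K_H(v))$, and such translations act trivially on $\theta_{v,\beta,\omega}(v^\perp)$, so the infinite-order conclusion persists.
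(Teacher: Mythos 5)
Your proposal is correct and follows essentially the same route as the paper: the paper's proof also reduces to the case where $c_1(v)$ is ample, takes an infinite-order element $g \in \Stab_0(v)^*$ (via Proposition \ref{prop:stab} and Lemma \ref{lem:S_H}, with the twisting trick of Lemma \ref{lem:m-irrational} in the $\rk\NS(X)\geq 2$ case), realizes it by an autoequivalence $\Phi$ with $\Phi(v)=v$, and uses the absence of walls to conclude that $\Phi$ induces an automorphism of $M_H(v)$, detecting infinite order through the $g$-equivariance of $\theta_v$. Your write-up merely makes explicit the chamber identification and the albanese bookkeeping that the paper leaves implicit.
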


\begin{proof}
We may assume that $v=(r,\xi,a)$, where $\xi$ is ample.
We take $g \in \Stab_0(v)^*$ of infinite order.
Then there is an autoequivalence $\Phi$ of ${\bf D}(X)$ 
which induces $g$.
Then $\Phi$ induces an isomorphism
$M_H(v) \to M_H(v)$.
\begin{NB}
Since $\theta_v:v^{\perp} \to H^2(M_H(v),{\Bbb Z})$ is
$g$-equivariant, $\Phi^m$ $(m \ne 0)$ acts non-trivially.  
Let ${\cal L}$ be an ample divisor of $K_H(v)$. 
By the action of $g$, $\lim_{m \to \pm \infty}g^m ({\cal L})$
are the rays $R_\pm$ with $q_{K_H(v)}(R_\pm)=0$.  
\end{NB}
\end{proof}

If $M_{(\beta,tH)}(v)$ is birationally equivalent to
$M_H(1,0,-\ell)$, we can get a more precise description of 
the stabilizer group. Since there is an autoequivalence
$\Phi:{\bf D}(X) \to {\bf D}(X)$ with
$\Phi(v)=(1,0,-\ell)$, it is sufficient to treat the case
$v=(1,0,-\ell)$.

\begin{prop}\label{prop:decomp}
We set $v=(1,0,-\ell)$.
\begin{enumerate}
\item[(1)]
A Fourier-Mukai transform $\Phi:{\bf D}(Y) \to {\bf D}(X)$
with the condition $\Phi((1,0,-\ell))=(1,0,-\ell)$
corresponds to a decomposition 
\begin{equation}\label{eq:decomp}
v=\ell u_1+u_2,\;
\langle u_1,u_2 \rangle=1,\;
\langle u_1^2 \rangle=\langle u_2^2 \rangle=0,
\end{equation}
where $u_1=\Phi(-\varrho_Y)$ and $u_2=\Phi(v({\cal O}_Y))$.
\item[(2)]
For the decomposition \eqref{eq:decomp}, one of the following holds.
\begin{enumerate}
\item[(i)]
$$
u_1=(p^2 s,pq \xi,q^2 t),\;
u_2=-(q^2 t,\ell pq \xi,\ell^2 p^2 s),
$$
where $p,q,s,t \in {\Bbb Z}$ satisfy
$\ell s p^2-t q^2=1$
and $\xi$ is a primitive divisor with
$(\xi^2)=2st>0$.
\item[(ii)] 
$$
\ell=1,\;u_1=(0,\xi,-1),\; u_2=(1,-\xi,0),
$$
where $(\xi^2)=0$.
\end{enumerate}
\item[(3)]
For the case (i) of (2),
if $t=1$, then $Y \cong X$.
\end{enumerate}
\end{prop}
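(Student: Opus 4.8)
The plan is to treat the three parts separately, with essentially all of the content sitting in the arithmetic of part~(2). For part~(1) I would read the decomposition off from the tautological identity $(1,0,-\ell)=v({\cal O}_Y)-\ell\varrho_Y$ on $Y$. Applying $\Phi$ and using linearity gives $\ell u_1+u_2=\Phi((1,0,-\ell))=(1,0,-\ell)=v$ with $u_1=\Phi(-\varrho_Y)$ and $u_2=\Phi(v({\cal O}_Y))$; and since $\Phi$ preserves the Mukai pairing, $\langle u_1^2\rangle=\langle\varrho_Y^2\rangle=0$, $\langle u_2^2\rangle=\langle v({\cal O}_Y)^2\rangle=0$ and $\langle u_1,u_2\rangle=\langle-\varrho_Y,v({\cal O}_Y)\rangle=1$, which is exactly \eqref{eq:decomp}. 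For the converse I would feed the data back into Proposition~\ref{prop:codim0}: an isotropic $u_1$ with $\langle v,u_1\rangle=1$ produces $Y=M_H(u_1)$ together with a Fourier--Mukai transform whose inverse $\Phi:{\bf D}(Y)\to{\bf D}(X)$ satisfies $\Phi(-\varrho_Y)=u_1$ and $\Phi(v({\cal O}_Y))=v-\ell u_1=u_2$, hence $\Phi((1,0,-\ell))=v$.

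For part~(2), write $u_1=(r_1,\xi_1,a_1)$. Since $u_1$ is isotropic, $\langle v,u_1\rangle=\langle\ell u_1+u_2,u_1\rangle=\langle u_1,u_2\rangle$, so the condition $\langle u_1,u_2\rangle=1$ is the same as $\langle v,u_1\rangle=1$; as $\langle v,u_1\rangle=\ell r_1-a_1$ this reads $a_1=\ell r_1-1$, and $\langle u_1^2\rangle=0$ becomes $(\xi_1^2)=2r_1a_1=2r_1(\ell r_1-1)$. One checks that $\langle u_2^2\rangle=0$ is then automatic and that $u_2=v-\ell u_1=(1-\ell r_1,-\ell\xi_1,-\ell^2 r_1)$. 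The sign of $r_1$ is controlled by positivity: by \cite[Thm.~6.16]{YY1} the vector $\Phi(\varrho_Y)=-u_1$ is positive, forcing $r_1\le 0$. When $r_1\neq 0$ one has $(\xi_1^2)>0$, and I would then apply the standard normal form for a primitive isotropic vector: writing $\xi_1=pq\xi$ with $\xi$ primitive and $\gcd(p,q)=1$ one obtains $r_1=p^2s$, $a_1=q^2t$ with $(\xi^2)=2st$, whereupon $a_1=\ell r_1-1$ is precisely $\ell sp^2-tq^2=1$; substituting into $u_2=v-\ell u_1$ yields the formula of case~(i). The leftover possibility $r_1=0$ gives $u_1=(0,\xi_1,-1)$ with $(\xi_1^2)=0$, which is case~(ii).

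I expect the rank-zero analysis to be the main obstacle. Extracting the shape $(p^2s,pq\xi,q^2t)$ of a primitive isotropic vector is a clean but slightly fiddly gcd/factorization argument, and isolating the exceptional configuration $r_1=0$ — in particular deducing that the genuinely rank-zero case with $\xi_1\neq0$ occurs only for the value of $\ell$ recorded in case~(ii) — is where one must use the positivity of \emph{both} $\Phi(\varrho_Y)=-u_1$ and $v(\Phi({\cal O}_Y))=u_2$, together with the primitivity of $u_1$ and the fact that $u_2$ is constrained to be the genuine image of ${\cal O}_Y$ rather than an arbitrary isotropic vector, and not pure lattice theory; this is the delicate step and I would treat it with care.

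Finally, for part~(3) I would match case~(i) with $t=1$ directly to Lemma~\ref{lem:n}(1). Then $u_1=(p^2s,pq\xi,q^2)$ with $(\xi^2)=2s$, i.e. $u_1=(p^2n,pq\xi,q^2)$ for $n=(\xi^2)/2=s$, which is exactly the shape of the Mukai vector $v_0$ in Lemma~\ref{lem:n}(1). The relation $\ell sp^2-q^2=1$ shows $sp$ is invertible modulo $q$, so $\gcd(pn,q)=\gcd(ps,q)=1$; moreover $t=1$ forces $s=(\xi^2)/2>0$, so $\xi$ may be taken ample. Lemma~\ref{lem:n}(1) then yields $Y=M_H(u_1)\cong X$, as claimed.
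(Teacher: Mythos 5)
Your part (1) is the paper's argument (the tautological identity $v({\cal O}_Y)-\ell\varrho_Y=(1,0,-\ell)$ plus, for the converse, a fine-moduli/universal-family construction: the paper builds $\Phi$ from $Y=M_H(\pm u_1)$ directly, you route it through Proposition \ref{prop:codim0}), and your part (3) is the paper's appeal to Lemma \ref{lem:n}. The genuine gap is in part (2), and it is twofold. First, the entire content of case (ii), namely that the rank-zero configuration forces $\ell=1$, is exactly what you do not prove: announcing that it ``is the delicate step and I would treat it with care'' leaves the dichotomy (i)/(ii) unestablished. Second, your route cannot produce that conclusion even in outline, because in the paper the equality $\ell=1$ does not come out of the rank-zero analysis at all; it comes out of the degenerate rank-nonzero subcase $(\xi_1^2)=0$, $r_1\neq 0$, where $u_1=\pm(x,q\xi,0)$ and the pairing condition $1=\langle v,u_1\rangle=\pm\ell x$ forces $\ell=1$, $x=\pm1$, and only then (because $\ell=1$ is what permits exchanging $u_1$ and $u_2$ in $v=\ell u_1+u_2$) does one land in case (ii). You never reach this subcase, since you dismiss it with the claim that $r_1\neq 0$ implies $(\xi_1^2)=2r_1(\ell r_1-1)>0$.

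That claim is underwritten by your positivity step, which is the other problem. You argue that $\Phi(\varrho_Y)=-u_1$ is positive by \cite[Thm.~6.16]{YY1}, hence $r_1\le 0$. That theorem is stated for $\rk\NS(X)=1$ and for $\Phi\in\Eq_0(\bl{D}(Y),\bl{D}(X))$, i.e.\ sheaf kernels up to \emph{even} shift, whereas the proposition concerns arbitrary abelian surfaces and arbitrary Fourier--Mukai transforms satisfying $\Phi(v)=v$, for which only $\pm\Phi(\varrho_Y)$ is positive (compose any sheaf-kernel equivalence with an odd shift). Moreover the restriction $r_1\le 0$ flatly contradicts your own part (3): there you take $t=1$, deduce $s=(\xi^2)/2>0$, hence $r_1=p^2s>0$, a configuration your part (2) has just excluded. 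So either your positivity claim holds and part (3) becomes vacuous, or it fails and your case analysis omits $r_1>0$ entirely, including precisely the degenerate case $(\xi_1^2)=0$ that yields $\ell=1$. The paper sidesteps all of this by running the classification in (2) as pure lattice arithmetic, with no positivity input and no sign restriction; that is the argument you would need to reproduce.
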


\begin{proof}
(1)
Since $v=\ell (-\varrho_Y)+v({\cal O}_Y)$,
we have $v=\Phi(v)=\ell \Phi(-\varrho_Y)+v({\cal O}_Y)$.
Since $\varrho_Y$ and $v({\cal O}_Y)$ are isotropic vector with
$\langle -\varrho_Y,v({\cal O}_Y) \rangle=1$,
we get the decomposition \eqref{eq:decomp}.
Conversely for the decomposition \eqref{eq:decomp}, we have 
an equivalence $\Phi:{\bf D}(Y) \to {\bf D}(X)$
such that $u_1=\Phi(-\varrho_Y)$ and $u_2=\Phi(v({\cal O}_Y))$,
where $Y=M_H(\pm u_1)$ and $H$ is an ample divisor on $X$.

(2)
We first assume that $\rk u_1 \ne 0$.
We set $u_1=(px,pq \xi,y)$, where $\xi \in \NS(X)$ is primitive and
$\gcd(x,q \xi)=1$.
Since $u_1$ is primitive, $\gcd(p,y)=1$.
Assume that $q^2(\xi^2) \ne 0$.
Then $p^2 q^2 (\xi^2)=2pxy$ implies that
$p \mid x$ and $q^2 \mid y$.
So we write $u_1=(p^2 s,pq \xi,q^2 t)$ with
$(\xi^2)=2st$.
Since $1=\langle v, u_1 \rangle$,
we have $p^2 s \ell-q^2 t=1$.
We set $u_2:=v-\ell u_1$. Then
$$
u_2=(1-\ell s p^2,-\ell pq \xi,-\ell(1+q^2 t))
=-(q^2 t,\ell p q \xi,\ell^2 p^2 s).
$$
If $st<0$, then we have $(p^2 s \ell,q^2 t)=(1,0)$ or
$(p^2 s \ell,q^2 t)=(0,-1)$.
Since $\rk u_1 \ne 0$, we have $p^2=s=\ell=1$ and $q=0$.
Since $q^2(\xi^2) \ne 0$, this case does not occur.
Therefore $st>0$.

If $q^2 (\xi^2)=0$, then
$y=0$ and $p=\pm 1$. Hence 
$u_1=\pm (x,q \xi,0)$.
Since $1=\langle v,u_1 \rangle= \pm \ell x$,
$x=\pm 1$ and $\ell=1$.
Thus $u_1=(1,q\xi,0)$ and $u_2=(0,-q\xi,-1)$.
By exchanging $u_1$ by $u_2$,
we have (ii).
\begin{NB}If $q=0$, then $u_1=(1,0,0)$ and fits in the case (i),
where we exchange $u_1$ and $u_2$.
If $(\xi^2)=0$, then $u_2=(0,-q\xi,-1)$ and fits in the case (ii). 
\end{NB}

We next assume that $\rk u_1=0$.
We set $u_1=(0,D,y)$.
Then $1=\langle v,u_1 \rangle=-y$.
Hence $u_1=(0,D,-1)$ with $(D^2)=0$.
Then (i) holds, where $t=-1$, $q^2=1$ and $pq$ is the multiplicity of
$D$. 
(3) If $(\xi^2)=2s$, then Lemma \ref{lem:n} implies the claim.
\end{proof}


\begin{rem}
In \cite{YY1},
the condition $v=\ell u_1+u_2$ with
$\langle u_1,u_2 \rangle=1$,
$\langle u_1^2 \rangle=\langle u_2^2 \rangle=0$
is called {\it numerical equation} and plays important role
for the study of stable sheaves.
\end{rem}

For $m \in {\Bbb Q}$,
we set
\begin{equation*}
Q_{\ell,m}:=
\{\xi \in \NS(X)_{\Bbb R} \mid \xi \in \sqrt{m}\NS(X)_{\Bbb Q},
(\xi^2)=2\ell\}.
\end{equation*}
$Q_{\ell,m}=Q_{\ell,m'}$ if and only if
$\sqrt{m} \in {\Bbb Q}\sqrt{m'}$.
For $\xi \in Q_{\ell,m}$,
we take an ample divisor $H$ with
${\Bbb Q}\tfrac{\xi}{\sqrt{m}} \cap \NS(X)={\Bbb Z}H$.
If $p^2 \ell (H^2)/2 -q^2=1$ has an integral solution $(p,q)$,
then there is an autoequivalence $\Phi$
in Proposition \ref{prop:decomp}.
In particular, if $\sqrt{m} \not \in {\Bbb Q}$, then
there are infinitely many $\Phi$ in Proposition \ref{prop:decomp}.

\begin{lem}\label{lem:Q_ell}
\begin{enumerate}
\item[(1)]
Each $Q_{\ell,m}$ is a dense or empty subset
of $Q_\ell$.
\item[(2)]
$\cup_{\sqrt{m} \not \in {\Bbb Q}} Q_{\ell,m}$
is dense in $Q_\ell$. In particular,
there are many autoequivalences $\Phi$
in Proposition \ref{prop:decomp}.
\end{enumerate}
\end{lem}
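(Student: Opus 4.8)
The plan is to deduce both parts from the classical density of rational points on a nondegenerate quadric. Throughout I work under the standing hypothesis of this subsection, $\rk \NS(X) \ge 2$; note that for $\rk \NS(X)=1$ the set $Q_\ell$ is finite and (2) can genuinely fail (when $2\ell(H^2)$ is a perfect square), so the rank condition is essential.

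First I would treat (1). Multiplication by $\sqrt m$ is a homeomorphism of $\NS(X)_{\Bbb R}$ carrying the real quadric $Q':=\{\zeta\in\NS(X)_{\Bbb R}\mid (\zeta^2)=2\ell/m\}$ onto $Q_\ell$ and carrying its set of $\Bbb{Q}$-points onto $Q_{\ell,m}$. Thus (1) is equivalent to: the rational points of $Q'$, if non-empty, are dense in $Q'$. This is the standard line-through-a-rational-point argument: fixing $p_0\in Q'(\Bbb{Q})$, for a rational direction $u\in\NS(X)_{\Bbb{Q}}$ with $(u^2)\ne 0$ the line $p_0+su$ meets $Q'$ at $s=0$ and at the single further value $s=-2(p_0,u)/(u^2)\in\Bbb{Q}$, so the second intersection point is again rational. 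As $u$ runs over rational directions, which are dense among all directions, this point varies continuously and sweeps out a dense subset of $Q'$. Since $Q_{\ell,m}$ is stable under $\eta\mapsto -\eta$, which interchanges the two sheets of $Q'$, density on one sheet yields density on both. This settles (1).

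For (2) the key reduction is that, given (1), it suffices to exhibit a \emph{single} $m$ with $\sqrt m\notin\Bbb{Q}$ and $Q_{\ell,m}\ne\emptyset$: for such an $m$ the set $Q_{\ell,m}$ is already dense in $Q_\ell$ by (1), and it is contained in the union in question. Concretely, I must find a rational $\eta\in\NS(X)_{\Bbb{Q}}$ with $(\eta^2)>0$ for which $2\ell(\eta^2)$ is not a perfect square in $\Bbb{Q}$; then $m:=2\ell/(\eta^2)$ satisfies $\sqrt m=\sqrt{2\ell(\eta^2)}/(\eta^2)\notin\Bbb{Q}$, while $\pm\sqrt m\,\eta\in Q_{\ell,m}$.

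The hard part is exactly this existence statement, and it is where the arithmetic enters. I would argue as follows. Choose an integral $\eta_0\in\NS(X)$ with $N_0:=(\eta_0^2)>0$. If $2\ell N_0$ is not a perfect square, take $\eta=\eta_0$. Otherwise write $2\ell N_0=g^2$ with $g\in\Bbb{Z}_{>0}$, and, using $\rk\NS(X)\ge 2$, pick a nonzero $\zeta\in\NS(X)\cap\eta_0^{\perp}$, so $c:=-(\zeta^2)>0$. Setting $\eta_q:=\eta_0+\tfrac1q\zeta$ gives $(\eta_q^2)=(N_0q^2-c)/q^2$, so $2\ell(\eta_q^2)$ is a rational square if and only if $2\ell(N_0q^2-c)=(gq)^2-2\ell c$ is a perfect square. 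For all large $q$ one has $0<2\ell c<2gq-1$, whence $(gq)^2-2\ell c$ lies strictly between the consecutive squares $(gq-1)^2$ and $(gq)^2$ and is not a perfect square, while $(\eta_q^2)>0$. Thus $\eta=\eta_q$ works, which via (1) completes the proof of (2). The only delicate point is keeping $(\eta_q^2)$ positive and the value squeezed between consecutive squares simultaneously, which the choice of large $q$ arranges.
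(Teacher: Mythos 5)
Your proposal is correct and takes essentially the same route as the paper: part (1) is the identical chord-through-a-rational-point argument (the paper parametrizes $Q_{\ell,m}$ as $\sqrt{m}\bigl(\eta_0-2\tfrac{(\eta_0,u)}{(u^2)}u\bigr)$, $u\in\NS(X)_{\Bbb Q}$), and part (2) is the same reduction to exhibiting a single class with $(\eta^2)>0$ and $2\ell(\eta^2)$ not a rational square. Your inline existence argument --- perturbing $\eta_0$ by an orthogonal $\zeta/q$ (negative definite by Hodge index) and squeezing $(gq)^2-2\ell c$ between the consecutive squares $(gq-1)^2$ and $(gq)^2$ --- is a rescaled version of the paper's Lemma \ref{lem:m-irrational}, which is exactly what the paper cites to finish (2).
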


\begin{proof}
(1)
We set $Q_\ell^+:=Q_\ell \cap \Amp(X)_{\Bbb R}$ and
$Q_{\ell,m}^+=Q_{\ell,m} \cap Q_\ell^+$.
Then $Q_\ell=Q_\ell^+ \cup -Q_\ell^+$
and $Q_{\ell,m}=Q_{\ell,m}^+ \cup -Q_{\ell,m}^+$.
Assume that $Q_{\ell,m} \ne \emptyset$.
We take $\xi_0 \in Q_{\ell,m}^+$ and
set
$$
B_{\xi_0}:=\{ \eta \in \xi_0^\perp \mid -(\eta^2) < 2\ell \}.
$$
Then we have a bijective correspondence
\begin{equation*}
\begin{matrix}
Q_\ell^+ & \to & B_{\xi_0}\\
\xi & \mapsto & \eta,
\end{matrix}
\end{equation*}
where 
\begin{equation*}
\begin{split}
\eta=& \frac{2\ell \xi-(\xi,\xi_0)\xi_0}{2\ell+(\xi_0,\xi)},\\
\xi=&\frac{4\ell}{(\eta^2)+2\ell}\eta+
\frac{2\ell-(\eta^2)}{2\ell+(\eta^2)}\xi_0.
\end{split}
\end{equation*}
\begin{NB}
$\eta$ is the intersection of 
the line $\overline{\xi (-\xi_0)}$ with $\xi_0^\perp$.
Then $\eta=t(\xi+\xi_0)-\xi_0$ and $(\eta,\xi_0)=0$,
$t=\frac{2\ell}{(\xi+\xi_0,\xi_0)}$.
Thus $\eta=\frac{2\ell}{(\xi+\xi_0,\xi_0)}\xi-
\frac{(\xi,\xi_0)}{(\xi+\xi_0,\xi_0)}\xi_0$.
\end{NB}
Then $Q_{\ell,m}^+$ corresponds to
$\eta \in \sqrt{m} \NS(X)_{\Bbb Q}$.
Therefore $Q_{\ell,m}$ is dense in $Q_\ell$.

(2) is a consequence of Lemma \ref{lem:m-irrational} and (1).
\end{proof}

\begin{NB}
\begin{proof}

(1)
Assume that $Q_{\ell,m} \ne \emptyset$.
We take $\xi_0 \in Q_{\ell,m}$ and set
$\eta_0:=\xi_0/\sqrt{m}$.
For $\xi \in Q_\ell$,
we set
$tu:=(\xi-\xi_0)/\sqrt{m} \in \NS(X)_{\Bbb Q}$.
Then we have
$t(t(u^2)+2(\eta_0,u))=0$.
If $(u^2)=0$, then $t(\eta_0,u)=0$.
Since $(\eta_0^2)>0$,
we see that $tu=0$.
Thus $\xi=\xi_0$.
So we may assume that $(u^2) \ne 0$.  
Then we see that $t=-2(\eta_0,u)/(u^2)$ and
$$
\xi=\sqrt{m}
\left(\eta_0-2\frac{(\eta_0,u)}{(u^2)}u \right),\;
u \in \NS(X)_{\Bbb Q}.
$$
Therefore $Q_{\ell,m}$ is dense in $Q_\ell$.
\end{proof}
\end{NB}

\begin{NB}
\begin{prop}
Assume that $\rk \NS(X) \geq 2$.
Then $\overline{\cup_{u \in {\frak I}_1} u^\perp}$
contains $\overline{P^+(v^\perp)}_{\Bbb R} \setminus
P^+(v^\perp)_{\Bbb R}$. 
\end{prop}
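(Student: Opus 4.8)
The plan is to imitate the proof of the preceding Proposition (the ${\frak W}$–version), but to feed the infinite–order auto-equivalence a \emph{seed in ${\frak I}_1$}, so that the entire orbit stays inside ${\frak I}_1$. First a caveat that will turn out to be the main obstacle: the inclusion can only hold when ${\frak I}_1\ne\emptyset$. Indeed, if ${\frak I}_1=\emptyset$ the left-hand side is empty while $\overline{P^+(v^\perp)}_{\Bbb R}\setminus P^+(v^\perp)_{\Bbb R}$ is not, and by Lemma \ref{lem:no-wall} such $v$ occur even with $\rk\NS(X)\ge2$. I therefore argue under the necessary hypothesis ${\frak I}_1\ne\emptyset$, i.e.\ in the presence of a primitive isotropic $u_0$ with $\langle v,u_0\rangle=1$.

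By Lemma \ref{lem:boundary} the rank-nonzero boundary rays are exactly the $e^{\xi_0/r+\xi}$ with $r\xi\in Q_\ell$, while the rank-zero rays $(0,\xi,(\xi,\xi_0)/r)$ with $(\xi^2)=0$ arise as projective limits of these (let $r\xi\in Q_\ell$ run off to the null cone of $\NS(X)$); so it suffices to hit a dense subset of the rays indexed by $Q_\ell$ and then pass to the closure. Fix $\zeta\in Q_\ell$. Using Lemma \ref{lem:m-irrational} together with Lemma \ref{lem:Q_ell} I choose a primitive ample $H$ with $\sqrt{2\ell/(H^2)}\notin{\Bbb Q}$ and $\sqrt{2\ell/(H^2)}H$ arbitrarily close to $\zeta$; as $\zeta$ and $H$ vary these quadratic-irrational points are dense in $Q_\ell$. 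For $L:={\Bbb Z}\oplus{\Bbb Z}H\oplus{\Bbb Z}\varrho_X$, Lemma \ref{lem:S_H} (via Lemma \ref{lem:n}) furnishes an autoequivalence $\Phi$ with $\Phi(v)=v$, $\Phi|_{L^\perp}=\id$ and $\Phi|_L$ of infinite order; on $(v^\perp\cap L)_{\Bbb R}$ it is hyperbolic with fixed isotropic rays ${\Bbb R}_{>0}e^{(\xi_0\pm\sqrt{2\ell/(H^2)}H)/r}$, exactly the two boundary rays computed in the ${\frak W}$–case.

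Now seed with $u_0\in{\frak I}_1$. Since $\Phi$ is an isometry fixing $v$, each $\Phi^n(u_0)$ is again primitive, isotropic and satisfies $\langle v,\Phi^n(u_0)\rangle=1$, so $\Phi^n(u_0)\in{\frak I}_1$. (Concretely, in the model case $v=(1,0,-\ell)$ these orbit vectors are the $u_1=(p^2s,pq\xi,q^2t)$ of Proposition \ref{prop:decomp}, indexed by the Pell solutions of $\ell sp^2-tq^2=1$.) Writing $u_0=u_0^L+u_0^{L^\perp}$ over ${\Bbb Q}$ and using $\Phi|_{L^\perp}=\id$, the eigen-decomposition of $\Phi|_L$ shows that—after a harmless perturbation of $H$ within the dense family so that $u_0^L$ has nonzero component along the expanding eigenray—the rays ${\Bbb R}_{>0}\Phi^n(u_0)$ converge projectively to $w_+:=e^{(\xi_0+\sqrt{2\ell/(H^2)}H)/r}$. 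Let $\tilde u_n$ be the projection of $\Phi^n(u_0)$ to $v^\perp$; then $\tilde u_n^\perp\cap v^\perp$ is the wall of $\Phi^n(u_0)\in{\frak I}_1$, its pole $\tilde u_n$ has direction tending to the null vector $w_+$, and, with $\lambda>1$ the expanding eigenvalue,
\begin{equation*}
\langle\tilde u_n,w_+\rangle=\langle\Phi^n(u_0),w_+\rangle=\langle u_0,\Phi^{-n}(w_+)\rangle=\lambda^{-n}\langle u_0,w_+\rangle\to0 .
\end{equation*}
Choosing any fixed $z\in v^\perp$ with $\langle z,w_+\rangle\ne0$ and setting $y_n:=w_+-\bigl(\langle\tilde u_n,w_+\rangle/\langle\tilde u_n,z\rangle\bigr)z$, one has $y_n\in\tilde u_n^\perp\cap v^\perp$; and since $\langle\tilde u_n,w_+\rangle\to0$ while $\langle\tilde u_n,z\rangle\sim c\,\lambda^{n}$ with $c\ne0$, the coefficient tends to $0$ and $y_n\to w_+$. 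Hence $w_+\in\overline{\bigcup_{u\in{\frak I}_1}u^\perp}$.

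As $\zeta$ (equivalently $H$) runs over the dense family above, the points $w_+=e^{(\xi_0+\sqrt{2\ell/(H^2)}H)/r}$ are dense in the rank-nonzero part of $\overline{P^+(v^\perp)}_{\Bbb R}\setminus P^+(v^\perp)_{\Bbb R}$; taking closures covers this part, and with it, as limits, the rank-zero boundary rays. This yields the asserted inclusion. The genuinely delicate point is the one flagged at the start: the statement is conditional on ${\frak I}_1\ne\emptyset$, and all that $\rk\NS(X)\ge2$ contributes is, through Lemmas \ref{lem:m-irrational}–\ref{lem:Q_ell}, enough irrational directions $H$ to make the accumulation rays $w_+$ dense; the seed $u_0$ must be supplied by the hypothesis ${\frak I}_1\ne\emptyset$, which is why the bare statement with ${\frak I}_1$ in place of ${\frak W}$ requires this extra assumption.
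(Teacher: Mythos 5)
Your proof is correct, and it proves something slightly stronger than what the paper establishes, by a recognizably different packaging of the same underlying Pell-equation mechanism. In the source, this proposition sits immediately after Proposition \ref{prop:decomp} and the discussion of $Q_{\ell,m}$, so the implicit standing assumption is $v=(1,0,-\ell)$, for which ${\frak I}_1 \ne \emptyset$ automatically (e.g.\ $(0,0,-1) \in {\frak I}_1$). The paper's own argument takes $\xi \in Q_{\ell,m}$ with $\sqrt{m} \notin {\Bbb Q}$, observes that each of the infinitely many solutions of $p^2 \ell (H^2)/2 - q^2 = 1$ yields through Proposition \ref{prop:decomp} a decomposition $v = \ell u_1 + u_2$ with $u_1 \in {\frak I}_1$, asserts that these walls accumulate at $e^{\sqrt{2\ell/(H^2)}H}$, invokes Lemma \ref{lem:Q_ell} for density among the rank-nonzero boundary rays, and disposes of the rank-zero rays via $(0,\xi,0) \in (0,\xi,-1)^\perp$. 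You instead transport a single seed $u_0 \in {\frak I}_1$ along the orbit of the infinite-order stabilizer autoequivalence of Lemma \ref{lem:S_H}, and you verify the accumulation at the expanding fixed ray $w_+$ by an explicit eigenvalue-decay estimate. Your route buys three things: (i) the statement for arbitrary $v$ under the hypothesis ${\frak I}_1 \ne \emptyset$, which you rightly flag as necessary --- Lemma \ref{lem:no-wall} gives $v$ with $\rk \NS(X) \geq 2$ and ${\frak I}_1 = \emptyset$, so the literal statement fails there, and the paper is protected only by its implicit restriction to $v=(1,0,-\ell)$; (ii) an actual proof, via the $y_n$ construction, of the accumulation of walls at a boundary ray, which the paper merely asserts; (iii) a cleaner treatment of the rank-zero boundary rays as projective limits of rank-nonzero ones, whereas the paper's argument $(0,\xi,0) \in (0,\xi,-1)^\perp$ tacitly requires $\xi$ to be an integral isotropic class in $\NS(X)$, which need not exist. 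The one soft spot on your side is the ``harmless perturbation'' ensuring $\langle u_0, w_-\rangle \ne 0$: it does work, since the bad polarizations form a proper closed condition inside your dense family of admissible $H$, but this deserves a sentence rather than a parenthesis.
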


\begin{proof}
For $\xi \in Q_{\ell,m}$,
we take a primitive $H \in {\Bbb Q}_{>0}\frac{\xi}{\sqrt{m}}$.
Then $\sqrt{2\ell/(H^2)} \not \in {\Bbb Q}$.
Hence we have infinitely many solutions $(p,q)$ of
$p^2 \ell (H^2)/2-q^2=1$.
Then there are infinitely many walls in any neighborhood of
$e^{\sqrt{2\ell/(H^2)}H}$.
By using this fact, we shall prove the claim.

We first note that
an isotropic vector $u \in H^*(X,{\Bbb Z})_{\alg} \otimes {\Bbb R}$
belongs to $w \in v^\perp$ if and only if
$w=(\rk w) e^\xi$ with $\xi \in Q_\ell$ or $w=(0,\xi,0)$ with
$(\xi^2)=0$.
For $w=(\rk u) e^\xi$ with $\xi \in Q_\ell$, 
Lemma \ref{lem:Q_ell} implies that
$w \in \overline{\cup_{u \in {\frak I}_1} u^\perp}$. 
For $\xi \in \overline{\Amp(X)}$ with $(\xi^2)=0$,
we have
$(0,\xi,0) \in (0,\xi,-1)^\perp$. 
Therefore the claim holds.
\end{proof}

\end{NB}

\section{The case where $\rk \NS(X)=1$.}\label{sect:rk1}

\subsection{The walls and chambers on the $(s,t)$-plane.}

From now on, let $X$ be an abelian surface such that
$\NS(X)={\Bbb Z}H$, where $H$ is an ample generator.
We set $(H^2)=2n$.
Let
${\Bbb H}:=\{(s,t) \mid t>0 \}$ be the upper half plane and
$\overline{\Bbb H}:=\{(s,t) \mid t \geq 0 \}$
its closure. 
We identify $\NS(X)_{\Bbb R} \times C(\overline{\Amp(X)}_{\Bbb R})
\times {\Bbb R}_{\geq 0}$ with $\overline{\Bbb H}$ via
$(sH,H,t) \mapsto (s,t)$. 
We shall study the set of walls.
\begin{lem}[{\cite{Mac}, \cite[Cor. 5.10]{YY2}}]
If
$\sqrt{\ell/n} \in {\Bbb Q}$, there are finitely many 
chamber. 
\end{lem}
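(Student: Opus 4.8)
The plan is to transport the question to the positive cone $P^+(v^\perp)_{\Bbb R}$, which in the rank one case is a binary Lorentzian cone, and to count walls there. By Proposition \ref{prop:wall-equation}, $(\beta,H,t)\in W_{v_1}$ if and only if $\xi(\beta,H,t)\in v^\perp\cap v_1^\perp$, and by Proposition \ref{prop:positive} the map $\Xi$ carries $\overline{\frak H}\cong\overline{\Bbb H}$ onto $C^+=C(\overline{P^+(v^\perp)}_{\Bbb R})$; by Proposition \ref{prop:positive-cone} it sends chambers of stabilities to the chambers of $P^+(v^\perp)_{\Bbb R}$ cut out by the lines $v_1^\perp$. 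Since $v^\perp$ has signature $(1,1)$, each such wall is a single ray, and finitely many rays produce finitely many chambers. So it suffices to bound the number of distinct walls $v^\perp\cap v_1^\perp$.

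First I would record that each wall is governed by a primitive negative vector of bounded self-intersection. For $v_1\in{\frak W}$ set $\delta(v_1):=\langle v^2\rangle v_1-\langle v,v_1\rangle v\in v^\perp$, so that $v^\perp\cap v_1^\perp=\delta(v_1)^\perp\cap v^\perp$ and $\langle\delta(v_1)^2\rangle=\langle v^2\rangle(\langle v^2\rangle\langle v_1^2\rangle-\langle v,v_1\rangle^2)$. By Proposition \ref{prop:existence-of-wall} an interior wall forces $\langle\delta(v_1)^2\rangle<0$, while Lemma \ref{lem:nowall-cond} gives $0\le\langle v_1^2\rangle<\langle v,v_1\rangle<\langle v^2\rangle=2\ell$; hence $\langle\delta(v_1)^2\rangle>2\ell(0-(2\ell)^2)=-8\ell^3$. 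Writing $\delta(v_1)=k\eta$ with $\eta\in v^\perp$ primitive, we obtain $-8\ell^3<\langle\eta^2\rangle<0$, so every wall is $\eta^\perp$ for a primitive negative $\eta$ whose self-intersection lies in a fixed bounded range.

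The only place the hypothesis $\sqrt{\ell/n}\in{\Bbb Q}$ enters is in making the two boundary rays of the cone rational. By Lemma \ref{lem:boundary} an isotropic $w\in v^\perp\otimes{\Bbb R}$ of nonzero rank has the form $(\rk w)e^{\xi_0/r+\xi}$ with $r\xi\in Q_\ell$; in rank one $\xi=cH$ and $(r\xi)^2=2\ell$ reads $r^2c^2(H^2)=2\ell$, i.e. $c=\pm\sqrt{\ell/n}/r$ since $(H^2)=2n$, which lies in ${\Bbb Q}$ exactly when $\sqrt{\ell/n}\in{\Bbb Q}$. Thus under our hypothesis the two isotropic boundary directions $v_\pm$ are defined over ${\Bbb Q}$; I choose primitive integral isotropic generators $f_+,f_-$ of these two rays, set $m:=\langle f_+,f_-\rangle\in{\Bbb Z}\setminus\{0\}$, and let $j$ be the index of ${\Bbb Z}f_+\oplus{\Bbb Z}f_-$ in $v^\perp$.

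Finally I would run the counting. Every primitive $\eta\in v^\perp$ can be written $\eta=xf_++yf_-$ with $x,y\in\frac1j{\Bbb Z}$, and $\langle\eta^2\rangle=2mxy$. A wall forces $\eta$ non-isotropic, so $xy\ne0$ and therefore $|x|,|y|\ge 1/j$; the self-intersection bound gives $|xy|=|\langle\eta^2\rangle|/(2|m|)\le 4\ell^3/|m|$. Combining these, $|x|=|xy|/|y|\le (4\ell^3/|m|)\,j$ and likewise $|y|$ is bounded, so only finitely many pairs $(x,y)$, hence finitely many walls $\eta^\perp$, can occur; this yields finitely many chambers. I expect the decisive step to be this last one: rationality of the boundary supplies the common denominator $j$ that bounds the two isotropic coordinates $x,y$ simultaneously once their product is squeezed between $1/j^2$ and $4\ell^3/|m|$. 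When $\sqrt{\ell/n}\notin{\Bbb Q}$ the coordinates admit no common denominator, the product constraint fails to bound $x,y$ individually, and infinitely many walls accumulate at $v_\pm$, consistent with the discussion in Section \ref{sect:rk1}. The remaining care is to treat the rank zero boundary vectors of Lemma \ref{lem:boundary} and, if $v$ is not primitive, to reduce to the primitive case; both are routine.
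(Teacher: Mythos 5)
You cannot be checked against the paper's own argument here, because the paper gives none: this lemma is imported verbatim from \cite{Mac} and \cite[Cor.~4.9]{YY2}. Judged on its own terms, your proof is correct, and it has the merit of running entirely on the machinery of Section \ref{sect:positive-cone}: Propositions \ref{prop:wall-equation} and \ref{prop:positive} identify each wall $W_{v_1}\subset\overline{\frak H}$ with the ray $v_1^\perp\cap\overline{P^+(v^\perp)}_{\Bbb R}$; your vector $\delta(v_1)=\langle v^2\rangle v_1-\langle v,v_1\rangle v$ is exactly the auxiliary vector $\eta$ in the paper's proof of Proposition \ref{prop:existence-of-wall}; Lemma \ref{lem:nowall-cond} gives the two-sided bound $-8\ell^3<\langle\delta(v_1)^2\rangle<0$; and the rationality of the two isotropic boundary rays is indeed equivalent to $\sqrt{\ell/n}\in{\Bbb Q}$ (compare Lemma \ref{lem:I}(1)). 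The decisive step, and the only place the hypothesis enters, is the common denominator $j$: writing a primitive $\eta\in v^\perp$ as $xf_++yf_-$ with $x,y\in\tfrac1j{\Bbb Z}$, the constraints $0<|2mxy|<8\ell^3$ and $|x|,|y|\ge 1/j$ bound $x,y$ individually, so only finitely many wall directions occur. This turns an external citation into a self-contained proof, and it visibly generalizes: the same count gives finiteness for any Mukai vector, in any Picard rank, whose cone $\overline{P^+(v^\perp)}_{\Bbb R}$ has rational isotropic boundary rays.

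Three small points should be tightened. First, your closing inference ``finitely many rays produce finitely many chambers'' lives downstairs in the cone, while the lemma asserts finiteness of chambers upstairs in $\overline{\frak H}$; Proposition \ref{prop:positive-cone}(3) says chambers map to chambers, not that this map is finite-to-one. The fix is already contained in what you proved: since $W_{v_1}$ is the $\Xi$-preimage of the ray $v_1^\perp\cap\overline{P^+(v^\perp)}_{\Bbb R}$, finitely many rays give finitely many \emph{distinct walls} as subsets of $\overline{\frak H}$, each a circle or line by \eqref{eq:W}, and finitely many such quadrics cut $\overline{\Bbb H}$ into finitely many components. Second, Proposition \ref{prop:existence-of-wall} controls only walls meeting ${\frak H}$; a $v_1\in{\frak W}$ with $\langle v,v_1\rangle^2\le\langle v^2\rangle\langle v_1^2\rangle$ has $W_{v_1}$ empty or equal to one of the two points $(s_\pm,0)$, so such $v_1$ contribute at most two further wall sets --- one sentence disposes of them. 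Third, for $\rk v=0$ one has $\ell=d^2n$, so the hypothesis is automatic and $\varrho_X\in v^\perp$ is already an integral isotropic vector; say this directly rather than invoking Lemma \ref{lem:boundary}, which assumes $r\ne0$.
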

By this lemma, it is sufficient to treat
the case where $\sqrt{\ell/n} \not \in {\Bbb Q}$.

\begin{prop}
Assume that $n \leq 4$.
Let $v:=(r,dH,a)$ be a primitive Mukai vector  with
$\langle v^2 \rangle>0$.
Then there is an isometry $\Phi$ of
$H^*(X,{\Bbb Z})_{\alg}={\Bbb Z}^{\oplus 3}$ 
such that $\Phi(v)=(r',d' H,a')$
with $r' a' \leq 0$.
\end{prop}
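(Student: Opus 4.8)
The plan is to work entirely with the matrix model of \S\ref{subsect:FM}, identifying $v=(r,dH,a)$ with $\iota_X(v)=\begin{pmatrix} r & d\sqrt{n}\\ d\sqrt{n}& a\end{pmatrix}$, and to run a reduction argument on the orbit of $v$ under a small collection of explicit isometries, minimizing $|r|$. Writing $\ell:=\langle v^2\rangle/2=nd^2-ra>0$, the desired conclusion $r'a'\le 0$ says precisely that the reduced matrix has diagonal entries of opposite sign (or a diagonal zero). Since $\langle v^2\rangle>0$ forces $\ell\ge 1$, the whole point will be that the hypothesis $n\le 4$ is incompatible with a ``reduced'' vector having $r'a'>0$.

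Two moves suffice, both realized inside $\widehat{G}$ acting by $M\mapsto {}^tg M g$. Taking the factorization $n=n\cdot 1$, the swap $S=\begin{pmatrix}0&1\\1&0\end{pmatrix}\in\widehat{G}$ exchanges $r\leftrightarrow a$ and fixes $d$. Taking $n=1\cdot n$, for each $b\in\bb{Z}$ the translation $T_b=\begin{pmatrix}1&b\sqrt{n}\\0&1\end{pmatrix}\in\widehat{G}$ acts by $r\mapsto r$, $d\mapsto d+rb$, $a\mapsto a+2bdn+b^2rn$; one checks directly that $n(d+rb)^2-r(a+2bdn+b^2rn)=nd^2-ra$, so it is indeed an isometry (as it must be, lying in $\widehat{G}$). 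I will also use $-\id$ on the lattice to normalize signs. The only routine verification here is the membership of $S,T_b$ in $\widehat{G}$ via Definition~\ref{defn:G} and the congruence computation of their actions.

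Now suppose for contradiction that no isometry $\Phi$ yields $r'a'\le 0$. Then every element of the orbit of $v$ has $r'\neq 0$, $a'\neq 0$ and $r'a'>0$ (in particular $v$ itself has $r\neq 0$). Since $|r'|$ runs over positive integers, I would choose an orbit representative $(r,dH,a)$ minimizing $|r|$ and apply $-\id$ if necessary so that $r>0$; then $a>0$ as well, because $ra>0$. Choosing $b$ so that $|d+rb|\le r/2$, and replacing $(r,dH,a)$ by $T_b$ of it, I may assume $|d|\le r/2$; this leaves $r$ unchanged, hence preserves minimality, and the new $a$ is again positive by the standing hypothesis $ra>0$. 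Applying $S$ to this reduced vector produces an orbit element of rank $a$, so minimality of $|r|$ gives $a\ge r$.

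The contradiction is then immediate:
\[
0<\ell=nd^2-ra\le \frac{n r^2}{4}-r^2=\frac{(n-4)\,r^2}{4}\le 0,
\]
using $|d|\le r/2$ and $a\ge r>0$ in the middle, and $n\le 4$ at the end. This is impossible, so some $\Phi$ must satisfy $r'a'\le 0$. The argument is elementary once the two reduction moves are identified; there is no analytic difficulty. The only real subtlety is the sign bookkeeping (the roles of $-\id$ and of $S$) together with the observation that twisting by $T_b$ preserves the minimality of $|r|$, and the single point where the hypothesis is used is the final estimate, where $n\le 4$ is exactly what forces the reduced diagonal to change sign.
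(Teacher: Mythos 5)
Your proof is correct and is essentially the paper's argument: both use the translation isometry (the paper's $e^{mH}$, your $T_b$) to reduce to $|d|\le r/2$, the swap isometry exchanging $r$ and $a$ (the paper's $(r,dH,a)\mapsto(a,-dH,r)$, your $S$), and the same key estimate that $\ell=nd^2-ra>0$ with $|d|\le r/2$ and $n\le 4$ forces the diagonal entries to compare unfavorably. The only difference is packaging — the paper runs an explicit descent on the rank (if $a>0$, swap and repeat, since $a<r$), whereas you phrase the same descent as a minimal-counterexample contradiction.
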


\begin{proof}
We may assume that $r \ne 0$.
Replacing $v$ by $-v$, we may assume that
$r>0$.
By the action of $e^{mH}$, we may assume that
$|d| \leq r/2$. 
Since $d^2 n-ra>0$,
we have $n/4 \geq (d/r)^2 n>a/r$.
By our assumption, we have
$a<r$.
If $a>0$, then we apply the isometry
$\varphi:(r,dH,a) \mapsto (a,-dH,r)$.
Applying the same arguments to $\varphi(v)$,
we finally get an isometry
$\Phi$ such that $\Phi(v)=(r',d' H,a')$
with $r' a' \leq 0$. 
\end{proof}

\begin{cor}\label{cor:n-leq4}
Assume that $n \leq 4$.
Let $v:=(r,dH,a)$ be a primitive Mukai vector  with
$\langle v^2 \rangle>0$.
If $\sqrt{\ell/n} \not \in {\Bbb Q}$,
then there are infinitely many
isotropic Mukai vectors
$u$ such that $\langle u,v \rangle>0$ and
$\langle (v-u)^2 \rangle \geq 0$.
In particular, 
there are infinitely many walls for $v$. 
\end{cor}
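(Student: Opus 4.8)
The plan is to combine the reduction furnished by the previous Proposition with the infinite‑order element of the stabilizer produced in Proposition~\ref{prop:stab}. Since any isometry $\Phi$ of $(H^*(X,{\Bbb Z})_{\alg},\langle\,\,,\,\,\rangle)$ sends isotropic vectors to isotropic vectors and preserves both $\langle u,v\rangle$ and $\langle (v-u)^2\rangle$, the assertion is unchanged when $v$ is replaced by $\Phi(v)$ and $u$ by $\Phi(u)$. So I would first apply the previous Proposition (this is where $n\le 4$ is used) to assume $v=(r,dH,a)$ with $ra\le 0$. The hypothesis $\sqrt{\ell/n}\notin{\Bbb Q}$ then forces $r\ne 0$ and $a\ne 0$: if $r=0$ or $a=0$ then $\langle v^2\rangle=2nd^2$, whence $\sqrt{\ell/n}=|d|\in{\Bbb Q}$, a contradiction. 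Thus $ra<0$, and after replacing $v$ by $-v$ if necessary (which replaces a candidate $u$ by $-u$ and preserves every condition) I may assume $r>0>a$.

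Next I would exhibit one admissible isotropic vector and then spread it around by an isometry. Take $u_0:=-\varrho_X=(0,0,-1)$; a direct computation gives $\langle u_0,v\rangle=r>0$ and $\langle (v-u_0)^2\rangle=2\ell-2r$, and since $\ell=nd^2+r|a|\ge r$ (using $a\le -1$) this is $\ge 0$, so $u_0$ satisfies \eqref{eq:wall-cond}. To generate infinitely many, I would invoke the infinite‑order element built in the proof of Proposition~\ref{prop:stab}: as $\sqrt{n\ell}=n\sqrt{\ell/n}\notin{\Bbb Q}$, the Pell equation $p^2-n\ell q^2=1$ has a solution with $q\ne 0$, and then $g:=pI_2+q\sqrt{n}\,F$ (with $F$ as in subsection~\ref{subsect:stab}, $F^2=\ell I_2$) lies in $\Stab_0(v)$, has $\det g=p^2-n\ell q^2=1$ hence fixes $v$, and has infinite order with eigenvalues $p\pm q\sqrt{n\ell}$. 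Under the identification of isotropic vectors with rank‑one symmetric matrices, i.e.\ with points of ${\Bbb P}^1$, the isometry $g$ acts as a hyperbolic Möbius transformation (its eigenvalues have distinct absolute value, since $pq\sqrt{n\ell}\ne 0$), so it fixes exactly two points of ${\Bbb P}^1$, namely the eigendirections of $F$, and every other point has infinite orbit. Because $a\ne 0$, the class of $u_0$ is not an eigendirection of $F$, so $\{g^k(u_0)\}_{k\in{\Bbb Z}}$ is infinite. Each $g^k(u_0)$ is isotropic with $\langle g^k(u_0),v\rangle=\langle u_0,v\rangle=r$, hence $\langle (v-g^k(u_0))^2\rangle=2\ell-2r\ge 0$; transporting back by $\Phi^{-1}$ gives infinitely many isotropic Mukai vectors $u$ for the original $v$ with $\langle u,v\rangle>0$ and $\langle (v-u)^2\rangle\ge 0$, proving the first assertion.

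For the walls, each such $u$ is isotropic with $\langle u,v\rangle=r>0$, so $\langle v,u\rangle^2=r^2>0=\langle v^2\rangle\langle u^2\rangle$ and Proposition~\ref{prop:existence-of-wall} yields $W_u\cap{\frak H}\ne\emptyset$. By Proposition~\ref{prop:wall-equation} the wall $W_u$ is determined by the line $u^\perp\cap v^\perp$ inside the rank‑two lattice $v^\perp$, equivalently by the anisotropic class $\eta(u):=\langle v^2\rangle u-\langle u,v\rangle v\in v^\perp$ up to scale. For orbit elements $u,u'$ (all with $\langle u,v\rangle=\langle u',v\rangle=r$) one checks that $\eta(u)\parallel\eta(u')$ forces $u,u'$ to be proportional or to satisfy $u+u'\in{\Bbb Q}v$; the latter determines $u'$ from $u$ uniquely, so $u\mapsto W_u$ has fibres of size $\le 2$ on the orbit. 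Hence infinitely many distinct walls.

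I expect the genuine obstacles to be the two points that this plan leans on: first, verifying that the distinguished isotropic class $u_0$ is not one of the two directions fixed by the hyperbolic $g$ (so that its orbit is truly infinite rather than eventually periodic), which I reduce to the simple observation $a\ne 0$; and second, the finiteness of the fibres of $u\mapsto W_u$, needed to upgrade ``infinitely many $u$'' to ``infinitely many walls.'' The indispensable arithmetic input—the existence of an infinite‑order $g$, i.e.\ a nontrivial unit in the relevant real quadratic order—is precisely where the irrationality $\sqrt{\ell/n}\notin{\Bbb Q}$ is consumed, through Proposition~\ref{prop:stab}.
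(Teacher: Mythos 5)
Your proof is correct and follows essentially the same route as the paper: reduce via the $n\le 4$ proposition to $ra\le 0$, rule out $ra=0$ using the irrationality of $\sqrt{\ell/n}$, take $u_0=(0,0,\pm 1)$, and propagate it by the infinite-order element of $\Stab_0(v)^*$ supplied by Proposition \ref{prop:stab} (the paper leaves implicit the two points you check explicitly, namely infiniteness of the orbit of $u_0$ and that infinitely many $u$ give infinitely many walls). One harmless slip: with the paper's convention that $g$ acts by $(x,y)\mapsto (x,y)g$ on row vectors, the fixed directions are the \emph{left} eigenvectors of $F$, so the condition ensuring $u_0$ is not fixed is $r\ne 0$ rather than $a\ne 0$ --- immaterial here since your reduction makes both nonzero.
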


\begin{proof}
We first show that there is a Mukai vector $u$ satisfying
the requirements.
We may assume that $ra \leq 0$.
If $ra=0$, then $\langle v^2 \rangle= d^2(H^2)$.
Hence $ra<0$. 
Then $u=(0,0,1)$ or $(0,0,-1)$ satisfies the requirements.

Since $\sqrt{\ell/n} \not \in {\Bbb Q}$, 
$\Stab_0(v)^*$ contains an element $g$ of 
infinite order.
Then $g^{n} (u)$ 
$(n \in {\Bbb Z})$ also satisfies the requirements.
\end{proof}

\begin{rem}
The condition $(H^2)\leq 8$ in Corollary \ref{cor:n-leq4}
is necessary by Lemma \ref{lem:no-wall}.
\end{rem}

\begin{NB}
\begin{prop}
Assume that $\ell=\langle v^2 \rangle/2 \geq 3$
and $\sqrt{\ell/n} \not \in {\Bbb Q}$.
If there is an isotropic Mukai vector $w$ with
$\langle v,w \rangle=1,2$,
then there are infinitely many walls of codimension 0,1.
\end{prop}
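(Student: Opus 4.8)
The plan is to manufacture a single non-empty wall of codimension $\le 1$ out of the given isotropic vector $w$, and then to spread it into infinitely many such walls by means of an autoequivalence of infinite order fixing $v$. Throughout I use that $\sqrt{\ell/n}\notin{\Bbb Q}$ is equivalent to $\sqrt{n\ell}\notin{\Bbb Q}$, so that the stabilizer results of subsection \ref{subsect:stab} apply.

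First I would verify that $w$ itself gives such a wall. Since $v$ is primitive, $\langle v^2\rangle=2\ell\ge 6$ and $0<\langle v,w\rangle\le 2<\ell$, Corollary \ref{cor:existence-of-wall} shows $w\in{\frak W}$ and $W_w\cap{\frak H}\ne\emptyset$, so $W_w$ is a genuine wall. If $\langle v,w\rangle=1$, then Proposition \ref{prop:codim0} yields $v=\ell w+v_2$ with $\langle w^2\rangle=\langle v_2^2\rangle=0$ and $\langle w,v_2\rangle=1$, so $W_w$ is a codimension $0$ wall by Proposition \ref{prop:codim0-div}(1). If $\langle v,w\rangle=2$, I would first reduce to $w$ primitive (if $w=2w'$ then $\langle v,w'\rangle=1$ and we are in the previous case); for $w$ primitive, $v_2:=v-w$ has $\langle v_2^2\rangle=2\ell-4\ge 2>0$, so $v=w+v_2$ is a decomposition of type (i) and $W_w$ is a codimension $1$ wall by Proposition \ref{prop:general:codim1-div}(1). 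In all cases $W_w$ has codimension $0$ or $1$.

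Next I would invoke the stabilizer: by Proposition \ref{prop:stab}(2), $\Stab_0(v)^*$ contains an element $g$ of infinite order, which by Lemma \ref{lem:n} (cf. Lemma \ref{lem:S_H}) is induced by an autoequivalence $\Phi$ of ${\bf D}(X)$ with $\Phi(v)=v$. For each $m\in{\Bbb Z}$ the vector $\Phi^m(w)$ is again isotropic with $\langle v,\Phi^m(w)\rangle=\langle\Phi^{-m}(v),w\rangle=\langle v,w\rangle$, so it too satisfies the hypotheses above and defines a non-empty wall $W_{\Phi^m(w)}$. Moreover, because $\Phi$ is a lattice isometry fixing $v$, applying it to the numerical decomposition witnessing the codimension of $W_w$ (namely $v=\ell w+v_2$ or $v=w+v_2$) produces a decomposition of exactly the same shape for $\Phi^m(w)$, with $\Phi^m(w)$ still primitive; hence every $W_{\Phi^m(w)}$ again has codimension $0$ or $1$.

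It remains to see that these walls are mutually distinct, which I expect to be the crux. Recording walls inside $v^\perp$ via Proposition \ref{prop:wall-equation}, the wall $W_w$ corresponds to the line $\ell_w:=v^\perp\cap w^\perp={\Bbb R}\eta$ with $\eta:=\langle v^2\rangle w-\langle v,w\rangle v$, and $\langle\eta^2\rangle=-\langle v^2\rangle\langle v,w\rangle^2<0$, so $\eta$ is not isotropic. Now $v^\perp$ has signature $(1,1)$, and $\Phi|_{v^\perp}$ has infinite order (otherwise $\Phi$ would be of finite order on ${\Bbb Q}v\oplus v^\perp_{\Bbb Q}$, contradicting the choice of $g$); an infinite-order isometry of an indefinite rank-$2$ lattice is hyperbolic, with the two isotropic rays as its only invariant lines. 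Since $\Phi^m(\ell_w)=v^\perp\cap\Phi^m(w)^\perp=\ell_{\Phi^m(w)}$ and $\ell_w$ is non-isotropic, the orbit $\{\Phi^m(\ell_w)\}_{m\in{\Bbb Z}}$ is infinite, so the walls $W_{\Phi^m(w)}$ are pairwise distinct. This produces infinitely many walls of codimension $0$ or $1$, as desired.
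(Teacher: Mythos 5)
Your argument is, in expanded form, the paper's own argument: the paper's proof consists precisely of the two remarks that $\sqrt{\ell/n}\notin{\Bbb Q}$ forces $\Stab_0(v)^*$ to contain an infinite cyclic subgroup, and that moving $w$ by this subgroup produces infinitely many walls of codimension $0,1$. What you add — non-emptiness via Corollary \ref{cor:existence-of-wall}, the codimension identification via Propositions \ref{prop:codim0}, \ref{prop:codim0-div} and \ref{prop:general:codim1-div}, and the pairwise distinctness of the resulting walls — is exactly the material the paper leaves implicit, and your way of filling it in is sound.

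One slip should be corrected. You claim $v^\perp\cap w^\perp={\Bbb R}\eta$ with $\eta=\langle v^2\rangle w-\langle v,w\rangle v$; this is false, since $\langle \eta,w\rangle=-\langle v,w\rangle^2\neq 0$. In fact $\eta$ spans the orthogonal complement of the line $v^\perp\cap w^\perp$ inside $v^\perp$ (this is precisely how $\eta$ is used in the paper's proof of Proposition \ref{prop:existence-of-wall}); since $\langle\eta^2\rangle=-\langle v^2\rangle\langle v,w\rangle^2<0$ and $v^\perp$ has signature $(1,1)$, the line $v^\perp\cap w^\perp$ is \emph{positive}, not negative. Had it really been negative, as your identification would force, the wall $W_w$ would be empty by Propositions \ref{prop:wall-equation} and \ref{prop:positive}, contradicting your own first step. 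Fortunately, what your hyperbolic-isometry argument actually uses is only that this line is non-isotropic, which the corrected statement still gives: an infinite-order isometry of the signature-$(1,1)$ space $v^\perp_{\Bbb R}$ fixes only the two isotropic lines, so the orbit of the positive line $v^\perp\cap w^\perp$ is infinite. With this repair the distinctness argument, and hence the whole proof, stands.
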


\begin{proof}
Since $\sqrt{\ell/n} \not \in {\Bbb Q}$, 
$\Stab_0(v)^*$ contains a cyclic subgroup of
infinite order.
Hence if there is an isotropic Mukai vector $w$ with
$\langle v,w \rangle=1,2$,
then there are infinitely many walls of codimension 0,1.
\end{proof}
\end{NB}

\subsection{An example}

Assume that $n:=(H^2)/2=1$.
We set $v:=(2,H,-2)$.
Then $\ell:=\langle v^2 \rangle/2=5$.
We set
\begin{equation*}
g:=
\begin{pmatrix}
0 & 1\\
1 & 1
\end{pmatrix},\;
h:=
\begin{pmatrix}
0 & -1\\
1 & 0
\end{pmatrix}.
\end{equation*}
In this case, we have
\begin{equation*}
\begin{split}
\Stab_0(v)=& \{ \pm g^n \mid n \in {\Bbb Z} \},\\
\Stab(v)=& \Stab_0(v) \rtimes \langle h \rangle,\\
h^{-1} g h=& -g^{-1}.
\end{split}
\end{equation*}

Let $C_0$ is a line defined by
\begin{equation*}
s=\frac{1}{2}.
\end{equation*}
It is the wall defined by $v_0:=(0,0,-1)$.
Let $C_{-1}$ is a circle defined by
\begin{equation*}
t^2+s(s+2)=0.
\end{equation*}
It is the wall defined by $v_{-1}:=(1,0,0)= v_0 \cdot g^{-1}$.
We set $v_n:= v_0 \cdot g^n $ and let 
$C_n$ be the wall defined by $v_n$.

\begin{lem}
$\{ C_n \mid n \in {\Bbb Z} \}$ is the set of walls
for $v$.
\end{lem}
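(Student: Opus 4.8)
The plan is to pass from the set of walls in $\overline{\frak H}$ to hyperplane sections of $v^\perp$, where $g$ acts, and then to count orbits. By Proposition \ref{prop:wall-equation} a wall $W_{v_1}$ is recorded by the ray $v_1^\perp\cap v^\perp$ inside $C^+=C(\overline{P^+(v^\perp)}_{\Bbb R})$; since $\langle v^2\rangle=10>0$ the lattice $v^\perp$ has signature $(1,1)$, so $C^+$ is an open interval whose endpoints are the isotropic rays ${\Bbb R}_{>0}v_\pm$, and these are irrational because $\sqrt{\ell/n}=\sqrt 5\notin{\Bbb Q}$. The isometry $g\in\Stab_0(v)$ fixes $\pm v$, hence acts on $v^\perp$ and on $C^+$ as a hyperbolic transformation with fixed points $v_\pm$, and it permutes the walls. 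Moreover, for a compact $A\subset P^+(v^\perp)_{\Bbb R}$ the set ${\frak W}_A$ is finite, so the walls accumulate only at $v_\pm$ and the quotient of the open interval by $\langle g\rangle$ is compact with only finitely many $\langle g\rangle$-orbits of walls.

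First I would check the inclusion $\supseteq$. The vector $v_0=(0,0,-1)$ is isotropic and satisfies $0<\langle v_0,v\rangle=2<5=\ell$, so by Corollary \ref{cor:existence-of-wall} it defines a nonempty wall; thus $C_0$ is a wall. Since $g$ permutes walls and $v_n=v_0\cdot g^n$, every $C_n=g^n\cdot C_0$ is a wall.

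For the reverse inclusion, let $W=W_{v_1}$ be an arbitrary wall. Because $(H^2)=2$ and $v=(2,H,-2)$, the pairing $\langle v,\,\cdot\,\rangle$ takes only even values on $H^*(X,{\Bbb Z})_{\alg}$, and likewise $\langle v_1^2\rangle$ is even. Combined with $0<\langle v,v_1\rangle<10$ and $0\le\langle v_1^2\rangle<\langle v,v_1\rangle$ from Lemma \ref{lem:nowall-cond} (together with $\langle(v-v_1)^2\rangle\ge0$ and $\langle v,v_1\rangle^2>10\langle v_1^2\rangle$), a short enumeration leaves only the four possibilities $(\langle v,v_1\rangle,\langle v_1^2\rangle)\in\{(2,0),(4,0),(6,2),(8,6)\}$. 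Replacing $v_1$ by $v-v_1$ when $\langle v,v_1\rangle>\ell$ and passing to the primitive vector, each of these collapses to the case $(2,0)$: indeed $10v_1-\langle v,v_1\rangle v$ is, up to a scalar, a primitive $\eta\in v^\perp$ with $\langle\eta^2\rangle=-10$, and $u:=\tfrac15(v-\eta)$ is then a primitive isotropic vector with $\langle v,u\rangle=2$ defining the same wall $W$. Hence every wall is $W_u$ for a primitive isotropic $u$ with $\langle v,u\rangle=2$.

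It remains to show these $u$ form a single orbit up to sign, and this is the main obstacle. The assignment $u\mapsto 5u-v$ is a sign-respecting bijection between such $u$ and the primitive vectors $\eta\in v^\perp$ with $\langle\eta^2\rangle=-10$; under the identification of $(v^\perp,\langle\,\cdot\,,\cdot\,\rangle)$ with the discriminant-$5$ lattice these are exactly the primitive representations of a fixed value by the principal binary quadratic form of discriminant $5$. Since that form has class number one and $g$ acts on $v^\perp$ as the fundamental proper automorph (its eigenvalues are the squares of the fundamental unit of ${\Bbb Z}[\tfrac{1+\sqrt5}{2}]$, consistent with $\im\varphi\cong{\Bbb Z}\oplus{\Bbb Z}_2$ in Proposition \ref{prop:stab}), all such $\eta$ lie in the single orbit $\{\pm g^n\eta_0\}$, where $\eta_0$ corresponds to $v_0$. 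Therefore every wall is some $C_n$, and the hardest point — the class-number-one, Pell-type computation identifying this orbit and verifying that $\langle g\rangle$ realizes the full proper automorph group — is precisely what pins the list down to $\{C_n\mid n\in{\Bbb Z}\}$.
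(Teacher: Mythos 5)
Your skeleton (walls as hyperplane sections of $v^\perp$, the enumeration, the $g$-action) is a genuinely different route from the paper's, and the enumeration $(\langle v,v_1\rangle,\langle v_1^2\rangle)\in\{(2,0),(4,0),(6,2),(8,6)\}$ is correct. But there is a genuine gap at the ``collapse'' step. You assert that $10v_1-\langle v,v_1\rangle v$ is, up to a scalar, a \emph{primitive} $\eta\in v^\perp$ with $\langle\eta^2\rangle=-10$. This is automatic only for the pairs $(2,0)$ and $(8,6)$. For $(4,0)$ one has $10v_1-4v=2(5v_1-2v)$ with $\langle(5v_1-2v)^2\rangle=-40$, and since $5v_1-2v\equiv v_1 \pmod 2$, its primitive part has square $-10$ if and only if $v_1$ itself is divisible by $2$; the case $(6,2)$ reduces to the same issue because $v-v_1$ is then isotropic with pairing $4$. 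So your claim is exactly equivalent to the statement that there is \emph{no primitive} isotropic $u$ with $\langle v,u\rangle=4$ --- and this is not formal: if such a $u$ existed, $W_u$ would be a wall distinct from every $C_n$ (the primitive generator of $u^\perp\cap v^\perp$ has square $-40$, while that of $v_n^\perp\cap v^\perp$ has square $-10$, so the lines differ) and the lemma would be false. The fact is true here, but needs an argument you never give: primitive isotropic vectors are $\pm(p^2,pqH,q^2)$ with $\gcd(p,q)=1$, so $\langle v,u\rangle=\pm 2(p^2+pq-q^2)$, and $4(p^2+pq-q^2)=(2p+q)^2-5q^2$ is a square mod $5$, hence $p^2+pq-q^2\not\equiv\pm2\pmod 5$ and the pairing is never $\pm4$. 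In addition, your final step --- that all primitive isotropic $u$ with $\langle v,u\rangle=2$ form a single $\{\pm g^n\}$-orbit --- is only sketched and explicitly deferred to a ``class-number-one, Pell-type computation''; since this is the crux of your route, the proof as written is an outline at both of its load-bearing points.

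For comparison, the paper's proof sidesteps all of this arithmetic. Since $g$ is induced by a (contravariant) Fourier--Mukai transform, it permutes walls and carries the chamber between $C_{-1}$ and $C_0$ onto the other complementary chambers, so it suffices to show there is no wall strictly between $C_{-1}$ and $C_0$. Such a wall would have to cross the line $s=0$ (because $C_{-1}$ passes through the origin, any other wall in that region has its two real endpoints $\lambda\in(0,1/2)$ and $\mu<-2$), and at a point $(0,t)$ of a wall $W_w$ with $w=(r',d'H,a')$ one finds $Z_{(0,tH)}(w)=d'\,Z_{(0,tH)}(v)$ and $Z_{(0,tH)}(v-w)=(1-d')\,Z_{(0,tH)}(v)$; the conditions \eqref{eq:wall-cond} then force $d'>0$ and $1-d'>0$ (a nonzero class of non-negative square cannot have vanishing central charge), which is impossible for an integer $d'$. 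That three-line geometric argument replaces both your collapse step and your orbit computation, which is the main thing your approach gives up in exchange for working purely in the lattice $v^\perp$.
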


\begin{proof}
It is sufficient to prove that
there is no wall between $C_{-1}$ and $C_0$.
If a Mukai vector $w:=(r',d'H,a')$ defines a wall for $v$ and
the wall $W_w$ lies between $C_{-1}$ and $C_0$.
Since $C_{-1}$ passes $(0,0)$,
$W_w$ intersects with the line $s=0$.
Hence we get $d',1-d'>0$, which is a contradiction.
\end{proof}

\begin{prop}
\begin{enumerate}
\item[(1)]
For $(s,t) \not \in \cup_{n \in {\Bbb Z}} C_n$, 
$$
M_{(sH,tH)}(2,H,-2) \cong M_H(2,H,-2).
$$
\item[(2)]
Let $(r,dH,a)$ be a primitive Mukai
vector such that $2 \mid r, 2 \mid a$ and $d^2-ra=5$.
Then 
$$
M_H(r,dH,a) \cong M_H(2,H,-2).
$$
\end{enumerate}
\end{prop}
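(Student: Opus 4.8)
The plan is to prove (1) by showing that \emph{every} wall $C_n$ is a codimension $1$ wall, so that crossing it induces an \emph{isomorphism} of moduli spaces, and then to connect an arbitrary chamber to the large volume limit by a chain of such wall-crossings. First I would record that each defining vector $v_n=v_0\cdot g^n$ is a primitive isotropic Mukai vector with $|\langle v,v_n\rangle|=2$: indeed $v_0=(0,0,-1)$ and $v_{-1}=(1,0,0)$ satisfy $\langle v,v_0\rangle=\langle v,v_{-1}\rangle=2$, and since $g$ acts on $H^*(X,{\Bbb Z})_{\alg}$ as an isometry with $v\cdot g=-v$, the relations $\langle v_n^2\rangle=0$, $|\langle v,v_n\rangle|=2$ and the primitivity of $v_n$ persist for all $n$. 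Replacing $v_n$ by $-v_n$ if necessary we may assume $\langle v,v_n\rangle=2$, and then $v=v_n+(v-v_n)$ with $\langle(v-v_n)^2\rangle=\langle v^2\rangle-2\langle v,v_n\rangle=10-4=6\ge 0$. Thus each $C_n$ is a wall of the type in Proposition \ref{prop:general:codim1-div} (1)(i), i.e.\ a codimension $1$ wall.

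Granting this, Remark \ref{rem:isom} supplies, for each wall $C_n$ and any two points $(\beta_\pm,\omega_\pm)$ of the two adjacent chambers, an isomorphism $M_{(\beta_+,\omega_+)}(v)\cong M_{(\beta_-,\omega_-)}(v)$ compatible with $\theta_{v,\beta,\omega}$. Because the set of walls meeting any fixed compact subset of ${\frak H}$ is finite, any $(s,t)\notin\cup_n C_n$ can be joined to a large volume point (where $M_{(sH,tH)}(v)=M_H(v)$) by a path crossing only finitely many of the $C_n$ at general points; composing the corresponding wall-crossing isomorphisms gives $M_{(sH,tH)}(v)\cong M_H(v)$. (Only $C_0$ is the vertical line $s=\tfrac12$, the remaining $C_n$ being bounded semicircles, so such a finite path avoiding the accumulation points of the walls indeed exists.) The main obstacle in (1) is precisely this bookkeeping: confirming that the codimension $1$ classification applies uniformly to every $C_n$ and that each chamber is genuinely linked to the large volume region through finitely many divisorial walls.

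For (2) the plan is to produce a Fourier--Mukai transform carrying $(2,H,-2)$ to $(r,dH,a)$ and then to reduce to (1). The hypotheses $2\mid r$, $2\mid a$, $d^2-ra=5$ say that $\iota_X(r,dH,a)=\begin{pmatrix} r & d\\ d & a\end{pmatrix}$ has determinant $-5$ and even content, so that $\tfrac12(r,dH,a)$ corresponds to the primitive integral binary quadratic form $(r/2)x^2+dxy+(a/2)y^2$ of discriminant $d^2-ra=5$; the same holds for $(2,H,-2)$, whose associated form is $x^2+xy-y^2$. Since the discriminant $5$ is fundamental with narrow class number one, all these forms are $\GL(2,{\Bbb Z})$-equivalent, so there is $g'\in\widehat{G}=\GL(2,{\Bbb Z})$ with $(2,H,-2)\cdot g'=\pm(r,dH,a)$ (the sign and the replacement $d\mapsto -d$ being absorbed by composing with the dualizing functor). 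By Lemma \ref{lem:n}, which guarantees realizability of such lattice isometries by autoequivalences, $g'$ is induced by a Fourier--Mukai transform $\Phi:{\bf D}(X)\to{\bf D}(X)$ with $\Phi(2,H,-2)=(r,dH,a)$.

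Finally I would transport stability along $\Phi$. Taking $(\beta,\omega)$ in the large volume chamber for $(2,H,-2)$, $\Phi$ maps $\sigma_{(\beta,\omega)}$-semistable objects of Mukai vector $(2,H,-2)$ to $\sigma_{\Phi(\beta,\omega)}$-semistable objects of Mukai vector $(r,dH,a)$, so that $M_H(2,H,-2)=M_{(\beta,\omega)}(2,H,-2)\cong M_{\Phi(\beta,\omega)}(r,dH,a)$. Since $\Phi$ is a lattice isometry it carries the walls of $(2,H,-2)$ bijectively onto those of $(r,dH,a)$ preserving codimension, hence every wall of $(r,dH,a)$ is again codimension $1$ and part (1) applies verbatim to $(r,dH,a)$, giving $M_{\Phi(\beta,\omega)}(r,dH,a)\cong M_H(r,dH,a)$. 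Combining the two isomorphisms yields $M_H(r,dH,a)\cong M_H(2,H,-2)$. I expect the delicate point of (2) to be this realizability step, namely checking that the purely lattice-theoretic $g'$ of determinant $-1$ really comes from an honest Fourier--Mukai transform on ${\bf D}(X)$ preserving Bridgeland stability.
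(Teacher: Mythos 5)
Your proof is correct, but in both parts it takes a genuinely different route from the paper's. For (1) the paper never classifies the walls: it observes that the chamber ${\cal C}$ between $C_0$ and $C_{-1}$ satisfies $\bigcup_{n \in {\Bbb Z}}g^n({\cal C})={\Bbb H}\setminus\bigcup_{n \in {\Bbb Z}} C_n$, takes a contravariant Fourier--Mukai transform $\Psi$ inducing $g$, and is done because $\Psi$ preserves Bridgeland stability: every chamber is carried to ${\cal C}$, whose moduli space is $M_H(v)$ by the large volume limit. You instead show that every $C_n$ is a codimension $1$ wall and compose the wall-crossing isomorphisms of Remark \ref{rem:isom} along a path meeting finitely many walls. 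Your route costs extra bookkeeping (the wall types; local finiteness away from the accumulation points $(s_\pm,0)$; disjointness of the $C_n$, so that each crossing lies on exactly one wall, as required in Proposition \ref{prop:general:codim1-div}), but it does not use the fact that $\langle g\rangle$ acts transitively on the set of chambers, so it would survive in examples where the stabilizer orbit of a chamber is not the whole chamber set; the paper's argument is shorter and in addition exhibits all the identifications as induced by autoequivalences. For (2) the paper quotes \cite[Prop.~7.12]{YY1}, by which $w=(r,dH,a)$ is $\GL(2,{\Bbb Z})$-equivalent to $(1,0,-5)$ or to $(2,H,-2)$, and rules out $(1,0,-5)$ by the parity invariant $2\mid\langle w,u\rangle$; your passage to primitive binary quadratic forms of discriminant $5$ with class number one is a self-contained substitute for that citation (in effect a proof of exactly the case needed), while the remaining steps---realizability of the isometry by an autoequivalence via Lemma \ref{lem:n}, transport of stability, and reduction to (1)---are what the paper compresses into ``the claim follows from (1)''. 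One small economy you could add: running the Fourier--Mukai transform in the paper's direction, i.e.\ carrying $(r,dH,a)$ to $(2,H,-2)$, lets you quote (1) as stated instead of transporting the wall-and-chamber structure over to $(r,dH,a)$.
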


\begin{proof}
(1) Let ${\cal C}$ be the chamber between $C_0$ and $C_{-1}$.
Then $\cup_{n \in {\Bbb Z}} g^n({\cal C})=
{\Bbb H} \setminus \cup_{n \in {\Bbb Z}} C_n$.
Let $\Psi:{\bf D}(X) \to {\bf D}(X)$ be a contravariant Fourier-Mukai
transform inducing $g$.
Since $\Psi$ preserves the stability,
the claim holds.

(2) By the action of $\GL(2,{\Bbb Z})$,
$w:=(r,dH,a)$ is transformed to $(1,0,-5)$ or $(2,H,-2)$
( \cite[Prop. 7.12]{YY1}). 
Since $2 \mid \langle w,u \rangle$ for all $u \in H^*(X,{\Bbb Z})_{\alg}$,
$w:=(r,dH,a)$ is transformed to $(2,H,-2)$.
Then the claim follows from (1). 
\end{proof}

\subsection{Divisors on the moduli spaces
$M_{(sH,tH)}(v)$.}

\begin{defn}
Let $v=(r,dH,a)$ be a Mukai vector with $r>0$
and set $\ell:=\langle v^2 \rangle/2=d^2 n-ra$.
We set
$$
s_\pm:=\frac{d}{r}\pm \frac{1}{r} \sqrt{\frac{\ell}{n}}.
$$
\end{defn}

\begin{lem}
We take 
$g \in \Stab_0(v)^*$ such that the order is infinite.
Then $(s_\pm,0)$ are the fixed points of the action of
$g$ on $(s,t)$-plane.
In particular, if there is a wall, then
$(s_\pm,0)$ are the accumulation points of the set of walls.
\end{lem}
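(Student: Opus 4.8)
The plan is to identify the action of $g$ on the parameter plane $\overline{\mathbb{H}}$ with a Möbius transformation, read off its two real boundary fixed points, and then combine the resulting hyperbolic dynamics with the fact that $g$ permutes the walls for $v$. Concretely, I would encode a point $(s,t)\in\overline{\mathbb{H}}$ by the complexified isotropic class $e^{(s+\sqrt{-1}t)H}$, which in the model $\Sym_2(\mathbb{Z},n)$ is the isotropic vector with $(x,y)=(1,\tau)$, $\tau:=(s+\sqrt{-1}t)\sqrt{n}$. Let $\Phi$ be an autoequivalence inducing $g$ (these exist for $g\in\Stab_0(v)^*$, cf. Lemma~\ref{lem:n}); since $\det g=1$ it is covariant, preserves Bridgeland stability, and acts on $H^*(X,\mathbb{Z})_{\alg}$ by $u\mapsto u\cdot g$. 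By the rule for isotropic vectors recorded in subsection~\ref{subsect:FM}, writing $g=\begin{pmatrix}x&y\\z&w\end{pmatrix}$, the pair $(1,\tau)$ is sent to $(1,\tau)g$, and (using $t>0$ to ensure $x+\tau z\neq 0$) the image is a nonzero multiple of $e^{(s'+\sqrt{-1}t')H}$ with $\tau'=\frac{w\tau+y}{z\tau+x}$. This is exactly the induced action on stability parameters from subsection~\ref{subsect:polarization}; and since $\Phi$ fixes $v$, it carries the locus of parameters admitting a properly semistable object of class $v$ to itself, i.e. it permutes the walls.

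Next I would locate the fixed points. By \eqref{eq:yz} (which holds because $g\in\Stab_0(v)$) one has $x-w=-2d\sqrt{n}\tfrac{z}{r}$ and $-y=\tfrac{a}{r}z$, so the fixed-point equation $z\tau^2+(x-w)\tau-y=0$ reduces to $\tfrac{z}{r}\bigl(r\tau^2-2d\sqrt{n}\,\tau+a\bigr)=0$. Since $g$ has infinite order we have $z\neq 0$ (otherwise $g=\pm I_2$), so the only fixed points are $\tau_\pm=\frac{d\sqrt{n}\pm\sqrt{\ell}}{r}$, that is $s_\pm=\tau_\pm/\sqrt{n}=\frac{d}{r}\pm\frac1r\sqrt{\ell/n}$; the discriminant $4\ell>0$ makes them real and distinct, which proves the first assertion and shows that $g$ acts on $\overline{\mathbb{H}}$ as a hyperbolic element of $\PSL_2(\mathbb{R})$, with one of $(s_\pm,0)$ attracting and the other repelling.

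Finally, assuming a wall $W$ exists, Proposition~\ref{prop:existence-of-wall} gives $W\cap\frak H\neq\emptyset$, so I may pick a point $p\in W$ with $t>0$, whence $p\neq(s_\pm,0)$. Each iterate $g^n(p)$ lies on the wall $g^n(W)$, while $g^n(p)\to(s_+,0)$ and $g^{-n}(p)\to(s_-,0)$ (after relabeling) as $n\to\infty$; hence both $(s_\pm,0)$ lie in the closure of $\bigcup_{v_1\in\frak W}W_{v_1}$ and are therefore accumulation points of the set of walls. The step I expect to be the main obstacle is the identification in the first paragraph: verifying that the action of $\Phi$ on the parameters $(s,t)$, defined through the transformed stability condition, really coincides with the concrete Möbius map in $\tau$, and that it genuinely carries walls to walls. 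Once this compatibility is settled, the fixed-point computation and the standard dynamics of a hyperbolic Möbius transformation complete the proof.
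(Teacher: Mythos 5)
Your proof is correct, and there is in fact nothing in the paper to measure it against: the lemma is stated there with no proof at all, being treated as an immediate consequence of Proposition \ref{prop:stab} (existence of an infinite-order element of $\Stab_0(v)^*$) together with the fact that the corresponding autoequivalences fix $v$ and therefore permute the walls. Your write-up supplies exactly the missing details, and each step checks against the paper's conventions: the identification $(s,t)\leftrightarrow(1,\tau)$ with $\tau=(s+\sqrt{-1}t)\sqrt{n}$ agrees with the rule for isotropic vectors $v\cdot g$ in subsection \ref{subsect:FM}; the reduction of the fixed-point equation to $\tfrac{z}{r}\bigl(r\tau^2-2d\sqrt{n}\,\tau+a\bigr)=0$ via \eqref{eq:yz} is right; $z\neq 0$ does follow from infinite order (if $z=rZ=0$ then $g=XI_2=\pm I_2$, using $r>0$); the roots give precisely $s_\pm$; and $\det g=1$, which is built into the definition of $\Stab_0(v)^*$, makes $g$ hyperbolic, since $X^2=1+\ell Z^2>1$. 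One remark on the step you flag as the main obstacle: you can bypass the stability-condition compatibility of subsection \ref{subsect:polarization} entirely, because walls are defined numerically. Since $Z_{(sH,tH)}(u)=\langle e^{(s+\sqrt{-1}t)H},u\rangle$ and $g$ acts as an isometry of the complexified Mukai lattice with $v\cdot g=v$, one gets $Z_{(sH,tH)}(u)=(x+\tau z)^2\,Z_{(s'H,t'H)}(u\cdot g)$ where $\tau'=(s'+\sqrt{-1}t')\sqrt{n}$ is the image of $\tau$ under your M\"{o}bius map; hence $(s,t)\in W_{v_1}$ if and only if $(s',t')\in W_{v_1\cdot g}$, and $v_1\cdot g$ again satisfies \eqref{eq:wall-cond} because $g$ is an isometry fixing $v$. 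This makes the wall permutation a purely lattice-theoretic fact, after which the hyperbolic dynamics give the accumulation of the orbits $g^{n}(p)$ and $g^{-n}(p)$ at $(s_+,0)$ and $(s_-,0)$ exactly as you argue.
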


We set $(\beta,\omega):=(sH,tH)$.
Then $v=(r,dH,a)$ is written as 
$$
v=r e^\beta+d_\beta(H+(\beta,H)\varrho_X)+a_\beta \varrho_X,
$$
where
\begin{equation*}
\begin{split}
d_\beta= & d-rs,\\
a_\beta= & -\langle e^\beta,v \rangle=a-(dH,\beta)+\frac{(\beta^2)}{2}r.
\end{split}
\end{equation*}
\begin{defn}
We set
\begin{equation}\label{eq:xi(s,t)}
\begin{split}
\xi(s,t):= &
\xi(sH,H,t)\\
=&
\left( r(s^2+t^2)n-a \right)
\left(H+\frac{2dn}{r}\varrho_X \right)-
2n(d-rs)
\left(1-\frac{a}{r}\varrho_X \right).
\end{split}
\end{equation}
\end{defn}

We consider the circle
\begin{equation*}
C_{v,\lambda}:
t^2+(s-\lambda)
\left(s-\frac{1}{r \lambda-d}
\left(\lambda d-\frac{a}{n} \right) \right)=0,\; \lambda \in {\Bbb R},
\end{equation*}
that is, ${\Bbb R} Z_{(sH,tH)}(v)={\Bbb R}Z_{(sH,tH)}(e^{\lambda H})$, where
$\lambda \ne d/r$.
We note that $(\lambda,0) \in C_{v,\lambda}$ and
$(d-rs)(d-r\lambda)> 0$ for $(s,t) \in C_{v,\lambda}$.
For $(s,t) \in C_{v,\lambda}$,
we see that
\begin{equation*}
\xi(s,t)=
(d-rs)
\left(
\frac{r\lambda^2 n-a}{d-r\lambda}
\left(H+\frac{2dn}{r}\varrho_X \right) -
2n
\left(1-\frac{a}{r}\varrho_X \right)\right).
\end{equation*}
Hence ${\Bbb R}_{>0}\xi(s,t)={\Bbb R}_{>0}\xi(\lambda,0)$
and is determined by $\lambda$.
If 
$$
C_{v,\lambda}
=\{(s,t) \mid {\Bbb R} Z_{(sH,tH)}(v)={\Bbb R}Z_{(sH,tH)}(v_1)\},
$$
that is, $C_{v,\lambda}$
is the wall defined by a Mukai vector $v_1:=(r_1,d_1 H,a_1)$, then
$\frac{r\lambda^2 n-a}{d-r\lambda}
=\frac{r a_1-r_1 a}{r_1 d-r d_1} \in {\Bbb Q}$.
Thus $\xi(s,t)/(d-rs) \in H^*(X,{\Bbb Q})_{\alg}$.

\begin{lem}
\begin{equation*}
\xi(s_\pm,0)=
2\left(\frac{\ell}{r} \pm
n\frac{d}{r}\sqrt{\frac{\ell}{n}} \right)
\left(H+\frac{(dH,H)}{r}\varrho_X \right)\pm
2n\sqrt{\frac{\ell}{n}}\left(1-\frac{a}{r}\varrho_X \right)
\end{equation*}
and satisfy $\langle \xi(s_\pm,0)^2 \rangle=0$.
Thus $\xi(s_\pm,0)$ define isotropic vectors
in $v^\perp \subset 
H^*(X,{\Bbb Z})_{\alg} \otimes {\Bbb R}$.
\end{lem}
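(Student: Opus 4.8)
The plan is to obtain the stated expression by direct substitution into the formula \eqref{eq:xi(s,t)} and then to read off the two orthogonality relations from short identities. Writing $\mu:=\sqrt{\ell/n}$, so that $n\mu^2=\ell$, the two quantities that enter \eqref{eq:xi(s,t)} at $t=0$, $s=s_\pm$ simplify immediately: since $\ell=d^2n-ra$ one has
\[
r s_\pm^2 n-a=\frac{d^2n-ra}{r}+\frac{n\mu^2}{r}\pm\frac{2dn}{r}\mu
=\frac{2\ell}{r}\pm\frac{2dn}{r}\mu,
\]
and likewise $d-rs_\pm=\mp\mu$. Substituting these into \eqref{eq:xi(s,t)} and using $(dH,H)=2dn$ gives exactly the displayed formula for $\xi(s_\pm,0)$.

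For the two assertions $\xi(s_\pm,0)\in v^\perp$ and $\langle\xi(s_\pm,0)^2\rangle=0$ I would simply compute the Mukai pairings. Writing $\xi(s_\pm,0)=(B,AH,A\tfrac{2dn}{r}-B\tfrac{a}{r})$ with $A=\tfrac{2}{r}(\ell\pm dn\mu)$ and $B=\pm2n\mu$, the pairing against $v=(r,dH,a)$ is $Ad(H^2)-Ba-r(A\tfrac{2dn}{r}-B\tfrac{a}{r})=2Adn-Ba-(2Adn-Ba)=0$; alternatively $\xi(s_\pm,0)\in v^\perp$ follows from Proposition \ref{prop:positive}, since $\Xi$ maps $\overline{\frak H}$ into $C(\overline{P^+(v^\perp)}_{\Bbb R})$, whose representatives lie in $v^\perp$. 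For the self-pairing one expands
\[
\langle\xi(s_\pm,0)^2\rangle=2nA^2-\tfrac{4dn}{r}AB+\tfrac{2a}{r}B^2;
\]
substituting $A,B$ the terms linear in $\mu$ cancel, and after reducing with $n\mu^2=\ell$ the remainder collapses to $\tfrac{8n\ell}{r^2}(\ell-d^2n+ra)$, which is $0$ because $\ell=d^2n-ra$. Hence $\xi(s_\pm,0)$ are nonzero isotropic vectors in $v^\perp$.

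The only genuinely delicate point is conceptual rather than computational: at $(s_\pm,0)$ the stability function degenerates, $Z_{(s_\pm H,0)}(v)=0$, as one checks from the expression for $Z_{(\beta,tH)}(v)$ in the proof of Lemma \ref{lem:Im} together with $((\delta-\beta)^2)=\tfrac{2\ell}{r^2}$ at these points. Consequently the characterization of $\xi(\beta,H,t)$ as $\mathrm{Im}\!\bigl(Z_{(\beta,tH)}(v)^{-1}e^{\beta+\sqrt{-1}tH}\bigr)$ from Lemma \ref{lem:Im} is unavailable here, which is why I prefer to work with the polynomial definition of $\xi$ directly as above. Conceptually the vanishing $\langle\xi(s_\pm,0)^2\rangle=0$ is forced: $v^\perp$ has signature $(1,1)$, so $\overline{P^+(v^\perp)}_{\Bbb R}$ is a two–dimensional cone bounded by exactly two isotropic rays, and $s_\pm$ are precisely the values realizing the equality case $((\delta-\beta)^2)=\langle v^2\rangle/r^2$ with $t=0$ in Lemma \ref{lem:non-perp}(1); thus $\xi(s_\pm,0)$ must land on the boundary of that cone, recovering the two accumulation points of walls.
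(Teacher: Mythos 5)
Your proof is correct and follows essentially the same route as the paper: direct substitution of $s_\pm$, $t=0$ into the definition \eqref{eq:xi(s,t)} and an explicit expansion of the Mukai self-pairing, which (after the $\pm$-linear terms cancel) collapses to $\tfrac{8n\ell}{r^2}(\ell-d^2n+ra)=0$. The paper's own proof records only this self-pairing computation, organized as a difference of squares, and leaves the displayed formula and the membership $\xi(s_\pm,0)\in v^\perp$ as immediate substitutions, so your additional verifications (including the observation that $Z_{(s_\pm H,0)}(v)=0$ forces one to work with the polynomial definition rather than Lemma \ref{lem:Im}) are consistent elaborations rather than a different method.
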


\begin{proof}
\begin{equation*}
\begin{split}
\langle \xi(s_\pm,0)^2 \rangle
= & 4\left(\frac{\ell}{r} \pm
n\frac{d}{r}\sqrt{\frac{\ell}{n}} \right)^2(2n)
+4 \ell n \frac{2a}{r}
\mp 8 \left(\frac{\ell}{r} \pm
n\frac{d}{r}\sqrt{\frac{\ell}{n}} \right)
\sqrt{n \ell}\frac{d}{r}(2n)\\
=& 4 \left(\frac{\ell}{r} \pm
n\frac{d}{r}\sqrt{\frac{\ell}{n}} \right)
\left(\frac{\ell}{r} \mp
n\frac{d}{r}\sqrt{\frac{\ell}{n}} \right)(2n)
+4 \ell n \frac{2a}{r}\\
=& 
4\frac{\ell^2}{r^2}(2n)-4
n\frac{d^2 }{r^2}\ell (2n)
+4 \ell n \frac{2a}{r}\\
=& 4\frac{\ell}{r^2}(\ell-d^2 n)(2n)
+4  \frac{\ell}{r^2}(ra)(2n)
= 0.
\end{split}
\end{equation*}
Therefore we get the claim.
\end{proof}

Proposition \ref{prop:positive-cone} is written as follows.
\begin{prop}
\begin{enumerate}
\item[(1)]
If $(s,t)$ belongs to a chamber and
$s, t^2 \in {\Bbb Q}$, then
$\theta_{v,sH,tH}(\xi(s,t))$ is an ample ${\Bbb Q}$-divisor
of $K_{(sH,tH)}(v)$.
\item[(2)]
We have a bijective map
\begin{equation*}
\varphi: [s_-,s_+] \to 
C(\overline{P^+(K_{(sH,tH)}(v))}_{\Bbb R}) 
\end{equation*}
such that 
$$
\varphi(\lambda):={\Bbb R}_{>0} \theta_{v,sH,tH}(\xi(\lambda,0)).
$$
\item[(3)]
$\Nef(K_{(sH,tH)}(v))_{\Bbb R}
=\varphi(\overline{D(sH,H,t)} \cap [s_-,s_+])$.
\end{enumerate}
\end{prop}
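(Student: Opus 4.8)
The plan is to obtain all three parts by specializing Proposition \ref{prop:positive-cone} (together with Proposition \ref{prop:positive}) to the rank-one situation $\NS(X)={\Bbb Z}H$; the only genuinely new work is to replace the abstract parameter space by the explicit interval $[s_-,s_+]$. For part (1) I would simply invoke Proposition \ref{prop:positive-cone}(1) with $(\beta',H',t')=(sH,H,t)$: since $\NS(X)_{\Bbb Q}={\Bbb Q}H$, the hypothesis $\beta'\in\NS(X)_{\Bbb Q}$ is exactly the condition $s\in{\Bbb Q}$, and there is nothing further to check.

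For part (2) the idea is to factor $\varphi$ through $\theta_v$. Since $\theta_v\colon(v^\perp,\langle\ ,\ \rangle)\to(H^2(K_{(sH,tH)}(v),{\Bbb Z}),q)$ is an isometry, it induces an isomorphism $C(\overline{P^+(v^\perp)}_{\Bbb R})\xrightarrow{\sim}C(\overline{P^+(K_{(sH,tH)}(v))}_{\Bbb R})$, so it suffices to prove that the map $\psi\colon[s_-,s_+]\to C(\overline{P^+(v^\perp)}_{\Bbb R})$, $\lambda\mapsto{\Bbb R}_{>0}\xi(\lambda,0)$, is a bijection. In rank one $v^\perp$ has signature $(1,1)$, so $C(\overline{P^+(v^\perp)}_{\Bbb R})$ is a closed interval whose two endpoints are the isotropic rays; by the preceding lemma these are precisely ${\Bbb R}_{>0}\xi(s_\pm,0)$, so $\psi$ sends endpoints to endpoints. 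For surjectivity I would use Proposition \ref{prop:positive}: the map $\Xi$ is onto $C^+$, and because $H^\perp\cap\NS(X)_{\Bbb R}=0$ in rank one, the construction in its proof realizes every ray already with $t=0$ and $\beta\in{\Bbb R}H$, i.e.\ as ${\Bbb R}_{>0}\xi(s,0)$. Writing $\xi(s,0)=P(s)u_1+Q(s)u_2$ in the basis $u_1=H+\tfrac{2dn}{r}\varrho_X$, $u_2=1-\tfrac{a}{r}\varrho_X$ of $v^\perp_{\Bbb R}$, with $P(s)=rns^2-a$ and $Q(s)=2n(rs-d)$, one checks $\langle\xi(s,0)^2\rangle=2r^2n^3(s-s_-)^2(s-s_+)^2\ge0$ and $\langle\xi(s,0),u_1\rangle>0$ for all $s$, so $\xi(s,0)$ always lies in $\overline{P^+(v^\perp)}_{\Bbb R}$ and meets its boundary exactly at $s=s_\pm$.

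The main obstacle is injectivity, which I would settle by a monotonicity (``angular velocity'') computation: $P(\lambda)Q'(\lambda)-Q(\lambda)P'(\lambda)=-2r^2n^2(\lambda-s_-)(\lambda-s_+)$, which is strictly positive on $(s_-,s_+)$. Hence the ray ${\Bbb R}_{>0}\xi(\lambda,0)$ turns in one fixed direction without reversing as $\lambda$ increases, so $\psi$ is a strictly monotone continuous map of intervals carrying endpoints to endpoints, therefore a bijection; composing with the isomorphism induced by $\theta_v$ gives the bijectivity of $\varphi$. I expect this monotonicity estimate, together with the identification of the two isotropic endpoints $\xi(s_\pm,0)$, to be the only non-formal step.

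For part (3) I would apply Proposition \ref{prop:positive-cone}(3), which gives $\theta_v(\overline{D(sH,H,t)})=\Nef(K_{(sH,tH)}(v))_{\Bbb R}$ with $\overline{D(sH,H,t)}$ a chamber in $C^+=C(\overline{P^+(v^\perp)}_{\Bbb R})$. Transporting this chamber through the bijection $\psi$ of part (2) identifies it with $\{\lambda\in[s_-,s_+]\mid\xi(\lambda,0)\in\overline{D(sH,H,t)}\}=\overline{D(sH,H,t)}\cap[s_-,s_+]$, so that $\Nef(K_{(sH,tH)}(v))_{\Bbb R}=\varphi(\overline{D(sH,H,t)}\cap[s_-,s_+])$, as claimed.
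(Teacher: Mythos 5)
Your proposal is correct; parts (1) and (3) are handled exactly as in the paper (both reduce to Proposition \ref{prop:positive-cone}, with your reading of the intersection $\overline{D(sH,H,t)}\cap[s_-,s_+]$ via the bijection of part (2) being the intended one), but your proof of part (2) takes a genuinely different route. The paper covers the closed upper half-plane by the pencil of circles $C_{v,\lambda}$, $\lambda\in[s_-,s_+]$: the ray ${\Bbb R}_{>0}\xi(s,t)$ is constant along each $C_{v,\lambda}$ and is determined by the injective function $f(\lambda)=(r\lambda^2 n-a)/(d-r\lambda)$, so bijectivity of $\varphi$ follows from surjectivity of the full parameter-space map (Proposition \ref{prop:positive-cone} (2)) together with injectivity of $f$. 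You instead argue entirely on the line $t=0$: writing $\xi(s,0)=P(s)u_1+Q(s)u_2$ with $u_1=H+\tfrac{2dn}{r}\varrho_X$, $u_2=1-\tfrac{a}{r}\varrho_X$, the identities $\langle\xi(s,0)^2\rangle=2r^2n^3(s-s_-)^2(s-s_+)^2$ and $\langle\xi(s,0),u_1\rangle=\tfrac{2n}{r}\bigl(n(rs-d)^2+\ell\bigr)>0$ (both of which check out) place the whole curve inside $\overline{P^+(v^\perp)}_{\Bbb R}$ with isotropic endpoints exactly at $s_\pm$, and the Wronskian $PQ'-QP'=-2r^2n^2(s-s_-)(s-s_+)>0$ on $(s_-,s_+)$ gives strictly monotone rotation of the ray, hence a bijection onto the arc of rays $C(\overline{P^+(v^\perp)}_{\Bbb R})$. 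Your route is more elementary and self-contained, and it explains \emph{why} one must restrict to $[s_-,s_+]$: outside this interval the Wronskian is negative and the same rays are retraced (the ray map on all of ${\Bbb R}$ is generically two-to-one). For the same reason, your side remark that Proposition \ref{prop:positive} already realizes every ray ``with $t=0$'' is superfluous and would be insufficient on its own (it produces some $s\in{\Bbb R}$, not necessarily $s\in[s_-,s_+]$); fortunately your monotone-interval argument does not need it. What the paper's pencil-of-circles argument buys in exchange is the explicit bookkeeping of which parameters $(s,t)$ with $t>0$ correspond to which $\lambda$, which is convenient background for part (3) but not logically necessary given your bijection.
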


\begin{proof}
(1) is obvious.
(2) We note that
$f(\lambda):=(r\lambda^2 n-a)/(d-r \lambda)$ gives a bijective map
$$
f:[s_-,s_+] \to [\tfrac{2\sqrt{n \ell}}{r}-\tfrac{2dn}{r},\infty]
\cup [-\infty,-\tfrac{2\sqrt{n \ell}}{r}-\tfrac{2dn}{r}]
$$
and
 $$
\bigcup_{\lambda \in [s_-,s_+]}C_{v,\lambda}=
{\Bbb R}^2 \setminus \{(s_-,0),(s_+,0)\},
$$
where we identify $\infty$ with $-\infty$,
$\lambda=d/r$ corresponds to $\pm \infty$
and
$C_{v,d/r}$ is the line $s=d/r$.
Since $C_{v,\lambda} \cap [s_-,s_+]=\{ \lambda \}$,
$[s_-,s_+]$ is the parameter space of $C_{v,\lambda}$.
Since $\xi(s,t)$ is determined by $f(\lambda)$,
Proposition \ref{prop:positive-cone} implies
$\varphi$ is bijective.
\begin{NB}
We set $x:=d-r\lambda$.
Then $\frac{r \lambda^2 n-a}{d-r \lambda}=
\frac{nx}{r}+\frac{\ell}{r x}-\frac{2dn}{r}$
gives a bijection
$[s_-,s_+] \to [\frac{2\sqrt{n \ell}}{r}-\frac{2dn}{r},\infty]
\cup [-\infty,-\frac{2\sqrt{n \ell}}{r}-\frac{2dn}{r}]$,
where $\lambda=d/r$ corresponds to $\pm \infty$.
\end{NB}
  
(3) is a consequence of 
Proposition \ref{prop:positive-cone}
(1) and (2).
\end{proof}

\begin{NB}
If $s=\frac{d}{r}$, then
$(d-rs)\theta_v(\xi_\omega)=
\frac{\langle v^2 \rangle+r^2 t^2 (H^2)}{2r}
(H+\frac{d}{r}(H^2)\varrho_X)$.
\end{NB}

\subsection{The movable cone of $K_{(sH,tH)}(v)$.}

\begin{NB}
\begin{rem}
For $w \in {\frak I}_0$,
we have 
$W_w=\{(\lambda,0)  \}$, where $\lambda H=c_1(w)/\rk w$.
\end{rem}
\end{NB}

\begin{lem}\label{lem:I}
Let $v$ be a Mukai vector with $\langle v^2 \rangle=2 \ell$.
\begin{enumerate}
\item[(1)]
${\frak I}_0 \ne \emptyset$ if and only if $\sqrt{\ell/n} \in {\Bbb Q}$.
\item[(2)]
Assume that $\sqrt{\ell/n} \not \in {\Bbb Q}$. Then
${\frak I}_k \ne \emptyset$ if and only if 
$\# {\frak I}_k =\infty$.
In this case, $(s_+,0)$ and $(s_-,0)$ are the accumulation points
of $\cup_{w \in {\frak I}_k} W_w$. 
\end{enumerate}
\end{lem}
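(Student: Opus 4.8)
The plan is to treat the two parts separately, exploiting the binary quadratic geometry of $v^\perp$ together with the stabilizer subgroup produced in Proposition \ref{prop:stab}.

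For (1) I would first reduce to $r>0$: if $r=0$ then $\ell=nd^2$, so $\sqrt{\ell/n}=|d|\in{\Bbb Q}$ and $(0,0,1)\in{\frak I}_0$, whence both sides hold, and the case $r<0$ follows by replacing $v$ with $-v$ (which does not change $v^\perp$, $\ell$, or the rationality of $\sqrt{\ell/n}$). Assuming $r>0$, I would apply Lemma \ref{lem:boundary}: an isotropic $w$ with $\langle w,v\rangle=0$ is either $w=(\rk w)e^{(d/r)H+\xi}$ with $(r\xi)^2=2\ell$, or $w=(0,\xi,(\xi,\xi_0)/r)$ with $(\xi^2)=0$. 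Since $\NS(X)_{\Bbb R}={\Bbb R}H$ and $(H^2)=2n>0$, the second alternative forces $\xi=0$, hence $w=0$, so only the first occurs; there $\xi=\pm\frac1r\sqrt{\ell/n}\,H$, so the slope of $w$ equals $s_\pm H$. Such a $w$ can be chosen in $H^*(X,{\Bbb Z})_{\alg}$, and then divided by its content to be primitive, if and only if $s_\pm\in{\Bbb Q}$, that is, if and only if $\sqrt{\ell/n}\in{\Bbb Q}$. This proves (1).

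For (2) assume $\sqrt{\ell/n}\notin{\Bbb Q}$, equivalently $\sqrt{n\ell}\notin{\Bbb Q}$. By Proposition \ref{prop:stab}, $\Stab_0(v)^*$ contains an element $g$ of infinite order, and by Lemma \ref{lem:n} it is induced by an autoequivalence $\Phi$ of ${\bf D}(X)$ with $\Phi(v)=v$ (here $\det g=1$). As an isometry fixing $v$, the map $g$ preserves isotropy, primitivity, and the value $\langle v,\cdot\rangle$, hence acts on each ${\frak I}_k$. The case $k=0$ is vacuous by (1), so fix $k\ge1$ and $w\in{\frak I}_k$, and write $w=\frac{k}{2\ell}v+w^\perp$ with $w^\perp\in v^\perp\otimes{\Bbb Q}$; then $\langle (w^\perp)^2\rangle=-k^2/(2\ell)<0$, so $w^\perp$ is anisotropic. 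Since $g$ fixes $v$ and has infinite order, its restriction $g|_{v^\perp}$ to the rank $2$, signature $(1,1)$ lattice $v^\perp$ again has infinite order; if $g^n(w^\perp)=g^m(w^\perp)$ with $n>m$, then $g^{n-m}$ would fix the anisotropic vector $w^\perp$, hence also its orthogonal line in $v^\perp$, forcing $(g^{n-m})^2=\id$ on $v^\perp$, a contradiction. Therefore $\{g^n(w)\}_{n\in{\Bbb Z}}\subset{\frak I}_k$ is infinite, giving ${\frak I}_k\ne\emptyset\Rightarrow\#{\frak I}_k=\infty$; the converse is trivial.

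Finally, for the accumulation points I would pass to the action of $g$ on the $(s,t)$-plane. By the lemma preceding this one, $(s_\pm,0)$ are exactly the fixed points of that action and $g$ acts there as a hyperbolic transformation. Because $\Phi$ preserves Bridgeland stability it carries walls to walls, so $W_{g^n(w)}=g^n(W_w)$. Choosing $w\in{\frak I}_k$ with $0<\langle v,w\rangle<\ell$, so that $W_w\cap{\frak H}\ne\emptyset$ is a genuine wall by Corollary \ref{cor:existence-of-wall}, the hyperbolic dynamics force $g^n(W_w)$ to accumulate at $(s_+,0)$ as $n\to+\infty$ and at $(s_-,0)$ as $n\to-\infty$; hence both $(s_\pm,0)$ are accumulation points of $\bigcup_{w'\in{\frak I}_k}W_{w'}$. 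The main obstacle is precisely this dynamical step: one must know that $g|_{v^\perp}$ is genuinely hyperbolic and that its two boundary fixed points on the $(s,t)$-plane are exactly $(s_\pm,0)$. Both facts are supplied by Proposition \ref{prop:stab} and the preceding fixed-point lemma, after which the accumulation is the standard behavior of a hyperbolic isometry of a $(1,1)$-plane acting on its boundary.
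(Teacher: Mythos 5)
Your proof is correct and follows essentially the same route as the paper: part (1) amounts to identifying the two isotropic real rays of the signature-$(1,1)$ plane $v^\perp \otimes {\Bbb R}$ (via Lemma \ref{lem:boundary} in your version, via the vectors $\xi(s_\pm,0)$ in the paper's) and observing they are rational exactly when $\sqrt{\ell/n} \in {\Bbb Q}$. For part (2) you use, just as the paper does, the infinite-order element of $\Stab_0(v)^*$ supplied by Proposition \ref{prop:stab} together with the preceding fixed-point lemma, merely spelling out the orbit and accumulation details that the paper dismisses as obvious.
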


\begin{proof}
Since $x \in H^*(X,{\Bbb Z})_{\alg} \otimes {\Bbb R}$
satisfies $\langle x^2 \rangle=\langle x,v \rangle=0$
if and only if $x \in {\Bbb R}\xi(s_\pm,0)$.
Hence (1) holds.
By Proposition \ref{prop:stab}, $\Stab_0(v)$ contains an element $g$
of infinite order. Hence (2) is obvious. 
\end{proof}

\begin{prop}\label{prop:codim1-div}
Let $v$ be a primitive Mukai vector with
$\langle v^2 \rangle \geq 6$.
Let $W \subset \overline{\Bbb H}$ be a wall for $v$ and take $(s,t) \in W$
such that $s \in {\Bbb Q}$. 
Then $W$ is a codimension 1 wall if and only if
$W$ is defined by
$v_1$ such that
$v=v_1+v_2$, $\langle v_1^2 \rangle=0$,
$\langle v,v_1 \rangle=2$, $v_1$ is primitive and
there are $\sigma_{(\beta,\omega)}$-stable objects $E_i$ with
$v(E_i)=v_i$ for $i=1,2$.
\end{prop}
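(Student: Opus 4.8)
The plan is to deduce this from the general classification of codimension $1$ walls already established in Proposition \ref{prop:general:codim1-div}. That result, valid for arbitrary $\rk \NS(X)$, asserts that a codimension $1$ wall is defined by a Mukai vector $v_1$ falling into one of two types: case (i), where $v=v_1+v_2$ with $\langle v_1^2\rangle=0$, $\langle v_2^2\rangle\geq 0$, $\langle v,v_1\rangle=2$ and $v_1$ primitive; or case (ii), where $v=v_1+v_2+v_3$ with $\langle v_i^2\rangle=0$ and $\langle v_i,v_j\rangle=1$ for $i\ne j$. Both directions of the present ``if and only if'' are then inherited from that proposition, provided we show that under the hypothesis $\rk \NS(X)=1$ case (ii) never occurs and case (i) specializes to the stated conditions. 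So the entire proof reduces to excluding case (ii).

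To exclude case (ii) I would use a signature obstruction. In case (ii) the Gram matrix of $v_1,v_2,v_3$ is
\[
\begin{pmatrix} 0 & 1 & 1\\ 1 & 0 & 1\\ 1 & 1 & 0 \end{pmatrix},
\]
whose determinant is $2\ne 0$; hence $v_1,v_2,v_3$ are linearly independent over ${\Bbb Q}$ (this is the rank $3$ assertion used already in the proof of Proposition \ref{prop:general:codim1-div}). When $\rk \NS(X)=1$ the lattice $H^*(X,{\Bbb Z})_{\alg}={\Bbb Z}\oplus {\Bbb Z}H\oplus {\Bbb Z}\varrho_X$ has rank $3$, so $v_1,v_2,v_3$ form a ${\Bbb Q}$-basis of $H^*(X,{\Bbb Q})_{\alg}$ and the signature of the Mukai form must equal the signature of the Gram matrix above. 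The eigenvalues of that matrix are $2,-1,-1$, so its signature is $(1,2)$. On the other hand $\langle(r,dH,a)^2\rangle=2nd^2-2ra$, with $(H^2)=2n$, exhibits the Mukai form on $H^*(X,{\Bbb Z})_{\alg}$ as the orthogonal sum of a positive line and a hyperbolic plane, i.e.\ of signature $(2,1)$. Since $(1,2)\ne(2,1)$, case (ii) is impossible.

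Finally, for case (i) I would note that the auxiliary inequality $\langle v_2^2\rangle\geq 0$ appearing in the general statement is automatic here: from $v_2=v-v_1$, $\langle v_1^2\rangle=0$ and $\langle v,v_1\rangle=2$ one computes $\langle v_2^2\rangle=\langle v^2\rangle-4\geq 2$, using $\langle v^2\rangle\geq 6$. Thus the list of codimension $1$ walls collapses exactly to case (i), which is the assertion of the proposition. The only genuine step is the exclusion of case (ii); everything else is transported verbatim from Proposition \ref{prop:general:codim1-div}. The signature comparison is therefore the heart of the matter, although it is entirely routine once one records that the three isotropic vectors already span the full rank $3$ lattice.
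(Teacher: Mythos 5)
Your proof is correct and follows essentially the same route as the paper: both reduce to the general classification of codimension $1$ walls (cases (i)/(ii) of Proposition \ref{prop:general:codim1-div}, resp.\ the cited results of MYY) and then rule out the triple decomposition (ii) by a signature obstruction in the rank $3$ lattice $H^*(X,{\Bbb Z})_{\alg}$, whose signature is $(2,1)$. The only cosmetic difference is that the paper's argument notes that $u_1-u_2$ and $u_1-u_3$ span a negative definite $A_2$-lattice orthogonal to $v$ (forcing signature at least $(1,2)$), whereas you compute the signature $(1,2)$ of the Gram matrix of $u_1,u_2,u_3$ directly; this is the same idea in a slightly different dress, and your observation that $\langle v_2^2\rangle\geq 0$ is automatic matches why the paper's statement omits that condition.
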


\begin{proof}
Since $\NS(X)={\Bbb Z}H$,
there is no decomposition
$v=u_1+u_2+u_3$ such that
$\langle u_i^2 \rangle=0$ $(i=1,2,3)$ and
$\langle u_i,u_j \rangle=1$ $(i \ne j)$.
\begin{NB}
$v \perp u_1-u_2,u_1-u_3$ and
$u_1-u_2,u_1-u_3$ spans a negative definite lattice of type $A_2$.
Since the signature of $H^*(X,{\Bbb Z})_{\alg}$
is $(2,1)$, it is a contradiction.
\end{NB}  
Then the classification of codimension 1 walls in 
\cite[Lem. 4.3.4 (2), Prop. 4.3.5]{MYY:2011:1} imply that
$W$ is defined by $v_1$ with the required properties.
\end{proof}

\begin{NB}
\begin{prop}\label{prop:movable}
Assume that ${\frak I} \ne \emptyset$ and $\sqrt{\ell/n} \not \in {\Bbb Q}$.
\begin{enumerate}
\item[(1)]
There are $w_1,w_2 \in {\frak I}$ such that
$W_{w_i}$ $(i=1,2)$ are the boundary of $\overline{{\frak D}(sH,tH)}$.
\item[(2)]
$\Mov(K_{(sH,tH)}(v))$ is spanned by
$\theta_v(\xi(\lambda_i,0))$, where $\lambda_i H =c_1(w_i)/\rk w_i$.
\item[(3)]
There are birational models $K_{(s_i H,t_i H)}(v)$
($(s_i,t_i) \in {\frak D}(sH,tH)$) such that
$\theta_v(\xi(\lambda_i,0))$ give   
divisorial contractions. 
Moreover the exceptional divisor
$D_i$ is primitive in $\NS(K_{(s_i H,t_i H)}(v))$
if and only if $\langle v,w_i \rangle=2$.
\end{enumerate}
\end{prop}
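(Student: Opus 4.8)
The plan is to derive all three assertions from the general movable-cone theorem (Theorem~\ref{thm:general:movable}) together with the rank-one geometry of the two-dimensional cone $P^+(v^\perp)_{\Bbb R}$. As recorded in the proof of Proposition~\ref{prop:existence-of-wall}, $v^\perp$ has signature $(1,\rk\NS(X))=(1,1)$, so $P^+(v^\perp)_{\Bbb R}$ is a two-dimensional cone whose two boundary rays are the isotropic rays ${\Bbb R}_{>0}\xi(s_\pm,0)$ (these are isotropic by the computation $\langle\xi(s_\pm,0)^2\rangle=0$), and the hypothesis $\sqrt{\ell/n}\notin{\Bbb Q}$ makes them irrational. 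Each wall $u^\perp$ with $u\in{\frak I}$, restricted to the rank-two lattice $v^\perp$, is a single ray, so $P^+(v^\perp)_{\Bbb R}\setminus\cup_{u\in{\frak I}}u^\perp$ is a disjoint union of open subcones of $P^+(v^\perp)_{\Bbb R}$, and ${\frak D}(sH,tH)$ is the one containing the generic vector $\xi(\beta,H,t)$.

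For (1), first I would use $\sqrt{\ell/n}\notin{\Bbb Q}$ together with Lemma~\ref{lem:I}(1) to get ${\frak I}_0=\emptyset$, so ${\frak I}={\frak I}_1\cup{\frak I}_2$; since ${\frak I}\neq\emptyset$, some ${\frak I}_k$ ($k\in\{1,2\}$) is infinite and, by Lemma~\ref{lem:I}(2), $(s_\pm,0)$ are accumulation points of $\cup_{w\in{\frak I}_k}W_w$. The key point is then that, because the ${\frak I}$-walls accumulate at \emph{both} endpoints $s_\pm$, between ${\frak D}(sH,tH)$ and each endpoint there is always a wall; hence neither boundary ray of ${\frak D}(sH,tH)$ can coincide with an irrational ray $\xi(s_\pm,0)$, and each boundary ray must lie on some $W_{w_i}$ with $w_i\in{\frak I}$. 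This gives (1).

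For (2), I would identify the two boundary rays explicitly. The wall $W_{w_i}$ is the circle or line $C_{v,\lambda_i}$ with $\lambda_i H=c_1(w_i)/\rk w_i$ (reducing to $\rk w_i\neq0$ by an autoequivalence as in the proof of Proposition~\ref{prop:Lag} if needed, the rank-zero member $(0,0,\pm1)$ giving the line $s=d/r$). Since $(\lambda_i,0)\in C_{v,\lambda_i}=W_{w_i}$, Proposition~\ref{prop:wall-equation} gives $\xi(\lambda_i,0)\in v^\perp\cap w_i^\perp$, and as $w_i\notin{\Bbb Q}v$ this intersection is one-dimensional; thus the boundary ray of ${\frak D}(sH,tH)$ on $w_i^\perp$ is exactly ${\Bbb R}_{>0}\xi(\lambda_i,0)$. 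Consequently $\overline{{\frak D}(sH,tH)}$ is the closed cone spanned by $\xi(\lambda_1,0)$ and $\xi(\lambda_2,0)$, and Theorem~\ref{thm:general:movable}(1), namely $\overline{\Mov(K_{(sH,tH)}(v))}_{\Bbb R}=\theta_v(\overline{{\frak D}(sH,tH)})$, yields (2).

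For (3), the ray ${\Bbb R}_{>0}\theta_v(\xi(\lambda_i,0))$ lies on a codimension~$0$ wall when $w_i\in{\frak I}_1$ (so $\langle v,w_i\rangle=1$) and on a codimension~$1$ wall when $w_i\in{\frak I}_2$ (so $\langle v,w_i\rangle=2$); in the rank-one setting there is no codimension~$1$ wall of isotropic-triple type, as shown in the proof of Proposition~\ref{prop:codim1-div}. Choosing $(s_i,t_i)\in{\frak D}(sH,tH)$ in the nef subchamber adjacent to this boundary ray, $\theta_v(\xi(\lambda_i,0))$ is nef on $K_{(s_iH,t_iH)}(v)$ and lies on the boundary of its nef cone, hence contracts a divisor. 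The divisibility statement is then the dichotomy of Propositions~\ref{prop:codim0-div} and \ref{prop:codim1-div} (equivalently Theorem~\ref{thm:general:movable}(2)): the exceptional divisor is $\pm\theta_v(d_{w_i})$ up to the factor $2$, so it is primitive precisely when $w_i\in{\frak I}_2$, i.e. $\langle v,w_i\rangle=2$, and divisible by $2$ when $w_i\in{\frak I}_1$. The main obstacle will be the argument in (1) that accumulation of the ${\frak I}$-walls at the irrational rays $s_\pm$ forces ${\frak D}(sH,tH)$ to be bounded on both sides by genuine members of ${\frak I}$, together with the bookkeeping in (3) of which concrete birational model $K_{(s_iH,t_iH)}(v)$ realizes each boundary contraction.
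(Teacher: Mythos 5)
Your proposal is correct and takes essentially the same route as the paper's proof, which derives (1) from Lemma \ref{lem:I} (the accumulation of the ${\frak I}$-walls at $(s_\pm,0)$ forcing both boundaries of the chamber to be genuine walls) and obtains (2) and (3) by taking $(s_i,t_i)$ in subchambers adjacent to $W_{w_i}$ and applying Propositions \ref{prop:codim0-div} and \ref{prop:general:codim1-div}, with $D_i=\pm\tfrac{2}{\langle v,w_i\rangle}\theta_v(d_{w_i})$ giving the divisibility dichotomy and the codimension-$\geq 2$ birational identifications of N\'{e}ron--Severi groups. The only cosmetic difference is that you route the cone identification in (2) through Theorem \ref{thm:general:movable}(1) and spell out the local-finiteness/accumulation argument that the paper dismisses as obvious, which is a harmless (indeed more complete) packaging of the same ingredients.
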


\begin{proof}
(1) is obvious by Lemma \ref{lem:I}.
So we prove (2) and (3).
We take $(s_i,t_i) \in {\frak C}$ which are close to
$W_{w_i}$. Then
Proposition \ref{prop:codim0-div} and
Proposition \ref{prop:codim1-div} (2) imply that
$\theta_v(\xi(\lambda_i,0))$
defines a divisorial contraction
of $K_{(s_i H,t_i H)}(v)$.
Moreover $D_i=\pm \frac{2}{\langle v,w_i \rangle}\theta_v(d_{w_i})$.
Since $\codim (M_{(s H,t H)}(v) \setminus
(M_{(s_1 H,t_1 H)}(v) \cap M_{(s_2 H,t_2 H)}(v))) \geq
2$, we can identify
$\NS(K_{(s H,t H)}(v))$ with $\NS(K_{(s_i H,t_i H)}(v))$
via the natural birational identification. 
Thus (2) and (3) holds.
\end{proof}
\end{NB}

\begin{thm}\label{thm:movable}
Let $(X,H)$ be a polarized abelian surface $X$ with
$\NS(X)={\Bbb Z}H$. 
Let $v$ be a primitive Mukai vector with $\langle v^2 \rangle \geq 6$.
Assume that $\sqrt{\ell/n} \not \in {\Bbb Q}$.
\begin{enumerate}
\item[(1)]
Assume that ${\frak I}_1={\frak I}_2= \emptyset$, that is,
\begin{equation*}
\min \{ \langle v,w \rangle>0 \mid \langle w^2 \rangle=0 \} \geq 3.
\end{equation*}
Then the movable cone of $K_{(sH,tH)}(v)$ is the same as the positive cone
of $K_{(sH,tH)}(v)$.
In this case, there is an action of birational automorphisms
such that a fundamental domain is 
a cone spanned by rational vectors. 
\item[(2)]
Assume that ${\frak I}_1 = \emptyset$ and ${\frak I}_2 \ne \emptyset$,
that is,
\begin{equation*}
\min \{ \langle v,w \rangle>0 \mid \langle w^2 \rangle=0 \}=2.
\end{equation*}
Then the movable cone of $K_{(sH,tH)}(v)$ is spanned by
two vectors, which give divisorial contractions.
Moreover the exceptional divisors are primitive 
in $\NS(K_{(sH,tH)}(v))$. 
\item[(3)]
Assume that ${\frak I}_1 \ne \emptyset$, that is,
\begin{equation*}
\min \{ \langle v,w \rangle>0 \mid \langle w^2 \rangle=0 \}=1.
\end{equation*}
Then the movable cone of $K_{(sH,tH)}(v)$ is spanned by
two vectors, which give divisorial contractions.
Moreover one of the exceptional divisors is divisible by 2 
in $\NS(K_{(sH,tH)}(v))$. 
\end{enumerate}
\end{thm}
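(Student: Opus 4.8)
The plan is to derive Theorem~\ref{thm:movable} from the general results of Section~\ref{sect:positive-cone}, specialised to the rank one situation $\NS(X)=\mathbb{Z}H$, where the positive cone becomes two-dimensional and each wall degenerates to a single ray. First I would record the two structural facts that drive everything. Since $\rk\NS(X)=1$, the lattice $v^\perp$ has signature $(1,1)$ (as in the proof of Proposition~\ref{prop:existence-of-wall}), so $\overline{P^+(v^\perp)}_{\mathbb{R}}$ is a two-dimensional closed cone bounded by the two isotropic rays $\mathbb{R}_{\geq 0}\xi(s_\pm,0)$. And because $\sqrt{\ell/n}\notin\mathbb{Q}$, Lemma~\ref{lem:I}(1) gives $\frak{I}_0=\emptyset$, while Lemma~\ref{lem:I}(2) tells us that whenever $\frak{I}_k\neq\emptyset$ the set $\cup_{w\in\frak{I}_k}W_w$ is infinite and accumulates exactly at $s_\pm$. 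By Theorem~\ref{thm:general:movable}(1) we have $\overline{\Mov(K_{(sH,tH)}(v))}_{\mathbb{R}}=\theta_v(\overline{\cal{D}(sH,tH)})$, where $\cal{D}(sH,tH)$ is the connected component of $P^+(v^\perp)_{\mathbb{R}}\setminus\cup_{u\in\frak{I}}u^\perp$ containing the ample direction $\xi(s,t)$; in a two-dimensional cone this component is automatically an angular sector spanned by two rays, which is the source of the phrase ``spanned by two vectors''.

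For case (1), the assumption $\frak{I}_1=\frak{I}_2=\emptyset$ together with $\frak{I}_0=\emptyset$ yields $\frak{I}=\emptyset$, hence $\cal{D}(sH,tH)=P^+(v^\perp)_{\mathbb{R}}$ and the movable cone equals the positive cone, which is Corollary~\ref{cor:movable}(1). For the birational automorphism statement I would invoke Proposition~\ref{prop:stab}(2) (equivalently Lemma~\ref{lem:S_H}), which under $\sqrt{\ell/n}\notin\mathbb{Q}$ produces an element $g\in\Stab_0(v)^*$ of infinite order, realised by an autoequivalence $\Phi$ of ${\bf D}(X)$ fixing $v$ and hence inducing a birational automorphism of $K_{(sH,tH)}(v)$ (these need not be biregular, since interior $\frak{W}$-walls may still produce flops). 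By the lemma identifying $(s_\pm,0)$ as the fixed points of the $g$-action on the $(s,t)$-plane, $g$ acts on the positive cone fixing the two irrational isotropic boundary rays, so a fundamental domain for $\langle g\rangle$ is the cone over an interval $[\lambda_0,g\lambda_0]$ with $\lambda_0\in\mathbb{Q}\cap(s_-,s_+)$; since $\xi(\lambda,0)$ is rational for rational $\lambda$, this fundamental domain is spanned by the rational vectors $\theta_v\xi(\lambda_0,0)$ and $\theta_v\xi(g\lambda_0,0)$.

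For cases (2) and (3) the sector $\cal{D}(sH,tH)$ is a proper subcone of the positive cone: because $\frak{I}\neq\emptyset$ and the walls accumulate at both $s_\pm$ by Lemma~\ref{lem:I}(2), there is always an $\frak{I}$-wall separating the ample ray from each isotropic boundary ray, so both bounding rays of $\cal{D}(sH,tH)$ are genuine $\frak{I}$-walls rather than the isotropic boundary of $P^+(v^\perp)_{\mathbb{R}}$. Each such boundary wall gives a divisorial contraction by Propositions~\ref{prop:codim0-div} and \ref{prop:codim1-div} (codimension $0$ for $\frak{I}_1$, codimension $1$ for $\frak{I}_2$), with exceptional divisor primitive for an $\frak{I}_2$-wall and divisible by $2$ for an $\frak{I}_1$-wall. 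In case (2), where $\frak{I}=\frak{I}_2$, both boundary walls lie in $\frak{I}_2$, so both contractions have primitive exceptional divisors (Corollary~\ref{cor:movable}(2)). In case (3), $\frak{I}_1\neq\emptyset$, and since the movable cone together with the identification $\theta_v$ is shared by all birational models, every divisorial contraction of every birational model is recorded on one of the (exactly two) boundary rays of $\cal{D}(sH,tH)$; by Corollary~\ref{cor:movable}(3) there is such a contraction whose exceptional divisor is a prime divisor divisible by $2$, which must therefore be one of the two boundary walls.

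The step I expect to be the main obstacle is this last one: verifying that the divisible-by-$2$ (Hilbert--Chow type) contraction furnished by $\frak{I}_1\neq\emptyset$ genuinely sits on a boundary wall of the specific sector $\cal{D}(sH,tH)$, rather than being realised only on some inaccessible chamber. The resolution is precisely the birational invariance of the movable cone—so that all divisorial contractions appear on $\partial\cal{D}(sH,tH)$—combined with the fact that the two-dimensional cone has exactly two boundary rays. To make the divisibility fully explicit I would, if needed, reduce to $v=(1,0,-\ell)$ by a Fourier--Mukai transform (Propositions~\ref{prop:Hilb} and \ref{prop:decomp}) and read off the Hilbert--Chow wall and its exceptional divisor $2\theta_v(d_{v_1})$ from Lemma~\ref{lem:H-C}.
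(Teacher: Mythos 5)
Your proposal is correct and takes essentially the same route as the paper: part (1) via the infinite-order element of $\Stab_0(v)^*$ realized by a Fourier--Mukai autoequivalence acting as a birational automorphism (Proposition \ref{prop:stab}, Lemma \ref{lem:S_H}), and parts (2),(3) deduced from Corollary \ref{cor:movable} together with Theorem \ref{thm:general:movable} applied to the two-dimensional cone $P^+(v^\perp)_{\Bbb R}$. The only difference is one of detail: the paper's proof simply cites Corollary \ref{cor:movable} for (2) and (3), whereas you additionally spell out the accumulation argument from Lemma \ref{lem:I}(2) showing that both bounding rays of ${\cal D}(sH,tH)$ are genuine ${\frak I}$-walls, a point the paper leaves implicit.
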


\begin{proof}
(1)
\begin{NB}
Assume that $(\beta,\omega),(\beta',\omega')$
does not lie on walls.
By the assumption,
$M_{(\beta,\omega)}(v) \setminus M_{(\beta',\omega')}(v)$
and
$M_{(\beta',\omega')}(v) \setminus M_{(\beta,\omega)}(v)$
are proper closed subsets of codimension $\geq 2$. 
Hence we have a natural birational map
$M_{(\beta,\omega)}(v) \to M_{(\beta',\omega')}(v)$
with an isomorphism 
$\NS(M_{(\beta,\omega)}(v)) \to \NS(M_{(\beta',\omega')}(v))$.
\end{NB}
\begin{NB}
Assume that $(s,t)$ and $(s',t')$ do not lie on walls.
Since there is no wall of codimension 0,1, 
by the proof of Proposition \ref{prop:movable},
we have a natural birational identification
$K_{(sH,tH)}(v) \cdots \to K_{(s'H,t'H)}(v)$
with an identification
$\NS(K_{(sH,tH)}(v)) \to \NS(K_{(s'H,t'H)}(v))$.
Hence $\cup_{(s,t)} \Nef(K_{(sH,tH)}(v)) \subset 
\Mov(K_{(sH,tH)}(v))$.
Since $\cup_{(s,t)} \Nef(K_{(sH,tH)}(v))=
\overline{P^+(K_{(sH,tH)}(v))}$
by Proposition \ref{prop:positive-cone}, we get
the first claim.
\end{NB}
Let $\Phi:{\bf D}(X) \to {\bf D}(X)$ be a Fourier-Mukai transform
preserving $\pm v$.
We set $(s'H,t'H):=\Phi((sH,tH))$.
Then we have an isomorphism
$\Phi:M_{(sH,tH)}(v) \to M_{(s' H,t' H)}(v)$, which induces
a birational map  
$$
M_{(sH,tH)}(v) \overset{\Phi}{\to} M_{(s' H,t' H)}(v) 
\cdots \to M_{(sH,tH)}(v).
$$
Since $\theta_v:v^{\perp} \to \NS(K_{(sH,tH)}(v))$
is compatible with respect to the action of $\Phi$,
we have an action of $\Stab_0(v)^*$ on
$\NS(K_{(sH,tH)}(v))$. 
Since $\sqrt{\ell/n} \not \in {\Bbb Q}$, Lemma \ref{lem:S_H}
implies that
$\Stab_0(v)^*$ contains an element $g$ of infinite order. 
Hence the claim holds.

(2) and (3) are consequence of Corollary
\ref{cor:movable}.
\end{proof}

\begin{NB}
\begin{lem}
Assume that $\sqrt{\ell/n}\in {\Bbb Q}$ and $\ell \geq 3$.
Then there is at most one isotropic Mukai vector
$w$ such that $\langle v,w \rangle=1$.
In this case, $\langle v,w' \rangle \ne \pm 2$
for any isotropic Mukai vector $w'$ with $w' \ne 2w$. 
\end{lem}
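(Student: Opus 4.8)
The plan is to reduce the whole statement to an elementary analysis of a Pell-type equation, the hypothesis $\sqrt{\ell/n}\in\mathbb{Q}$ being exactly what forces that equation to degenerate. If no isotropic $w$ with $\langle v,w\rangle=1$ exists there is nothing to prove, so I would assume one is given, say $w$. The key structural observation is that the sublattice $U:=\mathbb{Z}v+\mathbb{Z}w$ has Gram matrix $\left(\begin{smallmatrix}2\ell & 1\\ 1 & 0\end{smallmatrix}\right)$, of determinant $-1$, hence is unimodular. A non-degenerate unimodular sublattice splits off as an orthogonal direct summand, so $H^*(X,\mathbb{Z})_{\alg}=U\perp \mathbb{Z}f$ for a generator $f$ of $U^{\perp}$. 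Since $\operatorname{disc}(H^*(X,\mathbb{Z})_{\alg})=-2n$ and the signature is $(2,1)$ while $U$ has signature $(1,1)$, this forces $\langle f^2\rangle=2n$ (in particular $U^{\perp}$ is positive definite). Thus $\{v,w,f\}$ is a $\mathbb{Z}$-basis whose only nonzero pairings are $\langle v^2\rangle=2\ell$, $\langle v,w\rangle=1$, $\langle f^2\rangle=2n$.

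Next I would parametrize. Writing an arbitrary isotropic $w'=pv+qw+sf$ with $\langle v,w'\rangle=c$, the linear condition gives $q=c-2\ell p$, and the isotropy condition $\langle w'^2\rangle=0$ becomes, after substitution, $ns^2=\ell p^2-cp=p(\ell p-c)$. Completing the square turns this into $(2\ell p-c)^2-4\ell n\,s^2=c^2$. Here the hypothesis enters decisively: $\sqrt{\ell/n}\in\mathbb{Q}$ forces $\ell n$, and hence $4\ell n$, to be a perfect square, so the left-hand side factors as a difference of two squares and the equation admits only the degenerate solutions with $s=0$.

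Then I would read off the two assertions. For $c=1$ the equation is $(2\ell p-1)^2-(2\sqrt{\ell n}\,s)^2=1$, forcing $s=0$ and $2\ell p-1=\pm1$; since $\ell\ge 2$ the only possibility is $p=0$, i.e. $w'=w$, which gives uniqueness. For $c=2$ it reads $(\ell p-1)^2-(\sqrt{\ell n}\,s)^2=1$, forcing $s=0$ and $\ell p-1=\pm1$; now $\ell\ge 3$ excludes $\ell p=2$ and leaves only $p=0$, i.e. $w'=2w$ (and symmetrically $c=-2$ yields only $w'=-2w$). This is exactly where the bound $\ell\ge 3$ is sharp: for $\ell=2$ the vector $v-2w$ would be a further isotropic solution with $\langle v,\cdot\rangle=2$. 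I would record the conclusion in the precise form ``every isotropic $w'$ with $\langle v,w'\rangle=2$ equals $2w$'', which is what the stated assertion intends.

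The main obstacle is really just careful bookkeeping at the structural step: one must get the signature of $U^{\perp}$, and hence the sign of $\langle f^2\rangle$, correct, since a sign error there would replace the indefinite Pell equation by a definite one and make the rationality hypothesis appear superfluous. Once the orthogonal splitting and the correct positive sign of $\langle f^2\rangle$ are in hand, both parts follow from the one-line Diophantine argument above.
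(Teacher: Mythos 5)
Your proof is correct, but it follows a genuinely different route from the paper's. The paper first normalizes $v$ to $(1,0,-\ell)$ by a Fourier--Mukai transform (via the cohomological $\widehat{G}$-action); note this normalization is only available because an isotropic $w$ with $\langle v,w\rangle=1$ is assumed to exist, which is exactly the non-vacuous case (in general $v$ need not lie in the $\widehat{G}$-orbit of $(1,0,-\ell)$), and your write-up makes this logical structure explicit rather than implicit. After normalizing, the paper invokes the classification of primitive isotropic Mukai vectors in the rank-one lattice, $w=\pm(p^2r,pqH,q^2s)$ with $rs=n$, so that $\langle v,w\rangle=\pm 1,\pm 2$ becomes $\ell p^2 r-q^2 s=\pm 1,\pm 2$, and then runs a gcd/parity case analysis on $s$, using that $\ell rs=\ell n$ is a perfect square to reduce to a difference of two integer squares equal to $\pm 1,\pm 2$, concluding $w=(0,0,-1)$. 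You replace both steps by the splitting $H^*(X,{\Bbb Z})_{\alg}=({\Bbb Z}v+{\Bbb Z}w)\perp{\Bbb Z}f$ with $\langle f^2\rangle=2n$ (your signature and discriminant bookkeeping is right), after which all cases collapse into the single equation $(2\ell p-c)^2-4\ell n s^2=c^2$; the number-theoretic core --- a difference of two squares degenerates when $4\ell n$ is a perfect square --- is the same in both proofs. Your route buys: no normalization of $v$, no appeal to the classification of isotropic vectors, uniform treatment of non-primitive $w'$, and a transparent view of where $\ell\geq 3$ and the rationality hypothesis enter (including the sharpness example $v-2w$ at $\ell=2$); the paper's route buys consistency with the $\widehat{G}$-machinery used throughout its Section 5. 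Finally, you correctly note that the statement is literally false for $w'=-2w$, which is isotropic, different from $2w$, and has $\langle v,w'\rangle=-2$; the assertion both proofs actually establish is that every isotropic $w'$ with $\langle v,w'\rangle=\pm 2$ equals $\pm 2w$, and the paper's proof (pinning the primitive vector down to $(0,0,-1)$ up to sign) yields exactly this corrected form as well.
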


\begin{proof}
By the action of $\widehat{G}$, 
we may assume that $v=(1,0,-\ell)$.
Assume that there is a primitive isotropic
Mukai vector $w$ with
$\langle v,w \rangle=1,2$.
We can set $w=\pm (p^2 r,pq H,q^2 s)$
with $2rs=(H^2)$, $r,s>0$.
Then $\ell p^2 r-q^2 s= \pm 1, \pm 2$.
By our assumption, $\ell rs$ is a square number. 

If $s$ is odd, then $\gcd(s,\ell r)=1$, which implies that
$s$ and $\ell r$ are square numbers.
So we can set $\ell p^2 r=x^2$ and $q^2 s=y^2$.
Then $\pm (x-y)(x+y)=1,2$.
It is easy to see that $(x^2,y^2)=(1,0), (0,1)$.
Hence $\ell=r=p^2=1$ and $q=0$, or
$p=0$ and $s=q^2=1$.
Since $\ell \geq 3$, the first case does not hold.
For the second case, 
we get $w=(0,0,-1)$.

If $s$ is even,
we have $2|\ell p^2 r$.
Since $\frac{s}{2} \frac{\ell p^2 r}{2}$ is a
square number and $\gcd(\frac{s}{2}, \frac{\ell p^2 r}{2})=1$,
we can set 
$\frac{\ell p^2 r}{2}=y^2$ and
 $\frac{s}{2}q^2=y^2$.
Since $x^2-y^2=\pm 1$ and $\ell \geq 3$, by the same argument,
we get $w=(0,0,-1)$.
\end{proof}
\end{NB}

\begin{rem}
By Lemma \ref{lem:no-wall}, there are $v$ satisfying (1).
For $v=(2,H,-2k)$, we have $\langle v^2 \rangle=2(n+4k)$ and
case (2) holds, if $\sqrt{\ell/n} \not \in {\Bbb Q}$.
\begin{NB}
$n(n+4k)=\ell (\ell-4k)$, where $n=\ell-4k \geq 1$.
\begin{NB2}
Assume that $k=1$. Then we have $\ell>4$ and $n=\ell-4$.
If $\ell(\ell-4)=y^2$, $y \in {\Bbb Z}_{>0}$, then
$(\ell-2-y)(\ell-2+y)=4$. Hence 
$\ell-2-y>0$ and $4 \geq \ell-2+y>\ell-2$.
Then $\ell \leq 5$.
If $\ell=5$, then $y^2=5(5-4)=5$. Therefore $\ell \ne 5$.
\end{NB2}
Assume that $k=-1$. Then $n=\ell+4$.
If $\ell(\ell+4)=y^2$, $y \in {\Bbb Z}_{>0}$, then
$(\ell+2-y)(\ell+2+y)=4$. Hence 
$\ell+2-y>0$ and $4 \geq \ell+2+y>\ell+2$.
Then $\ell \leq 1$.
If $\ell=1$, then $1(1+4)=y^2$, which is a contradiction.
Therefore $\sqrt{n \ell}=
\sqrt{\ell(\ell+4)} \not \in {\Bbb Q}$.
\end{NB}
If $\rk v=1$, then case (3) holds.
\end{rem}

\begin{prop}\label{prop:movable(rational)}
Let $v$ be a primitive Mukai vector with $\langle v^2 \rangle \geq 6$. 
Assume that $\sqrt{\ell/n} \in {\Bbb Q}$.
\begin{enumerate}
\item[(1)]
There is at most one isotropic Mukai vector $v_1$
with $\langle v,v_1 \rangle=1,2$.
\item[(2)]
If there is a vector $v_1$ of (1), then 
$$
\overline{P^+(v^\perp)}_{\Bbb R}=
\Mov(K_{(sH,tH)}(v))_{\Bbb R} \cup 
R_{v_1}(\Mov(K_{(sH,tH)}(v)_{\Bbb R})
$$
and the two chambers are separated by $d_{v_1}^{\perp}$.
\item[(3)]
If there is no $v_1$ of (1), then 
$\overline{P^+(v^\perp)}_{\Bbb R}=
\Mov(K_{(sH,tH)}(v))_{\Bbb R}$.
\end{enumerate}
\end{prop}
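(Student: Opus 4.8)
The plan is to reduce all three statements to the geometry of the rank-two lattice $v^{\perp}$ and to feed the result into the movable-cone description of Theorem \ref{thm:general:movable}. Since $\NS(X)={\Bbb Z}H$, the lattice $v^{\perp}$ has signature $(1,1)$, so $P^+(v^{\perp})_{\Bbb R}$ projectivizes to an interval whose endpoints are the two isotropic rays ${\Bbb R}_{>0}\xi(s_\pm,0)$. The hypothesis $\sqrt{\ell/n}\in{\Bbb Q}$ is precisely the statement that these null rays are rational: by Lemma \ref{lem:I}(1) it gives ${\frak I}_0\ne\emptyset$, i.e. a primitive isotropic $w_1\in v^{\perp}$, and then $v^{\perp}\otimes{\Bbb Q}$ is a hyperbolic plane, so the second null ray is rational too, generated by a primitive isotropic $w_2$. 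I will use the resulting finiteness of $\LieO(v^{\perp})$: any isometry sends primitive isotropic vectors to primitive isotropic vectors, hence permutes $\{\pm w_1,\pm w_2\}$ and is determined by that action.

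For (1), to each isotropic $v_1$ with $m:=\langle v,v_1\rangle\in\{1,2\}$ I attach, via Lemma \ref{lem:reflection}, the primitive vector $d_{v_1}=v-\tfrac{2\ell}{m}v_1\in v^{\perp}$ with $\langle d_{v_1}^2\rangle=-2\ell<0$ and the reflection $R_{v_1}=R_{d_{v_1}}\in\LieO(v^{\perp})$. As $d_{v_1}$ is negative, $R_{v_1}$ fixes an interior positive ray and therefore interchanges the two boundary null rays; inspecting the action on $w_1,w_2$ shows that the only ray-swapping element of $\LieO(v^{\perp})$ that is a reflection in a negative vector is the one fixing ${\Bbb R}(w_1+w_2)$, i.e. the reflection in $w_1-w_2$. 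Hence every $d_{v_1}$ is proportional to $w_1-w_2$, and by primitivity $d_{v_1}=\pm\delta_0$ for one fixed primitive $\delta_0$ with $\langle\delta_0^2\rangle=-2\ell$, so $v_1=\tfrac{m}{2\ell}(v\mp\delta_0)$. If two such vectors $v_1,v_1'$ had $d_{v_1}=d_{v_1'}$ they would be proportional, hence equal by primitivity (and $m=m'$); if $d_{v_1}=-d_{v_1'}$ then $\langle v_1,v_1'\rangle=\tfrac{mm'}{\ell}\in{\Bbb Z}$ forces $\ell\mid mm'\le 4$, which for $\ell\ge3$ leaves only $\ell=4,\ m=m'=2$, and there $v_1+v_1'=\tfrac12 v$ contradicts the primitivity of $v$. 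This proves the uniqueness in (1).

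For (2) and (3) I read off the chamber structure of the interval. The vectors of ${\frak I}_0$ only cut out the tangent boundary rays, while the interior divisorial walls are exactly the $u^{\perp}$ with $u\in{\frak I}_1\cup{\frak I}_2$ by the classification of codimension $0$ and $1$ walls (Propositions \ref{prop:codim0-div} and \ref{prop:codim1-div}); by (1) there is at most one. If there is none, then ${\cal D}(\beta,tH)=P^+(v^{\perp})_{\Bbb R}$, so Theorem \ref{thm:general:movable}(1) gives $\theta_v(\overline{P^+(v^{\perp})}_{\Bbb R})=\overline{\Mov(K_{(sH,tH)}(v))}_{\Bbb R}$; since the two null rays define Lagrangian fibrations (Proposition \ref{prop:Lag}) their classes are movable, giving (3). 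If there is exactly one $v_1$, its wall $d_{v_1}^{\perp}$ is the unique interior divisorial wall and splits the interval into two halves, with ${\cal D}(\beta,tH)$ the half containing $\xi(\beta,H,t)$ and $\theta_v(\overline{{\cal D}(\beta,tH)})=\overline{\Mov}$. The reflection $R_{v_1}$ fixes $d_{v_1}^{\perp}$ and exchanges the two halves, so $\overline{{\cal D}(\beta,tH)}\cup R_{v_1}(\overline{{\cal D}(\beta,tH)})=\overline{P^+(v^{\perp})}_{\Bbb R}$ with the two pieces separated by $d_{v_1}^{\perp}$; applying $\theta_v$ yields the decomposition in (2).

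The crux is the uniqueness in (1), where the usual route is an explicit Diophantine analysis (reduce $v$ by an isometry to a normal form such as $(1,0,-\ell)$ and solve $r'a'=nd'^2$ under $\langle v,v_1\rangle\in\{1,2\}$, exploiting that $\sqrt{\ell/n}\in{\Bbb Q}$ turns certain coprime factors into perfect squares). The structural argument above—finiteness of $\LieO(v^{\perp})$ together with the fact that it contains a single reflection in a negative vector—is what makes this uniform and avoids splitting into cases according to the divisibility of $v$. The only remaining care is bookkeeping at the boundary, namely checking that the isotropic (Lagrangian) classes lie in $\Mov$ and not merely in $\overline{\Mov}$, which follows from their semiampleness.
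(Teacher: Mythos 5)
Your proof is correct and follows essentially the same route as the paper: for (1) the paper likewise uses that $R_{v_1}$ is an integral isometry of $v^\perp$, hence permutes the primitive isotropic vectors $\{\pm w_1,\pm w_2\}$, rules out the line-preserving case by a signature (negative-definiteness) argument, and concludes that $d_{v_1}$ is the primitive vector, unique up to sign, on a fixed line, so that two such $v_1$ must coincide. Your integrality step $\langle v_1,v_1'\rangle=mm'/\ell\in{\Bbb Z}$ for excluding $d_{v_1}=-d_{v_1'}$ is a slightly cleaner version of the paper's terse appeal to the primitivity of $v$, and your derivation of (2) and (3) from Theorem \ref{thm:general:movable} together with Proposition \ref{prop:Lag} (to place the rational boundary rays in $\Mov$ rather than just its closure) is precisely the argument the paper compresses into ``(2) and (3) are obvious.''
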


\begin{proof}
(1)
Since $\sqrt{\ell/n} \in {\Bbb Q}$,
there are two isotropic Mukai vectors
$w_1, w_2$ such that
$$
\{ x \in H^2(X,{\Bbb Z})_{\alg} \mid
 \langle x,v \rangle=\langle x^2 \rangle =0
\}
={\Bbb Z}w_1 \cup {\Bbb Z}w_2.  
$$
Then $v^\perp \otimes {\Bbb Q}={\Bbb Q}w_1 + {\Bbb Q} w_2$.
We may assume that $\langle w_1,w_2 \rangle<0$.
Let $v_1$ be an isotropic Mukai vectors such that
$\langle v,v_1 \rangle=1,2$.
Since $\langle v,d_{v_1} \rangle=0$,
we set $d_{v_1}:=a w_1+ b w_2$ ($a,b \in {\Bbb Q}$).
Then we have $2ab \langle w_1,w_2 \rangle=-\langle v^2 \rangle<0$.
By Lemma \ref{lem:reflection} (2), $R_{v_1}$ preserves
$\{ \pm w_1,\pm w_2 \}$.
Since 
$$
R_{v_1}(w_1)=w_1-\frac{1}{a}(a w_1+bw_2)=-\frac{b}{a}w_2,
$$
$R_{v_1}(w_1)=\pm w_2$.
If $R_{v_1}(w_1)=w_2$, then we have $R_{v_1}(w_1+w_2)=w_1+w_2$.
Hence $\langle d_{v_1},w_1+w_2 \rangle=0$.
Since $\langle (w_1+w_2)^2 \rangle=2\langle w_1,w_2 \rangle<0$
and $\langle d_{v_1}^2 \rangle<0$,
$v^\perp$ is negative definite, which is a contradiction.
Therefore $R_{v_1}(w_1)=-w_2$.
Then we see that $w_1+w_2 \in {\Bbb Z}d_{v_1}$.
If $\langle v,v_2 \rangle=1,2$, then 
the primitivities of $d_{v_1}$ and $d_{v_2}$ imply that
$d_{v_1}=\pm d_{v_2}$.
If $d_{v_1}=-d_{v_2}$, then
we see that 
$$
v=\frac{\langle v^2 \rangle}{4}
\left(
\frac{2}{\langle v,v_1 \rangle}v_1+\frac{2}{\langle v,v_2 \rangle}v_2
\right).
$$
Since $\langle v^2 \rangle \geq 6$ and $v$ is primitive, 
this case does not occur.
Therefore $d_{v_1}=d_{v_2}$, which implies that
$v_1=v_2$.

(2) and (3) are obvious.
\end{proof}

\begin{rem}\label{rem:oguiso}
Theorem \ref{thm:movable} and Proposition \ref{prop:movable(rational)}
are compatible with Oguiso's general results
\cite[Thm. 1.3]{Oguiso}.
\end{rem}

\begin{NB}

For the Mukai vector $v$ in Proposition \ref{prop:movable(rational)},
we shall explain the morphisms induced by the boundaries of positive cone. 
We first note that there are finitely many chambers,
so we can take chambers $C_\pm$ 
such that $(s_\pm,0) \in \overline{C_\pm}$.
We take $w_\pm \in {\frak I}_0$
such that $c_1(w_\pm)/\rk w_\pm=s_\pm H$, $\rk w_\pm >0$ .
We set $Y_\pm:=M_H(w_\pm)$ and let ${\bf E}_\pm$ are the universal families
as twisted objects.
Then we have (twisted) Fourier-Mukai transforms 
$\Phi_{X \to Y_\pm}^{{\bf E}_\pm^{\vee}[k] }:
{\bf D}(X) \to {\bf D}^{\alpha_\pm}(Y_\pm)$,
where $\alpha_\pm$ are suitable 2-cocycles of ${\cal O}_{Y_\pm}^{\times}$
defining ${\bf E}_\pm^{\vee}$
and $k=1$ for ${\bf E}_-$, $k=2$ for ${\bf E}_+$.
For $(s,t) \in C_\pm$,
we have isomorphisms $M_{(sH,tH)}(v) \to M_{H_\pm}^\alpha(v')$
such that $\rk v'=0$.
We note that
the scheme-theoretic support $\Div(E)$ of a purely 1-dimensional
sheaf is well-defined even for
a twisted sheaf.
Hence we have a morphism
 $f:M_{H_\pm}^\alpha(v') \to \Hilb_{Y_\pm}^\eta$
by sending $E \in M_{H_\pm}^\alpha(v')$
to $\Div(E)$, where $\Hilb_{Y_\pm}^\eta$
is the Hilbert scheme of effective divisors $D$
on $Y_\pm$ with $\eta=c_1(D)$.
For a smooth divisor $D$, $f^{-1}(D) \cong \Pic^0(D)$.
Hence $f$ is dominant, which implies $f$ is surjective. 
Therefore
we get a surjective morphism
$M_{(sH,tH)}(v) \to \Hilb_{Y_\pm}^\eta$.
Combining with the properties of the albanese map,
we have a commutative diagram:
\begin{equation*}
\begin{CD}
M_{(sH,tH)}(v) @>>> \Hilb_{Y_\pm}^\eta \\
@VVV @VVV\\
\Pic^0(X) \times X @>>> \Pic^0(Y_\pm)
\end{CD}. 
\end{equation*}
Hence we get a morphism
$K_{(sH,tH)}(v) \to {\Bbb P}(H^0(Y_\pm,{\cal O}(D)))$, where
$D \in \Hilb_{Y_\pm}^\eta$.
Then we see that $\theta_v(\xi(s_\pm,0))$ comes from
 ${\Bbb P}(H^0(Y_\pm,{\cal O}(D)))$.
Thus $\theta_v(\xi(s_\pm,0))$ give Lagrangian fibrations.  
\end{NB}

\begin{NB}
Let $M_1,M_2$ be smooth projective manifolds with
trivial canonical bundles, and
$f:M_1 \to M_2$ a birational map.
Then there are closed subsets $Z_i \subset M_i$
of codimension $\geq 2$ such that
$f$ induces an isomorphism
$f:M_1 \setminus Z_1 \to M_2 \setminus Z_2$.
Then we have an isomorphism
$f_*:\NS(M_1) \to \NS(M_2)$.
Since $\codim Z_i \geq 2$,
we also have an isomorphism
$f_*:H^0(M_1,L) \to H^0(M_2,f_*(L))$.
Therefore $f_*$ induces a bijection
of movable divisors, and we get 
$f_*(\Mov(M_1))=\Mov(M_2)$.
\end{NB}

\begin{NB}
\begin{lem} 
Let $v=(r,dH,a)$ be a primitive Mukai vector with
$\ell:=\langle v^2 \rangle/2 \geq 3$.
\begin{enumerate}
\item[(1)]
Assume that $r=1$.
For the Hilbert-Chow morphism
$\Hilb_X^{\ell} \to S^{\ell} X$,
the exceptional divisor $E$ is divisible by 2
and is given by $E_{|K_H(v)}=2\theta_v((1,0,\ell))$.
\item[(2)]
Assume that $r=2$.
For the Gieseker-Uhlenbeck morphism,
the exceptional divisor $E$ is primitive
and $E_{|K_H(v)}=\theta_v(v+(0,0,\ell))$
\end{enumerate}
\end{lem}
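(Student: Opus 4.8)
The plan is to treat both parts uniformly as the divisorial contraction attached to the single wall defined by the point-sheaf class $v_1 := (0,0,-1)$, and then to read off the class of the exceptional divisor from the results of subsection~\ref{subsect:movable}. The one elementary computation needed is that of the associated vector $d_{v_1}$ of Lemma~\ref{lem:reflection}. Since $\langle v,v_1 \rangle = r \in \{1,2\}$ in the two cases, the definition gives
\[
d_{v_1} = v - \frac{\langle v^2 \rangle}{\langle v,v_1\rangle}\,v_1 = v + \frac{2\ell}{r}(0,0,1),
\]
so that $d_{v_1} = (1,dH,a+2\ell)$ when $r=1$ (which is $(1,0,\ell)$ after normalizing $v$ to the standard form $(1,0,-\ell)$) and $d_{v_1} = v+(0,0,\ell)$ when $r=2$. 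By Lemma~\ref{lem:reflection}(1) the vector $d_{v_1}$ is primitive with $\langle d_{v_1}^2\rangle = -\langle v^2\rangle$, and $\theta_v$ is an isometry onto $H^2(K_H(v),\mathbb{Z})$, hence carries primitive vectors to primitive classes; this is what will yield the divisibility and primitivity assertions.

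For part (1) I would first reduce to $v=(1,0,-\ell)$ by tensoring ideal sheaves with $\mathcal{O}_X(-dH)$ (the action $e^{-dH}$), which carries the Hilbert-Chow morphism to itself and is compatible with $\theta_v$. In this case the decomposition $v = \ell v_1 + v_2$ with $v_1=(0,0,-1)$, $v_2=(1,0,0)$ satisfies $\langle v_1^2\rangle=\langle v_2^2\rangle=0$ and $\langle v_1,v_2\rangle=1$, so it exhibits the point-sheaf wall as a codimension-$0$ wall (Proposition~\ref{prop:codim0-div}(1)). The Hilbert-Chow morphism is exactly the divisorial contraction treated there, so by Lemma~\ref{lem:H-C} one gets $E_{|K_H(v)} = 2\theta_v(d_{v_1}) = 2\theta_v((1,0,\ell))$; divisibility by $2$ is then immediate from the primitivity of $d_{v_1}$ and the isometry property of $\theta_v$.

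For part (2), with $r=2$ the decomposition $v = v_1+v_2$, $v_1=(0,0,-1)$, $v_2=(2,dH,a+1)$ satisfies $\langle v_1^2\rangle=0$, $\langle v,v_1\rangle=2$, $v_1$ primitive and $\langle v_2^2\rangle = 2\ell-4 \geq 0$ (using $\ell\geq 3$), so by Proposition~\ref{prop:general:codim1-div}(1)(i) this is a codimension-$1$ wall. I would then identify the Gieseker-Uhlenbeck morphism with the corresponding divisorial contraction: its exceptional divisor is the non-locally-free locus, whose generic point is an elementary modification $0 \to E \to E^{\vee\vee} \to \mathbb{C}_p \to 0$, i.e.\ a sheaf destabilized by a point-sheaf quotient, which is precisely the datum of the wall $W_{(0,0,-1)}$. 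Granting this identification, Proposition~\ref{prop:general:codim1-div}(2)(b) gives $E_{|K_H(v)} = \theta_v(d_{v_1}) = \theta_v(v+(0,0,\ell))$, and primitivity follows once more from Lemma~\ref{lem:reflection}(1) together with the primitivity-preservation of $\theta_v$.

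The main obstacle is the precise matching of the two classical morphisms with the Bridgeland-wall contractions, and especially the Gieseker-Uhlenbeck morphism in part (2): one must verify that the large-volume limit $t\to\infty$ along $\beta=\delta$ lands on the wall $W_{(0,0,-1)}$ (in the rank-one Picard case this wall is the vertical line $s=d/r$, whose $t\to\infty$ endpoint is the $\mu$-map polarization $H+(\delta,H)\varrho_X$), and that the exceptional locus together with its orientation agrees with $D_\pm$ of Proposition~\ref{prop:general:codim1-div}(2). Once the morphisms are so identified, the remaining ingredients — the Mukai-pairing computation of $d_{v_1}$ and the divisibility/primitivity conclusions — are routine consequences of Lemma~\ref{lem:reflection} and the fact that $\theta_v$ is an isometry onto the second cohomology of $K_H(v)$.
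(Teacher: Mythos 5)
Your proposal is correct, but it takes a more structural route than the paper, whose own proof is very short: for (1) it simply remarks that the Hilbert--Chow claim is well known (it is exactly Lemma \ref{lem:H-C}, which the paper states without proof), and for (2) it identifies the Gieseker--Uhlenbeck exceptional divisor with the locus of non-locally-free sheaves, quotes \cite[Lem. 4.4.1]{MYY:2011:1} directly for the class $\theta_v(v+(0,0,\ell))$, and proves primitivity by a two-line parity argument: a nontrivial common divisor of $(2,dH,a+\ell)$ must be $2$, but then $2\mid d$ forces $2\mid \ell=d^2n-2a$, hence $2\mid a$, contradicting the primitivity of $v$. You instead specialize the paper's general wall machinery to $v_1=(0,0,-1)$: Proposition \ref{prop:codim0-div} and Lemma \ref{lem:H-C} for $r=1$ (your normalization $v\mapsto ve^{-dH}$ is in fact needed even to make sense of the expression $\theta_v((1,0,\ell))$, since $(1,0,\ell)\in v^\perp$ only when $d=0$), Proposition \ref{prop:general:codim1-div}(2)(b) for $r=2$, and Lemma \ref{lem:reflection}(1) together with the isometry $\theta_v:v^\perp\to H^2(K_H(v),{\Bbb Z})$ for primitivity --- the same lattice-theoretic argument the paper uses inside Proposition \ref{prop:general:codim1-div}, so there is no circularity in replacing the paper's parity computation by it. What your route buys is uniformity: both cases become the contraction attached to the single wall $W_{(0,0,-1)}$, with $d_{v_1}=v+\tfrac{2\ell}{r}(0,0,1)$ computed once. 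What it costs is the step you yourself flag as the main obstacle: the identification, with the correct orientation, of the Gieseker--Uhlenbeck exceptional divisor with the divisor $D_+$ of Proposition \ref{prop:general:codim1-div} for the wall $s=d/r$ at large $t$. The paper's direct appeal to \cite[Lem. 4.4.1]{MYY:2011:1} for the non-locally-free locus bypasses precisely this matching, which is why its proof is shorter; your sketch of the matching (a non-locally-free $E$ is one with $\Ext^1({\Bbb C}_p,E)\neq 0$, i.e., with a nonzero Hom from a shifted point sheaf of class $(0,0,-1)$, and $\xi(\beta,H,t)$ degenerates to the $\mu$-class $H+(\delta,H)\varrho_X$ on that wall) is the right one and can be completed, but in your write-up it is granted rather than carried out.
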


\begin{proof}
The first claim is well-known.
So we assume that $r=2$.
For the Gieseker-Uhlenbeck morphism,
the exceptional divisor $E$ parameterizes
non-locally free sheaves.
By \cite[Lem. 4.4.1]{MYY:2011:1}, we have 
$E_{|K_H(v)}=\theta_v(v+(0,0,\ell))$.
We shall prove that 
$v+(0,0,\ell)$ is primitive.
If $v+(0,0,\ell)=(2,dH,a+\ell)$
is not primitive, then
$2|d$ and $2|(a+\ell)$.
Since $\ell=d^2 n-2a$,
we have $2|\ell$. Then $2|(a+\ell)$ implies $2|a$.
Since $v$ is primitive, it is impossible.
Therefore $v+(0,0,\ell)$ is primitive.
\end{proof}

Let $v$ be a primitive Mukai vector such that
$\langle v^2 \rangle \geq 6$.
Assume that there is an isotropic Mukai vector
$w$ such that $\langle v,w \rangle=2$.
Then $w$ defines a wall $W$. Let $C^\pm$ be the chambers
adjacent to $W$.  
For $(s,t) \in W$, let $Y:=M_{(sH,tH)}(w)$ be the moduli space
of stable complexes and ${\bf E}$ a universal family
as a twisted object.
By the Fourier-Mukai transform 
$\Phi_{X \to Y}^{{\bf E}^{\vee}[1]}:{\bf D}(X) \to
{\bf D}^\alpha(Y)$,
$M_{C^\pm}(v)$ are transformed to the moduli spaces
$M_{H'}^\alpha(u)$ and 
the moduli space 
$\{F|F^{\vee} \in M_{H'}^{-\alpha}(u^{\vee})\}$,
where $\alpha$ is a 2-cocycle of ${\cal O}_X^{\times}$. 

There are irreducible divisors $E_\pm \subset M_{C^\pm}(v)$
which correspond to the divisors of non-locally free sheaves.
Hence $E_\pm$ are primitive in $M_{C^\pm}(v)$.
By \cite[Cor. 4.3.3 (2)]{MYY:2011:2},
$\theta_v((d-rs)\xi_{tH})$ contracts
$E_\pm$.

\end{NB}

\begin{NB}
\begin{prop}
Let $(X,H)$ be a polarized abelian surface with
$\NS(X)={\Bbb Z}H$.
Let $v=(r,dH,a)$ be a Mukai vector such that 
$r>0$ and $2\ell:=\langle v^2 \rangle \geq 6$.
Then $M_H(v)$ is birationally equivalent
to $\Pic^0(Y) \times \Hilb_Y^{\ell}$
if and only if there is an isotropic Mukai
vector $w \in H^*(X,{\Bbb Z})_{\alg}$ with $\langle v,w \rangle=1$,
where $Y$ is an abelian surface.   
\end{prop}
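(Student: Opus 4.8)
The plan is to establish the two implications separately, following the pattern of Proposition \ref{prop:Hilb} while exploiting the sharp description of the movable cone available for $\NS(X)={\Bbb Z}H$. For the ``if'' direction, I would start from a primitive isotropic $w$ with $\langle v,w\rangle=1$ (after replacing $w$ by $-w$ I may take it positive), so that $Y:=M_H(w)$ is a fine moduli space and hence an abelian surface. Proposition \ref{prop:codim0} then supplies a Fourier--Mukai transform $\Phi:{\bf D}(X)\to{\bf D}(Y)$ realizing the decomposition $v=\ell w+v_2$ and sending $v$ to $(1,0,-\ell)$ on $Y$. Since $\Phi$ preserves Bridgeland stability, it induces isomorphisms $M_{(\beta,\omega)}(v)\cong M_{(\widetilde\beta,\widetilde\omega)}(1,0,-\ell)$; choosing $(\widetilde\beta,\widetilde\omega)$ in the large-volume chamber on $Y$ identifies the latter with $M_{H'}(1,0,-\ell)\cong\Pic^0(Y)\times\Hilb_Y^{\ell}$, and since the Gieseker and Bridgeland moduli of $v$ differ only in codimension $\ge 2$ this yields the asserted birational equivalence. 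One may also quote \cite[Cor. 0.3]{Y} directly.

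For the converse I would take a birational equivalence between $M_H(v)$ and $\Pic^0(Y)\times\Hilb_Y^{\ell}$ and pass to Albanese fibres to obtain a birational map $f:K_H(v)\to K_{H'}(1,0,-\ell)$ of irreducible symplectic manifolds. As any such map is an isomorphism in codimension one, it induces an isometry $f_*:\NS(K_H(v))\to\NS(K_{H'}(1,0,-\ell))$ for the Beauville--Fujiki forms, carrying the movable cone onto the movable cone. On the target the Hilbert--Chow morphism $\Hilb_Y^{\ell}\to S^{\ell}Y$ contracts a divisor which, by Lemma \ref{lem:H-C}, is divisible by $2$ in $\NS(K_{H'}(1,0,-\ell))$. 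Pulling this boundary wall back by $f_*^{-1}$ exhibits a divisorial contraction from a birational model of $K_H(v)$ whose exceptional divisor is divisible by $2$. By Theorem \ref{thm:movable} when $\sqrt{\ell/n}\notin{\Bbb Q}$, and by Proposition \ref{prop:movable(rational)} when $\sqrt{\ell/n}\in{\Bbb Q}$, a contraction of this type can only be defined by a vector $v_1$ lying in ${\frak I}_1$, i.e.\ a primitive isotropic vector with $\langle v,v_1\rangle=1$; taking $w:=v_1$ finishes the argument.

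The main difficulty lies in the converse, and precisely in transporting the ``divisible by $2$'' feature across $f_*$. I would need to verify that $f_*$ is compatible with the Mukai isometries $\theta_{v,\beta,tH}:v^\perp\to\NS(K_H(v))$ and the analogous one on the $\Hilb$ side, so that the Hilbert--Chow divisor matches the lattice vector $d_{v_1}=v-\tfrac{\langle v^2\rangle}{\langle v,v_1\rangle}v_1$ of Lemma \ref{lem:reflection}. By Corollary \ref{cor:movable} together with Lemma \ref{lem:H-C} and Proposition \ref{prop:codim1-div}, the exceptional divisor is divisible by $2$ exactly when $\langle v,v_1\rangle=1$ and primitive exactly when $\langle v,v_1\rangle=2$, so the divisibility --- an invariant of the $\NS$-lattice preserved by the isometry $f_*$ --- is precisely what separates ${\frak I}_1$ from ${\frak I}_2$ and detects $w$. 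The ancillary facts (finiteness of walls in the rational case via Lemma \ref{lem:I}, and $Y\cong M_H(w)$ as an abelian surface via Proposition \ref{prop:decomp}) are routine.
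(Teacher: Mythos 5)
Your proof is correct and takes essentially the same route as the paper: the ``if'' direction via fineness of $M_H(w)$ and \cite[Cor. 0.3]{Y}, and the converse by passing to Albanese fibres, using the isometry $f_*:\NS(K_H(v)) \to \NS(K_{H'}(1,0,-\ell))$ to identify movable cones, and invoking the movable-cone trichotomy (Theorem \ref{thm:movable}, resp. Corollary \ref{cor:movable} in the rational case) to force ${\frak I}_1 \ne \emptyset$. The transport of the divisibility-by-2 of the Hilbert--Chow exceptional divisor, which you spell out explicitly, is precisely the content left implicit in the paper's appeal to case (3) of Theorem \ref{thm:movable}.
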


\begin{proof}
If there is an isotropic Mukai
vector $w$ with $\langle v,w \rangle=1$,
then $M_H(w)$ is a fine moduli space and 
the claim follows by \cite[Thm. 0.1]{Y}.

Conversely
if $M_H(v)$ is birationally equivalent to
$\Pic^0(Y) \times \Hilb_Y^{\ell}$, then
we have a birational map $f:K_H(v) \to K_{H'}(1,0,-\ell)$,
where $H'$ is an ample divisor on $Y$.
Then we have an isomorphism 
$f_*:\NS(K_H(v)) \to \NS(K_{H'}(1,0,-\ell))$.
Since $\rk \NS(K_{H'}(1,0,-\ell))=\rk \NS(Y)+1$,
we get $\rk \NS(Y)=1$.
By the isomorphism $f_*$,
the movable cones are isomorphic.
By Theorem \ref{thm:movable}, ${\frak I}_1 \ne \emptyset$.
\end{proof}

\end{NB}

\begin{NB}
\subsection{}

Assume that $(H^2)=2$.
Then every
isotropic Mukai vector
is written as $w=(p^2,pqH,q^2)$, $(p,q)=1$.

For $v=(3,H,-3)$,
there is no isotropic Mukai vector $w$ with
$\langle v,w \rangle=\pm 1$.

$v$ is equivalent to $(2,0,-5)$.

\end{NB}

\section{Relations with Markman's results.}\label{sect:Markman}

In this section, we shall explain the relation between 
our results and Markman's general results \cite{Mark}, \cite{Ma}, \cite{Ma2}.

\begin{defn}
Let $M$ be an irreducible symplectic manifold and $h$ an ample class.
\begin{enumerate}
\item[(1)]
An effective divisor $E$ is prime exceptional, if
$E$ is reduced and irreducible of $q_M(E^2)<0$.
\item[(2)]
Let $e \in \NS(M)$ be a primitive class.
$e$ is stably prime exceptional,
if $q_M(e,h)>0$ and there is a projective irreducible symplectic
manifold $M'$, a parallel-transport operator
$$
g:H^2(M,{\Bbb Z}) \to H^2(M',{\Bbb Z}),
$$
 and an integer $k$,
such that $kg(e)$ is the class of a prime exceptional divisor
$E \subset M'$.
\end{enumerate}
\end{defn}

Let $\Spe_M$ be the set of stably prime exceptional 
divisors of $M$.
Then Markman described the interior of the movable cone in terms of
$\Spe_M$.

\begin{thm}[{\cite[Prop. 1.8, Lem. 6.22]{Ma}}]\label{thm:Markman}
Let $M$ be an irreducible symplectic manifold and $\Mov(M)^0$ 
the interior of $\Mov(v)_{\Bbb R}$. Then 
\begin{equation*}
\Mov(M)^0=\{ x \in P^+(M) \mid q_M(x,e)>0 \text{ for all $e \in \Spe_M$} \}. 
\end{equation*}
\end{thm}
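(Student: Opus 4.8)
The plan is to prove the two inclusions separately, with Boucksom's divisorial Zariski decomposition and Markman's interpretation of prime exceptional divisors as monodromy reflections as the main tools.

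First I would treat the inclusion $\Mov(M)^0 \subseteq \{x \in P^+(M) \mid q_M(x,e)>0 \text{ for all } e \in \Spe_M\}$. The containment in the positive cone is already available: the remark preceding the statement records that $\Mov(M)_{\Bbb R} \subseteq \overline{P^+(M)}_{\Bbb R}$ (via Boucksom \cite{Bo2} and \cite[Lem. 6.22]{Ma}), and an interior class necessarily lands in the open cone $P^+(M)$. For the positivity against $\Spe_M$, fix $e \in \Spe_M$ and, by definition, a birational model $M'$ with a parallel-transport operator $g$ and an integer $k$ such that $kg(e)=[E]$ for a prime exceptional divisor $E \subset M'$. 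Since base loci of codimension $\geq 2$ are unaffected by the birational identification, the movable cone is carried isomorphically onto that of $M'$, so it suffices to show $q_{M'}(g(x),[E])>0$. Because a movable class has trivial negative part in its divisorial Zariski decomposition, it cannot pair negatively with the rigid, negative-square divisor $E$, and interior movability upgrades this to strict positivity.

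The substantive inclusion is the reverse one. Here I would use that the positive cone $P^+(M)$ is tiled into chambers by the hyperplanes $e^\perp$ with $e \in \Spe_M$, the movable cone being the chamber containing the ample classes. The two inputs are Markman's theorem that every prime exceptional divisor has $q_M<0$ and defines a reflection $R_e$ that is a monodromy (parallel-transport) operator, together with Boucksom's result that the negative parts of pseudo-effective classes are supported on a mutually $q_M$-orthogonal set of such exceptional classes. These combine to identify the walls of $\Mov(M)^0$ with exactly the hyperplanes $e^\perp$, $e \in \Spe_M$. Thus, given $x$ in the right-hand set, strict positivity against every $e \in \Spe_M$ prevents $x$ from lying on or beyond any wall, placing it in the open fundamental chamber, i.e. in $\Mov(M)^0$; equivalently, running wall-crossing, every potential divisorial contraction corresponds to some $e \in \Spe_M$ and is ruled out by $q_M(x,e)>0$.

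The hard part will be this second inclusion, and specifically the claim that \emph{every} wall of the movable cone arises from a stably prime exceptional class: ruling out spurious walls requires the Hodge-theoretic global Torelli theorem for irreducible symplectic manifolds and Markman's classification of monodromy reflections, while the local finiteness and $q_M$-orthogonality of the relevant exceptional classes (Boucksom) is what makes the chamber decomposition well-defined. As this is precisely \cite[Prop. 1.8]{Ma}, in our situation it suffices to invoke that result; the contribution of the present paper is rather to exhibit the set $\Spe_M$ concretely through the wall-crossing of Bridgeland stability conditions.
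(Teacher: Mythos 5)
Your proposal ends exactly where the paper does: the paper gives no independent proof of this theorem, stating it purely as a quoted result with the citation \cite[Prop. 1.8, Lem. 6.22]{Ma}, which matches your concluding remark that in this situation it suffices to invoke Markman's result. Your preliminary sketch of the two inclusions (Boucksom's divisorial Zariski decomposition for one direction, monodromy reflections and global Torelli for the chamber structure in the other) is a reasonable outline of Markman's own argument, but it plays no role in the paper, whose contribution is instead the concrete description of $\Spe_M$ via Bridgeland wall-crossing.
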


Let $M$ be an irreducible symplectic manifold
which is deformation equivalent to
the generalized Kummer variety $K_H(1,0,-\ell)$
of dimension $2\ell-2$.
We shall explain the description of $\Spe_M$. 
For $e \in H^2(M,{\Bbb Z})$ with $q_M(e^2)=-2\ell$,
we set 
$$
\divisor q_M(e,\bullet):=\min \{ q_M(e,x)>0 \mid x \in H^2(M,{\Bbb Z}) \}.
$$
As an abstract lattice,
the cohomological Mukai lattice $H^{2*}(X,{\Bbb Z})=
\bigoplus_{i=0}^2 H^{2i}(X,{\Bbb Z})$
is independent of the choice of an abelian surface $X$.
We denote this lattice by $\widetilde{\Lambda}$.
It is a direct sum of 4 copies of
the hyperbolic lattice.
Since $M$ is deformation equivalent to
$K_H(1,0,-\ell)$,
by using a parallel-transport, 
we have a primitive embedding
$H^2(M,{\Bbb Z}) \to \widetilde{\Lambda}$.
The $\LieO(\widetilde{\Lambda})$-orbit of the embedding is 
independent of the choice of a parallel-transport 
by similar claims to \cite[Thm. 9.3, Thm. 9.8]{Ma}.
For $M=K_H(v)$, it is the embedding
$$
H^2(K_H(v),{\Bbb Z}) \overset{\theta_v^{-1}}{\to} v^\perp
\subset H^{2*}(X,{\Bbb Z})
$$
 (\cite[Example 9.6]{Ma}).
We fix an embedding and 
regard $H^2(M,{\Bbb Z})$ as a sublattice of
$\widetilde{\Lambda}$. 
Let ${\Bbb Z}v$ be the orthogonal compliment of
$H^2(M,{\Bbb Z})$ in $\widetilde{\Lambda}$.
Then $\langle v^2 \rangle =2\ell$.
Since $\langle v,e \rangle=0$,
$e \pm v$ are isotropic.
We define $(\rho,\sigma) \in {\Bbb Z}_{>0} \times {\Bbb Z}_{>0}$
by requiring that $(e+v)/\rho$ and $(e-v)/\sigma$ are primitive and isotropic.
We also set $r:=\rho/\gcd(\rho,\sigma)$ and
$s:=\sigma/\gcd(\rho,\sigma)$.
If $\ell \mid \divisor q_M(e,\bullet)$, then
$r$ and $s$ are relatively prime integers with
$rs=\ell,\ell/2,\ell/4$. 
We set $rs(e):=\{r,s \}$.

\begin{prop}[{\cite{Ma2}}]
Let $M$ be an irreducible symplectic manifold of 
$\dim M=2\ell-2$ which is deformation equivalent
to $K_H(1,0,-\ell)$.
\begin{enumerate}
\item[(1)]
For $e \in \Spe_M$, $q_M(e^2)=-2\ell$ and 
$\ell \mid \divisor q_M(e,\bullet)$.
\item[(2)]
For $e \in H^2(M,{\Bbb Z})$ with $q_M(e^2)=-2\ell$ and 
$\ell \mid \divisor q_M(e,\bullet)$, the orbit of $e$ of the monodromy
group action is classified by
$rs(e):=\{r,s \}$ and $\divisor q_M(e,\bullet)$.
\end{enumerate}
\end{prop}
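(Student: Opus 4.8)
The plan is to reduce both assertions to the classification of divisorial walls obtained in Subsection \ref{subsect:movable}, using Markman's deformation theory \cite{Mark}, \cite{Ma} to transport the question to a moduli space of Bridgeland-stable objects. Throughout I identify $H^2(M,{\Bbb Z})$ with $v^\perp \subset \widetilde{\Lambda}$, so that $q_M$ becomes the restriction of the Mukai pairing and $\divisor q_M(e,\;)=\gcd\{\langle e,x\rangle\mid x\in v^\perp\}$.

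First I would treat (1). A stably prime exceptional class $e$ becomes, after a parallel-transport operator, the primitive class of a prime exceptional divisor on a manifold $M'$ deformation equivalent to $K_H(1,0,-\ell)$. Since $e$ remains of type $(1,1)$ along a suitable deformation, I may assume $M'$ is a birational model of some $K_{(\beta,\omega)}(v)$ with $\langle v^2\rangle=2\ell$ and that $e$ lies on the boundary of the movable cone. By Theorem \ref{thm:general:movable}(2) such a boundary divisorial contraction is governed by an isotropic Mukai vector $u$ with $\langle v,u\rangle\in\{1,2\}$, and by Propositions \ref{prop:codim0-div} and \ref{prop:general:codim1-div} the exceptional divisor is $2\theta_{v}(d_u)$ if $u\in{\frak I}_1$ and $\theta_{v}(d_u)$ if $u\in{\frak I}_2$, where $d_u=v-\frac{2\ell}{\langle v,u\rangle}u$. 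In either case the primitive class is $e=\theta_v(d_u)$, so Lemma \ref{lem:reflection}(1) gives $q_M(e^2)=\langle d_u^2\rangle=-2\ell$. For the divisibility, observe that for $x\in v^\perp$ one has $\langle d_u,x\rangle=-\frac{2\ell}{\langle v,u\rangle}\langle u,x\rangle$, which is divisible by $\ell$ in both cases; hence $\ell\mid\divisor q_M(e,\;)$. That the reflection $R_e$ is a genuine monodromy operator, which is what lets me run this reduction for an arbitrary prime exceptional $E$ rather than only for contractible walls, is the input I would import from \cite{Mark}, \cite{Ma}.

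For (2) the statement is purely lattice-theoretic: the monodromy group $\operatorname{Mon}^2(M)$ acts on $v^\perp$ through isometries of $\widetilde{\Lambda}$ fixing $\pm v$, and I must show that two classes $e,e'$ with $q_M=-2\ell$ and $\ell\mid\divisor$ are conjugate if and only if $\divisor q_M(e,\;)=\divisor q_M(e',\;)$ and $rs(e)=rs(e')$. I would pass from $e$ to the two primitive isotropic vectors $w_+=(e+v)/\rho$ and $w_-=(e-v)/\sigma$; since $e\pm v$ are isotropic and $\langle e+v,e-v\rangle=-4\ell$, the pair $\{w_+,w_-\}$ together with $\langle w_+,w_-\rangle=-4\ell/(\rho\sigma)$ and the divisibilities encodes exactly the data $(rs(e),\divisor q_M(e,\;))$, and $e,v$ are immediately recovered from $\{w_+,w_-\}$. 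The classification then amounts to transitivity of the stabilizer of $\pm v$ on such isotropic configurations. Because $\widetilde{\Lambda}\cong U^{\oplus 4}$ contains several hyperbolic planes, Eichler's criterion gives transitivity of $\LieO(\widetilde{\Lambda})$ on primitive vectors of prescribed square and divisibility, and a relative version gives transitivity on the configurations above; this is what would produce the two stated invariants.

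The main obstacle is precisely that $\operatorname{Mon}^2(M)$ is a proper subgroup of the full stabilizer $\{g\in\LieO(\widetilde{\Lambda})\mid g(v)=\pm v\}$: for Kummer-type manifolds it is cut out by sign characters coming from the derived and autoequivalence description (compare the appearance of $\det g=\pm1$ and the index-two subgroups $\Stab_0(v)$, $\Stab_0(v)^*$ in Section \ref{sect:FM}). I must check that passing from the full orthogonal stabilizer to this smaller monodromy group neither splits the orbits further nor is obstructed, i.e. that the relevant characters are already constant on each $(rs(e),\divisor)$-level set. Verifying this compatibility is the delicate step, and it is exactly where Markman's explicit determination of the Kummer monodromy group in \cite{Ma2} is needed rather than bare Eichler transitivity; the remaining lattice bookkeeping, including the case distinction $rs=\ell,\ell/2,\ell/4$, should then be routine.
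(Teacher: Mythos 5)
A preliminary remark on the comparison itself: the paper does not prove this proposition at all --- it is imported wholesale from Markman's letter \cite{Ma2}, and the only related computation in the paper is the discussion following the \emph{next} proposition, where the invariants $\{\rho,\sigma\}$, $\{r,s\}$ and the divisibility of $\langle d_u,x\rangle$ are worked out for the classes $d_u$ with $u\in{\frak I}_1\cup{\frak I}_2$. Your part (1) reconstructs this plausibly: once a stably prime exceptional class is realized as the primitive part of the exceptional divisor of a divisorial contraction on a birational model of some $K_{(\beta,\omega)}(v)$, Theorem \ref{thm:general:movable}, Propositions \ref{prop:codim0-div} and \ref{prop:general:codim1-div}, and Lemma \ref{lem:reflection} indeed give $q_M(e^2)=\langle d_u^2\rangle=-2\ell$ and $\langle d_u,x\rangle=-\frac{2\ell}{\langle v,u\rangle}\langle u,x\rangle$ for $x\in v^\perp$, whence $\ell\mid\divisor q_M(e,\;)$. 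Note, however, that the reduction step needs more than the fact that $R_e$ is a monodromy operator: the $M'$ appearing in the definition of $\Spe_M$ is an arbitrary deformation-equivalent manifold, so you need Markman's deformation theory of prime exceptional divisors (the pair $(M',E)$ deforms along the locus where $e$ stays of type $(1,1)$, and that locus contains points corresponding to moduli spaces) before the paper's wall classification applies; you gesture at \cite{Mark}, \cite{Ma} but attribute the need to the wrong statement.

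The genuine gap is in (2). Your lattice-theoretic argument --- encoding $e$ by the primitive isotropic vectors $(e+v)/\rho$ and $(e-v)/\sigma$ and applying Eichler's criterion --- can at best give transitivity of the full stabilizer of $\pm v$ in $\LieO(\widetilde{\Lambda})$ on each level set of the invariants $(\{r,s\},\divisor q_M(e,\;))$; these invariants are visibly constant on such orbits, so this direction is the easy half. The proposition asserts transitivity of the \emph{monodromy group}, which for Kummer-type manifolds is a proper subgroup of that stabilizer, and the claim that the stabilizer orbits do not split into smaller monodromy orbits is exactly the content of Markman's result in \cite{Ma2} --- that is, of the statement being proved. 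You explicitly defer this step to \cite{Ma2}, so your proposal establishes nothing beyond what the paper already obtains by citation; as an independent proof it is incomplete at precisely the step that carries the mathematical content. This is not a defect relative to the paper, which also offers no proof, but it means your part (2) is a reduction to the cited result rather than an argument for it.
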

For each value of $\{r,s \}$ with $rs \in \{\ell,\ell/2,\ell/4 \}$, 
the same examples in \cite[sect. 10, 11]{Mark} show that
there are $K_H(v)$ with
$\dim K_H(v)=2\ell-2$ and $e \in \NS(K_H(v))$
such that 
$\divisor q_M(e,\bullet)=\ell, 2\ell$ and $rs(e)=\{r,s \}$.
Then we also get the following description of $\Spe_M$.

\begin{prop}
Let $M$ be an irreducible symplectic manifold of 
$\dim M=2\ell-2$ which is deformation equivalent
to $K_H(1,0,-\ell)$ and $h$ an ample divisor on $M$.
A divisor
$e$ with $q_M(e,e)=-2\ell$ and $q_M(e,h)>0$
is stably prime exceptional if and only if
\begin{enumerate}
\item[(1)]
$\divisor q_M(e, \bullet)=2\ell$ and $\{r,s\}=\{1,\ell\}$ or
\item[(2)]
$\divisor q_M(e,\bullet)$ and $\{ r,s \}$ are one of the following.
\begin{enumerate}
\item
$\divisor q_M(e,\bullet)=2 \ell$ and 
$\{r,s \}=\{2,\ell/2\}$, $\ell \geq 6$, $\ell \equiv 2 \mod 4$. 
\item
$\divisor q_M(e,\bullet)=\ell$ and
$\{r,s \}=\{1,\ell \}$, $\ell \geq 3$, $2 \nmid \ell$. 
\item
$\divisor q_M(e,\bullet)=\ell$ and
$\{r,s \}=\{1,\ell/2\}$, $\ell \geq 2$, $2 \mid \ell$.
\end{enumerate} 
\end{enumerate}
\end{prop}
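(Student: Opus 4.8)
The plan is to read off the answer from the monodromy-orbit classification stated just above (from \cite{Ma2}) together with the explicit description of divisorial contractions obtained in subsection \ref{subsect:movable}. Since being stably prime exceptional is preserved by parallel transport and depends only on the monodromy orbit of $e$, and since $M$ is deformation equivalent to a moduli space $K_{(\beta,tH)}(v)$ on a polarized abelian surface, it suffices to decide which orbits --- labelled by the pair $(\{r,s\},\divisor q_M(e,\;))$ --- contain the class of an honest prime exceptional divisor on some $K_{(\beta,tH)}(v)$. By Theorem \ref{thm:general:movable}(2) and Corollary \ref{cor:movable}, a primitive class $e$ with $q_M(e,e)=-2\ell$ and $q_M(e,h)>0$ is realised by a prime exceptional divisor precisely when $\pm e$ lies on a codimension $0$ or codimension $1$ wall, and Propositions \ref{prop:codim0-div} and \ref{prop:general:codim1-div} identify the corresponding exceptional divisors as $2\theta_{v,\beta,tH}(d_{v_1})$ with $v_1\in{\frak I}_1$ (the codimension $0$ case, divisible by $2$) and $\theta_{v,\beta,tH}(d_{v_1})$ with $v_1\in{\frak I}_2$ (the codimension $1$ case, primitive).

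The remaining task is to translate each wall type into the invariants $(\{r,s\},\divisor)$. For $v_1\in{\frak I}_1$ one has $\langle v,v_1\rangle=1$, $v=\ell v_1+v_2$ with $v_2$ primitive isotropic, $e=d_{v_1}=v-2\ell v_1$, and hence $e-v=-2\ell v_1$, $e+v=2v_2$; thus $(\rho,\sigma)=(2,2\ell)$, $\{r,s\}=\{1,\ell\}$, and since $\bb{Z}v_1+\bb{Z}v_2$ is unimodular one computes $\divisor q_M(e,\;)=2\ell$. This is case (1), with exceptional divisor divisible by $2$. For $v_1\in{\frak I}_2$ one has $\langle v,v_1\rangle=2$, $e=d_{v_1}=v-\ell v_1$, so $e-v=-\ell v_1$ gives $\sigma=\ell$, while $\rho$ is the divisibility of $e+v=2v-\ell v_1$ in $\widetilde\Lambda$ and divides $4$. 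Writing $2v=\rho w_+-\sigma w_-$ with $w_\pm$ primitive isotropic, the primitivity of $v$ forces $\gcd(\rho,\sigma)\in\{1,2\}$, which in turn excludes $\rho=4$ when $4\mid\ell$ (otherwise $v$ would be divisible by $2$). Running through $\rho\in\{1,2,4\}$ then yields exactly $\{1,\ell\}$ when $\ell$ is odd (since $\ell$ odd forces $\rho=1$), $\{1,\ell/2\}$ when $\ell$ is even with $\rho=2$, and $\{2,\ell/2\}$ when $\ell\equiv2\pmod4$ with $\rho=4$, all with $\divisor q_M(e,\;)=\ell$; these are cases (2b), (2c), (2a). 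In particular the would-be orbit $\{1,\ell/4\}$ is discarded, as it would correspond to $\rho=4$ with $4\mid\ell$, making $v$ imprimitive.

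For the converse I would invoke the examples of \cite[sect.\ 10, 11]{Mark}, which realise each admissible $\{r,s\}$ by a class $e$ on a suitable $K_H(v)$ with the stated divisibility; by the computation above each such $e$ then sits on a codimension $0$ or codimension $1$ wall and is therefore the class of a genuine prime exceptional divisor. Combined with the fact that $\Spe_M$ is a union of full monodromy orbits and with Markman's Theorem \ref{thm:Markman}, this gives both implications.

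The main obstacle is the lattice bookkeeping hidden in the second paragraph: one must compute the divisibility $\rho$ of $2v-\ell v_1$ and the value $\divisor q_M(e,\;)$ inside the non-unimodular lattice $v^\perp$, and in particular show that the saturation index of $\bb{Z}v+\bb{Z}v_1$ in $\widetilde\Lambda$ is what controls whether $\divisor$ equals $\ell$ or $2\ell$. This is exactly the point at which the congruences $\ell\equiv2\pmod4$, $2\mid\ell$, and $2\nmid\ell$ are forced, and where the spurious orbits --- the $\divisor=2\ell$ subcase of ${\frak I}_2$ (which coincides with case (1)) and $\{1,\ell/4\}$ --- must be ruled out.
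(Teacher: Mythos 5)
Your proposal is correct and takes essentially the same route as the paper: Markman's orbit classification \cite{Ma2} together with the realizability examples of \cite[sect.\ 10, 11]{Mark}, combined with the identification of prime exceptional classes with $d_u$ for $u \in {\frak I}_1 \cup {\frak I}_2$ (Propositions \ref{prop:codim0-div} and \ref{prop:general:codim1-div}) and the computation of $\{\rho,\sigma\}$ for $e=d_u$ --- your case division by $\rho \in \{1,2,4\}$, with $\gcd(\rho,\sigma)\mid 2$ excluding $\rho=4$ when $4\mid\ell$, is equivalent to the paper's division by $b=\pm 1,\pm 2$ in $v=au+bw$. The step you flag as the main obstacle, namely that $\divisor q_M(e,\;)$ equals $\ell$ rather than $2\ell$ in the ${\frak I}_2$ case, is left at exactly the same level of precision in the paper, which only establishes the divisibility $\ell \mid \langle d_u,x\rangle$ for $x \in v^\perp$ via the proof of Lemma \ref{lem:reflection}.
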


If $M=K_{(\beta,\omega)}(v)$ for some $v$, then
we shall explain that
the case (1) corresponds to the codimension 0 wall
$u^\perp \;(u \in {\frak I}_1)$
and the case (2) corresponds to the codimension 1 wall
$u^\perp \;(u \in {\frak I}_2)$.

(1)
We first assume that $u \in {\frak I}_1$.
Since $d_u=v-2\ell u$,
$v+d_u=2(v-\ell u)$ and $v-d_u=2\ell u$.
Since
$\langle d_u,\;\;\rangle=-2\ell \langle u,\;\; \rangle$ on $v^\perp$,
$\divisor q_{K_H(v)}(d_u,\bullet)=2\ell$.
Since $\langle v+d_u,u \rangle=2$,
$v+d_u$ is primitive, which implies that
$\{\rho,\sigma\}=\{2,2\ell\}$.
Therefore $\{r,s \}=\{1,\ell \}$. 

(2)
We next assume that $u \in {\frak I}_2$. 
In this case, we have $d_u=v-\ell u$.
Thus
$v+d_u=2v-\ell u$ and $v-d_u=\ell u$.
We shall compute $\divisor q_{K_H(v)}(d_u,\bullet)$.
We first note that
$\langle d_u,\;\; \rangle=-\ell \langle u,\;\; \rangle$
on $v^\perp$.
Since $H^*(X,{\Bbb Z})$ is a 4 copies of hyperbolic lattice,
there is an isotropic Mukai vector $\lambda \in H^{2*}(X,{\Bbb Z})$
with $\langle u,\lambda \rangle=1$.
Then 
$$
H^{2*}(X,{\Bbb Z})=({\Bbb Z}u+{\Bbb Z}\lambda) \oplus
 ({\Bbb Z}u+{\Bbb Z}\lambda)^\perp.
$$
We set 
\begin{equation}\label{eq:div(q)}
v:=2 \lambda+a u+\xi, \; a \in {\Bbb Z}, \xi \in  
({\Bbb Z}u+{\Bbb Z}\lambda)^\perp.
\end{equation}
Then $x \lambda+y u+z \eta$ $(x,y,z \in {\Bbb Z}, \eta \in 
({\Bbb Z}u+{\Bbb Z}\lambda)^\perp)$
belongs to $v^\perp$ if and only if
$xa+2y+z\langle \xi,\eta \rangle=0$.
If $2 \nmid \xi$, then the unimodularity of
$({\Bbb Z}u+{\Bbb Z}\lambda)^\perp$ implies that
we can take $\eta$ with $2 \nmid \langle \xi,\eta \rangle$.
We take $z \in {\Bbb Z}$ such that
$y=-(a+z \langle \xi,\eta \rangle)/2 \in {\Bbb Z}$.
Then $\lambda+y u+z \eta \in v^\perp$ and
$\langle d_u,\lambda+y u+z \eta \rangle=
-\ell \langle u,\lambda+y u+z \eta \rangle=-\ell$.
Therefore $\divisor q_{K_H(v)}(d_u,\bullet)=\ell$.
If $2 \mid \xi$, then the primitivity of $v$ implies that
$2 \nmid a$. Hence $x \lambda+y u+z \eta \in v^\perp$
satisfies $2 \mid x$.
Then we have 
$$
\langle d_u,x\lambda+yu+z \eta \rangle=
-\ell \langle u,x \lambda+yu+z \eta \rangle=
-\ell x \langle u,\lambda \rangle \in 2\ell {\Bbb Z}.
$$
Hence $\divisor q_{K_H(v)}(d_u,\bullet)=2\ell$.
Therefore $\divisor q_{K_H(v)}(d_u,\bullet)=2\ell$ if and only if
there is $a \in {\Bbb Z}$ such that $2 \mid (v-a u)$.

We next compute $\{r,s\}$.
We take $w$ such that ${\Bbb Z}u+{\Bbb Z}w$ is a saturated
sublattice of $H^*(X,{\Bbb Z})_{\alg}$ containing $v$.
For the notation of \eqref{eq:div(q)},
$w=2\lambda+\xi$ or $2w=2\lambda+\xi$.
Thus they are distinguished by
$\divisor q_{K_H(v)}(d_u,\bullet)$.
We set $v=a u+b w$ ($b=1,2$).
Then $v+d_u=(2a-\ell)u+2b w$.
We note that $\ell=2a+b^2 \langle w^2 \rangle/2 $ by
$b \langle u,w \rangle=2$.
\begin{NB}
Old version:
We note that $2=\langle v,u \rangle=b\langle u,w \rangle$.
Thus $b=\pm 1, \pm 2$.
\end{NB}
If $2 \nmid \ell$, then 
$v+d_u$ is primitive, which implies that
$\{\rho,\sigma\}=\{1,\ell \}$.
In this case, $b=1$ and $\divisor q_{K_H(v)}(d_u,\bullet)=\ell$.
Assume that $2 \mid \ell$.
Then $(v+d_u)/2=(a-\ell/2)u+b w \in H^*(X,{\Bbb Z})_{\alg}$.
If $b=1$, then
$(v+d_u)/2$ is primitive, which implies 
$\{\rho,\sigma\}=\{2,\ell \}$.
If $b =2$, then
the primitivity of $v$ implies that $2 \nmid a$.
Then $\ell \equiv 2 \mod 4$ and
$(v+d_u)/4 \in H^*(X,{\Bbb Z})_{\alg}$ is primitive, which implies 
$\{\rho,\sigma\}=\{4,\ell \}$.
Therefore $\{r,s\}$ satisfies (a),  (b) or (c).

\begin{NB}
\section{An Example.}

Let $C_1$ and $C_2$ are non-isogenous elliptic curves and 
set $X=C_1 \times C_2$.
Then $\NS(X)={\Bbb Z}\sigma \oplus {\Bbb Z}f$,
$(f^2)=(\sigma^2)=0$ and $(\sigma,f)=1$.
We set $v:=(1,0,-\ell)$.
(1)
$\ell=3$.
We have decompositions
\begin{equation*}
\begin{split}
v=& (1,0,0)+3(0,0,-1)\\
v=& (1,-\sigma-f,1)+(0,\sigma,-2)+(0,f,-2)\\
v=& (1,-f,0)+(0,f,-3)\\
v=& (1,-\sigma,0)+(0,\sigma,-3)
\end{split}
\end{equation*}

Codimension 0 walls:
\begin{equation*}
\begin{split}
v=& (1,0,0)+3(0,0,-1)\\
v=& (1,-3f,0)+3(0,f,-1)\\
v=& (1,-3\sigma,0)+3(0,\sigma,-1)\\
v=& (-2,3(\sigma+2f),-9)+3(1,-(\sigma+2f),2)\\
v=& (-2,3(2\sigma+f),-9)+3(1,-(2\sigma+f),2)\\
v=& (4,-6(\sigma+f),9)+3(-1,2(\sigma+f),-4)
\end{split}
\end{equation*}
These defines a hexagon.whose vertices are
$(0,f,0),(-1,\sigma+3f,-3),(-2,3\sigma+4f,-6),(-2,4\sigma+3f,-6),
(-1,3\sigma+f,-3),(0,\sigma,0)$ with center
$(-1,2\sigma+2f,-3)$.   

\begin{lem}
$\Mov(K_H(1,0,-3))$ is defined by
$(0,f,0)$, $(0,\sigma,0)$ and $(-1,2\sigma+2f,-3)$.   
\end{lem}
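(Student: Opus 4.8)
The plan is to read off the movable cone from Theorem \ref{thm:general:movable}, which asserts that $\overline{\Mov(K_H(v))}_{\Bbb R}=\theta_{v}(\overline{{\cal D}})$, where ${\cal D}$ is the connected component of $P^+(v^\perp)_{\Bbb R}\setminus\bigcup_{u\in{\frak I}}u^\perp$ containing the nef class of the Gieseker space $M_H(v)$. Since $v=(1,0,-3)$ has $\delta=0$, the formula for $\xi(\beta,H,t)$ at the large volume limit shows this nef class is proportional to $(0,H,0)=p(0,\sigma,0)+q(0,f,0)$ for $H=p\sigma+qf$ ample, and perturbing $\beta$ so that $d_\beta>0$ moves it into the region where the rank coordinate is negative. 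Hence everything reduces to computing ${\cal D}$, which I would propose is the simplicial cone $\Delta$ spanned by the three vectors in the statement. As preliminary bookkeeping I would record that all three lie in $v^\perp$, that $(0,f,0),(0,\sigma,0)$ are isotropic while $\langle(-1,2\sigma+2f,-3)^2\rangle=2$, and that a positive combination $\lambda(0,f,0)+\mu(0,\sigma,0)+\nu(-1,2\sigma+2f,-3)$ has Mukai square $2\lambda\mu+4\lambda\nu+4\mu\nu+2\nu^2>0$, so that $\Delta^\circ\subset P^+(v^\perp)_{\Bbb R}$.

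Next I would exhibit the three facets of $\Delta$ as genuine walls and compute the extremal rays as their pairwise intersections. The facet $\nu=0$ lies on $(0,0,-1)^\perp$ with $(0,0,-1)\in{\frak I}_1$, the Hilbert--Chow (codimension $0$) wall of Proposition \ref{prop:codim0-div}; the facets $\mu=0$ and $\lambda=0$ lie on $(0,f,-2)^\perp$ and $(0,\sigma,-2)^\perp$ with $(0,f,-2),(0,\sigma,-2)\in{\frak I}_2$, codimension $1$ walls. A direct pairing check gives $(0,0,-1)^\perp\cap(0,f,-2)^\perp={\Bbb R}(0,f,0)$, $(0,0,-1)^\perp\cap(0,\sigma,-2)^\perp={\Bbb R}(0,\sigma,0)$ and $(0,f,-2)^\perp\cap(0,\sigma,-2)^\perp={\Bbb R}(-1,2\sigma+2f,-3)$, so $\Delta$ is exactly the cone cut out by the three half-spaces inside $\overline{P^+(v^\perp)}_{\Bbb R}$. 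I would also flag here the feature peculiar to $\ell=3$: the decomposition $v=(1,-\sigma-f,1)+(0,\sigma,-2)+(0,f,-2)$ into three mutually $1$-paired isotropic vectors is a type (ii) codimension $1$ wall of Proposition \ref{prop:general:codim1-div}(1), so the walls $(0,f,-2)^\perp$, $(0,\sigma,-2)^\perp$ and $(1,-(\sigma+f),1)^\perp$ meet transversally at the vertex $(-1,2\sigma+2f,-3)$. This is precisely why Proposition \ref{prop:intersection2}, which requires $\ell\geq4$, is unavailable and a direct computation is forced.

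The heart of the proof is to show that no wall $u^\perp$ with $u\in{\frak I}$ meets the open cone $\Delta^\circ$. Writing a primitive isotropic $u=(r',p\sigma+qf,a')$ with $\langle v,u\rangle=k\in\{0,1,2\}$ forces $a'=3r'-k$, and $\langle u^2\rangle=0$ gives $pq=r'(3r'-k)$; since $r'(3r'-k)\geq0$ for every integer $r'$, the entries $p,q$ have the same sign. Evaluating on $x=\lambda(0,f,0)+\mu(0,\sigma,0)+\nu(-1,2\sigma+2f,-3)$ one finds $\langle u,x\rangle=\lambda p+\mu q+\nu(2p+2q+6r'-k)$, and the claim is that the three coefficients $p$, $q$, $2p+2q+6r'-k$ are simultaneously $\geq0$ or simultaneously $\leq0$, so that $\langle u,x\rangle$ has constant sign on $\Delta^\circ$ and $u^\perp\cap\Delta^\circ=\emptyset$. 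The only delicate point is the sign of the last coefficient: for $p,q\geq0$ the estimate $p+q\geq2\sqrt{pq}=2\sqrt{r'(3r'-k)}$ reduces the inequality $2(p+q)+6r'-k\geq0$, after squaring, to $12r'^2-4kr'-k^2\geq0$, which holds for all admissible integers $r'$, the borderline cases $r'=0$ being settled by primitivity of $u$ (they produce exactly the facet walls, where one coefficient vanishes); the case $p,q\leq0$ is symmetric. I expect this sign bookkeeping, together with the careful treatment of the degenerate cases, to be the main obstacle.

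Finally, since $\Delta^\circ$ is connected, meets no ${\frak I}$-wall, and has its three facets on actual ${\frak I}$-walls, it is a single chamber; as the nef cone of $M_H(v)$ abuts the facet $\nu=0$ from the side $\nu>0$, this chamber is ${\cal D}$. Theorem \ref{thm:general:movable} then yields $\overline{\Mov(K_H(v))}_{\Bbb R}=\theta_v(\overline\Delta)$, i.e. the movable cone is spanned by $\theta_v(0,f,0)$, $\theta_v(0,\sigma,0)$ and $\theta_v(-1,2\sigma+2f,-3)$, which is the assertion.
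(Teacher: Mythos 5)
Your proof is correct, and it follows the route the paper intends: reduce to Theorem \ref{thm:general:movable} and identify the chamber ${\cal D}$ of $P^+(v^\perp)_{\Bbb R}\setminus\bigcup_{u\in{\frak I}}u^\perp$ containing the Gieseker nef class. The difference is in what gets verified. The paper's own treatment of this lemma (it sits in a working-notes block) offers no proof at all: it tabulates the codimension~$0$ walls, observes they cut out a hexagon with vertices $(0,f,0),\dots,(0,\sigma,0)$ and center $(-1,2\sigma+2f,-3)$, and then simply asserts the triangle; the only full wall-exclusion computation in the notes is carried out for $\ell=4$, by ad hoc inequalities special to that case. Your argument supplies exactly the missing step, and does so uniformly: parametrizing every $u\in{\frak I}_k$ as $(r',p\sigma+qf,3r'-k)$ with $pq=r'(3r'-k)\geq 0$, and showing via $p+q\geq 2\sqrt{pq}$ that the three pairings $p$, $q$, $2p+2q+6r'-k$ never change sign, so no ${\frak I}$-wall crosses the open triangle. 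I checked the computations: the pairings of the three facet vectors $(0,0,-1)\in{\frak I}_1$, $(0,f,-2),(0,\sigma,-2)\in{\frak I}_2$ with the three proposed generators, the extremal-ray intersections inside $v^\perp$, the discriminant inequality $12r'^2-4kr'-k^2\geq 0$ for $|r'|\geq 1$ (with equality exactly at $r'=1$, $k=2$, $p=q=-1$, i.e.\ the type~(ii) vector $(1,-\sigma-f,1)$ whose wall passes through the vertex $(-1,2\sigma+2f,-3)$), and the $r'=0$ borderline cases, all come out right; your identification of the Gieseker side of the Hilbert--Chow wall via the sign of the rank coordinate of $\xi(\beta,H,t)$ also checks out. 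Two cosmetic imprecisions, neither a gap: the $r'=0$, $p=0$, $q\geq 2$ vectors are not themselves facet walls (their walls merely touch the triangle along the ray ${\Bbb R}_{>0}(0,f,0)$), and one should say explicitly that the facet interiors lie in $P^+$ (clear, since $\langle x^2\rangle=2\lambda\mu>0$ on the facet $\nu=0$, etc.), which is what makes $\Delta^\circ$ a full chamber.
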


(2)
$\ell=4$.
We set $v_1=(r,a\sigma+bf,s)$. Replacing $v_1$ by
$v-v_1$, we may assume that $r \geq 1$.
Since $e^{2(\sigma+f)}=(1,2(\sigma+f),4)$,
we have $v e^{2(\sigma+f)}=(1,2(\sigma+f),0)$
and $v_1 e^{2(\sigma+f)}=(r,(a+2r)\sigma+(b+2r)f,s+2(a+b)+4r)$.
Assume that $(-1,y(\sigma+f),-4)$ with $y \geq 2$ belongs to
$v_1^\perp$. Then
$y(a+b)+4r+s=0$.
Thus $(a+b)((4r+s)+2(a+b)) \leq 0$.
Hence we see that
\begin{equation*}
\begin{split}
&(a+b+4r)(4-(a+b+4r))\langle v_1,v-v_1 \rangle\\
=&(4-(a+b+4r))^2 \langle v_1^2 \rangle/2+
(a+b+4r)^2 \langle (v-v_1)^2 \rangle/2
-(a+b)(2(a+b)+4r+s) \geq 0.
\end{split}
\end{equation*}
Therefore $(a+b+4r)(4-(a+b+4r)) \geq 0$.
Thus $0 \leq a+b+4r \leq 4$.
Since $r \geq 1$, $a+b \leq 0$.
If $(a+b+4r)(4-(a+b+4r))=0$, then
$(a+b)(2(a+b)+4r+s)=0$.
If $a+b=-4r$, then we have $s=4r$.
In this case, $\langle v_1^2 \rangle/2=ab-rs=-(a+2r)^2 \leq 0$.
Hence $a=b=-2r$. Therefore $v_1=r(1,-2(\sigma+f),4)$.
Since $\langle v_1,v \rangle=\langle v_1^2 \rangle=0$, 
we have $\langle v_1,v-v_1 \rangle=0$, which is a contradiction.
If $a+b=4-4r$, then $r=1$ or $s=4r-8$.
If $s=4r-8$, then $\langle (v-v_1)^2 \rangle/2=-(a+2(1-r))^2 \leq 0$.
Hence we see that
$v-v_1=(1-r)(1,-2(\sigma+f),4)$, which is a contradiction.
If $r=1$, then $v-v_1=(0,a(\sigma-f),0)$.
Since $\langle (v-v_1)^2 \rangle \geq 0$,
$a=0$, which is a contradiction.
Therefore $0 < a+b+4r < 4$.
In particular $a+b<0$ and $2(a+b)+4r+s \geq 0$.

Assume that $2(a+b)+4r+s> 0$.
Then we have
\begin{equation}\label{eq:example:v_1}
0 \leq \langle v_1^2 \rangle= \frac{1}{2}
\left((a+b+4r)^2-(a-b)^2-4r(s+2(a+b)+4r) \right) 
\leq \frac{1}{2}((a+b+4r)^2-4r).
\end{equation}
Hence $r=1,2$.
If $r=2$, then $a+b+4r=3$ and $\langle v_1^2 \rangle=0$.
Since 
$$
0 \leq \langle (v-v_1)^2 \rangle= \frac{1}{2}
\left((a+b+4r-4)^2-(a-b)^2-4(r-1)(s+2(a+b)+4r) \right) 
\leq \frac{1}{2}(1-4r)<0,
$$
we have $r=1$.

If $a+b=-1$, then $a-b=\pm 1$ and $s=-1,0$.
Thus $v_1=(1,-f,-1), (1,-f,0), (1,-\sigma,-1), (1,-\sigma,0)$.
If $a+b=-2$, then $a=b=1$ and $s=1$.
Hence $v_1=(1,-(\sigma+f),1)$.
If $a+b=-3$, then \eqref{eq:example:v_1} does not hold. 

Assume that $s+2(a+b)+4r=0$.
Then $(a+2r)(b+2r) \geq 0$, $(2-(a+2r))(2-(b+2r)) \geq 0$
and $(a+2r)(2-(b+2r))+(b+2r)(2-(a+2r))>0$.
Hence $4>(a+2r)+(b+2r)>2(a+2r)(b+2r) \geq 0$, which implies that
$a+2r,b+2r \geq 0$.  
We also have $2-(a+2r),2-(b+2r) \geq 0$.
If $a \geq b$, then $0 \leq b+2r \leq a+2r \leq 2$.
If $b+2r>0$, then $a+2r=b+2r=1$, which implies that
$(a+2r)+(b+2r)=2(a+2r)(b+2r)$.
Hence $b+2r=0$. Then $a+2r=1,2$.
Therefore $v_1=(r,(1-2r)\sigma-2r f,4r-2)$ or
$v_1=(r,(2-2r)\sigma-2rf,4r-4)$.
If $x(0,f,0)+(1-x)(0,\sigma,0) \in v_1^\perp$
for $0 \leq x \leq 1$, then
$x-2r=0$ for the first case and 
$2x-2r=0$ for the second case.
Therefore the first case does not occur and the second case 
occurs only for $r=1$ and $v_1=(1,-2f,0)$.
If $b \geq a$, then we also have $v_1=(1,-2\sigma,0)$.  

codimension 0 wall:
\begin{equation*}
v=(1,0,0)+4(0,0,-1).
\end{equation*}
codimension 1 walls:
\begin{equation*}
\begin{split}
v=&(1,-2f,0)+2(0,f,-2),\\
v=&(1,-2\sigma,0)+2(0,\sigma,-2).
\end{split}
\end{equation*}
They define a triangle with vertices
$(1,-2(\sigma+f),4)$, $(0,f,0)$ and $(0,\sigma,0)$.

\begin{lem}
$\Mov(K_H(1,0,-4))$ is spanned by 
$(1,-2(\sigma+f),4)$, $(0,f,0)$ and $(0,\sigma,0)$.
\end{lem}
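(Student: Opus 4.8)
The plan is to deduce the statement from Theorem \ref{thm:general:movable}, which for the primitive Mukai vector $v=(1,0,-4)$ (so that $\ell=\langle v^2\rangle/2=4$) identifies $\overline{\Mov(K_{(\beta,tH)}(v))}_{\Bbb R}$ with $\theta_{v,\beta,tH}(\overline{{\cal D}(\beta,tH)})$, where ${\cal D}(\beta,tH)$ is the connected component of $P^+(v^\perp)_{\Bbb R}\setminus\bigcup_{u\in{\frak I}}u^\perp$ containing $\xi(\beta,H,t)$. Since $\theta_{v,\beta,tH}$ is an isometry of $v^\perp$ onto $\NS(K_{(\beta,tH)}(v))$, it suffices to choose $(\beta,tH)$ in the large volume limit (so that $K_{(\beta,tH)}(v)=K_H(v)$) and to show that the chamber ${\cal D}(\beta,tH)$ is the open cone in $v^\perp_{\Bbb R}$ spanned by $(0,f,0)$, $(0,\sigma,0)$ and $(1,-2(\sigma+f),4)$.

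First I would record that $v^\perp=\{(r,\xi,4r)\mid r\in{\Bbb Z},\ \xi\in\NS(X)\}$ is a lattice of signature $(1,2)$, so $\partial\overline{P^+(v^\perp)}_{\Bbb R}$ is the isotropic conic; a one-line computation using $(\sigma^2)=(f^2)=0$, $(\sigma,f)=1$ shows that each of the three listed vectors lies in $v^\perp$ and is isotropic, hence lies on this conic, and that all three lie in $\overline{P^+(v^\perp)}_{\Bbb R}$. Next I would exhibit the three bounding walls: the enumeration over $v_1=(r,a\sigma+bf,s)$ carried out above shows that the classes $u\in{\frak I}_1\cup{\frak I}_2$ whose hyperplanes are adjacent to the Gieseker chamber are exactly $(0,0,-1)\in{\frak I}_1$ and $(0,f,-2),(0,\sigma,-2)\in{\frak I}_2$. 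Pairing these against the candidate vertices, one checks that $(0,0,-1)^\perp$ passes through $(0,f,0)$ and $(0,\sigma,0)$, that $(0,f,-2)^\perp$ passes through $(0,f,0)$ and $(1,-2(\sigma+f),4)$, and that $(0,\sigma,-2)^\perp$ passes through $(0,\sigma,0)$ and $(1,-2(\sigma+f),4)$; thus the three chords cut out the triangle with the stated vertices.

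It then remains to verify that ${\cal D}(\beta,tH)$ is exactly this open triangle. I would take $\beta=sH$ with $s>0$ and $t\gg0$ (so $M_{(\beta,tH)}(v)=M_H(v)$) and check that $\xi(\beta,H,t)$ pairs strictly positively with all of $(0,0,-1)$, $(0,f,-2)$, $(0,\sigma,-2)$ while satisfying $\langle\xi(\beta,H,t)^2\rangle>0$, so that it lies in the interior of the triangle. To rule out any further subdivision I would invoke Lemma \ref{lem:wall-intersection} together with Proposition \ref{prop:intersection2} (applicable since $\ell=4\geq4$), which show that ${\frak I}_1$- and ${\frak I}_2$-walls do not meet inside $P^+(v^\perp)_{\Bbb R}$; hence no other isotropic wall crosses the interior and ${\cal D}(\beta,tH)$ is the open triangle. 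Theorem \ref{thm:general:movable}(1) then gives $\overline{\Mov(K_H(v))}_{\Bbb R}=\theta_v(\overline{{\cal D}(\beta,tH)})$, the cone spanned by the three vectors, and Proposition \ref{prop:codim0-div}, Proposition \ref{prop:general:codim1-div} and Corollary \ref{cor:movable} identify the face on $(0,0,-1)$ with the Hilbert--Chow divisorial contraction (exceptional divisor divisible by $2$) and the two faces on ${\frak I}_2$ with divisorial contractions having primitive exceptional divisors, consistent with case (3) of Theorem \ref{thm:movable}.

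The main obstacle is the exhaustive lattice computation underlying the second step, namely showing that no primitive isotropic $u$ with $\langle v,u\rangle\in\{1,2\}$ other than these three yields a wall meeting the interior of the triangle. This is precisely the case analysis over $v_1=(r,a\sigma+bf,s)$, where the inequalities $0<a+b+4r<4$ reduce the candidates to the finite list; the non-crossing statements then guarantee that these candidates cut out a single chamber rather than a finer subdivision, which is what closes the argument.
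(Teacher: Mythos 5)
Your route is genuinely different from the paper's, and its core idea is actually cleaner. The paper proves this lemma by the exhaustive enumeration preceding it: writing $v_1=(r,a\sigma+bf,s)$, deriving $0<a+b+4r<4$ for any wall meeting the relevant segment, and listing all solutions, which exhibits $(0,0,-1)^\perp$, $(0,f,-2)^\perp$, $(0,\sigma,-2)^\perp$ as the only walls of codimension $0$ or $1$, i.e.\ the triangle. You instead check directly that the three classes lie in ${\frak I}_1\cup{\frak I}_2$ and that their walls are the chords through the stated vertices, and then kill any further subdivision by Lemma \ref{lem:wall-intersection} and Proposition \ref{prop:intersection2}: a wall of ${\frak I}_1\cup{\frak I}_2$ meeting the open triangle is a chord of the boundary conic, so it would have to cross one of the three bounding walls at a point of $P^+(v^\perp)_{\Bbb R}$, which those results forbid (note ${\frak I}_1,{\frak I}_2\subset{\frak W}$ here since $\langle v^2\rangle=8$, and ${\frak I}_0$-walls are tangent to the conic, hence harmless). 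This makes the enumeration unnecessary. Your write-up does not recognize this: the second paragraph cites an enumeration ``carried out above'' that appears nowhere, and your last paragraph declares that enumeration to be the main unfinished obstacle. It is not an obstacle at all --- once the incidence checks and the non-crossing argument are in place, the claim that exactly these three walls bound the chamber is a consequence, not an input, and your proof closes.

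The one substantive error is the choice of chamber. For $v=(1,0,-4)$ one has $d_\beta=-(\beta,H)/(H^2)$, so the Gieseker chamber, where $M_{(\beta,tH)}(v)=M_H(v)$, requires $\beta=sH$ with $s<0$, not $s>0$; accordingly the interior of the correct triangle is cut out by $\langle x,(0,0,-1)\rangle<0$, $\langle x,(0,f,-2)\rangle>0$, $\langle x,(0,\sigma,-2)\rangle>0$, not by positivity of all three pairings. With your literal choices ($s>0$, all pairings positive) the point $\xi(\beta,H,t)$ lies on the other side of the ${\frak I}_1$-wall, in the chamber obtained from the triangle by the reflection $R_{(0,0,-1)}$, whose closure is spanned by $(0,f,0)$, $(0,\sigma,0)$ and $(1,2(\sigma+f),4)$. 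That is a genuinely different cone: ${\Bbb R}(1,2(\sigma+f),4)\ne{\Bbb R}(1,-2(\sigma+f),4)$, so the slip changes the answer rather than merely the representative (the lemma itself is sign-sloppy, since $(1,-2(\sigma+f),4)\notin\overline{P^+(v^\perp)}_{\Bbb R}$; the actual vertex is $(-1,2(\sigma+f),-4)$). Replace $s>0$ by $s<0$ and fix the sign pattern, and your argument is correct and complete.
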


\begin{lem}
Let $v_1$ be an isotropic Mukai vector.
There is an isotropic Mukai vector $w$ such that
$w=(x,a\sigma+bf,\ell x) \in v^\perp \cap v_1^\perp$.
\end{lem}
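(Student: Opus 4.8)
The plan is to turn the three defining conditions on $w=(x,a\sigma+bf,\ell x)$ into the problem of intersecting a conic with a line in ${\Bbb P}^2$, and to use the isotropy of $v_1$ to force a rational intersection point. Write $v_1=(r_1,a_1\sigma+b_1 f,s_1)$; since $(\sigma^2)=(f^2)=0$ and $(\sigma,f)=1$, isotropy of $v_1$ reads $a_1 b_1=r_1 s_1$. First I would observe that membership $w\in v^\perp$ is automatic for the prescribed shape, since $\langle w,v\rangle=\langle (x,a\sigma+bf,\ell x),(1,0,-\ell)\rangle=\ell x-\ell x=0$; this is exactly why the rank and the $\varrho_X$-coefficient are tied by the factor $\ell$. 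The two remaining conditions become $\langle w^2\rangle=2ab-2\ell x^2=0$, i.e. $ab=\ell x^2$, and $\langle w,v_1\rangle=ab_1+a_1 b-x(s_1+\ell r_1)=0$.

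Next I would regard $[x:a:b]$ as homogeneous coordinates on ${\Bbb P}^2$. The isotropy locus $C\colon ab=\ell x^2$ is a smooth conic (its defining form is nondegenerate because $\ell\neq 0$) carrying the rational points $[0:1:0]$ and $[0:0:1]$, so $C\cong{\Bbb P}^1_{\Bbb Q}$ and I can parametrize it by $(x,a,b)=(st,s^2,\ell t^2)$. The orthogonality condition $\langle w,v_1\rangle=0$ defines a line $L$, which is a genuine line whenever $v_1\neq 0$ (its linear form vanishes identically only if $a_1=b_1=0$ and $s_1+\ell r_1=0$, forcing $r_1=s_1=0$ by isotropy). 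Substituting the parametrization into the linear form produces the binary quadratic $b_1 s^2-(s_1+\ell r_1)st+\ell a_1 t^2=0$.

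The crux is the discriminant: $(s_1+\ell r_1)^2-4\ell a_1 b_1=(s_1+\ell r_1)^2-4\ell r_1 s_1=(s_1-\ell r_1)^2$, where the isotropy relation $a_1 b_1=r_1 s_1$ is used in the middle equality. Being a perfect square, it makes both intersection points $C\cap L$ rational, with $[s:t]=[s_1:b_1]$ and $[s:t]=[\ell r_1:b_1]$. Clearing denominators from the first gives the integral isotropic vector $w=(s_1 b_1,\,s_1^2\sigma+\ell b_1^2 f,\,\ell s_1 b_1)\in v^\perp\cap v_1^\perp$ of the required form, and since every point of ${\Bbb P}^2$ is a nonzero vector up to scale, such a solution never collapses to $0$. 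In the one arithmetically degenerate case $s_1=b_1=0$ the quadratic factors as $\ell t(a_1 t-r_1 s)$, and the root $[s:t]=[1:0]$ yields $w=(0,\sigma,0)$ directly.

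I expect the only real content to be the discriminant identity---recognizing that isotropy of $v_1$ is precisely the condition making it a square---together with the observation that a projective intersection point always represents a genuine nonzero Mukai vector, so that the superficially degenerate choices of root never actually produce $w=0$.
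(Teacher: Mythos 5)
Your proof is correct, and at its core it performs the same elimination as the paper's own argument: both reduce the problem to the pair of conditions $ab=\ell x^2$ and $\langle w,v_1\rangle=0$ (membership in $v^\perp$ being automatic for the prescribed shape of $w$), and both exploit the fact that the resulting quadratic splits rationally precisely because $v_1$ is isotropic. The difference is in the organization, and it is a real one. The paper splits into cases according to $\rk v_1$: for $\rk v_1=r\ne 0$ it writes $v_1=re^{p\sigma+qf}$ with $p,q\in{\Bbb Q}$, eliminates $x$ via the linear condition, and factors the outcome as $(\ell b-q^2a)(p^2b-\ell a)=0$, obtaining $w$ proportional to $e^{\frac{\ell}{q}\sigma+qf}$ or $e^{p\sigma+\frac{\ell}{p}f}$; the rank-zero vectors of type $(0,f,q)$, $(0,\sigma,p)$ and $(0,0,-1)$ are then handled separately, and one still has to clear denominators to land in the integral lattice. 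You avoid the case split entirely: keeping integral coordinates $v_1=(r_1,a_1\sigma+b_1f,s_1)$ with isotropy $a_1b_1=r_1s_1$, you parametrize the conic $ab=\ell x^2$ over ${\Bbb Q}$ by $(x,a,b)=(st,s^2,\ell t^2)$ and compute that the pulled-back binary quadratic $b_1s^2-(s_1+\ell r_1)st+\ell a_1t^2$ has discriminant $(s_1+\ell r_1)^2-4\ell r_1s_1=(s_1-\ell r_1)^2$, a perfect square, so its roots $[s_1:b_1]$ and $[\ell r_1:b_1]$ are rational. This buys three things: uniformity over all ranks of $v_1$, a manifestly integral solution $w=(s_1b_1,\,s_1^2\sigma+\ell b_1^2f,\,\ell s_1b_1)$ with nothing to clear, and a transparent identification of where isotropy of $v_1$ enters (it is exactly the condition making the discriminant a square); the paper's factorization is the same phenomenon, seen after the rational normalization $v_1=re^{p\sigma+qf}$. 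What the paper's form buys instead is that the solutions appear directly in the exponential shape $(\rk w)e^{\eta}$ used throughout the rest of that section. The one genuine danger in your route, that the selected root might give the zero vector, you dispose of correctly: the explicit $w$ vanishes only when $s_1=b_1=0$, and in that degenerate case the root $[s:t]=[1:0]$ yields $(0,\sigma,0)$, which indeed lies in $v^\perp\cap v_1^\perp$.
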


We first assume that $\rk v_1 \ne 0$.
We set $v_1=r e^{p \sigma+q f}=(r,r(p \sigma+q f),rpq)$.
Then $(pb+qa)-(pq+\ell)x=0$ and $\ell x^2=ab$.
Hence $x=(pb+qa)/(pq+\ell)$ and 
$\ell(pb+qa)^2=(pq+\ell)^2 ab$.
Then we have $(\ell b-q^2 a)(p^2 b-\ell a)=0$.
Therefore 
$$
w=(\rk w) e^{\frac{\ell}{q}\sigma+q f},
(\rk w) e^{p \sigma+\frac{\ell}{p}f}.
$$
We next assume that $v_1=r(0,f,q)$. Then
we see that $w=\rk w e^{q \sigma+\frac{\ell}{q}f}$
or $w=(0,f,0)$.
If $v_1=r(0,\sigma,p)$,
then we also 
see that $w=\rk w e^{\frac{\ell}{q}\sigma+q f}$
or $w=(0,\sigma,0)$.
If $v_1=(0,0,-1)$, then $w=(0,f,0), (0,\sigma,0)$.

For $(0,f,-2) \in {\frak I}_2$, 
$(-2,4\sigma+\ell f,-2 \ell),(0,f,0) \in (0,f,-2)^\perp$
are isotropic.
Thus $(0,f,-2)$ defines a codimension 1 wall with vertices
$(-2,4\sigma+\ell f,-2 \ell),(0,f,0)$.  

(3) $\ell=5$.
For $(1,-2(\sigma+f),4) \in {\frak I}_1$,
$(-2,4\sigma+5f,-10), (-2,5\sigma+4f,-10) \in (1,-2(\sigma+f),4)^\perp$
are isotropic.

\begin{lem}
$\Mov(K_H(1,0,-5))$ is spanned by $(0,\sigma,0),(0,f,0),
(-2,4\sigma+\ell f,-2\ell), (-2,\ell \sigma+4 f,-2\ell)$.
\end{lem}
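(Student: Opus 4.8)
The plan is to read off the movable cone from Theorem \ref{thm:general:movable} (1): it equals $\theta_v(\overline{{\cal D}})$, where ${\cal D}$ is the connected component of $P^+(v^\perp)_{\mathbb R}\setminus\bigcup_{u\in{\frak I}}u^\perp$ containing the ample class of a large--volume model of $K_H(v)$. Since $\theta_v\colon v^\perp\to\NS(K_H(v))$ is an isometry, it suffices to show that ${\cal D}$ is the cone $\Sigma$ spanned inside $v^\perp$ by $p_1=(0,\sigma,0)$, $p_2=(0,f,0)$, $p_3=(-2,4\sigma+5f,-10)$ and $p_4=(-2,5\sigma+4f,-10)$. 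Here $\langle v^2\rangle=2\ell=10$, so $v^\perp$ has signature $(1,2)$; a direct check shows each $p_i$ is a primitive isotropic vector of $v^\perp$ and that all six pairings $\langle p_i,p_j\rangle$ are strictly positive. Hence $\Sigma\subset\overline{P^+(v^\perp)}_{\mathbb R}$, and the relative interior of each $2$-dimensional face $\mathrm{cone}(p_i,p_j)$ lies in $P^+(v^\perp)_{\mathbb R}$.

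First I would match the four facets of $\Sigma$ with walls. A short computation gives the cyclic order $p_1,p_4,p_3,p_2$ of the rays and shows that the facet planes are $\mathrm{cone}(p_1,p_2)=(0,0,-1)^\perp$ and $\mathrm{cone}(p_3,p_4)=(1,-2\sigma-2f,4)^\perp$, both defined by vectors of ${\frak I}_1$ (each has pairing $1$ with $v$), while $\mathrm{cone}(p_1,p_4)=(0,\sigma,-2)^\perp$ and $\mathrm{cone}(p_2,p_3)=(0,f,-2)^\perp$ are defined by vectors of ${\frak I}_2$ (pairing $2$ with $v$). Thus all four facets lie on walls from ${\frak I}_1\cup{\frak I}_2$; the ${\frak I}_0$--walls, being tangent to $\overline{P^+(v^\perp)}_{\mathbb R}$, do not cut the interior.

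The real obstacle is to rule out any \emph{interior} wall, so that $\Sigma$ is a single chamber. Suppose $w\in{\frak I}_1\cup{\frak I}_2$ had $w^\perp\cap\Sigma^\circ\neq\emptyset$. Then $w^\perp$ separates $\Sigma$ and so crosses the relative interior of one facet inside $P^+(v^\perp)_{\mathbb R}$; writing $g\in{\frak I}_1\cup{\frak I}_2$ for the vector defining that facet, we get a point of $w^\perp\cap g^\perp\cap P^+(v^\perp)_{\mathbb R}$ with $w\notin{\mathbb Z}g$ (since $w^\perp\neq g^\perp$). By Corollary \ref{cor:existence-of-wall} both $w$ and $g$ lie in ${\frak W}$. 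If $g\in{\frak I}_1$ or $w\in{\frak I}_1$ this contradicts Lemma \ref{lem:wall-intersection}, and if $g,w\in{\frak I}_2$ it contradicts Proposition \ref{prop:intersection2}, whose hypothesis $\ell\ge 4$ holds because $\ell=5$. Hence no wall meets $\Sigma^\circ$.

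Finally $\Sigma^\circ$ is a nonempty open subset of $P^+(v^\perp)_{\mathbb R}$, so by the surjectivity in Proposition \ref{prop:positive} together with Remark \ref{rem:positive} there is a general $(\beta,H,t)$ with $tH$ ample and $\xi(\beta,H,t)\in\Sigma^\circ$; taking $t\gg0$ and $\beta$ small of the appropriate sign realizes the Gieseker model $K_H(v)$, so ${\cal D}=\Sigma$ and the movable cone is $\theta_v(\overline{\Sigma})$, spanned by $\theta_v(p_1),\dots,\theta_v(p_4)$. As a consistency check, the two bounding ${\frak I}_1$--walls give exceptional divisors divisible by $2$ and the two ${\frak I}_2$--walls give primitive ones, matching Corollary \ref{cor:movable}.
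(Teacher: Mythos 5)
Your overall strategy is the right one, and it is in fact the natural completion of the paper's treatment (the paper never writes a proof of this lemma; it only records the bounding walls and their isotropic vertices in the surrounding text). All of your numerical identifications check out: the four rays are primitive isotropic classes in $v^\perp$ with positive mutual pairings, the facet planes are $(0,0,-1)^\perp$, $(1,-2\sigma-2f,4)^\perp$ (both from ${\frak I}_1$) and $(0,\sigma,-2)^\perp$, $(0,f,-2)^\perp$ (both from ${\frak I}_2$), and the appeals to Lemma \ref{lem:wall-intersection} and Proposition \ref{prop:intersection2} are legitimate since $\ell=5\geq 4$. The genuine gap is the inference ``$w^\perp\cap\Sigma^\circ\neq\emptyset$ implies $w^\perp$ crosses the relative interior of a facet inside $P^+(v^\perp)_{\Bbb R}$.'' The correct dichotomy is: the two boundary rays of the $2$-cone $w^\perp\cap\Sigma$ either include a point of a facet's relative interior, \emph{or both are extremal rays of $\Sigma$}; adjacent extremal rays force $w^\perp$ to be a facet plane, but \emph{opposite} ones give the diagonal planes $\mathrm{span}(p_1,p_3)$ and $\mathrm{span}(p_2,p_4)$, which genuinely pass through $\Sigma^\circ$ (indeed $p_1+p_3=p_2+p_4=(-2,5\sigma+5f,-10)$ is an interior point). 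This is not a vacuous worry: the vector $w=(0,2\sigma,-5)$ is isotropic, lies in ${\frak W}$ (one checks $\langle w,v-w\rangle=5>0$ and $\langle (v-w)^2\rangle=0$), and $w^\perp$ contains $\mathrm{cone}(p_1,p_3)$, hence meets $\Sigma^\circ$ while meeting each facet plane only along the isotropic rays $p_1,p_3\notin P^+(v^\perp)_{\Bbb R}$. So the purely geometric separation argument you give is false for isotropic wall vectors in general; what saves the lemma is arithmetic specific to ${\frak I}$, which your proof never invokes.

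The gap closes with a short computation. Suppose $w=(x,a\sigma+bf,y)$ is isotropic and orthogonal to both $p_1$ and $p_3$. Orthogonality to $p_1$ gives $b=0$, orthogonality to $p_3$ gives $5a+10x+2y=0$, and isotropy then gives $xy=0$. If $x=0$ then $w$ is proportional to $(0,2\sigma,-5)$; if $y=0$ then $w$ is proportional to $(1,-2\sigma,0)$. In both cases $\langle v,w\rangle=\pm 5$ (up to the scalar), so $w\notin{\frak I}_0\cup{\frak I}_1\cup{\frak I}_2$; by the symmetry $\sigma\leftrightarrow f$ the only isotropic vectors orthogonal to $\{p_2,p_4\}$ are multiples of $(0,2f,-5)$ and $(1,-2f,0)$, again with pairing $\pm 5$. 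Hence no ${\frak I}$-wall contains a diagonal, and together with your facet-crossing argument no ${\frak I}$-wall meets $\Sigma^\circ$. With this supplement the remainder of your proof is correct: the ${\frak I}_0$-walls are tangent to $\overline{P^+(v^\perp)}_{\Bbb R}$, the half-space description of the polyhedral cone $\Sigma$ shows the chamber containing any point of $\Sigma^\circ$ is exactly $\Sigma^\circ$, and the Gieseker chamber does land there, since for $v=(1,0,-5)$ the rank component of $\xi(\beta,H,t)$ equals $(\beta,H)<0$, which is the $\Sigma$-side of the facet $(0,0,-1)^\perp$.
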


(4) $\ell=6$.

For $(4,-10(\sigma+f),25) \in {\frak I}_1$,
$(-10,25 \sigma+24 f, -60), (-10,24 \sigma+25 f, -60)
\in (4,-10(\sigma+f),25)^\perp$ are isotropic.

\begin{lem}
$\Mov(K_H(1,0,-6))$ is spanned by $(0,\sigma,0),(0,f,0),
(-2,4\sigma+\ell f,-2\ell), (-2,\ell \sigma+4 f,-2\ell)$.
\end{lem}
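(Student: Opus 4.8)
The plan is to apply Theorem \ref{thm:general:movable} with $\ell=6$ and $v=(1,0,-6)$, so that $\langle v^2\rangle=12$ and $v^\perp$ is a lattice of signature $(1,2)$. Part (1) of that theorem gives $\overline{\Mov(K_H(v))}_{\Bbb R}=\theta_{v,\beta,tH}(\overline{{\cal D}})$, where ${\cal D}$ is the connected component of $P^+(v^\perp)_{\Bbb R}\setminus\bigcup_{u\in{\frak I}}u^\perp$ containing the Gieseker direction $\xi(\beta,H,t)$; for $v=(1,0,-\ell)$ this is the regime $t\gg0$ with $d_\beta=-s>0$, i.e. $\beta=s(\sigma+f)$ with $s<0$. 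Since $\theta_{v,\beta,tH}\colon v^\perp\to\NS(K_H(v))$ is an isometry, it suffices to show that $\overline{{\cal D}}$ is the closed cone spanned by the four isotropic vectors $A=(0,\sigma,0)$, $B=(0,f,0)$, $C=(-1,2\sigma+3f,-6)$, $D=(-1,3\sigma+2f,-6)$; the vectors in the statement are the positive multiples $2C$ and $2D$.

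First I would exhibit the four bounding walls. Set $u_1=(0,0,-1)$, $u_2=(0,\sigma,-2)$, $u_3=(0,f,-2)$, $u_4=(1,-2\sigma-2f,4)$, each primitive and isotropic. From the decompositions $v=6u_1+(1,0,0)$, $v=u_2+(1,-\sigma,-4)$, $v=u_3+(1,-f,-4)$, $v=u_4+(0,2\sigma+2f,-10)$ one reads off $\langle v,u_1\rangle=1$ and $\langle v,u_2\rangle=\langle v,u_3\rangle=\langle v,u_4\rangle=2$, all complementary summands having non-negative square; hence $u_1\in{\frak I}_1$ and $u_2,u_3,u_4\in{\frak I}_2$, and by Corollary \ref{cor:existence-of-wall} (with $\langle v^2\rangle=12\ge6$) together with Propositions \ref{prop:codim0-div} and \ref{prop:general:codim1-div} each $u_i^\perp$ is a genuine codimension $0$ (resp.\ $1$) wall. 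A direct computation of the isotropic rays of the rank-two lattices $u_i^\perp\cap u_j^\perp\cap v^\perp$ shows that $A\in u_1^\perp\cap u_2^\perp$, $B\in u_1^\perp\cap u_3^\perp$, $C\in u_3^\perp\cap u_4^\perp$, $D\in u_2^\perp\cap u_4^\perp$, so that $A,B,C,D$ are four ideal vertices with boundary edges $AB\subset u_1^\perp$, $BC\subset u_3^\perp$, $CD\subset u_4^\perp$, $DA\subset u_2^\perp$. Pairing the interior class $(-1,3\sigma+3f,-6)$ (which has square $6>0$ and lies in $v^\perp$) against each $u_i$, and comparing signs with $\xi(\beta,H,t)$ for $s<0$, $t\gg0$, confirms that both lie strictly on the inner side of all four walls.

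The heart of the argument is completeness: no further $u\in{\frak I}_1\cup{\frak I}_2$ meets the interior of $Q:=\mathrm{cone}(A,B,C,D)$. Since $v$ is primitive and $\ell=6\ge4$, Lemma \ref{lem:wall-intersection} and Proposition \ref{prop:intersection2} together show that any two distinct members of ${\frak I}_1\cup{\frak I}_2$ have orthogonal complements that do not meet inside $P^+(v^\perp)_{\Bbb R}$; in the hyperbolic-plane model of $P^+(v^\perp)_{\Bbb R}$ these walls are pairwise non-crossing geodesics, and $Q$ is the geodesically convex ideal quadrilateral cut out by $u_1^\perp,\dots,u_4^\perp$. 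A wall $u^\perp$ entering the interior of $Q$ would have to cross one of the four boundary walls at an interior point of $P^+(v^\perp)_{\Bbb R}$, which is excluded, unless both its ideal endpoints are vertices of $Q$. The only remaining possibility is a diagonal, and here I would check that the two isotropic vectors orthogonal to $\{A,C\}$ (namely $(0,\sigma,-3)$ and $(1,-2\sigma,0)$, with $\langle v,\cdot\rangle=3,6$) and likewise those orthogonal to $\{B,D\}$ lie outside ${\frak I}_1\cup{\frak I}_2$. Hence ${\cal D}=\mathrm{int}\,Q$.

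This identifies $\overline{{\cal D}}=Q$ with extremal isotropic rays $A,B,C,D$, and $\theta_{v,\beta,tH}(Q)=\overline{\Mov(K_H(v))}_{\Bbb R}$; along the boundary rays, where the square vanishes, Proposition \ref{prop:Lag} realizes the corresponding contractions as Lagrangian fibrations. I expect the completeness step of the previous paragraph to be the main obstacle: because $\rk\NS(X)=2$ forces infinitely many walls in ${\frak I}_1\cup{\frak I}_2$ accumulating at the ideal vertices of $Q$, one must guarantee that none of them clips a corner of $Q$, and it is precisely the non-crossing estimates of Lemma \ref{lem:wall-intersection} and Proposition \ref{prop:intersection2} (which require $\ell\ge4$) that make this control possible.
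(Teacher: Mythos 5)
Your proposal is correct, and the computations it rests on all check out: $A,B,C,D$ are primitive isotropic vectors of $v^\perp$ with all pairwise pairings positive; $u_1\in{\frak I}_1$ and $u_2,u_3,u_4\in{\frak I}_2$; the incidences $A\perp u_1,u_2$, $B\perp u_1,u_3$, $C\perp u_3,u_4$, $D\perp u_2,u_4$ hold, so the four walls are the edges of the ideal quadrilateral $Q$; the interior class $\tfrac{1}{2}(A+B+C+D)=(-1,3\sigma+3f,-6)$ and the large-volume direction pair with $u_1,\dots,u_4$ with the same signs; and on the diagonals, the primitive isotropic vectors orthogonal to $\{A,C\}$ are exactly $\pm(0,\sigma,-3)$ and $\pm(1,-2\sigma,0)$, with pairings $\pm 3,\pm 6$ against $v$, hence not in ${\frak I}_1\cup{\frak I}_2$ (and symmetrically for $\{B,D\}$). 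Be aware, though, that the paper contains no proof of this lemma to compare against: the statement sits in a draft example section that is excluded from compilation (the NB environment), and its only content for $\ell=6$ is the observation that $(4,-10(\sigma+f),25)$ is, up to sign, in ${\frak I}_1$ with ideal endpoints $(-10,25\sigma+24f,-60)$ and $(-10,24\sigma+25f,-60)$ --- a codimension-$0$ wall which in your picture lies outside $Q$, beyond the edge $CD\subset u_4^\perp$, consistent with your answer. Judging from the neighbouring case $\ell=4$, the paper's intended route to completeness is a direct Diophantine enumeration of all vectors $v_1=(r,a\sigma+bf,s)$ whose walls could meet a given region. Your route replaces that enumeration by the structural non-crossing results (Lemma \ref{lem:wall-intersection} for ${\frak I}_1$ against ${\frak W}$, and Proposition \ref{prop:intersection2} for ${\frak I}_2$ against ${\frak I}_2$, available since $\langle v^2\rangle/2=6\geq 4$), which force the ${\frak I}$-walls to be pairwise disjoint geodesics in $P^+(v^\perp)_{\Bbb R}$ and so reduce everything to excluding the two diagonals of $Q$. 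This is a genuinely different and more robust argument: it is precisely what keeps the infinitely many walls accumulating at the ideal vertices (forced by $\rk\NS(X)=2$) under control without any case analysis, and it would apply verbatim to the other values of $\ell$ in the example section. One cosmetic slip: the edges of $Q$ are the isotropic rays of the rank-two lattices $u_i^\perp\cap v^\perp$ (each triple intersection $u_i^\perp\cap u_j^\perp\cap v^\perp$ has rank one); this does not affect the argument.
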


(5)
$\ell=7$.
For $(1,-(2\sigma+3f),6) \in {\frak I}_1$,
$(1,-(2\sigma+3f),6)^\perp$ intersects with the boundary at
$(-2,4\sigma+7f,-14)$ and $(-3,7 \sigma+9f,-21)$.
For $(-1,3(\sigma+f),-9) \in {\frak I}_2$,
$(1,-3(\sigma+f),9)^\perp$ intersects with 
the boundary at $(-3,\ell \sigma +9 f,-3 \ell),
(-3,9 \sigma +\ell f,-3 \ell)$.

\begin{lem}
$\Mov(K_H(1,0,-7))$ is spanned by $(0,\sigma,0),(0,f,0),
(-2,4\sigma+\ell f,-2\ell), (-2,\ell \sigma+4 f,-2\ell),
(-3,\ell \sigma +9 f,-3 \ell),
(-3,9 \sigma +\ell f,-3 \ell)$.
\end{lem}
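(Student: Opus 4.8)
The plan is to apply Theorem \ref{thm:general:movable}(1), which identifies $\overline{\Mov(K_H(v))}_{\Bbb R}$ for $v=(1,0,-\ell)$, $\ell=7$, with $\theta_v(\overline{{\cal D}})$, where ${\cal D}$ is the connected component of $P^+(v^\perp)_{\Bbb R}\setminus\bigcup_{u\in{\frak I}}u^\perp$ containing the large volume point $\xi(\beta,H,t)$ (the one for which $M_{(\beta,tH)}(v)=M_H(v)=\Pic^0(X)\times\Hilb_X^{\ell}$). Since $\theta_v$ is an isometric isomorphism onto $\NS(K_H(v))_{\Bbb R}$, it suffices to show that $\overline{{\cal D}}$ is the cone spanned by the six listed Mukai vectors. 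First I would record that $v^\perp$ has signature $(1,2)$, so $C(\overline{P^+(v^\perp)}_{\Bbb R})$ is a closed hyperbolic disk whose boundary circle consists of the isotropic rays; in the basis $e_1=(0,\sigma,0)$, $e_2=(0,f,0)$, $e_3=(1,0,\ell)$ of $v^\perp$ the form reads $2pq-2\ell w^2$ for $x=pe_1+qe_2+we_3$, and the six listed classes are exactly the primitive isotropic vectors $(1,0,0),(0,1,0),(4,7,-2),(7,9,-3),(9,7,-3),(7,4,-2)$ in $(p,q,w)$-coordinates, all lying on this boundary circle.

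Next I would exhibit the bounding walls. A direct pairing computation shows that each of
$$(0,0,-1),\ (0,f,-2),\ (1,-(2\sigma+3f),6),\ (-1,3(\sigma+f),-9),\ (1,-(3\sigma+2f),6),\ (0,\sigma,-2)$$
is a primitive isotropic vector with $\langle v,u\rangle\in\{1,2\}$, hence lies in ${\frak I}_1\cup{\frak I}_2$; by Corollary \ref{cor:existence-of-wall} (here $\ell=7>\langle v,u\rangle>0$) each defines a non-empty wall meeting $P^+(v^\perp)_{\Bbb R}$. Computing $u^\perp\cap\partial\overline{P^+(v^\perp)}_{\Bbb R}$ for each shows these six walls are the chords joining the six isotropic vertices in the cyclic order $(1,0,0),(0,1,0),(4,7,-2),(7,9,-3),(9,7,-3),(7,4,-2)$, with consecutive walls sharing the intermediate vertex, so they bound a hexagonal subcone $\Delta$. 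The wall $(0,0,-1)^\perp$ is the locus $w=0$ and is the Hilbert--Chow wall of $M_H(v)$ (cf. Lemma \ref{lem:H-C}); since for $t\gg0$ the class $\xi(\beta,H,t)$ degenerates to the ray ${\Bbb R}_{>0}(0,H,0)$ on $w=0$ and lies on the $w<0$ side for a suitable small $\beta$, the point $\xi(\beta,H,t)$ lies in $\Delta$, so ${\cal D}$ is the interior of $\Delta$.

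The heart of the argument is to prove that no further wall $u^\perp$ ($u\in{\frak I}_1\cup{\frak I}_2$) meets the interior of $\Delta$. Because $\ell=7\ge4$, Lemma \ref{lem:wall-intersection} and Proposition \ref{prop:intersection2} guarantee that any two non-proportional walls from ${\frak I}_1\cup{\frak I}_2$ are disjoint inside the open cone $P^+(v^\perp)_{\Bbb R}$, so these walls form a family of pairwise non-crossing chords of the boundary circle. A chord meeting the interior of $\Delta$ would then have to cross one of the six edge-chords at an interior point of the disk, which is impossible, unless it is a diagonal joining two of the six vertices. It thus remains to exclude diagonal walls. For each of the nine non-adjacent vertex pairs I would compute the primitive normal $n\in v^\perp$ to the spanning chord; by Lemma \ref{lem:reflection} a chord is an ${\frak I}_1\cup{\frak I}_2$-wall only if $\langle n^2\rangle=-2\ell=-14$, in which case $v\pm n$ are automatically isotropic and the chord is an ${\frak I}_1$- (resp. ${\frak I}_2$-) wall precisely when $v+n$ or $v-n$ is divisible by $2\ell$ (resp. $\ell$) in $H^*(X,{\Bbb Z})_{\alg}$. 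Most diagonals already fail the norm test (for instance the chord through $(0,\sigma,0)$ and $(-3,7\sigma+9f,-21)$ has $\langle n^2\rangle=-126$), and the few survivors fail the divisibility test because the relevant $v\pm n$ only carry prime factors coprime to $7$; hence no diagonal is a wall. Therefore $\Delta$ is a single chamber, $\overline{{\cal D}}=\Delta$, and applying $\theta_v$ shows that $\Mov(K_H(1,0,-7))$ is spanned by the six stated classes.

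I expect the main obstacle to be the completeness step of the last paragraph: reducing the a priori infinite set of walls to the finite diagonal check via the non-crossing input, and then carrying out the norm-and-divisibility computation so as to be sure that the six exhibited walls exhaust $\partial{\cal D}$. The remaining steps — verifying that the six vectors are isotropic boundary points, that the six exhibited classes lie in ${\frak I}_1\cup{\frak I}_2$ and bound $\Delta$, and locating the large volume point — are routine lattice computations in $v^\perp$.
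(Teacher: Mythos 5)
Your proposal is correct and follows essentially the same route as the paper's treatment of this example: exhibit the six walls $u^\perp$ with $u\in{\frak I}_1\cup{\frak I}_2$ — namely $u=(0,0,-1)$, $(0,f,-2)$, $(0,\sigma,-2)$, $(1,-(2\sigma+3f),6)$, $(1,-(3\sigma+2f),6)$, $(1,-3(\sigma+f),9)$ — compute their isotropic endpoints on the boundary of $\overline{P^+(v^\perp)}_{\Bbb R}$, and feed the resulting hexagonal chamber into Theorem \ref{thm:general:movable}; moreover your completeness step (pairwise non-crossing of such walls inside $P^+(v^\perp)_{\Bbb R}$ via Lemma \ref{lem:wall-intersection} and Proposition \ref{prop:intersection2}, then exclusion of the nine diagonals by the norm-plus-divisibility test on the primitive normal) is precisely the justification the paper leaves implicit, and it is sound. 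One cosmetic correction: the distribution of failures is the reverse of what you predict — six of the nine diagonals pass the norm test (e.g.\ the diagonal joining $(0,\sigma,0)$ and $(-2,4\sigma+7f,-14)$ has primitive normal $n=(-1,4\sigma,-7)$ with $\langle n^2\rangle=-2\ell=-14$) and are excluded only because $v\pm n$ is not divisible by $\ell$ or $2\ell$, while only the three diagonals such as the one through $(0,\sigma,0)$ and $(-3,7\sigma+9f,-21)$ fail the norm test; since you run both tests on all nine pairs anyway, this does not affect the argument.
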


(6)
$\ell=8$.
For $(1,-(2\sigma+3f),6) \in {\frak I}_2$,
$(1,-(2\sigma+3f),6)^\perp$ intersects with the boundary at
$(-2,4\sigma+\ell f,-2 \ell)$ and $(-3,\ell \sigma+9f,-3 \ell)$.
For $(-1,3(\sigma+f),-9) \in {\frak I}_1$,
$(1,-3(\sigma+f),9)^\perp$ intersects with 
the boundary at $(-3,\ell \sigma +9 f,-3 \ell),
(-3,9 \sigma +\ell f,-3 \ell)$.

\begin{lem}
$\Mov(K_H(1,0,-8))$ is spanned by $(0,\sigma,0),(0,f,0),
(-2,4\sigma+\ell f,-2\ell), (-2,\ell \sigma+4 f,-2\ell),
(-3,\ell \sigma +9 f,-3 \ell),
(-3,9 \sigma +\ell f,-3 \ell)$.
\end{lem}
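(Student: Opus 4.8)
The plan is to apply Theorem~\ref{thm:general:movable}(1) to $v=(1,0,-8)$ on $X=C_1\times C_2$ and simply identify the relevant chamber. For $(\beta,H,t)$ in the large volume (Gieseker) region we have $K_H(v)=K_{(\beta,tH)}(v)$, and that theorem gives $\overline{\Mov(K_H(v))}_{\Bbb R}=\theta_{v,\beta,tH}(\overline{{\cal D}(\beta,tH)})$, where ${\cal D}(\beta,tH)$ is the connected component of $P^+(v^\perp)_{\Bbb R}\setminus\bigcup_{u\in{\frak I}}u^\perp$ containing $\xi(\beta,H,t)$; so everything reduces to describing this chamber inside $v^\perp$. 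First I would take the basis $e_0=(1,0,8)$, $e_\sigma=(0,\sigma,0)$, $e_f=(0,f,0)$ of $v^\perp$, with respect to which the Mukai form is $\left(\begin{smallmatrix}-16&0&0\\0&0&1\\0&1&0\end{smallmatrix}\right)$, so $v^\perp$ has signature $(1,2)$ and $C(\overline{P^+(v^\perp)}_{\Bbb R})$ is a projective disc bounded by the light cone. A direct substitution shows each of the six listed vectors is isotropic and lies in $\overline{P^+(v^\perp)}_{\Bbb R}$, hence spans a ray ${\Bbb R}_{>0}u$ with $u$ primitive in ${\frak I}_0$; moreover the isometry of $v^\perp$ exchanging $\sigma$ and $f$ (which fixes $v$) permutes these six rays in three pairs. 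It therefore suffices to prove that they are the extremal rays of $\overline{{\cal D}(\beta,tH)}$.

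For the inclusion $\overline{\Mov}\subseteq\theta_{v,\beta,tH}({\cal K})$, where ${\cal K}\subseteq\overline{P^+(v^\perp)}_{\Bbb R}$ is the convex cone spanned by the six vectors, I would exhibit the six facets of ${\cal K}$ as walls. The bounding walls are $u^\perp\cap v^\perp$ for $(0,0,-1),(-1,3\sigma+3f,-9)\in{\frak I}_1$ and for $(0,\sigma,-2),(0,f,-2),(1,-2\sigma-3f,6),(1,-3\sigma-2f,6)\in{\frak I}_2$; for each $u$ one computes $d_u$ (Lemma~\ref{lem:reflection}) and checks by a single Mukai-pairing computation that $d_u^\perp$ contains the corresponding adjacent pair of the six rays, the $\sigma\leftrightarrow f$ symmetry halving the work. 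Evaluating these six wall inequalities at $\xi(\beta,H,t)$ shows it lies in $\operatorname{int}{\cal K}$, and since ${\cal D}(\beta,tH)$ crosses no wall we get ${\cal D}(\beta,tH)\subseteq\operatorname{int}{\cal K}$, hence $\overline{{\cal D}(\beta,tH)}\subseteq{\cal K}$ and the desired inclusion.

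For the reverse inclusion I must rule out every wall $u^\perp$ ($u\in{\frak W}$, in particular $u\in{\frak I}_1\cup{\frak I}_2$) meeting $\operatorname{int}{\cal K}$, so that $\operatorname{int}{\cal K}$ is a single chamber, necessarily ${\cal D}(\beta,tH)$. Proposition~\ref{prop:intersection2} (distinct ${\frak I}_2$-walls do not meet inside $P^+(v^\perp)_{\Bbb R}$, valid here since $\ell=8\ge4$) and Lemma~\ref{lem:wall-intersection} (an ${\frak I}_1$-wall meets no other ${\frak W}$-wall inside) show that the chords already found are pairwise non-crossing in the interior, and reduce the claim to excluding any further wall $v_1=(r,a\sigma+bf,s)$ whose chord separates an interior point of the hexagon from one of its vertices. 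The main obstacle is exactly this Diophantine enumeration: bounding the integer solutions of the wall conditions \eqref{eq:wall-cond}, namely $\langle v_1^2\rangle\ge0$, $\langle(v-v_1)^2\rangle\ge0$ and $\langle v_1,v-v_1\rangle>0$, that could cut into ${\cal K}$, exactly as in the computation already carried out for $\ell=4$ but now with $\ell=8$. Once this finite check is complete we obtain ${\cal K}=\overline{{\cal D}(\beta,tH)}$, and Theorem~\ref{thm:general:movable}(1) finishes the proof.
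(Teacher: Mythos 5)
Your setup is sound and is essentially the paper's own: reduce via Theorem \ref{thm:general:movable}(1) to identifying the chamber ${\cal D}(\beta,tH)$ of $P^+(v^\perp)_{\Bbb R}\setminus\bigcup_{u\in{\frak I}}u^\perp$, and your list of bounding walls is correct --- $(0,0,-1),\,(-1,3\sigma+3f,-9)\in{\frak I}_1$ and $(0,\sigma,-2),\,(0,f,-2),\,(1,-2\sigma-3f,6),\,(1,-3\sigma-2f,6)\in{\frak I}_2$ do pair the six listed rays into a closed hexagonal chain, matching the walls the paper records for $\ell=8$ together with those carried over from the smaller $\ell$ cases. The forward inclusion argument (the six facets lie on ${\frak I}$-walls, $\xi(\beta,H,t)\in\operatorname{int}{\cal K}$, hence ${\cal D}(\beta,tH)\subseteq\operatorname{int}{\cal K}$ and $\overline{\Mov}\subseteq\theta_{v,\beta,tH}({\cal K})$) is fine.

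The gap is in the reverse inclusion, and it is not a formality: it is the entire content of the lemma beyond general theory. Your non-crossing reduction (Lemma \ref{lem:wall-intersection} and Proposition \ref{prop:intersection2}, applicable since $\ell=8\geq 4$) correctly shows that any further ${\frak I}_1\cup{\frak I}_2$-wall meeting $\operatorname{int}{\cal K}$ cannot cross a facet at an interior point of $P^+(v^\perp)_{\Bbb R}$, hence must pass through two non-adjacent vertices. But non-crossing says nothing about such diagonals --- a diagonal of a convex inscribed hexagon meets no side inside the disc --- so they must be excluded by direct computation: for each of the nine non-adjacent pairs $(w_1,w_2)$ one has to check that the rank-two lattice $\{u\in H^*(X,{\Bbb Z})_{\alg}\mid\langle u,w_1\rangle=\langle u,w_2\rangle=0\}$ contains no isotropic $u$ with $\langle v,u\rangle\in\{1,2\}$. (For instance, for $w_1=(0,\sigma,0)$ and $w_2=(-1,2\sigma+4f,-8)$ this lattice is $\{(x,a\sigma,-4a-8x)\}$; its isotropic elements are multiples of $(0,\sigma,-4)$ and $(1,-2\sigma,0)$, which pair with $v$ to $4a$ and $8x$, never $1$ or $2$. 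This is the kind of check required, nine times.) You never perform this or any equivalent verification: the proposal ends with ``once this finite check is complete,'' and what it points to is the $\ell=4$ brute-force enumeration over ${\frak W}$ --- a computation your own reduction was meant to replace, and which is likewise not carried out for $\ell=8$. Until one of these finite checks is actually done, the equality ${\cal K}=\overline{{\cal D}(\beta,tH)}$, and with it the lemma, is asserted rather than proved; a single overlooked diagonal wall would make $\Mov(K_H(1,0,-8))$ strictly smaller than the stated hexagonal cone.
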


\end{NB}

\begin{NB}
\begin{lem}
$\xi \in \NS(X)$ is effective if and only if $(\xi^2) \geq 0$
and $(\xi,H)>0$ for an ample divisor $H$.
\end{lem}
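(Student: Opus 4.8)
The plan is to prove the two implications separately, and to split the non-trivial one according to the sign of $(\xi^2)$.

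First I would dispose of the easy direction. If $\xi\ne 0$ is effective, write a representative as $\xi=\sum_i a_i C_i$ with $a_i>0$ and the $C_i$ distinct irreducible curves. Since an abelian surface carries no rational curves, each $C_i$ has arithmetic genus $\ge 1$, so adjunction together with $K_X=0$ gives $(C_i^2)=2p_a(C_i)-2\ge 0$; as distinct irreducible curves meet non-negatively, expanding $(\xi^2)=\sum_i a_i^2(C_i^2)+\sum_{i\ne j}a_ia_j(C_i\cdot C_j)$ yields $(\xi^2)\ge 0$, while $(\xi,H)=\sum_i a_i(C_i,H)>0$ for any ample $H$.

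For the converse, suppose $(\xi^2)\ge 0$ and $(\xi,H)>0$. Riemann--Roch on $X$ (using $K_X=0$ and $\chi(\mathcal O_X)=0$) gives $\chi(\mathcal O_X(\xi))=(\xi^2)/2$, and Serre duality gives $h^2(\mathcal O_X(\xi))=h^0(\mathcal O_X(-\xi))=0$, the last vanishing because $(-\xi,H)<0$ prevents $-\xi$ from being effective. Hence $h^0(\mathcal O_X(\xi))=(\xi^2)/2+h^1(\mathcal O_X(\xi))\ge (\xi^2)/2$, which already settles the case $(\xi^2)>0$, where this lower bound is strictly positive.

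The hard part will be the boundary case $(\xi^2)=0$, where Riemann--Roch only delivers $h^0=h^1$ and no positive lower bound. Here I would use the structure of isotropic nef classes on an abelian surface: the homomorphism $\phi_\xi\colon X\to \widehat X$ attached to $\xi$ has degree $\chi(\mathcal O_X(\xi))^2=0$, so, being non-zero, its image is one-dimensional and $Y:=(\ker\phi_\xi)^0$ is an elliptic curve. Translation by $Y$ fixes the class $\xi$ (since $\phi_\xi$ kills $Y$), hence $\xi$ is trivial on the fibres of the quotient $q\colon X\to E:=X/Y$ and descends to a class $\eta\in\NS(E)$ with $\xi=q^*\eta$; the hypothesis $(\xi,H)>0$ then forces $\deg\eta>0$, so $\eta$ is effective on the elliptic curve $E$ and its pullback is an effective divisor in the class $\xi$. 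This completes the proof. The rationality of the subtorus $Y$ --- equivalently, that the radical of the alternating form of $\xi$ is defined over the lattice --- is exactly what the integrality of $\xi$ and the equality $(\xi^2)=0$ provide, and it is the crux of this last case.
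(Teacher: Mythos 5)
Your overall strategy is sound, and in the crucial case $(\xi^2)=0$ it is genuinely different from the paper's. The paper handles that case with the Fourier--Mukai transform: since Mukai's functor $\Phi_{X \to \Pic^0(X)}^{\mathbf{P}}$ is an equivalence, $\Phi(L)\neq 0$ for any $L$ with $c_1(L)=\xi$, so $H^i(L\otimes \mathbf{P}_y)\neq 0$ for some $y\in\Pic^0(X)$ and some $i$; as $\chi(L\otimes\mathbf{P}_y)=(\xi^2)/2=0$ and $K_X$ is trivial, Serre duality forces $H^0(L\otimes\mathbf{P}_y)\neq 0$ or $H^0(L^{\vee}\otimes\mathbf{P}_y^{\vee})\neq 0$, i.e.\ $\pm\xi$ is effective, and $(\xi,H)>0$ selects the sign. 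Your route instead uses the classical structure theory of degenerate line bundles: $\deg\phi_L=\chi(L)^2=0$, so $Y:=(\ker\phi_L)^0$ is an elliptic curve and $\xi$ should be a multiple of the fibre class of $q:X\to E=X/Y$. Both work; the paper's argument is shorter given that the Poincar\'{e} bundle machinery is already central to the paper, while yours is more self-contained and exhibits the geometry (the elliptic fibration) behind the effectivity. You also prove the ``only if'' direction (via absence of rational curves and adjunction), which the paper's proof simply omits; your case $(\xi^2)>0$ coincides with the paper's Riemann--Roch argument.

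There is, however, one step you must tighten, and as written it is the weakest link. First, ``translation by $Y$ fixes the class $\xi$'' is vacuous: translations act trivially on all of $H^*(X,{\Bbb Z})$, for any class. The meaningful statement is that translation by $y\in Y$ fixes the \emph{line bundle} $L$, because $T_y^*L\otimes L^{-1}=\phi_L(y)=0$; this is what gives $\deg(L|_{x+Y})=0$ for every fibre. Second, and more seriously, the inference ``$\xi$ restricts to zero on every fibre, hence descends to $\NS(E)$'' is false as a general statement about N\'{e}ron--Severi classes: on $E\times E$ with fibre classes $F_1,F_2$ and diagonal $\Delta$, the class $\Delta-F_1-F_2$ restricts to degree $0$ on every fibre of $p_1$ but is not a pullback from the base. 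What rescues your argument is precisely the unused hypothesis $(\xi^2)=0$: since $\xi$ and the fibre class $[Y]$ are both isotropic, mutually orthogonal, and positive against $H$, the Hodge index theorem forces $\xi\in{\Bbb Q}_{>0}[Y]$, and primitivity of $[Y]$ (all fibres of $q$ are irreducible elliptic curves) gives $\xi\in{\Bbb Z}_{>0}[Y]$, hence effective. Alternatively, carry out the descent at the bundle level: twist $L$ by an element of $\Pic^0(X)$ so that $L|_Y\cong{\cal O}_Y$, then use the symmetry of $\phi_L$ and the exact sequence $0\to\widehat{X/Y}\to\widehat{X}\to\widehat{Y}\to 0$ to see that $L$ is trivial on \emph{every} fibre, whence $L\cong q^*q_*L$ (this is the standard Birkenhake--Lange argument). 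Either repair closes the gap; without one of them, the ``descent'' assertion is not justified.
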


\begin{proof}
If $(\xi^2)>0$, then the Riemann-Roch theorem implies that
$\xi$ or $-\xi$ is effective. Hence 
$(\xi,H)>0$ implies $\xi$ is effective.
If $(\xi^2)=0$, then for a line bundle with 
$c_1(L)=\xi$,
$\Phi_{X \to \Pic^0(X)}^{{\bf P}}(L) \ne 0$.
Hence $H^1(L \otimes {\bf P}_{|X \times \{y \}}) \ne 0$
for a point $y \in \Pic^0(X)$.
Then $H^0(L \otimes {\bf P}_{|X \times \{y \}}) \ne 0$
or $H^0(L^{\vee} \otimes {\bf P}_{|X \times \{y \}}^{\vee}) \ne 0$.
Hence 
$(\xi,H)>0$ implies $\xi$ is effective.
\end{proof}

We set $v:=(0,\xi,a)$ and assume that $\xi$ is effective.

\begin{lem}
We set $v_1:=(r_1,\xi_1,a_1)$.
\begin{enumerate}
\item[(1)]
$v_1 \in \varrho_X^\perp$
if and only if $r_1=0$.
\item[(2)]
If $v_1 \in \varrho_X^\perp$ 
defines a wall, then $\xi_1$ and $\xi-\xi_1$ are effective.
In particular the choice of $\xi_1$ is finite.
\end{enumerate}
\end{lem}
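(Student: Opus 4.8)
The plan is to reduce both parts to the Mukai pairing together with the Hodge index theorem on $\NS(X)$, and then to invoke the effectivity criterion stated just above.

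Part (1) is an immediate computation. Writing $\varrho_X=(0,0,1)$ and $v_1=(r_1,\xi_1,a_1)$, the defining formula of $\langle\;,\;\rangle$ gives $\langle v_1,\varrho_X\rangle=-r_1$, so $v_1\in\varrho_X^\perp$ if and only if $r_1=0$.

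For part (2), I would set $v_1=(0,\xi_1,a_1)$, so that $v-v_1=(0,\xi-\xi_1,a-a_1)$, and evaluate the three wall inequalities \eqref{eq:wall-cond}. Because the vanishing ranks kill the cross terms in the pairing, these reduce to
\begin{equation*}
(\xi_1,\xi-\xi_1)>0,\quad (\xi_1^2)\ge 0,\quad ((\xi-\xi_1)^2)\ge 0.
\end{equation*}
The strict inequality forces $\xi_1\ne 0$ and $\xi-\xi_1\ne 0$. The key step is then the Hodge index theorem: since $\NS(X)$ has signature $(1,\rk\NS(X)-1)$, a nonzero class $x$ with $(x^2)\ge 0$ satisfies $(x,H)\ne 0$, the sign of $(x,H)$ placing $x$ in one of the two halves of the closed positive cone, and two classes lying in opposite halves pair nonpositively. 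Now $(\xi_1,\xi)=(\xi_1^2)+(\xi_1,\xi-\xi_1)>0$ and, symmetrically, $(\xi-\xi_1,\xi)>0$. As $\xi$ is effective it lies in the forward half-cone (the one containing the ample classes), so both $\xi_1$ and $\xi-\xi_1$ must lie there as well; in particular $(\xi_1,H)>0$ and $(\xi-\xi_1,H)>0$. Combined with $(\xi_1^2)\ge 0$ and $((\xi-\xi_1)^2)\ge 0$, the preceding effectivity criterion then shows that $\xi_1$ and $\xi-\xi_1$ are both effective.

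Finiteness would follow from the resulting bound $0<(\xi_1,H)<(\xi,H)$: decomposing $\xi_1=\tfrac{(\xi_1,H)}{(H^2)}H+\xi_1'$ with $\xi_1'\in H^\perp$, the negative definiteness of $H^\perp$ together with $(\xi_1^2)\ge 0$ bounds $-(\xi_1'^2)$ and hence confines $\xi_1'$ to a bounded region of a negative definite lattice; since $(\xi_1,H)$ takes only finitely many integer values, only finitely many integral $\xi_1$ can occur. I expect the main obstacle to be the Hodge index step, specifically the boundary case where one of $(\xi_1^2)$ or $((\xi-\xi_1)^2)$ vanishes; this is exactly where the rigidity of signature $(1,\rk\NS(X)-1)$ is used, and it is cleanest to argue through the signs of $(\xi_1,\xi)$ and $(\xi-\xi_1,\xi)$ rather than by comparing $\xi_1$ and $\xi-\xi_1$ with each other directly.
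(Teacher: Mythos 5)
Your proof is correct, and it rests on the same two pillars as the paper's own argument: the Hodge index theorem for $\NS(X)$ (signature $(1,\rk \NS(X)-1)$) and the effectivity criterion stated just before the lemma, namely that $(x^2)\geq 0$ and $(x,H)>0$ imply $x$ is effective. The organization differs a little. The paper fixes the ample class $H$ and argues by contradiction on the sign of $(\xi_1,H)$: the case $(\xi_1,H)=0$ is excluded by Hodge index (it would force $\xi_1=0$, hence $(\xi_1,\xi-\xi_1)=0$), and the case $(\xi_1,H)<0$ is excluded because then $-\xi_1$ is effective, which leads either to $(\xi_1,\xi-\xi_1)\leq 0$ or to $(\xi,H)<0$. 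You instead pair against $\xi$ itself, noting $(\xi_1,\xi)=(\xi_1^2)+(\xi_1,\xi-\xi_1)>0$, and use the Lorentzian fact that classes of non-negative square in opposite halves of the positive cone pair nonpositively; this places $\xi_1$ and $\xi-\xi_1$ in the forward cone in one step and avoids the case analysis, but it is the same underlying fact the paper uses when it asserts $(\xi_1,\xi-\xi_1)\leq 0$ in its middle case. One genuine addition on your side: the paper's proof ends with effectivity and never justifies the final assertion of the lemma that the choice of $\xi_1$ is finite, whereas your argument --- bounding $(\xi_1,H)$ between $0$ and $(\xi,H)$ and then confining the component $\xi_1'\in H^\perp$ to a bounded region of a negative definite lattice --- supplies exactly that missing step.
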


\begin{proof}
(1) is obvious. (2)
Since $v_2:=v-v_1 \in \varrho_X^\perp$,
$(\xi_1,\xi-\xi_1)>0$, $(\xi_1^2) \geq 0$ and
$((\xi-\xi_1)^2) \geq 0$.
Assume that $(\xi_1,H) \leq 0$. 
If the equality holds, then the Hodge index theorem implies that
$(\xi_1^2) \leq 0$. 
Then $(\xi_1^2)=0$, which implies that $\xi_1=0$.
In this case, $(\xi_1,\xi-\xi_1)=0$.
Therefore $(\xi_1,H) < 0$. Then  
$-\xi_1$ is effective. 
If $(\xi-\xi_1,H)>0$, then
$\xi-\xi_1$ is effective and $(\xi_1,\xi-\xi_1) \leq 0$, which is a 
contradiction. If $(\xi-\xi_1,H) \leq 0$, then
$(\xi,H)=(\xi_1,H)+(\xi-\xi_1,H)<0$.
Therefore we have $(\xi_1,H) > 0$, which implies that
$\xi_1$ is effective.
In the same way, we see that $\xi-\xi_1$ is effective. 
Thus the claim holds. 
\end{proof}

For $D \in \NS(X)$,
$T_D:H^*(X,{\Bbb Z})_{\alg} \to H^*(X,{\Bbb Z})_{\alg}$ 
is the multiplication 
by $e^D$. We set
$$
G:=\{T_D
\mid D \in \NS(X) \cap \xi^{\perp}\}.
$$
Then $v$ and $\varrho_X$ are invariant under 
the action of $G$.

\begin{lem}
There are finitely many chambers ${\cal C}$
such that $\overline{\cal C}$ contains $\varrho_X$
up to the action of $G$. 
\end{lem}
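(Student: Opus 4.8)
The plan is to reduce the statement to a counting problem for the walls passing through the boundary point $\varrho_X$, and then to use the group $G$ to absorb the one remaining infinite parameter. First I would record that, in a neighborhood of $\varrho_X$, a chamber $\mathcal{C}$ with $\varrho_X\in\overline{\mathcal{C}}$ is controlled only by the walls $v_1^\perp$ that contain $\varrho_X$: by the Proposition on the cones $\overline{uA}$ there is a neighborhood $U$ of $\varrho_X$ in which every wall meeting $\overline{uA}\setminus\{\varrho_X\}$ already passes through $\varrho_X$, so locally $\mathcal{C}$ is cut out by these walls alone. By the preceding lemma, $\varrho_X\in v_1^\perp$ is equivalent to $r_1=0$, so the relevant wall vectors are $v_1=(0,\xi_1,a_1)$; and by the lemma characterizing such walls, if $v_1$ defines a wall then both $\xi_1$ and $\xi-\xi_1$ are effective, whence $\xi_1$ ranges over a finite set.

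The remaining freedom is in $a_1\in\mathbb{Z}$, and this is exactly where $G$ enters. Since $T_D$ fixes $v$ and $\varrho_X$, it permutes the walls through $\varrho_X$ and the chambers touching $\varrho_X$. A direct computation gives $T_D(0,\xi_1,a_1)=(0,\xi_1,a_1+(D,\xi_1))$, so the $G$-orbit of $a_1$ is the coset $a_1+c_{\xi_1}\mathbb{Z}$, where $c_{\xi_1}\ge 0$ generates the image of $D\mapsto (D,\xi_1)$ on $\NS(X)\cap\xi^\perp$. I would then split into two cases. If $\xi_1\notin\mathbb{Q}\xi$, then $c_{\xi_1}>0$ by non-degeneracy of the intersection form, so modulo $G$ only finitely many values of $a_1$ survive for each of the finitely many admissible $\xi_1$. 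If $\xi_1\in\mathbb{Q}\xi$, then $G$ fixes $a_1$, but here the wall degenerates: restricting $x\mapsto\langle x,v_1\rangle$ to $v^\perp$ and using $\langle x,v\rangle=0$ shows that $v_1^\perp\cap v^\perp$ equals the rank-zero locus $\{x=(x_0,x_1,x_2):x_0=0\}$, independently of $a_1$. Thus in every case the set of walls through $\varrho_X$ is finite modulo $G$.

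Finally I would pass from walls to chambers. Working in the transversal slice $v^\perp/\mathbb{R}\varrho_X\cong\NS(X)_{\mathbb{R}}$, the wall of $v_1=(0,\xi_1,a_1)$ becomes (when $a\ne 0$; the case $a=0$ is similar) the hyperplane orthogonal to $\zeta:=a\xi_1-a_1\xi$, on which $G$ acts by $\zeta\mapsto\zeta-(D,\xi_1)\xi$, i.e.\ by shears fixing $\xi^\perp$ pointwise; as $a_1\to\pm\infty$ these hyperplanes accumulate on $\xi^\perp$. Since there are finitely many walls modulo $G$, and each chamber touching $\varrho_X$ is determined by the signs of $\langle a,v_1\rangle$ for an interior direction $a$, the chambers are permuted by $G$ with finitely many orbits. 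The main obstacle is precisely this last step: the walls through $\varrho_X$ are genuinely infinite and accumulate on the $G$-fixed hyperplane $\xi^\perp$, so one must verify that the shears in $G$ carry each thin sector near $\xi^\perp$ onto the next, collapsing the accumulation into finitely many orbits of chambers, and that this remains uniform over the several families $\xi_1$ and over a full neighborhood of $\varrho_X$ covered by finitely many cones $\overline{uA}$.
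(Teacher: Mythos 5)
Your core computations are the right ones, and they are essentially all that the paper itself records for this lemma: the paper's treatment stops at the setup, namely the decomposition $v^\perp={\Bbb Q}w+{\Bbb Q}\varrho_X+L$ with $w=e^{\eta_0}$, $L=\{\eta e^{\eta_0}\mid \eta\in\xi^\perp\}$, and the formula
$T_D(rw+\eta e^{\eta_0}+b\varrho_X)=rw+(\eta+rD)e^{\eta_0}+(b+(\eta,D)+\tfrac{(D^2)}{2}r)\varrho_X$;
no argument beyond this is written down. Your identification of the walls through $\varrho_X$ with vectors $(0,\xi_1,a_1)$, the finiteness of the admissible $\xi_1$, the action $T_D(0,\xi_1,a_1)=(0,\xi_1,a_1+(D,\xi_1))$, the nonvanishing of the subgroup $\{(D,\xi_1)\mid D\in\NS(X)\cap\xi^\perp\}$ when $\xi_1\notin{\Bbb Q}\xi$, and the degeneracy of the walls with $\xi_1\in{\Bbb Q}\xi$ (on $v^\perp$ they reduce to $\{x_0=0\}$, which is disjoint from $P^+(v^\perp)_{\Bbb R}$) are all correct and coincide with this intended route.

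There are, however, two genuine gaps, and the first is a step that fails. You reduce at the outset to the walls containing $\varrho_X$, claiming that near $\varrho_X$ a chamber is cut out by these walls alone. The Proposition following Lemma~\ref{lem:B} does not give this: it controls walls only inside $U\cap\overline{uA}$ with $A$ a \emph{compact} subset of $P^+(v^\perp)_{\Bbb R}$, and no such cone contains a full neighborhood of $\varrho_X$ (all directions tangent to the boundary of the positive cone at $\varrho_X$ are missed). In fact, walls not containing $\varrho_X$ accumulate at $\varrho_X$ whenever one such wall exists: if $W_{v_1}\neq\emptyset$ with $\rk v_1\neq 0$ and $0\neq D\in\NS(X)\cap\xi^\perp$, then the walls $W_{T_{kD}(v_1)}$, $k\in{\Bbb Z}$, never contain $\varrho_X$ (as $\langle\varrho_X,T_{kD}(v_1)\rangle=\langle\varrho_X,v_1\rangle=-\rk v_1$), yet by the displayed $T_D$-formula the $\varrho_X$-component of $T_{kD}(x)$, for $x\in v_1^\perp\cap P^+(v^\perp)_{\Bbb R}$, grows like $k^2$ while the other components grow at most linearly, so ${\Bbb R}\,T_{kD}(x)\to{\Bbb R}\varrho_X$. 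Such $v_1$ exist: for $\NS(X)={\Bbb Z}\sigma\oplus{\Bbb Z}f$ with $(\sigma^2)=(f^2)=0$, $(\sigma,f)=1$, $v=(0,\sigma+3f,0)$ and $v_1=(1,2\sigma-3f,-6)$, one checks \eqref{eq:wall-cond} and \eqref{eq:wall-cond2} directly. So chambers touching $\varrho_X$ are in general strictly smaller than the cells of your through-$\varrho_X$ arrangement, and the correspondence you rely on breaks down. The second gap is the one you flag yourself: finitely many orbits of walls do not formally yield finitely many orbits of chambers. Both gaps can be closed together, in the opposite order from your plan. First, any chamber ${\cal C}$ with $\varrho_X\in\overline{\cal C}$ lies, for every wall $v_1^\perp$ with $\rk v_1\neq 0$, on the side determined by the sign of $\langle\varrho_X,v_1\rangle$ (evaluate $\langle\,\cdot\,,v_1\rangle$ along a sequence in ${\cal C}$ tending to $\varrho_X$); hence two chambers touching $\varrho_X$ inside the same cell of the through-$\varrho_X$ arrangement both lie in the intersection of that cell with all these open half-spaces, which is convex and disjoint from every wall, so lies in a single chamber — forcing the two to coincide. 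Thus chambers touching $\varrho_X$ inject into cells, with no local reduction needed. Second, since $P^+(v^\perp)_{\Bbb R}$ avoids $\{x_0=0\}$, the coordinate $\mu=\eta/r$ turns the through-$\varrho_X$ walls into the affine hyperplanes $(\mu,\xi_1)=a_1-(\eta_0,\xi_1)$ in $\xi^\perp_{\Bbb R}$, a locally finite arrangement invariant under translation by the full-rank lattice $\NS(X)\cap\xi^\perp\cong G$, and cocompactness of this lattice gives finiteness of the cells modulo $G$. This is the statement your "shears identify the thin sectors" sentence is aiming at, but it must be combined with the convexity injection rather than with the (false) local reduction.
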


$(x,\eta,b) \in v^\perp$ if and only if 
$(\xi,\eta)=xa$.
We take $w \in v^\perp \cap H^*(X,{\Bbb Z})_{\alg} \otimes {\Bbb Q}$ 
such that $\langle w,\varrho_X \rangle=1$.
Replacing $w$ by $w+y \varrho_X$, we may assume that
$\langle w^2 \rangle=0$.
We set $w:=e^{\eta_0}$.
Then $v^\perp={\Bbb Q}w+{\Bbb Q}\varrho_X+L$,
where $L$ is generated by $(0,\eta,(\eta,\eta_0))=\eta e^{\eta_0}$, 
$\eta \in \xi^\perp$.
$T_D(r w+\eta e^{\eta_0}+b \varrho_X)=
e^D(r w+\eta e^{\eta_0}+b \varrho_X)
=r w+(\eta+rD)e^{\eta_0}+(b+(\eta,D)+\frac{(D^2)}{2}r )\varrho_X$.

\end{NB}

\section{Appendix}\label{sect:appendix}

\subsection{The base of Lagrangian fibrations}

Let
$\Phi_{X \to Y}^{{\bf E}^{\vee}[k]}:{\bf D}(X) \to {\bf D}^\alpha(Y)$
be the Fourier-Mukai transform in 
the proof of Proposition \ref{prop:Lag}.
Then we have an isomorphism 
$$
\phi:M_{(\beta,tH)}(v) \to M_{\widetilde{tH}}^\alpha(v')
$$
and a morphism
$$
f:M_{\widetilde{tH}}^\alpha(v') \to \Hilb_Y^\eta.
$$
We set  
$v':=(0,\eta,b), b \in {\Bbb Z}$ and $H':=\widetilde{tH}$.
Since ${\frak a}:M_{(\beta,tH)}(v) \to X \times \widehat{X}$
is the albanese map,
$$
{\frak a}':
M_{H'}^\alpha(0,\eta,b) \to M_{(\beta,tH)}(v) \overset{\frak a}{\to} 
 X \times \widehat{X}
$$
 is the albanese map.
Then 
$$
M_{H'}^\alpha(0,\eta,b) \to \Hilb_Y^\eta \to
\Pic^0(Y)
$$
 induces a morphism
$g:X \times \widehat{X} \to \Pic^0(Y)$ with a 
commutative diagram
\begin{equation*}
\begin{CD}
M_{H'}^\alpha(0,\eta,b) @>{f}>> \Hilb_Y^\eta\\
@V{{\frak a}'}VV @VVV\\
X \times \widehat{X} @>{g}>> \Pic^0(Y).
\end{CD}
\end{equation*}
Let $K_{H'}^\alpha(0,\eta,b)$ be a fiber of 
$M_{H'}^\alpha(0,\eta,b) \to
 X \times \widehat{X}$.
Since $\Hilb_Y^\eta \to \Pic^0(Y)$ is a 
${\Bbb P}^{(\eta^2)/2-1}$-bundle,
we have a morphism $K_{H'}^\alpha(0,\eta,b) \to {\Bbb P}^{(\eta^2)/2-1}$.
We shall prove the following.

\begin{prop}\label{prop:connected} 
The fiber of $K_{H'}^\alpha(0,\eta,b) \to {\Bbb P}^{(\eta^2)/2-1}$
is connected.
\end{prop}

For $D \in \Hilb_Y^\eta$,
$f^{-1}(D)$ consists of $\alpha$-twisted stable sheaves $E$ 
such that $E$ is an ${\cal O}_D$-module.
We take an effective divisor $D \in \Hilb_Y^\eta$.
Since a fiber of $K_{H'}^\alpha(0,\eta,b) \to {\Bbb P}^{(\eta^2)/2-1}$
is a fiber of $f^{-1}(D) \to X \times \widehat{X}$,
we shall study the map $f^{-1}(D) \to X \times \widehat{X}$.
For the connectivity of fibers,
we may assume that $D$ is a general member of $\Hilb_Y^\eta$.
Indeed since ${\Bbb P}^{(\eta^2)/2-1}$ is a normal variety
over a field of characteristic 0,
the connectivity of the generic fiber implies the connectivity
for all fibers.

\begin{NB}
It is sufficient to prove the claim for
a general fiber of
$K_{({\cal X},{\cal H})/T}^\alpha(0,\eta,b) \to \Hilb_{{\cal Y}/T}^\xi 
\times_{\widehat{\cal Y}} T$.
Indeed if the claim holds, then the
Stein factorization is trivial by
the smoothness of $\Hilb_{{\cal Y}/T}^\xi 
\to {\widehat{\cal Y}} $.

For the morphism
$\zeta:M_{({\cal X},{\cal H})/T}^\alpha(0,\eta,b) \to \Hilb_{{\cal Y}/T}^\xi$,
the smoothness of $M_{({\cal X},{\cal H})/T}^\alpha(0,\eta,b)$ and
$\Hilb_{{\cal Y}/T}^\xi$ over $T$ imply that
for any $t \in T$,
$\zeta$ is smooth over a general point $D \in (\Hilb_{{\cal Y}/T}^\xi)_t$.
For this fiber $\zeta_D$,
we have a homomorphism
$H_1(\zeta_D,{\Bbb Z}) \to H_1({\cal Y}_t,{\Bbb Z})$
which is locally constant.
So for the surjectivity of this map, it is sufficient to
prove the claim for a special $D$ such that 
$\zeta$ is smooth over $D$.

\end{NB}

\begin{NB}
Let $C$ be a curve in an abelian surface $Y$ such that $C$ is a
hyperplane section of $Y$ in a projective space.
\begin{rem}
Assume that $C=d C_0$, $C_0$ is primitive.
If $d \geq 3$, or $d=2$ and $(C_0^2)>2$ or
$d=1$ and $(C^2) \geq 10$, then
$C$ is very ample. 
\end{rem}
Then Lefschetz hyperplane section theorem
implies that $H_1(C,{\Bbb Z}) \to H_1(Y,{\Bbb Z})$ is surjective. 
Then the kernel of $f:\Jac(C) \to Y$ is connected.
Indeed for the abelian variety $Z:=\Jac(C)/(\ker f)_0$,
we have a surjective homomorphism
$H_1(C,{\Bbb Z}) \to H_1(Z,{\Bbb Z}) \to H_1(Y,{\Bbb Z})$.
Then $\dim Z \geq \dim Y$.
Since $Z \to Y$ is finite,
we have $\dim Z \leq 2$. 
Therefore $Z \to Y$ is an isogeny.
Then the surjectivity of   
$H_1(Z,{\Bbb Z}) \to H_1(Y,{\Bbb Z})$ implies that
it is an isomorphism. Thus $\ker f$ is connected.

For ${\cal O}_C(D) \in \Pic^0(C)$ with $D=\sum_i n_i p_i$, 
we have ${\cal O}_C(D)={\cal O}_C+\sum_i n_i {\Bbb C}_{p_i}$
in $K(C)$.
Hence 
$$
\Phi_{Y \to X}^{{\bf E}}({\cal O}_C(D))=
\Phi_{Y \to X}^{{\bf E}}({\cal O}_C)+\sum_i n_i {\bf E}_{p_i}
$$ 
in $K(X)$.
Then we have 
${\frak a}(\Phi_{Y \to X}^{{\bf E}}({\cal O}_C(D)))
={\frak a}(\Phi_{Y \to X}^{{\bf E}}({\cal O}_C))
+\sum_i n_i {\frak a}({\bf E}_{p_i})$.
This morphism is the same as 
$\Pic^0(C)\cong \Jac(C) \overset{f}{\to} 
Y  \overset{{\frak a}}{\to} X \times \widehat{X}$ sending
${\cal O}_C(D)$ to the image of $\sum_i n_i p_i \in Y$
by ${\frak a}$. 

In particular $\ker(\Pic^0(C) \to X \times \widehat{X})$ is connected.

Let ${\cal X} \to T$ be a family of abelian surface
with a family of moduli spaces ${\cal Y} \to T$.
Let ${\cal C} \subset {\cal Y}  \times_T \Hilb_{{\cal Y}/T}^\xi$
be the universal family of curves.
If ${\cal C}_s$ ($s \in \Hilb_{{\cal Y}/T}^\xi$) is smooth
then
$(\Pic^0_{{\cal C}/\Hilb_{{\cal Y}/T}^\xi})_s$ is the fiber of
${\cal M}_{{\cal Y}/T}^\alpha(v) \to \Hilb_{{\cal Y}/T}^\xi$.

We have a morphisms
$\Pic^0_{{\cal C}/\Hilb_{{\cal Y}/T}^\xi} \to 
{\cal X} \times_T \widehat{\cal X}$.
\end{NB}

The following well-known result is due to Reider.
\begin{prop}\label{prop:base-point-free}
Let $X$ be an abelian surface defined over an algebraically closed field
$k$ and $D$ a divisor on $X$.
If $(X,D) \not \cong (C_1 \times C_2,C_1+k C_2)$ and
$(D^2)>4$, then $|D|$ is base point free.
Moreover $|D|$ is fixed point free, if 
$(X,D) \not \cong (C_1 \times C_2,C_1+k C_2)$ and $(D^2)=4$.
\end{prop}

\begin{lem}\label{lem:tree-config}
For a general point of $\Hilb_Y^\eta$,
$D$ is a normal crossing divisor such that
each component $D_i$ is smooth and
the configuration is tree.
\end{lem}

\begin{proof}
We take a line bundle $L$ on $Y$ with $c_1(L)=\eta$.
If $|L|$ is base point free, then
Bertini's theorem implies that
a general member of $|L|$ is a smooth divisor.
Assume that $|L|$ has a base point.
Then Proposition \ref{prop:base-point-free}
implies that 
(i) $(\eta^2)=4$ and $L$ does not have a fixed component
or (ii) there is an elliptic curve $C$ on $Y$
with $(\eta,C)=1$.
In the first case,
$$
K(L):=\{x \in Y \mid T_x^*(L) \cong L \}
$$
is a subgroup of order 4.
Let $\mathrm{Bs}(L)$ be the set of base points.
Then by the action of $K(L)$, $\mathrm{Bs}(L)$ is invariant.
Therefore $\# \mathrm{Bs}(L) \geq \# K(L)$.
Since $L$ does not have a fixed component,
$4 \geq \# \mathrm{Bs}(L)$.
Therefore $\mathrm{Bs}(L)$ consists of 4 points.
For two $D,D' \in |L|$, $D$ and $D'$ intersect transversally.
Therefore $D$ is smooth at base points.
By using Bertini's theorem, $D$ is smooth for a general
member of $|L|$. 
For case (ii), there is an elliptic curve $C'$ such that
$(C,C')=1$ and 
$\eta=C+n C'$, where $n=( \eta^2 )/2$.
Since $nC'$ is linear equivalent to $\sum_{i=1}^n C_i$ with
$C_i \cap C_j=\emptyset$ $(i \ne j)$, we get the claim.  
\end{proof}

\subsection{Moduli of twisted-stable sheaves on $D$} 

By Lemma \ref{lem:tree-config},
we shall study $f^{-1}(D)$ for a normal crossing divisor
$D=\sum_{i=0}^m D_i$ such that
$D_i$ are smooth curves and the configuration of
$D_i$ is tree.
We may assume that $D=D_0+D_1+\cdots+D_m$ and 
$p_1,p_2,...,p_m$ are the singular points of $D$
such that $p_i=D_{\varphi(i)} \cap D_i$ with $\varphi(i)<i$.


By looking at the dual graph of irreducible components,
we have the following lemma.
\begin{lem}\label{lem:decomposition}
For each singular point $p_i$, we have a unique 
decomposition $D=A^i+B^i$ with 
$A^i \cap B^i=\{p_i \}$.
\end{lem}

\begin{NB}

There is a unique decomposition
$D=A+B$ such that $A \cap B =\{x \}$.

\begin{proof}
We set $U:=X \setminus \{ x \}$.
If there is a desired decomposition
$D=A+B$, then $A':=A \cap U$ and $B'=B \cap U$ are the
connected components of $D \cap U$.
\begin{NB2}
$A' \cap B'=A \cap B \cap U= \emptyset$.
For two point $y,z \in A$,
there is a path $I$ in $A$ connecting $y$ and $z$.
By perturbing $I$, we may assume that $x \not \in I$.
Hence $A$ is connected. 
\end{NB2}
Hence the decomposition is unique.

If $D \cap U$ has connected components $A_1,A_2,...,A_k$,
then the closure $\overline{A}_i$ intersects at $x$.
Hence $k=1,2$.
If $D\cap U$ is connected, then the configuration of
$D_i$ is not tree. Therefore $D \cap U$ has two
connected components  
$A'$ and $B'$.
Then $A=\overline{A}'$ and $B=\overline{B}'$
are the desired decomposition of $D$.
\end{proof}
\end{NB}

\begin{lem}\label{lem:independent}
In the free abelian group generated by
$D_0,D_1,..,D_m$, we have
$$
{\Bbb Z}D+{\Bbb Z}A^1+\cdots + {\Bbb Z}A^m
={\Bbb Z}D_0 \oplus {\Bbb Z}D_1 \oplus \cdots \oplus 
{\Bbb Z}D_m.
$$
\end{lem}

\begin{proof}
Before proving this lemma, we note that
${\Bbb Z}D+{\Bbb Z}A^i={\Bbb Z}D+{\Bbb Z}B^i$. Thus
the left hand side is independent of the choice of $A^i$ in the
decomposition $D=A^i+B^i$.

For each $D_i$, let $p_{n_1},...,p_{n_t} \in D_i$ be the singular points of
$D$.
Then we have the decompositions
$D=A^{n_j}+B^{n_j}$ with $A^{n_j} \cap B^{n_j}=\{p_{n_j} \}$.
We may assume that $D_i \subset A^{n_j}$ for all $j$.
Then $D=D_i+\sum_j B^{n_j}$. 
Hence $\sum_{i=0}^m {\Bbb Z}D_i \subset {\Bbb Z}D+\sum_{i=1}^m A^i$, 
which implies the claim.
\end{proof}

\begin{defn}
Let $\beta$ be a ${\Bbb Q}$-Cartier divisor on $D$.
\begin{enumerate}
\item[(1)]
For a subdivisor
$D' \subset D$,
$\beta(D'):=\int_{D'} \beta  \in {\Bbb Q}$ 
denotes the degree of $\beta_{|D'} \in H^2(D',{\Bbb Q})$.
\item[(2)]
For a coherent sheaf $E$ on $D$,
we set $\chi(E(-\beta)):=\chi(E)-\beta(\Div(E))$.
\end{enumerate}
\end{defn}

\begin{defn}
We have a surjective homomorphism
\begin{equation*}
\begin{matrix}
\deg^m :& \Pic(D)& \to & \bigoplus_{i=0}^m {\Bbb Z}\delta_i\\
& E & \to & \sum_{i=0}^m \deg(E_{|D_i})\delta_i.
\end{matrix}
\end{equation*}
Indeed for a smooth point $p_i \in D_i$,
we have a Cartier divisor and get
a line bundle ${\cal O}_D(p_i)$ on $D$. 
Then $\deg^m({\cal O}_D(p_i))=\delta_i$.
\end{defn}

\begin{defn}
Let $H'$ be an ample divisor on $D$.
A purely 1-dimensional sheaf
$E$ on $D$ is $\beta$-twisted semi-stable, if
\begin{equation*}
\frac{\chi(F(-\beta))}{(\Div(F),H')} 
\leq \frac{\chi(E(-\beta))}{(\Div(E),H')}
\end{equation*}
for all $0 \ne F \subset E$.
If the inequality is strict for all proper
subsheaf $F$, then
$E$ is $\beta$-twisted stable.
\end{defn}

Since $H^2_{\text{\'{e}t}}(D,{\cal O}_D^{\times})=0$,
by refining the covering of $D$,
we have an $\alpha_{D}$-twisted line bundle $L$ on $D$ 
which induces an equivalence
\begin{equation*}
\begin{matrix}
\Coh^{\alpha}(D)& \to & \Coh(D)\\
E & \mapsto & E \otimes L^{\vee}.
\end{matrix}
\end{equation*}
Let $G$ be a locally free $\alpha$-twisted sheaf defining 
twisted semi-stability of $f^{-1}(D)$.
Then $G':=G_{|D} \otimes L^{-1}$ is a locally free sheaf on 
$D$. We set $\beta:=c_1(G')/\rk G' \in H^2(D,{\Bbb Q})$.
Thus we have an isomorphism
$f^{-1}(D) \to M_D^\beta(v)$, where
$M_D^\beta(v)$ is the moduli space of
$\beta$-twisted sheaves on $D$ with $v(E)=v$
and the polarization is $H'_{|D}$.
We shall describe $M_D^\beta(v)$.

Let $x$ be a smooth point of $D$.
Then the stalk $E_x$ is a free ${\cal O}_{D,x}$-module.
Since $\Div(E)=D$,
the classification of finitely generated ${\cal O}_{D,x}$-module
implies that $E_x \cong {\cal O}_{D,x}$.

\begin{NB}
Let ${\cal Z}$ be the fiber of 
${\cal M}_{{\cal Y}/T}^\alpha(v) \to  \Hilb_{{\cal Y}/T}^\xi$.
If ${\cal C}_s$ is smooth, then
${\cal Z}_s \cong \Pic^0({\cal C}_s)$ is an abelian variety.
If ${\cal Z}_s$ $(s \in  \Hilb_{{\cal Y}/T}^\xi)$ is smooth, then 
$$
\varphi_s :H^1(({\cal X} \times_T \widehat{\cal X})_s,{\Bbb Z})
\to H^1({\cal Z}_s,{\Bbb Z})
$$
is independent of $s  \in \Hilb_{{\cal Y}/T}^\xi$.

Assume that
${\cal X}_t$ is a product of elliptic curves $C$ and $D$ such that 
${\cal C}_s=C_0+C_1+\cdots +C_n$, where $e \in D$ 
is the zero of the group action,
$C_0=C \times \{e \}$ 
and $C_i=\{p_i \} \times D$ for $i>0$.
Then ${\cal Z}_s$ is smooth and it is isomorphic
to $C_0 \times C_1 \times \cdots  \times C_n$.
In this case, $C_0 \times C_1 \times \cdots  \times C_n \to 
X \times \widehat{X}$
factors through $Y$ and 
$H_1(C_0 \times C_1 \times \cdots  \times C_n,{\Bbb Z}) \to
H_1(Y,{\Bbb Z})$ is surjective.
Hence $\coker \varphi_s$ is torsion free.

We set $A:=C_0 \times C_1 \times \cdots  \times C_n$.

Let $E$ be a stable sheaf whose support is
$A$.
Then we have an exact sequence
\begin{equation*}
0 \to E_{|C_0}(-\sum_{i>0} p_i) \to E \to
\oplus_{i>0} E_{|C_i} \to 0.
\end{equation*}

\begin{rem}
Indeed the kernel $K$ of the restriction map
$E \to \oplus_{i>0} E_{|C_i}$ is a purely 1-dimensional sheaf
and the multiplication map
$K \to K(C_0)$ is zero on $C \setminus \{p_1,...,p_n \}$.
Hence $K$ is an ${\cal O}_{C_0}$-module.
Thus we have a surjection 
$E(-\sum_{i>0} C_i) \to E(-\sum_{i>0} C_i)_{|C_0} \to K$.
If $E_{|C_0}$ is purely 1-dimensional, then
$E(-\sum_{i>0} C_i)_{|C_0} \to K$ is injective and get the claim.  
\end{rem}

\end{NB}

\begin{prop}\label{prop:Pic_D}
Assume that $\beta$ is general, that is,
$M_D^\beta(v)$ consists of $\beta$-twisted stable sheaves.
\begin{enumerate}
\item[(1)]
$M_D^\beta(v)$ is non-empty and consists of line bundles on $D$.
\item[(2)]
$M_D^\beta(v)$ is isomorphic to $\prod_i \Pic^0(D_i)$.
In particular, $M_D^\beta(v)$ is an abelian variety.
\end{enumerate}
\end{prop}

For the proof of Proposition \ref{prop:Pic_D},
we first prove the following.
 
\begin{lem}\label{lem:locally-free}
$M_D^\beta(v)$ consists of locally free ${\cal O}_D$-modules.  
\end{lem}

\begin{proof}
For $E \in M_D^\beta(v)$,
assume that $E_{|D_i}$ is torsion free.
Then
$E_{|D_i}$ is a locally free sheaf of rank 1 on $D_i$.
By using Nakayama's lemma, we have a surjective homomorphim
${\cal O}_{X, x} \to E_x$ for all $x \in D_i$.
Then we have a surjective homomorphism
${\cal O}_{U} \to E_{|U}$ for a neighborhood $U$ of $x$.
Since $E$ is an ${\cal O}_D$-module,
we have a surjective homomorphism
$\psi:{\cal O}_{D \cap U} \to E_{|U}$. 
Since $E$ is a locally free ${\cal O}_D$-module
over $X \setminus \cup_{j \ne k} D_j \cap D_k$,
$\Supp \ker \psi \subset \cup_{j \ne k} D_j \cap D_k$.
Hence $\psi$ is an isomorphism. 
Therefore it is sufficient to show the torsion freeness of
$E_{|D_i}$.
Assume that the torsion module $T$ of $E_{|D_i}$ is not zero.
Then there is a component $D_j$ such that $T_{p_i} \ne 0$ at
$p_i \in D_i \cap D_j$.
We take the decomposition $D=A^i+B^i$ with $A^i \cap B^i=\{ p_i \}$
in Lemma \ref{lem:decomposition}.
We may assume that $D_i \subset A^i$ and $D_j \subset B^i$.
Let $T'$ be the torsion submodule of $E_{A^i}$.
Then $T$ is a direct summand of $T'$ with $T_{p_i}=T_{p_i}'$.
For the morphism $E \to E_{|A^i}/T'$,
the kernel contains a submodule $F$ fitting in an exact sequence
\begin{equation*}
0 \to  (E_{|B^i}/T'')(-p_i) \to F \to T' \to 0, 
\end{equation*}
where $T''$ is the torsion submodule of $E_{|B^i}$.
Then we have
\begin{equation*}
\frac{\chi((E_{|B^i}/T'')(-p_i-\beta))+\chi(T')}{(B^i,H')}=
\frac{\chi(F(-\beta))}{(B^i,H')}<
\frac{\chi(E(-\beta))}{(D,H')}<
\frac{\chi((E_{|B^i}/T'')(-\beta))}{(B^i,H')}
\end{equation*}
Since $\chi((E_{|B^i}/T'')(-p_i-\beta))+\chi(T') \geq 
\chi((E_{|B^i}/T'')(-\beta))$, we get a contradiction.
Therefore $E_{|D_i}$ is torsion free, and we complete the proof.
\end{proof}

We next characterize the Mukai vectors of $E_{|D_i}$
for $E \in M_D^\beta(v)$. 
We set $v(E_{|A^i})=(0,A^i,a_i)$ and $v(E_{|B^i})=(0,B^i,b_i)$.
Since $E_{|B^i}(-p_i)$ is a subsheaf of $E$ and
$E_{|B^i}$ is a quotient of $E$,
we have
\begin{equation}\label{eq:(D,H')}
\frac{b_i-1-\beta(B^i)}{(B^i,H')}
<\frac{\chi(E)-\beta(D)}{(D,H')}<
\frac{b_i-\beta(B^i)}{(B^i,H')}.
\end{equation}
Hence
\begin{equation}\label{eq:b}
b_i=\min\left\{ n \in {\Bbb Z} \left|
n>\frac{(\chi(E)-\beta(D))(B^i,H')}{(D,H')}+\beta(B^i)
\right. \right\}.
\end{equation}

Conversely if \eqref{eq:b} holds for a line bundle $E$ on $D$, 
then we shall prove that
$E$ is $\beta$-twisted stable.

We first note that \eqref{eq:(D,H')} holds and
\begin{equation}\label{eq:a}
\frac{a_i-1-\beta(A^i)}{(A^i,H')}
<\frac{\chi(E)-\beta(D)}{(D,H')}<
\frac{a_i-\beta(A^i)}{(A^i,H')}.
\end{equation}
For an exact sequence
\begin{equation*}
0 \to E_1 \to E \to E_2 \to 0
\end{equation*}
such that $E_1$ and $E_2$ are purely 1-dimensional and
$E_1$ is $\beta$-twisted stable,
$\Div(E_1)$ is connected.
We set 
$$
\Div(E_1) \cap \Div(E_2):=\{ p_{n_1},p_{n_2},...,p_{n_s} \}.
$$
We note that 
$p_{n_j}=D_{a_j} \cap D_{b_j}$ with
$\{a_j,b_j \}=\{ n_j,\varphi(n_j) \}$.
We may assume that $D_{a_j} \subset E_1$ and 
$D_{b_j} \subset E_2$. 
Replacing $B^i$ by $A^i$,
we may assume that 
$D_{a_j} \subset A^{n_j} \cap \Div(E_1)$
and
$D_{b_j} \subset \Div(E_2) \cap B^{n_j}$
for $1 \leq j \leq s$.
Since $A^{n_j} \setminus \{ p_{n_j} \}$
is a connected component of $D \setminus \{ p_{n_j} \}$,
connectivity of $\Div(E_1) \setminus \{p_j \}$ implies that
$\Div(E_1) \subset A^{n_j}$. 
Since $D$ is a tree configuration, we also have
$B^{n_j} \cap B^{n_k} =\emptyset$ for $j \ne k$.  
Hence we have a decomposition of $\Div(E_2)=D-\Div(E_1)$
into connected components $B^j$:
$\Div(E_2)=\sum_j B^{n_j}$.
By \eqref{eq:(D,H')}, we have
\begin{equation*}
\frac{\chi(E(-\beta))}{(D,H')}(B^{n_j},H')<
\chi(E_{|B^{n_j}}(-\beta)).
\end{equation*}
Then we have 
\begin{equation*}
\sum_j \chi(E_{|B^{n_j}}(-\beta)) >
\sum_j \frac{\chi(E(-\beta))}{(D,H')}(B^{n_j},H')=
\frac{\chi(E(-\beta))}{(D,H')}(\Div(E_2),H').
\end{equation*} 
Since $\cup_j B^{n_j}$ is a disjoint union,
we have a surjective homomorphism
$E \to \oplus_j E_{|B^{n_j}}$.
Since $E_1 \to E \to \oplus_j E_{|B^{n_j}}$ is a zero map,
we have a surjective morphism
$E_2 \to \oplus_j E_{|B^{n_j}}$.
Since $\Div(E_2)=\sum_j B^{n_j}$ and $E_2$ is pure,
it is an isomorphism.
Therefore 
\begin{equation*}
\chi(E_2(-\beta))>
\frac{\chi(E(-\beta))}{(D,H')}(\Div(E_2),H'),
\end{equation*} 
 which implies $E$ is $\beta$-twisted stable.

\begin{rem}\label{rem:chi(D_i)}
By the proof of Lemma \ref{lem:independent},
we also have an exact sequence
\begin{equation*}
0 \to {\cal O}_{D_i}(-\sum_j p_{n_j}) \to
{\cal O}_D \to \oplus_j {\cal O}_{B^{n_j}} \to 0.
\end{equation*}
Hence
\begin{equation*}
\chi(E_{|D_i})=(D-D_i,D_i)+\chi(E)-
\sum_j b_{n_j}.
\end{equation*}
\end{rem}

\begin{defn}
For a sequence of smooth curves $C_1,C_2,...,C_s$ in 
$X$ and a sequence of integers $d_1,d_2,...,d_s$,
$\Pic^{d_1,d_2,...,d_s}(\sum_{j=1}^s C_j)$
denotes the moduli spaces of line bundles $E$ on
$\sum_{j=1}^s C_j$ such that 
$$
\chi(E_{|C_i})=d_i+(1-g(C_i)).
$$  
\end{defn}

\begin{NB}
Since $H^0(\sum_i C_i,{\cal O}_{\sum_i C_i})={\Bbb C}$,
$\Pic^{d_1,d_2,...,d_s}(\sum_{j=1}^s C_j)$ exists as an algebraic space.
\end{NB}

By Lemma \ref{lem:independent},
we have a bijective correspondence between
$(\chi(E_{|D_0}),\chi(E_{|D_1}),...,\chi(E_{|D_m}))$
and 
$(\chi(E),b_1,b_2,...,b_m)$.
Hence Proposition \ref{prop:Pic_D} follows from the following
claim.

\begin{lem}\label{lem:Picard-scheme}
For $E \in M_D^\beta(v)$, 
we set $d_i:=\deg(E_{|D_i})=
\chi(E_{|D_i})-(1-g(D_i))$.
\begin{NB}
Then $u_i:=v(E_{|D_i}(-p_i))=(0,D_i,d_i-1)$.
We set
$$
v_i:=v(E_{|D_0})+\sum_{j=1}^i u_j=
(0,\sum_{j \leq i} D_j,d_0+\sum_{j=1}^i (d_j-1)).
$$  
$\Pic^{(d_0,d_1,...,d_i)}(\sum_{j=0}^i D_j)$ 
denotes the moduli space of line bundles
$E$ on $\sum_{j=0}^i D_j$ with $\chi(E_{|D_j})=d_j$
for all $j$. 
\end{NB}
Then we have an isomorphism
$$
\Pic^{d_0,d_1,...,d_i}(\sum_{j=0}^i D_j) \cong 
\Pic^{d_0,d_1,...,d_{i-1}}(\sum_{j=0}^{i-1} D_j) 
\times \Pic^d_i(D_i).
$$
In particular, 
$\Pic^{d_0,d_1,...,d_m}(\sum_{j=0}^m D_j)
\cong \prod_j \Pic^{d_j}(D_j)$. 
\end{lem}

\begin{proof}
For $E \in \Pic^{d_0,d_1,...,d_i}(\sum_{j=0}^i D_j)$, 
we have 
$$
(E_{|\sum_{j<i} D_j},E_{|D_i}) \in 
\Pic^{d_0,d_1,...,d_{i-1}}(\sum_{j=0}^{i-1} D_j) 
\times \Pic^d_i(D_i)$$ 
and $E$ fits in an exact sequence
\begin{equation*}0 \to E_{|D_i}(-p_i) \to E
\to E_{|\sum_{j<i} D_j} \to 0.
\end{equation*}
Since $\Ext^k(E_{|\sum_{j<i} D_j},E_{|D_i}(-p_i))=0$ for $k \ne 1$
and 
\begin{equation*}
\begin{split}
\Ext^1(E_{|\sum_{j<i} D_j},E_{|D_i}(-p_i))\cong &
H^0(X,{\cal E}xt^1_{{\cal O}_X}(E_{|\sum_{j<i} D_j},E_{|D_i}(-p_i)))\\
\cong & H^0(X,{\Bbb C}_{p_i}),
\end{split}
\end{equation*}
$E$ is uniquely determined by 
$(E_{|\sum_{j<i} D_j},E_{|D_i}(-p_i))$.
\end{proof}

\subsection{Proof of Proposition \ref{prop:connected}}

\begin{lem}\label{lem:H_1}
Let $C$ be a smooth curve of $(C^2)>0$ in an abelian surface $Y$.
Then $H_1(C,{\Bbb Z}) \to H_1(Y,{\Bbb Z})$ is surjective.
\end{lem}

\begin{proof}
If it is not surjective, then
$f:\Pic^0(Y) \to \Pic^0(C)$ is not injective.
For the abelian surface $f(\Pic^0(Y))$,
we set $Y':=\Pic^0(f(\Pic^0(Y)))$. 
Since $C \to Y$ factors through $\Alb(C)$, 
we have the following diagram
\begin{equation*}
\begin{CD}
C @>>> Y'\\
@| @VV{g}V\\
C @>>> Y
\end{CD}
\end{equation*}
Let $y$ be a point of $\ker g$.
Since $T_y^*(C)$ is algebraically equivalent to
$C$, we have $(T_y^*(C),C)=(C,C)>0$.
Thus $T_y^*(C) \cap C \ne \emptyset$.
For a point $s \in T_y^*(C) \cap C$,
$g(s)=g(s+y)$ for the points
$s, s+y \in C$.
Thus $g_{|C}$ is not injective.
Therefore $H_1(C,{\Bbb Z}) \to H_1(Y,{\Bbb Z})$ is surjective.
\end{proof}

For the divisor $D \in \Hilb_Y^\eta$ in Lemma \ref{lem:tree-config},
we take $E \in f^{-1}(D)$.
Let ${\frak d}$ be a Cartier divisor of $D$ 
such that
${\frak d}=\sum_i {\frak d}_i$, 
${\frak d}_i=\sum_j n_{ij} p_{ij}$,
$p_{ij} \in D_i \setminus \cup_{k \ne i} D_k$
and $\deg({\frak d}_i)=\sum_{j} n_{ij}=0$ for all $i$.
For ${\cal O}_D({\frak d}) \in \Pic(D)$, 
we have ${\cal O}_D({\frak d})={\cal O}_D+
\sum_{i,j} n_{ij} {\Bbb C}_{p_{ij}}$
in $K(D)$.
Hence 
$$
\Phi_{Y \to X}^{{\bf E}}(E({\frak d}))=
\Phi_{Y \to X}^{{\bf E}}(E)+\sum_{i,j} n_{ij} {\bf E}_{p_{ij}}
$$ 
in $K(X)$.
Then we have 
$$
{\frak a}(\Phi_{Y \to X}^{{\bf E}}(E(\frak{d})))
={\frak a}(\Phi_{Y \to X}^{{\bf E}}(E))
+\sum_{i,j} n_{ij} {\frak a}({\bf E}_{p_{ij}}).
$$
This morphism is the same as 
$$
\prod_i \Pic^0(D_i)\cong \prod_i \Jac(D_i) \overset{\mu}{\to} 
Y  \overset{{\frak a}}{\to} X \times \widehat{X}
$$
 sending
${\cal O}_D(\sum_{i,j} n_{ij} p_{ij})$ 
to the image of $\sum_{i,j} n_{ij} p_{ij} \in Y$
by ${\frak a}$.

\begin{lem}
${\frak a}:Y \to X \times \widehat{X}$ sending $y \in Y$ to
${\frak a}({\bf E}_y) \in X \times \widehat{X}$ is injective.
\end{lem}

\begin{proof}
We set $u=(r,\xi,a)$.
Replacing $u$ by $-u$, we may assume that $r>0$.
\begin{NB}
Replacing $E \in M_H(u)$ by 
$\Phi(E)$, we may assume that $r>0$ and $\xi$ is ample, where 
$\Phi$ is a suitable autoequivalence of ${\bf D}(X)$.
Indeed we have a commutative diagram:
\begin{equation*}
\begin{CD}
{\bf D}(X) @>{\Phi}>> {\bf D}(X)\\
@V{{\frak a}}VV @VV{{\frak a}}V\\
X \times \widehat{X} @>>> X \times \widehat{X}
\end{CD}
\end{equation*}
\end{NB}
We set $p=(r,\xi)$. Since $v$ is primitive,
$(p,a)=1$.
Since $ra=(\xi^2)/2 \in p^2 {\Bbb Z}$,
we may set $r=p^2 t$ and $\xi=pqH$, where $H$ is primitive.
Since $q^2 (H^2)/2=ta$ and $(q,pt)=1$,
we can set $a=q^2 s$. Thus we get
$v=(p^2 t,pqH,q^2 s)$, where $(pt,q)=1$, $(p,q^2 s)=1$ and the type of
$H$ is $(1,ts)$.
We take $E \in Y$.
We have a morphism
$f:X \to Y$ by sending $x \in X$ to $T_x^*(E)$. 
Then $X/K(E) \cong Y$, where 
$K(E)=\im(K(pqH) \overset{p^2 t}{\to} X)$.
We note that 
$g:X \to Y \to X \times \widehat{X}$ is
the morphism sending $x$ to $(q^2 s x,\phi_{pqH}(x))$.
We shall prove that $\ker g=K(E)$.
 
We set $K(pq H)=\frac{1}{pq}V_1/V_1 \oplus \frac{1}{pqts} V_2/V_2$.
Then 
$$
K(E)=p^2 t K(pqH)=
\frac{p^2 t}{pq}V_1/V_1 \oplus \frac{p^2 t}{pqts} V_2/V_2  
=\frac{1}{q}V_1/V_1 \oplus \frac{1}{qs} V_2/V_2,
$$ 
where we used $(pt,q)V_1=V_1$ and $(p,qt)V_2=V_2$.
For $(\frac{x_1}{pq},\frac{x_2}{pqts}) \in \ker (q^2 s) \cap K(pqH)$,
we have $(\frac{q^2 s x_1}{pq},\frac{q^2 s x_2}{pqts}) \in V_1 \oplus V_2$.
Then we have $x_1 \in p V_1$ and $x_2 \in pt V_2$.
Hence  
$$
\ker (q^2 s) \cap K(pqH)=\frac{1}{q}V_1/V_1 
\oplus \frac{1}{qs}V_2/V_2=K(E).
$$
 \end{proof}

By Proposition \ref{prop:Pic_D}, Proposition \ref{prop:connected}
follows from the following claim.
\begin{lem}\label{lem:connected}
If $D$ is a normal crossing divisor
of smooth curves $D_i$, then
$\ker \mu$ is connected.
\end{lem}

\begin{proof}
Since $D_i$ are smooth,
it is sufficient to prove that
$H_1(\prod_i \Pic^0(D_i), {\Bbb Z}) \to
H_1(Y,{\Bbb Z})$ is surjective.
Since 
$$
H_1(\prod_i \Pic^0(D_i), {\Bbb Z}) \cong
\bigoplus_i H_1(\Pic^0(D_i),{\Bbb Z}) \cong 
\bigoplus_i H_1(D_i,{\Bbb Z}),
$$
Lemma \ref{lem:H_1} implies the claim unless
all $D_i$ are elliptic curves.
If all $D_i$ are elliptic curves, then
$(D_0,D_1)=1$ implies that
the natural homomorphism
$D_0 \times D_1 \to Y$ is an isomorphism.
Hence 
$$
H_1(D_0,{\Bbb Z}) \oplus H_1(D_1,{\Bbb Z})
\to H_1(Y,{\Bbb Z})
$$ 
is an isomorphism.
Therefore the claim holds. 
\end{proof}

\end{document}